\newcommand\shorttitle{Relation between crystals and $\mathcal{C}$-complexes}
\newcommand\authors{Arun Soor}
\ifodd\value{page}
\authors
\shorttitle
\newtheorem{thm}{Theorem}[section]
\newtheorem{cor}[thm]{Corollary}
\newtheorem{lem}[thm]{Lemma}
\newtheorem{scholium}[thm]{Scholium} 
\newtheorem{defn}[thm]{Definition}
\newtheorem{rmk}[thm]{Remark}
\newtheorem{example}[thm]{Example}
\newtheorem{prop}[thm]{Proposition}
\newtheorem*{claim*}{Claim}
\newcommand*{\sheafhom}{\mathcal{H}\kern -.5pt om}
\DeclareMathOperator{\prolim}{``lim''}
\DeclareMathOperator{\indlim}{``colim''}
\title{\large \bf $\wideparen{\mathcal{D}}$-modules are quasi-coherent sheaves on an analytic stack}
\author{Arun Soor}
\date{\today}
\begin{document}
\maketitle
\abstract{We construct a fully-faithful functor of $\infty$-categories from complexes of $\wideparen{\mathcal{D}}$-modules with Fr\'echet cohomology to quasi-coherent sheaves on an analytic stack. We prove various descent results for $\infty$-categories of $\wideparen{\mathcal{D}}$-modules in the analytic topology.}
\tableofcontents
\section{Introduction}
In algebraic or analytic geometry we frequently consider a ``sheaf theory" attached to a category $\mathscr{C}$ of geometric objects. For the purposes of this introduction only, let us say that a \emph{sheaf theory} is a functor 
\begin{equation}
    \mathscr{C}^\mathsf{op} \to \mathsf{Cat}_\infty,
\end{equation}
where $\mathsf{Cat}_\infty$ is the $\infty$-category of $\infty$-categories. For instance $\mathscr{C}$ could be the category of schemes, and $\mathscr{C}^\mathsf{op} \to \mathsf{Cat}_\infty$ could be the functor which assigns to each scheme $X$ the derived category $\operatorname{QCoh}(X)$ of quasi-coherent sheaves on $X$. Or, $\mathscr{C}$ could be the category of smooth schemes, and $\mathscr{C}^\mathsf{op} \to \mathsf{Cat}_\infty$ could be the functor which assigns to each scheme $X$ the derived category of $\mathcal{D}$-modules on $X$. Or, $\mathscr{C}$ could be the category of locally compact Hausdorff topological spaces, and $\mathscr{C}^\mathsf{op} \to \mathsf{Cat}_\infty$ assigns to each $X$ the derived category $D(X, \mathbf{Z})$ of abelian sheaves on $X$.

As remarked in the introduction to \cite{ScholzeSixFunctors}, in many cases the sheaf theory can be factored through a functor $ \mathscr{C}^\mathsf{op} \to  \{\text{analytic stacks}\}$, so that it can be written as the composite
\begin{equation}\label{eq:QCohcomposite}
   \mathscr{C}^\mathsf{op} \to  \{\text{analytic stacks}\} \xrightarrow[]{\operatorname{QCoh}} \mathsf{Cat}_\infty.
\end{equation}
This hypothesis could be summarized in the slogan
\begin{equation}\label{eq:universalsheaftheory}
    \text{``$\operatorname{QCoh}$ is the universal sheaf theory"},
\end{equation}
where $\operatorname{QCoh}$ stands for the sheaf theory of quasi-coherent sheaves on analytic stacks. In the preceding examples, one has that the sheaf theory of algebraic $\mathcal{D}$-modules factors through the assignment $X \mapsto X_{\mathrm{dR}}$ which sends a scheme to its (algebraic) de Rham space. The sheaf theory of abelian sheaves on locally compact Hausdorff spaces factors through the assignment $X \mapsto X_{\mathrm{sch}}$ sending $X$ to its schematization \cite{ToenChampsAffines}. The purpose of this article is to explain precisely how the $\wideparen{\mathcal{D}}$-modules of Ardakov and Wadsley  \cite{Dcap1}, which we can think of as a sheaf theory for rigid-analytic varieties, fits into the ansatz provided by \eqref{eq:QCohcomposite} and \eqref{eq:universalsheaftheory}.

Let us specialize to the setting of the problem. We consider $\mathsf{dRig}$, the category of \emph{derived rigid spaces} introduced in \cite{soor_six-functor_2024}. In \emph{loc. cit.} we associated to each $X \in \mathsf{dRig}$ an (analytic) prestack $X_\mathrm{str}$ called the \emph{stratifying stack}. The corresponding sheaf theory is 
\begin{equation}
    \mathsf{dRig} \xrightarrow{X \mapsto X_{\mathrm{str}}} \mathsf{PStk} \xrightarrow[]{\operatorname{QCoh}} \mathsf{Cat}_\infty, 
\end{equation}
in which $\mathsf{PStk}$ stands for the category of (analytic) prestacks and the value of $\operatorname{QCoh}$ on such is defined as in \cite{soor_six-functor_2024}. We aim to relate $\operatorname{Strat}(X) := \operatorname{QCoh}(X_{\mathrm{str}})$ to Ardakov--Wadsley's $\wideparen{\mathcal{D}}$-modules.

In \cite{soor_six-functor_2024}, in the spirit of \cite{MR3264953}, we considered the canonical morphism of prestacks $p: X \to X_{\mathrm{str}}$. When this morphism is thought of as a covering or atlas, there is an expectation to describe objects of $\operatorname{QCoh}(X_{\mathrm{str}})$ in terms of objects of $\operatorname{QCoh}(X)$ equipped with some kind of ``descent datum". In order to get the relation to modules over an algebra, we are looking for the action of a descent monad (rather than a comonad) and so we are led to consider the $!$-push-pull adjunction 
\begin{equation}
    p_!: \operatorname{QCoh}(X) \leftrightarrows \operatorname{QCoh}(X_{\mathrm{str}}) : p^!,
\end{equation}
on quasi-coherent sheaves. In \cite{soor_six-functor_2024} we defined the monad $\mathcal{D}^\infty_X := p^!p_!$ and proved the following.
\begin{thm}\cite[Theorem 4.101]{soor_six-functor_2024}\label{thm:previousthm} 
Suppose that $X = \operatorname{dSp}(A)$ is a smooth classical affinoid rigid space which is \'etale over a polydisk. Then the adjunction $p_! \dashv p^!$ is monadic, so that there is an equivalence of $\infty$-categories 
\begin{equation}
    \operatorname{QCoh}(X_{\mathrm{str}}) \simeq \operatorname{Mod}_{\mathcal{D}^\infty_X}\operatorname{QCoh}(X).
\end{equation}
\end{thm}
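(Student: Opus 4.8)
\medskip
\noindent\textbf{Proof sketch (plan).} The plan is to apply the Barr--Beck--Lurie monadicity theorem to the adjunction $p_! \dashv p^!$. Writing $T := \mathcal{D}^\infty_X = p^!p_!$ for the induced monad on $\operatorname{QCoh}(X)$, the Eilenberg--Moore comparison functor $\operatorname{QCoh}(X_{\mathrm{str}}) \to \operatorname{Mod}_T\operatorname{QCoh}(X)$ is an equivalence as soon as (i) the right adjoint $p^!$ is conservative and (ii) $p^!$ preserves geometric realizations of $p^!$-split simplicial objects; in fact I would verify the stronger statement that $p^!$ preserves all small colimits. (One should first record that $p$ lies in the class of morphisms along which the exceptional functor $p_!$ is defined, so that the adjunction $p_!\dashv p^!$ is available; this is part of the formalism of \cite{soor_six-functor_2024}.)

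The engine for both conditions is \emph{purity}. I claim that $p\colon X \to X_{\mathrm{str}}$ is cohomologically smooth with \emph{invertible} dualizing object $\omega_p \in \operatorname{QCoh}(X)$, so that there is a natural equivalence $p^!(-) \simeq \omega_p \otimes p^*(-)$. Granting this, (ii) is immediate: $p^*$ is a left adjoint and hence preserves all small colimits, and tensoring with the fixed object $\omega_p$ preserves all small colimits in the presentably symmetric monoidal category $\operatorname{QCoh}(X)$, so $p^!$ preserves all small colimits. For (i): since $p$ is a cover of prestacks (an effective epimorphism), $X_{\mathrm{str}}$ is the geometric realization $\operatorname{colim}_{[n]\in\Delta^{\mathsf{op}}}\check{C}(p)_n$ of its \v{C}ech nerve, so $\operatorname{QCoh}(X_{\mathrm{str}}) \simeq \lim_{[n]\in\Delta}\operatorname{QCoh}\bigl(\check{C}(p)_n\bigr)$ and the pullback $p^*\colon \operatorname{QCoh}(X_{\mathrm{str}}) \to \operatorname{QCoh}(X)$ is comonadic, in particular conservative; as $\omega_p$ is invertible, $p^!\simeq \omega_p\otimes p^*(-)$ differs from $p^*$ by an autoequivalence and is therefore conservative as well. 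Barr--Beck--Lurie then yields $\operatorname{QCoh}(X_{\mathrm{str}}) \simeq \operatorname{Mod}_{p^!p_!}\operatorname{QCoh}(X) = \operatorname{Mod}_{\mathcal{D}^\infty_X}\operatorname{QCoh}(X)$.

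It remains to justify the purity claim, and this is where the hypotheses enter and where the real work lies. Morally, $X_{\mathrm{str}}$ contracts the infinitesimal directions of $X$ just as the de Rham space does in the algebraic setting, so the relative cotangent complex of $p$ is a shift of $\Omega^1_{X}$ and one expects $p$ to be cohomologically smooth with $\omega_p$ an invertible shift of the relative canonical bundle $\omega_{X/K}$. To prove this I would use the étale-over-a-polydisk hypothesis to reduce to the polydisk: cohomological smoothness and the formation of $\omega_p$ are étale-local on $X$, and an étale morphism $X \to \mathbf{D}^d$ induces a square relating $p\colon X\to X_{\mathrm{str}}$ to $\mathbf{D}^d \to (\mathbf{D}^d)_{\mathrm{str}}$ that is Cartesian enough to transport the property; for $\mathbf{D}^d$ itself one has an explicit ``infinitesimal groupoid'' presentation of $(\mathbf{D}^d)_{\mathrm{str}}$ (the \v{C}ech nerve of $p$ being assembled from completions of products of $\mathbf{D}^d$ along diagonals), and one computes $p_!$, $p^!$, and hence $\omega_p$, by hand there. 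The main obstacle is exactly this last step: establishing cohomological smoothness of $p$ with an invertible dualizing object inside the six-functor formalism for derived rigid spaces. The subtleties are genuinely analytic --- one is manipulating Fr\'echet/bornological completions rather than finite-order differential operators --- so the identification of $\omega_p$, and the verification that purity holds in the required form, is not a formal consequence of the algebraic picture and must be carried out within the framework of \cite{soor_six-functor_2024}. Everything else is a formal application of monadicity together with the descent already established for $p$.
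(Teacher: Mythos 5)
First, note that the paper does not prove this statement: it is imported verbatim from \cite[Theorem 4.101]{soor_six-functor_2024}, so there is no in-paper proof to compare against. Your overall shape (Barr--Beck--Lurie for $p_!\dashv p^!$, with conservativity and preservation of $p^!$-split geometric realizations as the two conditions, and the observation that $X_{\mathrm{str}}$ is the realization of the \v{C}ech nerve of $p$ so that $p^*$ is conservative) is reasonable. But the engine of your argument --- that $p$ is cohomologically smooth with invertible dualizing object, so that $p^!\simeq\omega_p\otimes p^*$ --- is false, and it is contradicted by results established in this paper. If $p^!$ were $\omega_p\otimes p^*$, then base change along the Cartesian square defining $(X\subseteq X\times X)^\dagger$ would give $\mathcal{D}^\infty_X=p^!p_!\simeq\omega_p\otimes\widetilde{\pi}_{2,!}\widetilde{\pi}_1^*(-)$, i.e.\ the monad would be given by tensoring with a fixed bimodule on all of $\operatorname{QCoh}(X)$. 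The paper is explicit that this fails (``for functional-analytic reasons, the monad $\mathcal{D}^\infty_X$ is not always given by tensoring with $\mathcal{D}^\infty_X1_X$''): Theorem \ref{thm:PNFtensor} establishes the tensoring formula only on $\operatorname{Fr}(X)$, and the first proposition of \S\ref{sec:essentialimage} exhibits, already for $X$ the closed unit disk --- exactly the base case you propose to ``compute by hand'' --- an explicit element $\sum_{k,l}s^kt^l$ witnessing that $(-)\widehat{\otimes}_X\mathcal{D}^\infty_X1_X\to\mathcal{D}^\infty_X(-)$ is not an equivalence. The correct geometry is the opposite of what you assert: $p$ is \emph{prim} (proper-like, $p_!\simeq p_*$, with $\widetilde{\pi}_{1,!}$ a restriction of scalars), and $p^!p_!\simeq R\underline{\operatorname{Hom}}_A((A\widehat{\otimes}_KA)^\dagger_\Delta,-)$ is an internal-Hom functor, not a twist of a $*$-pullback.

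Your stronger claim that $p^!$ preserves all small colimits also fails. By Proposition \ref{prop:compactgenprops}(iii) (or a direct adjunction argument), preservation of filtered colimits by $p^!$ would force $p_!$ to preserve compact objects; but $1_X$ is compact in $\operatorname{QCoh}(X)$ while the second proposition of \S\ref{sec:essentialimage} shows that $p_!1_X\simeq p_*1_X$ is \emph{not} compact in $\operatorname{Strat}(X)$. So both Barr--Beck hypotheses must be verified without purity: conservativity of $p^!$ and preservation of colimits of $p^!$-split simplicial objects have to be extracted from the explicit description $\mathcal{D}^\infty_X\simeq R\underline{\operatorname{Hom}}_K(K\langle dx/p^\infty\rangle,-)$ furnished by Theorem \ref{thm:ExpTX} --- for instance using the retraction of $\operatorname{id}$ off $\mathcal{D}^\infty_X$ induced by the unit and augmentation of $K\langle dx/p^\infty\rangle$, together with a constant-$\operatorname{Ind}$/$\operatorname{Pro}$-object argument in the spirit of Appendix \ref{subsec:NCdescent}. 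That is precisely the nontrivial analytic content of \cite[Theorem 4.101]{soor_six-functor_2024}, and it is the step your proposal replaces with a claim that cannot hold.
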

In light of Theorem \ref{thm:previousthm}, the main task of this article is to better understand the monad $\mathcal{D}^\infty_X$. 
For functional-analytic reasons, the monad $\mathcal{D}^\infty_X$ is not always given by tensoring with $\mathcal{D}^\infty_X1_X$. Nonetheless, we have the following. 
\begin{thm}[= Lemma \ref{lem:naturaltransformlema}] Let $X = \operatorname{dSp}(A)$ be a derived affinoid algebra. Then the object $\mathcal{D}_X^\infty1_X$ acquires the canonical structure of an algebra object\footnote{In particular this gives a definition of the ring of infinite-order differential operators even when $X$ is not classical or smooth. In these situations we do not expect the object $\mathcal{D}_X^\infty1_X$ to be concentrated in a single degree. It would be interesting to compute the graded ring $H^\bullet\mathcal{D}_X^\infty1_X$ in these cases.} in $\operatorname{QCoh}(X \times X)$ (under convolution). With this algebra object structure there is a canonical morphism of monads 
\begin{equation}\label{eq:intronaturaltransform}
    (-) \widehat{\otimes}_X \mathcal{D}^\infty_X1_X \to \mathcal{D}^\infty_X(-).
\end{equation}
\end{thm}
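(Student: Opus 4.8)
The plan is to deduce the statement from the functoriality of the convolution action of $\operatorname{QCoh}(X\times X)$ on $\operatorname{QCoh}(X)$, together with the compatibility of the stratifying stack with base change. First I would record that $\operatorname{QCoh}(X)$ is a left module over the monoidal $\infty$-category $(\operatorname{QCoh}(X\times X),\star)$, with associated monoidal functor $\rho\colon(\operatorname{QCoh}(X\times X),\star)\to\operatorname{Fun}(\operatorname{QCoh}(X),\operatorname{QCoh}(X))$, $\rho(K)\colon M\mapsto M\widehat{\otimes}_X K$. Since $X=\operatorname{dSp}(A)$ is affinoid, $\operatorname{QCoh}(X)$ is dualizable over $\operatorname{QCoh}(\ast)$, so $\rho$ is a fully faithful monoidal functor onto the colimit-preserving endofunctors; the inclusion of the latter into accessible endofunctors (a colimit-preserving inclusion of presentable monoidal $\infty$-categories) therefore admits a lax monoidal right adjoint, yielding a lax monoidal ``reflection'' $\rho^R\colon\operatorname{Fun}^{\mathrm{acc}}(\operatorname{QCoh}(X),\operatorname{QCoh}(X))\to\operatorname{QCoh}(X\times X)$. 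Now $\mathcal{D}^\infty_X=p^!p_!$ is an accessible monad, i.e. an algebra in accessible endofunctors under composition, so $\rho^R(\mathcal{D}^\infty_X)$ is an algebra in $(\operatorname{QCoh}(X\times X),\star)$, and the counit of the monoidal adjunction supplies a morphism of monads $\rho(\rho^R(\mathcal{D}^\infty_X))\to\mathcal{D}^\infty_X$. Everything thus reduces to identifying $\rho^R(\mathcal{D}^\infty_X)$ with $\mathcal{D}^\infty_X 1_X$.

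For this identification I would relativise. Using that $X\mapsto X_{\mathrm{str}}$ — hence the monad $\mathcal{D}^\infty_X=p^!p_!$ — is compatible with base change \cite{soor_six-functor_2024}, the morphism $p\times\operatorname{id}_X\colon X\times X\to X_{\mathrm{str}}\times X$ yields a monad $\mathcal{D}^\infty_X\boxtimes\operatorname{id}_X$ on $\operatorname{QCoh}(X\times X)$, and I would take $\mathcal{D}^\infty_X 1_X:=(\mathcal{D}^\infty_X\boxtimes\operatorname{id}_X)(\Delta_!1_X)$, the value of this relative monad on the unit $\Delta_!1_X$ of $\star$ (with $\Delta\colon X\to X\times X$ the diagonal). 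The standard ``kernel'' formula $\rho^R(T)\simeq(T\boxtimes\operatorname{id}_X)(\Delta_!1_X)$ then identifies this with $\rho^R(\mathcal{D}^\infty_X)$, so that $\mathcal{D}^\infty_X 1_X$ inherits the algebra structure and receives the morphism of monads $(-)\widehat{\otimes}_X\mathcal{D}^\infty_X 1_X\to\mathcal{D}^\infty_X(-)$. Concretely, writing $X_\bullet$ for the \v{C}ech nerve of $p$ — a groupoid object in prestacks by \cite{soor_six-functor_2024} — and $\delta_1\colon X_1=X\times_{X_{\mathrm{str}}}X\to X\times X$ for the (source, target) morphism, base change identifies $\mathcal{D}^\infty_X 1_X$ with $\delta_{1,!}$ of the relevant object of $\operatorname{QCoh}(X_1)$, and under this description the algebra structure is the groupoid-algebra structure induced by the composition $X_1\times_X X_1\to X_1$ and the degeneracy $X=X_0\to X_1$; a Segal-type argument checks that this agrees with the structure obtained abstractly.

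The hard part will be twofold. First, the functional analysis already alluded to before the statement: $\mathcal{D}^\infty_X$ genuinely fails to preserve colimits — because $p^!$ does not commute with the colimits computing $\widehat{\otimes}_X$ — so the counit $\rho(\rho^R(\mathcal{D}^\infty_X))\to\mathcal{D}^\infty_X$ is not an equivalence, which is precisely why only a morphism is asserted; and one must still verify that $\operatorname{QCoh}(X)$ is dualizable and $\mathcal{D}^\infty_X$ accessible, so that $\rho^R$ exists and is well behaved. Second, the coherence: the ``kernel'' formula and, more importantly, the assertion that the comparison is a morphism of algebras in endofunctors must be produced from the functoriality of the six-functor formalism of \cite{soor_six-functor_2024} — realising the base-change and projection-formula identifications as maps of the relevant (co)cartesian fibrations — rather than checked by hand on underlying objects; matching the groupoid multiplication on $\delta_{1,!}$ with the monad multiplication of $p^!p_!$ under $\rho$ is where the bookkeeping concentrates.
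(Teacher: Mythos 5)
Your overall architecture --- embed $\operatorname{QCoh}(X\times X)$ into endofunctors of $\operatorname{QCoh}(X)$, find a lax monoidal right adjoint to this embedding, apply it to the monad $\mathcal{D}^\infty_X$ to obtain the algebra object, and read off the morphism of monads from the counit --- is exactly the paper's strategy. But the specific adjunction you invoke does not exist in the form you need, and you are missing the structural input that makes the paper's version work. Your $\rho^R$ is supposed to be right adjoint to the inclusion of colimit-preserving endofunctors into \emph{accessible} endofunctors; the accessible endofunctors of a presentable category do not form a presentable category (they are a large filtered union of the $\kappa$-accessible ones over all $\kappa$), so the adjoint functor theorem does not apply, and the ``standard kernel formula'' $\rho^R(T)\simeq(T\boxtimes\operatorname{id})(\Delta_!1_X)$ is asserted rather than proved --- it is the formula for the inverse of $\rho$ on $\operatorname{Fun}^L_{\operatorname{QCoh}(*)}$, and there is no reason it computes a colocalization of a non-colimit-preserving functor. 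Relatedly, $\rho$ is an equivalence onto the \emph{$\operatorname{QCoh}(*)$-linear} colimit-preserving endofunctors, not all colimit-preserving ones; you drop the linearity throughout, and this is fatal: without at least a lax linear structure on $\mathcal{D}^\infty_X$, the object $\mathcal{D}^\infty_X1_X=\mathcal{D}^\infty_X(A)$ does not acquire its left $A$-module (hence bimodule) structure, and the comparison map $M\widehat{\otimes}_A\mathcal{D}^\infty_X(A)\to\mathcal{D}^\infty_X(M)$ has no source of coherences.

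The missing idea is that $p_!$ is $\operatorname{QCoh}(*)$-linear, so by passing to adjoints (Theorem~\ref{HaugsengLaxTheoremC}) its right adjoint $p^!$, and hence $\mathcal{D}^\infty_X=p^!p_!$, carries a canonical \emph{lax} $\operatorname{QCoh}(*)$-linear structure. The correct adjunction is then $\iota:{}_A\operatorname{BMod}_A\mathscr{V}\leftrightarrows\operatorname{Fun}^{\mathrm{lax}}_{\mathscr{V}}(\operatorname{RMod}_A\mathscr{V},\operatorname{RMod}_A\mathscr{V}):\kappa$ of Corollary~\ref{cor:Bimodmonoidal}, with $\kappa$ given by evaluation at $A$; its unit and counit are built from the bar resolution and use the lax-linear structure at every step, and the lax monoidality of $\kappa$ (coming from strong monoidality of $\iota$) is what makes $\kappa(\mathcal{D}^\infty_X)=\mathcal{D}^\infty_X1_X$ an algebra under convolution and the counit $\iota\kappa\to\operatorname{id}$ a morphism of monads. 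Accessibility of $\mathcal{D}^\infty_X$ plays no role, and dualizability of $\operatorname{QCoh}(X)$ is not needed. Your fallback --- matching the abstract structure with a groupoid-algebra structure on $\delta_{1,!}$ by ``a Segal-type argument'' --- is precisely the unbounded coherence-checking that this machinery is designed to avoid, and as written that is where your proof actually lives without being carried out. (Your remark that the counit fails to be an equivalence for functional-analytic reasons, so that only a morphism of monads is asserted, is accurate and consistent with the paper.)
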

The natural transformation \eqref{eq:intronaturaltransform} is produced via some general machinery developed in Appendix \ref{sec:monoidsfromMonads}, which could be of independent interest. In short, one gets a transformation of the form \eqref{eq:intronaturaltransform} whenever the underlying endofunctor of a monad is \emph{lax-linear}\footnote{See Appendix \ref{sec:monoidsfromMonads}.}. Now the problem of describing modules over the monad is reduced to two sub-problems:
\begin{itemize}
    \item[] \emph{Problem 1}: Identify \emph{fixed points}, that is, some class of objects of $\operatorname{QCoh}(X)$ on which the natural transformation \eqref{eq:intronaturaltransform} restricts to an equivalence;
    \item[] \emph{Problem 2}: Identify the algebra structure on $\mathcal{D}^\infty_X1_X$. 
\end{itemize}
As the below Theorem shows, Problem 1 should be thought of as a functional-analytic condition, and for Problem 2 we can identify $\mathcal{D}^\infty_X 1_X$ with Ardakov--Wadsley's $\wideparen{\mathcal{D}}_X(X)$, at least when $X = \operatorname{dSp}(A)$ is classical and \'etale over a polydisk. 
\begin{thm}[= Theorem \ref{thm:PNFtensor}, Proposition \ref{prop:algebraiso}]
Assume that $X = \operatorname{dSp}(A)$ is classical and \'etale over a polydisk. 
\begin{enumerate}[(i)]
    \item Let $\operatorname{Fr}(X) \subseteq \operatorname{QCoh}(X)$ be the full subcategory spanned by complexes whose cohomology groups are Fr\'echet spaces. Then $\operatorname{Fr}(X) \subseteq \operatorname{QCoh}(X)$ is preserved by the monad $\mathcal{D}^\infty_X$ and the natural transformation \eqref{eq:intronaturaltransform} restricts to an equivalence on $\operatorname{Fr}(X)$.
    \item There is an equivalence of algebra objects\footnote{In $\operatorname{QCoh}(X \times X)$ with the convolution monoidal structure.} $\mathcal{D}^\infty_X1_X \simeq \wideparen{\mathcal{D}}_X(X)$, where the latter is defined as in \cite{Dcap1} and viewed as an algebra object in degree $0$.  
\end{enumerate}
\end{thm}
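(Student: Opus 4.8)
The plan is to extract from the monad $\mathcal{D}^\infty_X$ an explicit ``integral kernel'' on $X\times X$ which can be computed in coordinates, show that the comparison transformation \eqref{eq:intronaturaltransform} becomes the projection-formula map for this kernel (hence an equivalence exactly when a certain limit interchange holds), and then match the kernel, with its convolution algebra structure, to Ardakov--Wadsley's $\wideparen{\mathcal{D}}_X(X)$. Concretely, I would first fix an \'etale map $X=\operatorname{dSp}(A)\to\mathbf{D}^n$ to a polydisk, with coordinates $t_1,\dots,t_n\in A$ and dual vector fields $\partial_1,\dots,\partial_n$ trivialising $\mathcal{T}_X$, so that the diagonal in $X\times X$ is cut out by the regular sequence $t_i\otimes 1-1\otimes t_i$. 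The prestack $X_{\mathrm{str}}$ carries a groupoid presentation by the analytic tube neighbourhoods of the diagonal, so $p_!1_X$, and hence $\mathcal{D}^\infty_X1_X=p^!p_!1_X$, is computed as a limit over the opposite of its nerve; in the chosen coordinates this is a Koszul-type computation. I expect to identify $\mathcal{D}^\infty_X1_X\in\operatorname{QCoh}(X\times X)$ with a concrete analytic completion of $\mathcal{O}_{X\times X}$ along the diagonal, concentrated in cohomological degree $0$, whose global sections form a Fr\'echet space, explicitly the Fr\'echet direct sum $\widehat{\bigoplus}_{\alpha\in\mathbf{N}^n}A\cdot\partial^{\alpha}$ over monomials in the $\partial_i$ (dual to the divided powers $(t\otimes 1-1\otimes t)^{\alpha}/\alpha!$); classicality together with the \'etale chart is what forces degree-$0$ concentration.

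For part (i), this description gives $\mathcal{D}^\infty_X(M)\simeq\pi_{2,*}\!\big(\pi_1^*M\otimes\mathcal{D}^\infty_X1_X\big)$ for the projections $\pi_1,\pi_2\colon X\times X\to X$, and \eqref{eq:intronaturaltransform} identifies with the natural transformation comparing ``convolve then push'' against the monad, i.e.\ the projection-formula map for $\pi_{2,*}$ tensored against $\pi_1^*M$. Its failure to be an equivalence in general is exactly the failure of the completion $\pi_{2,*}$ to commute with $\pi_1^*M\otimes(-)$. The functional-analytic point is that it does commute once $M\in\operatorname{Fr}(X)$: I would reduce to a single Fr\'echet module, write it as a countable inverse limit of Banach modules with dense transition maps, observe that the pro-Banach pieces of $\mathcal{D}^\infty_X1_X$ tensored against such an $M$ again form a countable inverse system with dense transition maps, conclude that the two limits interchange and all $\varprojlim^1$-type obstructions vanish, and read off that the comparison map is an equivalence. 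The same estimates show that completed tensor products and $\pi_{2,*}$ preserve the Fr\'echet condition, giving preservation of $\operatorname{Fr}(X)$ by $\mathcal{D}^\infty_X$.

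For part (ii), unwinding the kernel picture turns the convolution product on $\mathcal{D}^\infty_X1_X$ into composition of (infinite-order) differential operators acting on $\mathcal{O}_X$. I would then build an algebra map $\mathcal{D}^\infty_X1_X\to\wideparen{\mathcal{D}}_X(X)$ by sending the basis elements $\partial^{\alpha}$ of the first step to the corresponding operators, check it is a continuous bijection on the explicit Fr\'echet-direct-sum models, and verify multiplicativity only on the topological generators $\mathcal{O}_X$ and $\partial_1,\dots,\partial_n$ --- using that both multiplications are separately continuous, that these elements generate a dense subalgebra, and that there both sides reduce to the usual composition in the finite-order ring $\mathcal{D}_X(X)$. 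Compatibility of units and of the $A\widehat{\otimes}A$-bimodule structures is then formal. The essential external input is Ardakov--Wadsley's Fr\'echet--Stein presentation of $\wideparen{\mathcal{D}}_X(X)$ as the inverse limit of the Banach completions obtained by rescaling the $\partial_i$ by radii tending to $1$; this is precisely the inverse system produced on the geometric side by the analytic tube neighbourhoods shrinking onto the diagonal, and it is what lets the two Fr\'echet topologies be matched.

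The step I expect to be the main obstacle is the limit-interchange in part (i): proving rigorously that on Fr\'echet objects the completion $\pi_{2,*}$ commutes with $\pi_1^*M\otimes(-)$ with no residual derived terms, which is where the specific properties of Fr\'echet (rather than arbitrary) complexes are genuinely used. Closely related, and also delicate, is the bookkeeping in the first step needed to pin down which analytic completion of $\mathcal{O}_{X\times X}$ along the diagonal is actually computed by $p^!p_!$, so that its normalisation agrees with Ardakov--Wadsley's on the nose rather than only up to an undetermined rescaling.
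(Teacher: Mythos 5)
Your overall architecture matches the paper's: fix an \'etale chart to identify the kernel $\mathcal{D}^\infty_X 1_X$ concretely as $A\,\widehat{\otimes}_K K\langle p^\infty\partial\rangle$ (dual to divided powers on the dagger-germ of the diagonal), reduce (i) to a functional-analytic interchange valid for Fr\'echet coefficients, and match the convolution product to composition of differential operators. Two remarks on (ii): the paper does not need a density-and-continuity argument on topological generators; it computes the convolution directly on the basis $\partial^\alpha$ using the explicit comultiplication $\psi$ on $A\widehat{\otimes}_K K\langle dx/p^\infty\rangle$ (the Leibniz coproduct $\psi(1\otimes dx_i)=(1\otimes dx_i)\otimes 1 + 1\otimes(1\otimes dx_i)$) and the bimodule structure, which pins down the normalisation you worry about with no rescaling ambiguity. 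Also, no Koszul computation over the nerve is needed to get the kernel: by effectivity of the groupoid and base change, $p^!p_!1_X\simeq R\underline{\operatorname{Hom}}_A((A\widehat{\otimes}_K A)^\dagger_\Delta,A)$ in one step, and the identification of the germ with $A\widehat{\otimes}_K K\langle dx/p^\infty\rangle$ is the (nontrivial) exponential/Taylor-series theorem, which your sketch presupposes rather than supplies.

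The genuine gap is in the mechanism you propose for (i). You present the Fr\'echet module $M$ as a countable inverse limit of Banach modules and then assert that ``the two limits interchange.'' But the left-hand side of the comparison map involves $M\,\widehat{\otimes}^{\mathbf{L}}$, and a completed tensor product is not a limit; commuting it past $\varprojlim_n$ is exactly the nontrivial nuclearity/flatness statement, not a formal interchange, and dense transition maps alone do not give it (nor do they control the derived $\widehat{\otimes}^{\mathbf{L}}$ versus the underived one). The paper's proof goes the other way around: it uses the Mackey-type fact that a Fr\'echet space in $\mathsf{CBorn}_K$ is an $\aleph_1$-filtered \emph{colimit} of Banach spaces, so the tensor side is handled for free (tensor commutes with colimits), and the only interchange needed is of a \emph{countable limit} past an \emph{$\aleph_1$-filtered colimit}, which holds in any compactly generated category; Mittag--Leffler and the strong flatness of $K\langle p^\infty\partial\rangle$ are then invoked only with Banach coefficients, where they are available. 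Without this (or an equivalent input such as Bode's results on the ``cap tensor product''), your step identifying $M\widehat{\otimes}\varprojlim_n\mathcal{D}^n$ with $\varprojlim_n(M\widehat{\otimes}\mathcal{D}^n)$, and the vanishing of the derived $\varprojlim^1$ on the tensor side, is unproved, and this is precisely where the Fr\'echet hypothesis has to enter.
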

As a Corollary (using Theorem \ref{thm:previousthm} for the second part), we obtain the following:
\begin{cor}
Assume that $X = \operatorname{dSp}(A)$ is classical and \'etale over a polydisk.  
\begin{enumerate}[(i)]
    \item There is an equivalence of $\infty$-categories $\operatorname{RMod}_{\wideparen{\mathcal{D}}_X(X)} \operatorname{Fr}(X) \simeq \operatorname{Mod}_{\mathcal{D}_X^\infty} \operatorname{Fr}(X)$,
    \item There is a fully-faithful functor $\operatorname{RMod}_{\wideparen{\mathcal{D}}_X(X)} \operatorname{Fr}(X) \hookrightarrow \operatorname{QCoh}(X_{\mathrm{str}})$. 
\end{enumerate}
\end{cor}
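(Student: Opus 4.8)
The plan is to deduce the corollary formally from Lemma~\ref{lem:naturaltransformlema}, Theorem~\ref{thm:PNFtensor}, Proposition~\ref{prop:algebraiso} and Theorem~\ref{thm:previousthm}; essentially all the content is contained in those results, and what remains is bookkeeping about monads and their module categories. For \textbf{(i)}, I would start from the morphism of monads~\eqref{eq:intronaturaltransform} of Lemma~\ref{lem:naturaltransformlema}, whose source is the free right-module monad $(-)\widehat{\otimes}_X\mathcal{D}^\infty_X1_X$ for the convolution action of $\operatorname{QCoh}(X\times X)$ on $\operatorname{QCoh}(X)$, so that modules over it are by definition $\operatorname{RMod}_{\mathcal{D}^\infty_X1_X}\operatorname{QCoh}(X)$. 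By Theorem~\ref{thm:PNFtensor}(i) the target monad $\mathcal{D}^\infty_X$ preserves $\operatorname{Fr}(X)$ and~\eqref{eq:intronaturaltransform} becomes an equivalence of underlying endofunctors after restriction to $\operatorname{Fr}(X)$; hence the source endofunctor preserves $\operatorname{Fr}(X)$ as well, and restricting everything produces a morphism of monads on $\operatorname{Fr}(X)$ which is an underlying equivalence. Since the forgetful functor from monads on $\operatorname{Fr}(X)$ (i.e.\ associative algebras in $\operatorname{Fun}(\operatorname{Fr}(X),\operatorname{Fr}(X))$ under composition) to endofunctors is conservative, this is an equivalence of monads, hence induces an equivalence of module $\infty$-categories $\operatorname{RMod}_{\mathcal{D}^\infty_X1_X}\operatorname{Fr}(X)\simeq\operatorname{Mod}_{\mathcal{D}^\infty_X}\operatorname{Fr}(X)$. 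Finally, Proposition~\ref{prop:algebraiso} gives an equivalence $\mathcal{D}^\infty_X1_X\simeq\wideparen{\mathcal{D}}_X(X)$ of algebra objects in $(\operatorname{QCoh}(X\times X),\text{convolution})$, which transports $\operatorname{RMod}_{\mathcal{D}^\infty_X1_X}\operatorname{Fr}(X)$ to $\operatorname{RMod}_{\wideparen{\mathcal{D}}_X(X)}\operatorname{Fr}(X)$, completing (i).

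For \textbf{(ii)}, using (i) together with the equivalence $\operatorname{QCoh}(X_{\mathrm{str}})\simeq\operatorname{Mod}_{\mathcal{D}^\infty_X}\operatorname{QCoh}(X)$ of Theorem~\ref{thm:previousthm}, it suffices to check that the functor $\operatorname{Mod}_{\mathcal{D}^\infty_X}\operatorname{Fr}(X)\to\operatorname{Mod}_{\mathcal{D}^\infty_X}\operatorname{QCoh}(X)$ induced by the fully-faithful inclusion $\operatorname{Fr}(X)\hookrightarrow\operatorname{QCoh}(X)$ (which exists because $\mathcal{D}^\infty_X$ restricts to a monad on $\operatorname{Fr}(X)$) is fully faithful. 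Because $\mathcal{D}^\infty_X$ preserves $\operatorname{Fr}(X)$, this functor identifies $\operatorname{Mod}_{\mathcal{D}^\infty_X}\operatorname{Fr}(X)$ with the full subcategory of $\operatorname{Mod}_{\mathcal{D}^\infty_X}\operatorname{QCoh}(X)$ spanned by modules whose underlying object lies in $\operatorname{Fr}(X)$: for two such modules $M,N$, the bar resolution gives $\operatorname{Map}_{\operatorname{Mod}_{\mathcal{D}^\infty_X}\operatorname{QCoh}(X)}(M,N)\simeq\operatorname{Tot}_{[n]\in\Delta}\operatorname{Map}_{\operatorname{QCoh}(X)}\big((\mathcal{D}^\infty_X)^{n}M,N\big)$ (underlying objects understood), and each term coincides with $\operatorname{Map}_{\operatorname{Fr}(X)}\big((\mathcal{D}^\infty_X)^{n}M,N\big)$ since all objects involved lie in the full subcategory $\operatorname{Fr}(X)$; the same totalisation computes $\operatorname{Map}_{\operatorname{Mod}_{\mathcal{D}^\infty_X}\operatorname{Fr}(X)}(M,N)$, so the two mapping spaces agree. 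Full faithfulness of the inclusion of a full subcategory is then automatic, and composing with (i) and Theorem~\ref{thm:previousthm} yields the desired embedding.

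\textbf{Where the work is.} Granting the four cited results, the corollary is formal. The two points I would be careful about are, in (i), the promotion of~\eqref{eq:intronaturaltransform} from an equivalence of endofunctors to an equivalence of \emph{monads} on $\operatorname{Fr}(X)$ (including the check that the restricted transformation is genuinely a morphism of monads, which follows from the fact that composition of endofunctors preserving $\operatorname{Fr}(X)$ is computed on $\operatorname{Fr}(X)$); and, in (ii), the identification of $\operatorname{Mod}_{\mathcal{D}^\infty_X}\operatorname{Fr}(X)$ with the appropriate full subcategory of $\operatorname{Mod}_{\mathcal{D}^\infty_X}\operatorname{QCoh}(X)$, which rests on the bar-resolution description of mapping spaces together with the stability of $\operatorname{Fr}(X)$ under $\mathcal{D}^\infty_X$ — in particular this avoids having to assume $\operatorname{Fr}(X)$ is closed under the geometric realisations appearing in the bar resolution. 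Neither point is a real obstacle, but both deserve to be spelled out.
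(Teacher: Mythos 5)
Your argument is correct and follows the paper's own route: part (i) is exactly the content of Corollary \ref{cor:Dmodequivalence} (assembling Theorem \ref{thm:PNFtensor} and Proposition \ref{prop:PNFpreserve} into an equivalence of monads on $\operatorname{Fr}(X)$) combined with the algebra identification of Proposition \ref{prop:algebraiso}, and part (ii) is the composite of fully-faithful functors and equivalences recorded in the diagram \eqref{eq:bigdiagram} together with Theorem \ref{thm:StratMonadicity}. Your explicit cobar-resolution justification of the full faithfulness of $\operatorname{Mod}_{\mathcal{D}^\infty_X}\operatorname{Fr}(X)\hookrightarrow\operatorname{Mod}_{\mathcal{D}^\infty_X}\operatorname{QCoh}(X)$ spells out a step the paper leaves implicit, but it is the same argument.
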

For $X = \operatorname{dSp}(A)$ a smooth classical affinoid, we let $D_\mathcal{C}(X)$ denote Bode's category of $\mathcal{C}$-complexes\footnote{To be precise, we mean its $\infty$-categorical enhancement, see Definition \ref{defn:Ccomplexprime}.} on $X$ \cite[\S 8]{bode_six_2021}. The category $D_\mathcal{C}(X)$ is a derived enhancement of the abelian category of coadmissible $\wideparen{\mathcal{D}}_X(X)$-modules. It is easy (c.f. Lemma \ref{lem:Ccomplex}) to see that $D_\mathcal{C}(X)$ is contained in $\operatorname{RMod}_{\wideparen{\mathcal{D}}_X(X)} \operatorname{Fr}(X)$. Hence we obtain the following.
\begin{cor}\label{cor:EmbeddingofCcomplexes}
Assume that $X = \operatorname{dSp}(A)$ is classical and \'etale over a polydisk. Then there is a fully-faithful functor  $D_\mathcal{C}(X) \hookrightarrow \operatorname{QCoh}(X_{\mathrm{str}})$. 
\end{cor}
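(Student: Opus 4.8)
The plan is to obtain the embedding as a composite of two fully-faithful functors: the one of part (ii) of the preceding Corollary, namely $\operatorname{RMod}_{\wideparen{\mathcal{D}}_X(X)}\operatorname{Fr}(X) \hookrightarrow \operatorname{QCoh}(X_{\mathrm{str}})$, precomposed with a fully-faithful functor $D_\mathcal{C}(X) \hookrightarrow \operatorname{RMod}_{\wideparen{\mathcal{D}}_X(X)}\operatorname{Fr}(X)$. Granting Lemma \ref{lem:Ccomplex}, which supplies exactly this second functor, the Corollary is then immediate, since a composite of fully-faithful functors of $\infty$-categories is fully faithful. So the entire substance is in checking Lemma \ref{lem:Ccomplex}, and I will indicate why it holds.

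By Definition \ref{defn:Ccomplexprime} (following Bode \cite[\S 8]{bode_six_2021}) an object of $D_\mathcal{C}(X)$ is a complex of $\wideparen{\mathcal{D}}_X(X)$-modules whose cohomology is coadmissible, and the forgetful functor to the derived $\infty$-category of $\wideparen{\mathcal{D}}_X(X)$-modules is fully faithful essentially by construction. I would first recall that $\wideparen{\mathcal{D}}_X(X)$ is Fr\'echet--Stein, so any coadmissible module is a projective limit, with dense transition maps, of a countable system of Banach modules, hence is canonically a Fr\'echet space; restricting scalars along the structure map $A \to \wideparen{\mathcal{D}}_X(X)$ (modulo the standard left/right module conversion, which on a smooth classical affinoid \'etale over a polydisk is an equivalence and preserves both coadmissibility and the topology) this exhibits the underlying object of $\operatorname{QCoh}(X)$ as one whose cohomology groups are Fr\'echet, i.e.\ as an object of $\operatorname{Fr}(X)$. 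Thus the forgetful functor factors through $\operatorname{RMod}_{\wideparen{\mathcal{D}}_X(X)}\operatorname{Fr}(X)$, and, since $\operatorname{Fr}(X) \subseteq \operatorname{QCoh}(X)$ is a full subcategory, no mapping spaces are changed, so the resulting functor $D_\mathcal{C}(X) \to \operatorname{RMod}_{\wideparen{\mathcal{D}}_X(X)}\operatorname{Fr}(X)$ remains fully faithful.

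The one point requiring genuine care — hence the main obstacle — is the compatibility of topologies: one must verify that the Fr\'echet structure carried by a coadmissible $\wideparen{\mathcal{D}}_X(X)$-module is precisely the one seen by $\operatorname{QCoh}(X)$ and $\operatorname{Fr}(X)$ in the solid/analytic formalism, i.e.\ that regarding a coadmissible module as a solid $A$-module and regarding the associated Fr\'echet space as a solid module give the same object, and that this persists at the level of complexes. A secondary, more bookkeeping, subtlety is making the left-to-right module conversion functorial at the $\infty$-categorical level. Neither should be hard given the apparatus already assembled in the paper, but they are where the proof of Lemma \ref{lem:Ccomplex} — and therefore of the Corollary — actually spends its effort.
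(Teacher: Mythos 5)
Your proposal is correct and follows the paper's route exactly: the Corollary is the composite of the fully-faithful functor $\operatorname{RMod}_{\wideparen{\mathcal{D}}_X(X)}\operatorname{Fr}(X) \hookrightarrow \operatorname{QCoh}(X_{\mathrm{str}})$ with the inclusion supplied by Lemma \ref{lem:Ccomplex}, which the paper deduces from Remark \ref{rmk:iiprime} ($H^j(M^\bullet)\cong\lim_n H^j(M_n^\bullet)$ is a countable inverse limit of Banach spaces, hence Fr\'echet) — the same coadmissibility argument you sketch. The two subtleties you flag are not really present here: the paper works throughout with \emph{right} $\wideparen{\mathcal{D}}_X(X)$-modules (so no left/right conversion is needed), and the functional-analytic setting is $\mathsf{CBorn}_K$ with the fully-faithful embedding $\mathsf{Fr}_K\hookrightarrow\mathsf{CBorn}_K$ of Lemma \ref{lem:frechetAleph1}, not a solid formalism.
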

Recalling that the stack $X_{\mathrm{str}}$ is defined as the quotient of $X$ by the analytic germ of the diagonal, one could say that the above Corollaries give an interpretation of $\wideparen{\mathcal{D}}$-modules in terms of ``$\dagger$-infinitesimal parallel transport". 

In order to globalize from the affinoid case, it is natural to investigate descent for the various categories of $\wideparen{\mathcal{D}}$-modules thus far considered. Our results are as follows; we recall the definition of the Banach-completed differential operators $\mathcal{D}^n_X(X)$ from, for instance \cite[\S2]{bode_six_2021}; these are Noetherian Banach algebras and one has $\wideparen{\mathcal{D}}_X(X) = \operatorname{lim}_n\mathcal{D}^n_X(X)$. 
\begin{thm}[c.f. \S \ref{sec:DescentforDCap}]\label{thm:introdescentthm}
Assume that $X = \operatorname{dSp}(A)$ is classical and \'etale over a polydisk.
    \begin{enumerate}[(i)]
        \item The prestacks 
        \begin{equation}
            \begin{aligned}
            \operatorname{RMod}_{\wideparen{\mathcal{D}}_X(-)} D(\mathsf{CBorn}_K)  && \text{ and } && \operatorname{RMod}_{\mathcal{D}^n_X(-)} D(\mathsf{CBorn}_K) 
            \end{aligned}
        \end{equation}
        are sheaves in the weak topology on $X$.
        \item The prestack $\operatorname{RMod}^{\mathrm{b,fg}}_{\mathcal{D}^n_X(-)} D(\mathsf{CBorn}_K)$ is a sheaf on the site of $p^n$-accessible\footnote{By this  we mean $p^n\mathcal{T}_X$-accessible in the sense of \cite[\S4.5]{Dcap1}.} affinoid subdomains of $X$. Here the superscript $\mathrm{b,fg}$ denotes cohomologically bounded complexes with finitely-generated cohomology.
        \item There is an equivalence of $\infty$-categories
        \begin{equation}
            D_\mathcal{C}(X) \simeq \underset{n}{\operatorname{lim}} \operatorname{RMod}^{\mathrm{b,fg}}_{\mathcal{D}^n_X(X)} D(\mathsf{CBorn}_K), 
        \end{equation}
        where the left-hand-side again denotes Bode's $\mathcal{C}$-complexes \cite[\S8]{bode_six_2021}. 
        \item The prestack $D_\mathcal{C}(-)$ is a sheaf in the weak topology on $X$.  
    \end{enumerate}
\end{thm}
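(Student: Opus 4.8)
The plan is to establish the four assertions in order, with (i) and (ii) supplying a descent mechanism for module categories over sheaves of topological algebras, (iii) identifying Bode's $\mathcal{C}$-complexes with an explicit homotopy limit, and (iv) then following formally. Throughout, the starting point is that the relevant sheaves of algebras — $U \mapsto \mathcal{D}^n_X(U)$ and $U \mapsto \wideparen{\mathcal{D}}_X(U) = \operatorname{lim}_n \mathcal{D}^n_X(U)$ — already satisfy weak-topology descent (Tate acyclicity), by \cite{Dcap1, bode_six_2021}, and that $\wideparen{\mathcal{D}}_X(X)$ is Fr\'echet--Stein with Noetherian Banach pieces $\mathcal{D}^n_X(X)$ and flat transition maps $\mathcal{D}^{n+1}_X(X) \to \mathcal{D}^n_X(X)$.

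For (i) I would reduce to a single Laurent covering $X = X(f) \cup X(1/f)$, since these generate the weak topology and descent along a general weak cover follows by \v{C}ech bootstrapping. Given descent for the algebra, it remains to propagate it to the module category in $D(\mathsf{CBorn}_K)$; this is $\infty$-categorical (Barr--Beck--Lurie) descent for modules over a sheaf of algebras, whose hypothesis is that the \v{C}ech nerve of the cover be of universal descent in $D(\mathsf{CBorn}_K)$, which holds for a Laurent covering since the associated Koszul-type resolution of the unit is finite; alternatively one may invoke the gluing results for $\wideparen{\mathcal{D}}$-modules in \cite{bode_six_2021}. Part (ii) follows the same skeleton but must also preserve the subcategory $\operatorname{RMod}^{\mathrm{b,fg}}$ of cohomologically bounded complexes with finitely generated cohomology. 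Restricting to $p^n$-accessible affinoid subdomains is exactly what makes each restriction map $\mathcal{D}^n_X(X) \to \mathcal{D}^n_X(U)$ flat, by \cite[\S 4.5]{Dcap1}; since $\mathcal{D}^n_X(X)$ and $\mathcal{D}^n_X(U)$ are Noetherian Banach algebras, the restriction functors then preserve boundedness and finite generation of cohomology, and because a weak cover is finite the descent diagram involves only finitely many terms, so no $\operatorname{lim}^1$-type obstructions arise and the limit of bounded f.g. complexes is again bounded with finitely generated cohomology.

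For (iii) I would unwind the definition of $D_\mathcal{C}(X)$ from \cite[\S 8]{bode_six_2021}: a $\mathcal{C}$-complex is, up to the evident equivalences, a compatible system $(M_n)_n$ with $M_n \in \operatorname{RMod}^{\mathrm{b,fg}}_{\mathcal{D}^n_X(X)} D(\mathsf{CBorn}_K)$ together with coherence data identifying $\mathcal{D}^n_X(X) \,\widehat{\otimes}_{\mathcal{D}^{n+1}_X(X)} M_{n+1} \simeq M_n$. The transition functors of the homotopy limit $\operatorname{lim}_n \operatorname{RMod}^{\mathrm{b,fg}}_{\mathcal{D}^n_X(X)} D(\mathsf{CBorn}_K)$ are the derived base-changes along $\mathcal{D}^{n+1}_X(X) \to \mathcal{D}^n_X(X)$, and on this subcategory these agree with the completed tensor product: by flatness of the Fr\'echet--Stein transition maps the derived and underived tensor products coincide, and finitely generated modules over Noetherian Banach algebras are already complete, so the underived tensor product is complete. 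Full faithfulness and essential surjectivity of the resulting comparison functor (given by the Fr\'echet--Stein presentation in one direction and by $\operatorname{lim}_n$ in the other) then reduce to Bode's structural results on $\mathcal{C}$-complexes, which constitute a derived refinement of the Ardakov--Wadsley dictionary between coadmissible $\wideparen{\mathcal{D}}$-modules and their Fr\'echet--Stein presentations.

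Finally, (iv) is formal given (ii) and (iii): by (iii) the prestack $D_\mathcal{C}(-)$ is the limit over $n$ of the prestacks $\operatorname{RMod}^{\mathrm{b,fg}}_{\mathcal{D}^n_X(-)} D(\mathsf{CBorn}_K)$, and a limit of sheaves is a sheaf, so it suffices that each factor be a weak-topology sheaf; by (ii) it is a sheaf for covers by $p^n$-accessible subdomains, and every weak cover of $X$ admits a refinement all of whose members are $p^n$-accessible simultaneously for every $n$ (to be extracted from \cite[\S 4.5]{Dcap1}), so such covers are cofinal among weak covers at each level. The step I expect to be the main obstacle is precisely this accessibility bookkeeping in (ii) and (iv) — arranging that a fixed weak covering is, or can be refined to be, $p^n$-accessible uniformly in $n$ while staying within the class of smooth affinoids on which $D_\mathcal{C}$ is defined — together with the verification in (iii) that Bode's dg-level constructions genuinely assemble into the intended homotopy limit with no residual coherence obstruction.
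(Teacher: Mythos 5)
Your outline tracks the paper's strategy at the level of headings (algebra-level Tate acyclicity $\Rightarrow$ module-category descent; a finiteness argument for $\mathrm{b,fg}$; the Fr\'echet--Stein limit description; formal assembly), but several of the steps you wave through are exactly where the content lies, and two of them would fail as you have stated them. First, in (i) your descent mechanism is the commutative one: ``universal descent in $D(\mathsf{CBorn}_K)$'' witnessed by a finite Koszul-type resolution of the unit is a statement about $\mathcal{O}_X$, whereas $\wideparen{\mathcal{D}}_X$ is noncommutative, so there is no symmetric monoidal category of $\wideparen{\mathcal{D}}_X$-modules in which to run Mathew's argument. The paper has to develop a noncommutative descendability notion (Definition \ref{defn:descendableNC}: $A\in\langle B\rangle$ inside \emph{bimodules}, stable under finite (co)limits, retracts and convolution) together with a Beck--Chevalley condition, whose verification is the nontrivial Lemma \ref{lem:DCaphomotopyepi}: $\wideparen{\mathcal{D}}_X(U)\widehat{\otimes}^{\mathbf{L}}_{\wideparen{\mathcal{D}}_X(X)}\wideparen{\mathcal{D}}_X(V)\simeq\wideparen{\mathcal{D}}_X(U\cap V)$. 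Your ``alternatively invoke Bode's gluing results'' does not rescue this, since those concern coadmissible modules/$\mathcal{C}$-complexes, not all of $\operatorname{RMod}_{\wideparen{\mathcal{D}}_X(-)}D(\mathsf{CBorn}_K)$. Second, in (ii) the claim that ``because a weak cover is finite \dots no $\operatorname{lim}^1$-type obstructions arise'' is false: the descent datum is a cosimplicial object over all of $\Delta$, and even after Dold--Kan reduces $R^p\operatorname{lim}_{\Delta}$ to the finite alternating \v{C}ech complex, the groups $R^p\operatorname{lim}_{\Delta}H^qM_m^\bullet$ for $0<p\leqslant s-1$ need not vanish for formal reasons, and there is no reason they would be finitely generated over $\mathcal{D}^n_X(X)$. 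The paper kills them using the analogue of Kiehl's theorem for finitely generated $\mathcal{D}^n_X$-modules \cite[\S5]{Dcap1}; this same input identifies $H^j$ of the totalization with the equalizer, whence finite generation. Without it your argument does not establish that the glued object is $\mathrm{b,fg}$.

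For (iii), ``reduce to Bode's structural results'' hides the step the paper flags as having been only conjectured in \cite{bode_six_2021}: one must show that for a Cartesian section $(N_n^\bullet)_n$ the object $R\operatorname{lim}_nN_n^\bullet$ is again a $\mathcal{C}$-complex, and the crux is $H^j(R\operatorname{lim}_nN_n^\bullet)\cong\operatorname{lim}_nH^j(N_n^\bullet)$, which requires the vanishing of $R^1\operatorname{lim}$ of the tower $\{H^j(N_n^\bullet)\}_n$. Flatness of the Fr\'echet--Stein transition maps alone does not give this; the paper uses that the tower is pre-nuclear with dense images and invokes Bode's Mittag--Leffler theorem, then uses acyclicity of coadmissible modules for $-\widehat{\otimes}_{\wideparen{\mathcal{D}}_X(X)}\mathcal{D}^m_X(X)$ to see that $R\operatorname{lim}$ and $\phi$ are mutually inverse. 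Finally, in (iv) your accessibility bookkeeping is phrased backwards: no refinement is ``$p^n$-accessible simultaneously for every $n$''; the correct (and simpler) statement is that for a \emph{fixed finite} cover there is $n_0$ with all members $p^n$-accessible for $n\geqslant n_0$, and the limit over $n$ may be restricted to this cofinal range --- no refinement of the cover is needed. You also need, and do not address, the naturality in $U$ of the equivalence in (iii) (i.e.\ that restriction preserves $D_\mathcal{C}$ and commutes with the comparison functors, Lemma \ref{lem:Coadmissiblepullback}), without which ``a limit of sheaves is a sheaf'' does not apply.
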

An important ingredient in the proof of Theorem \ref{thm:introdescentthm} is the noncommutative notion of descendability developed in Appendix \ref{subsec:NCdescent}, which may be of independent interest. A description as in (iii) above was conjectured in the introduction to \cite{bode_six_2021}, and I am grateful to Andreas Bode for helpful discussions about the proof of this. 

Now we investigate whether the embedding of $\mathcal{C}$-complexes from Corollary \ref{cor:EmbeddingofCcomplexes} is compatible with restrictions, so that it can be globalised. This turns out to be true (that is Theorem \ref{thm:CcomplexFFaffine}) but the proof is non-trivial for the following reason: the comparison between $\operatorname{Strat}(X)$ and modules over the monad $\mathcal{D}^\infty_X$ comes from forgetting via an \emph{upper-shriek} functor, but we want to use the \emph{upper-star} restriction functors for $\wideparen{\mathcal{D}}$-modules. In fact, we have to use the ``parametrized monadicity theorem" of Appendix \ref{subsec:BarrBeckFamilies} to manage some of the coherences. Using this compatibility with restrictions we obtain the following.
\begin{thm}[c.f. \S\ref{sec:global}]
Let $X$ be a smooth classical rigid space. For such $X$ we define the value of $D_\mathcal{C}(-)$ by Kan extension\footnote{If you like, you could call this the ``$\infty$-stackification" of Bode's $\mathcal{C}$-complexes.}, see Definition \ref{defn:CComplexstackification}. Then there is a fully-faithful functor 
\begin{equation}\label{eq:CComplexFFintro}
    D_\mathcal{C}(X) \hookrightarrow \operatorname{Strat}(X) = \operatorname{QCoh}(X_{\mathrm{str}}). 
\end{equation}
This induces a fully-faithful functor 
\begin{equation}
    \{ \text{coadmissible }\mathcal{D}_X\text{-modules}\} \hookrightarrow \operatorname{Strat}(X). 
\end{equation}
\end{thm}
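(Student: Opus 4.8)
The plan is to promote the affinoid embedding of Corollary \ref{cor:EmbeddingofCcomplexes} to a morphism of sheaves on a suitable basis, and then to obtain both the construction and the fully-faithfulness of \eqref{eq:CComplexFFintro} by descent. Since $X$ is smooth and classical it has a basis $\mathcal{B}$ of affinoid subdomains each admitting an \'etale map to a polydisk; by Theorem \ref{thm:introdescentthm}(iv) the assignment $U \mapsto D_\mathcal{C}(U)$ is a sheaf on $\mathcal{B}$, and by Definition \ref{defn:CComplexstackification} the category $D_\mathcal{C}(X)$ is its Kan extension, so that $D_\mathcal{C}(X) \simeq \lim_{U} D_\mathcal{C}(U)$ over a cover; on the other side $U \mapsto \operatorname{Strat}(U) = \operatorname{QCoh}(U_\mathrm{str})$ satisfies analytic descent as in \cite{soor_six-functor_2024}, since $X_\mathrm{str}$ is the colimit of the $U_\mathrm{str}$. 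Thus, once I produce a natural transformation $D_\mathcal{C}(-) \to \operatorname{Strat}(-)$ over $\mathcal{B}$ whose components are the functors of Corollary \ref{cor:EmbeddingofCcomplexes}, taking limits over a cover yields \eqref{eq:CComplexFFintro}, and its fully-faithfulness on $X$ follows from fully-faithfulness on each $U$, since the mapping spaces on both sides are computed as limits of the mapping spaces over the cover.

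The real work is the naturality in $U$, and this is where the variance mismatch flagged in the introduction bites. Restriction along an affinoid inclusion $V \hookrightarrow U$ acts on $D_\mathcal{C}$ and on $\operatorname{RMod}_{\wideparen{\mathcal{D}}_U(U)}\operatorname{Fr}(U)$ through the completed base change $\wideparen{\mathcal{D}}_V(V)\,\widehat{\otimes}_{\wideparen{\mathcal{D}}_U(U)} (-)$ (an ``upper-star''), and acts on $\operatorname{Strat}$ through the honest pullback $\bar{j}^*$ along $\bar{j} : V_\mathrm{str} \to U_\mathrm{str}$ (also an ``upper-star'' on that end) --- but the monadic identification $\operatorname{Strat}(U) \simeq \operatorname{Mod}_{\mathcal{D}^\infty_U}\operatorname{QCoh}(U)$ of Theorem \ref{thm:previousthm} has forgetful functor the \emph{upper-shriek} $p_U^!$, and the square relating $p_U^!$, $p_V^!$, $\bar{j}^*$ and the restriction on $\operatorname{QCoh}$ does not commute on the nose. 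To handle the coherences I would use the parametrized monadicity theorem of Appendix \ref{subsec:BarrBeckFamilies}: assemble $U \mapsto \operatorname{QCoh}(U)$ (with upper-star transitions) together with the family of monads $U \mapsto \mathcal{D}^\infty_U$ and right adjoints $p_U^!$ into a single parametrized-monadic datum over $\mathcal{B}$, and identify the resulting total module category with $U \mapsto \operatorname{Strat}(U)$. The hypotheses to check are pointwise monadicity --- which is exactly Theorem \ref{thm:previousthm} --- and a Beck--Chevalley condition saying the monads are compatible with upper-star restriction. On the subcategories $\operatorname{Fr}(U)$ this compatibility can be verified directly: there $\mathcal{D}^\infty_U$ is the completed tensor $(-)\,\widehat{\otimes}_U \mathcal{D}^\infty_U 1_U$ (Theorem \ref{thm:PNFtensor}) with $\mathcal{D}^\infty_U 1_U \simeq \wideparen{\mathcal{D}}_U(U)$ as algebras (Proposition \ref{prop:algebraiso}), so the base-change maps for the monads become the known completed base-change behaviour of $\wideparen{\mathcal{D}}$ under restriction to affinoid subdomains \cite{Dcap1, bode_six_2021}.

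Granting this, the composite of Corollary \ref{cor:EmbeddingofCcomplexes}, namely $D_\mathcal{C}(U) \hookrightarrow \operatorname{RMod}_{\wideparen{\mathcal{D}}_U(U)}\operatorname{Fr}(U) \simeq \operatorname{Mod}_{\mathcal{D}^\infty_U}\operatorname{Fr}(U) \hookrightarrow \operatorname{Mod}_{\mathcal{D}^\infty_U}\operatorname{QCoh}(U) \simeq \operatorname{Strat}(U)$, is built from morphisms of $\mathcal{B}$-families and is therefore natural in $U$, which gives the needed transformation; fully-faithfulness then globalises as above. I expect the main obstacle to be precisely the Beck--Chevalley/coherence verification: one must pin down the interaction of $p^!$ with restriction along affinoid subdomains of $X_\mathrm{str}$ and confirm that the induced base-change maps for $\mathcal{D}^\infty$ are equivalences on $\operatorname{Fr}$, which mixes a functional-analytic input (completedness of the relevant tensor products) with careful bookkeeping of the six operations on stratifying stacks. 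For the last assertion, I would globalise Lemma \ref{lem:Ccomplex}: a coadmissible $\mathcal{D}_X$-module is by definition a compatible system of coadmissible $\wideparen{\mathcal{D}}_U(U)$-modules, hence defines an object of $\lim_U D_\mathcal{C}(U) \simeq D_\mathcal{C}(X)$, and this embedding is fully faithful because it is so locally; composing with \eqref{eq:CComplexFFintro} produces the claimed functor on coadmissible $\mathcal{D}_X$-modules.
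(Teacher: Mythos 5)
Your proposal is correct and follows essentially the same route as the paper: reduce to the basis of affinoids étale over a polydisk via descent and Kan extension, construct the natural transformation $D_\mathcal{C}(-)\to\operatorname{Strat}(-)$ on that basis by resolving the upper-star/upper-shriek variance mismatch with the parametrized monadicity theorem of Appendix \ref{subsec:BarrBeckFamilies} (checking the compatibility of the monads with restriction on Fr\'echet objects via Theorem \ref{thm:PNFtensor} and Proposition \ref{prop:algebraiso}), and then deduce global fully-faithfulness from the limit description of mapping spaces. The only cosmetic difference is in the last assertion, where the paper passes through the $t$-structure on $D_\mathcal{C}$ and identifies its heart with coadmissible modules as a $(2,1)$-limit, but your gluing of compatible local coadmissible modules amounts to the same argument.
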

In \S\ref{sec:essentialimage} we attempt but ultimately fail to find an intrinsic characterization of the essential image of the functor \eqref{eq:CComplexFFintro} in terms of various finiteness conditions arising in the context of six-functor formalisms. To summarise\footnote{We direct the reader there for relevant definitions.} the results of \S\ref{sec:essentialimage} : let $f: X_{\mathrm{str}} \to *$ be the canonical morphism. Then the essential image of \eqref{eq:CComplexFFintro} is not contained in the \emph{dualizable} nor the \emph{$f$-prim} objects. It seems most plausible that the essential image of  \eqref{eq:CComplexFFintro} is contained in the \emph{$f$-suave} objects, though we were unable to prove this. The latter was conjectured in \cite[\S5.2]{Camargo_deRham}.

\paragraph{Acknowledgements.} This work was done while the author was a DPhil student at the University of Oxford funded by an EPSRC scholarship, and formed part of the the author's DPhil thesis. I would like to thank my supervisor Konstantin Ardakov for his interest and encouragement and for helpful mathematical discussions. I would like to thank Andreas Bode, Jack Kelly, Kobi Kremnizer, Ken Lee, and Finn Wiersig for helpful  discussions.
\section{Reminders on quasi-abelian categories}\label{sec:quasiabelian}
In order for this article to make sense, it is necessary to recall certain features of the theory of analytic stacks following \cite{DAnG} and \cite{soor_six-functor_2024}. In these works homological algebra is done in the setting of quasi-abelian categories. We record the relevant constructions in this section, which may be skipped on a first reading. 

A \emph{quasi-abelian category} is an additive category $\mathscr{A}$ which has all kernels and cokernels, such that strict\footnote{Recall that a morphism $f$ is called strict if the natural morphism $\operatorname{coker} \operatorname{ker}f \xrightarrow[]{ }\operatorname{ker}\operatorname{coker}f$ is an isomorphism.} epimorphisms (resp. monomorphisms) are stable under pullbacks (resp. pushouts).

Let us recall the following notions of exactness in quasi-abelian categories. A functor between quasi-abelian categories is called \emph{left exact} (resp. \emph{strongly left exact}) if it preserves the kernels of strict morphisms (resp. all morphisms). Dually, a functor between quasi-abelian categories is called \emph{right exact} (resp. \emph{strongly right exact}) if it preserves the cokernels of strict morphisms (resp. all morphisms). A functor between quasi-abelian categories is called \emph{exact} (resp. \emph{strongly exact}) if it is left and right exact (resp. strongly left exact and strongly right exact). 

An object $P$ in a quasi-abelian category $\mathscr{A}$ is called \emph{projective} if the functor $\operatorname{Hom}(P,-): \mathscr{A} \to \mathsf{Ab}$ takes strict epimorphisms to surjections. We say that $\mathscr{A}$ has \emph{enough projectives}  if for each object $M \in \mathscr{A}$ there exists a projective object $P$ together with a strict epimorphism $P \twoheadrightarrow M$. 
\begin{defn}\cite[Definition 2.1.10]{schneiders_quasi-abelian_1999}
Let $\mathscr{A}$ be a quasi-abelian category. Assume that $\mathscr{A}$ admits (small) coproducts.
\begin{enumerate}[(i)]
    \item A (small) subcategory $\mathscr{P}$ of $\mathscr{A}$ is called \emph{generating} if for each object $M$ of $\mathscr{A}$ there exists a (small) collection $\{P_i\}_{i}$ of objects of $\mathscr{P}$ together with a strict epimorphism $\bigoplus_i P_i \twoheadrightarrow M$.
    \item An object $C \in \mathscr{A}$ is called \emph{small} if $\operatorname{Hom}(C,-): \mathscr{A} \to \mathsf{Ab}$ commutes with (small) coproducts.  
    \item The category $\mathscr{A}$ is called \emph{quasi-elementary} if it is cocomplete and has a (small) generating subcategory $\mathscr{P} \subseteq \mathscr{A}$ of small projective objects. 
\end{enumerate}
\end{defn}
\begin{defn}
Let $\mathcal{S}$ be a subcategory of a quasi-abelian category $\mathscr{A}$. 
\begin{enumerate}[(i)]
    \item Let $\mathcal{I}$ be a filtered category. An object $C \in \mathscr{A}$ is called \emph{$(\mathcal{I},\mathcal{S})$-tiny} if the functor $\operatorname{Hom}(C,-): \mathscr{A} \to \mathsf{Ab}$ commutes with colimits of diagrams in $\operatorname{Fun}_\mathcal{S}(\mathcal{I}, \mathscr{A})$. Here $\operatorname{Fun}(\mathcal{I}, \mathscr{A}) \subseteq \operatorname{Fun}_\mathcal{S}(\mathcal{I}, \mathscr{A})$ denotes the sub-class of those functors which factor through $\mathcal{S}$.
    \item The category $\mathscr{A}$ is called \emph{$(\mathcal{I},\mathcal{S})$-elementary} if $\mathscr{A}$ is generated by a subcategory $\mathscr{P} \subseteq \mathscr{A}$ consisting of $(\mathcal{I},\mathcal{S})$-tiny projective objects. 
        \item An object $C \in \mathscr{A}$ is called \emph{$\mathcal{S}$-tiny} if the functor $\operatorname{Hom}(C,-): \mathscr{A} \to \mathsf{Ab}$ commutes with colimits of diagrams in $\operatorname{Fun}_\mathcal{S}(\mathcal{I},\mathscr{A})$, for any filtered category $\mathcal{I}$. 
    \item The category $\mathscr{A}$ is called \emph{$\mathcal{S}$-elementary} if $\mathscr{A}$ is generated by a subcategory $\mathscr{P} \subseteq \mathscr{A}$ consisting of $\mathcal{S}$-tiny projective objects. 
\end{enumerate}
\end{defn}
In practice we often take $\mathcal{S}$ to be a \emph{wide subcategory} of $\mathscr{A}$, meaning a subcategory which contains all the objects of $\mathscr{A}$. For instance, we may take $(\mathcal{I},\mathcal{S}) = (\mathbf{N}, \mathrm{SplitMon})$, where $\mathrm{SplitMon}$ is the wide subcategory of $\mathscr{A}$ on split monomorphisms, or $\mathcal{S} = \mathrm{AdMon}$,  where $ \mathrm{AdMon}$ is the wide subcategory of $\mathscr{A}$ on strict monomorphisms, or $\mathcal{S} = \mathrm{all} = \mathscr{A}$. In each of these cases we say that $\mathscr{A}$ is \emph{$(\mathbf{N}, \mathrm{SplitMon})$-elementary}, \emph{$\mathrm{AdMon}$-elementary}, and \emph{elementary}, respectively. They are related in the following way: \emph{elementary} implies \emph{$\mathrm{AdMon}$-elementary} implies \emph{quasi-elementary} implies \emph{$(\mathbf{N}, \mathrm{SplitMon})$-elementary} implies \emph{enough projectives}.

We shall use $\operatorname{Ch}(\mathscr{A})$ to denote the category of cochain complexes valued in $\mathscr{A}$. We always use \emph{superscripts} for \emph{cohomological} indexing notation and \emph{subscripts} for \emph{homological} indexing notation. These conventions are of course related by $M^i = M_{-i}$ for $i \in \mathbf{Z}$.
\begin{thm}\cite[Theorem 4.59, Theorem 4.65]{kelly_homotopy_2021}\label{thm:kelly}
\begin{enumerate}[(i)]
    \item Let $\mathscr{A}$ be a quasi-abelian category with enough projectives. Then the \emph{projective model structure} on $\operatorname{Ch}^{\leqslant 0}(\mathscr{A})$ exists. The weak equivalences, fibrations and cofibrations may be described as follows:
    \begin{enumerate}
    \item[(W)] A morphism is a weak equivalence if it is a strict quasi-isomorphism, i.e., its cone is strictly exact. 
    \item[(F)] A morphism is a fibration if the its components are strict epimorphisms in positive degrees. 
    \item[(C)] A morphism is a cofibration if it is a degreewise strict monomorphism with degreewise projective cokernel.
    \end{enumerate}
    Further, this is a simplicial model structure. 
    \item Assume that $\mathscr{A}$ is a $(\mathbf{N},\mathrm{SplitMon})$-elementary quasi-abelian category. Then the \emph{projective model structure} on $\operatorname{Ch}(\mathscr{A})$ exists. The weak equivalences and fibrations may be described as follows:
    \begin{enumerate}
    \item[(W)] A morphism is a weak equivalence if it is a strict quasi-isomorphism, i.e., its cone is strictly exact. 
    \item[(F)] A morphism is a fibration if it is a degreewise strict epimorphism.
    \end{enumerate}
    Further, this is a stable and simplicial model structure. 
\end{enumerate}
\end{thm}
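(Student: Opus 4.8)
The plan is to realise both model structures as cofibrantly generated model structures obtained from Quillen's small object argument together with Kan's recognition criterion, taking the generating (acyclic) cofibrations to be built out of the projective objects of $\mathscr{A}$. Concretely, for a projective object $P \in \mathscr{A}$ write $D^n(P)$ for the ``disk'' complex having $P$ in two adjacent degrees connected by the identity, and $S^n(P)$ for the ``sphere'' complex having $P$ in a single degree; one takes the generating cofibrations $I$ to be the sphere-to-disk inclusions $S^{n-1}(P) \hookrightarrow D^n(P)$, together with the maps $0 \to S^0(P)$ to account for degree $0$, and the generating acyclic cofibrations $J$ to be the maps $0 \to D^n(P)$, with $P$ ranging over a set of projective generators and $n$ over the appropriate degree range.

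For part (i): the category $\operatorname{Ch}^{\leqslant 0}(\mathscr{A})$ has finite limits and colimits since $\mathscr{A}$ is quasi-abelian, and the transfinite colimits needed to run the small object argument exist because, in the bounded-above range, successive cell attachments only affect finitely many degrees at a time. One then checks: (a) $I$-cell complexes are degreewise strict monomorphisms with degreewise projective cokernel, using that strict monomorphisms in a quasi-abelian category are stable under pushout and transfinite composition, and likewise for $J$-cell complexes; (b) a morphism has the right lifting property against $J$ precisely when it is a degreewise strict epimorphism in positive (homological) degrees, and against $I$ precisely when it is moreover a strict quasi-isomorphism; (c) $J$-cell complexes are strict quasi-isomorphisms, by an explicit contraction of the disk complexes. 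Kan recognition then assembles the model structure, and the simplicial enrichment is the standard cotensor of chain complexes over simplicial sets via normalised chains; the pushout-product axiom follows from projectivity of the generators. Alternatively, one could transport Quillen's model structure on simplicial objects of a category with enough projectives across the Dold--Kan equivalence.

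For part (ii): the complexes are now unbounded, so the colimits appearing in the small object argument are genuinely $\mathbf{N}$-indexed transfinite composites of the split monomorphisms arising from cell attachments. The $(\mathbf{N},\mathrm{SplitMon})$-elementary hypothesis is precisely what makes the argument go through: $\operatorname{Hom}(P,-)$ commutes with such colimits for $P$ among the generators, which both validates the smallness hypothesis in the small object argument and, since a strict quasi-isomorphism can be detected by applying $\operatorname{Hom}(P,-)$ for $P$ ranging over the generators, lets one verify that $J$-cell complexes remain strict quasi-isomorphisms in the unbounded range. Stability follows formally: the shift functor $[1]$ on $\operatorname{Ch}(\mathscr{A})$ is an equivalence of categories that is simultaneously left and right Quillen, hence a Quillen equivalence, so suspension is invertible on the homotopy category; together with pointedness this yields a stable model structure. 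Simpliciality is established as in part (i).

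The step I expect to be the main obstacle is verifying that relative $J$-cell complexes, and more generally cofibrations between cofibrant objects, are strict quasi-isomorphisms in the \emph{unbounded} setting of part (ii). This is the quasi-abelian analogue of the familiar fact that an arbitrary unbounded complex of projectives need not be $K$-projective: without a finiteness hypothesis the relevant colimits need not preserve strict exactness, and it is exactly the $(\mathbf{N},\mathrm{SplitMon})$-tininess of the projective generators that rescues the argument. Keeping the bookkeeping of \emph{strictness} straight throughout — strict monomorphisms, strict epimorphisms and strict exactness are not all preserved by the colimit operations one wants to use — is where the care is needed, and is where working with a merely quasi-abelian, rather than abelian, category genuinely bites.
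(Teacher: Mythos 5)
The paper itself offers no proof here: the theorem is quoted directly from \cite{kelly_homotopy_2021} (Theorems 4.59 and 4.65). Measured against that source, your strategy for part (ii) is essentially the right one and essentially Kelly's: sphere-to-disk generating (acyclic) cofibrations on projective generators, Kan's recognition theorem, detection of strict exactness by applying $\operatorname{Hom}(P,-)$ for $P$ in the generating family, and the observation that relative cell complexes are $\omega$-indexed composites of degreewise \emph{split} monomorphisms (a strict monomorphism with projective cokernel splits degreewise), so that $(\mathbf{N},\mathrm{SplitMon})$-tininess is exactly what guarantees both the smallness needed for the small object argument and the preservation of strict quasi-isomorphisms along these composites. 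Your diagnosis of the unbounded-case obstruction is also correct, and it is why the statement of (ii) describes only (W) and (F): cofibrant objects there are not simply the degreewise projective complexes.

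The one genuine gap is in part (i). The hypotheses there are only that $\mathscr{A}$ is quasi-abelian with enough projectives --- no cocompleteness and no smallness of the projective objects --- so the small object argument cannot be run at all, and the remark that cell attachments ``only affect finitely many degrees at a time'' is not a substitute for smallness of the domains $S^{n-1}(P)$ (nor for the existence of the transfinite composites). In the bounded range the factorizations must instead be constructed directly, degree by degree, exactly as one builds projective resolutions; only finite (co)limits and enough projectives are used at each stage, which is precisely what the hypotheses provide. (Equivalently, one transports Quillen's model structure on simplicial objects across Dold--Kan --- the alternative you mention in passing; either route repairs the argument.) With that substitution, the remaining steps of your outline --- identification of the $I$- and $J$-injectives, contractibility of the disks, stability via the shift functor being a Quillen autoequivalence modelling the suspension, and simpliciality via normalised chains and the pushout-product axiom checked on generators --- go through as you describe.
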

This permits us to make the following definition.
\begin{defn}
Let $\mathscr{A}$ be a $(\mathbf{N},\mathrm{SplitMon})$-elementary quasi-abelian category. The \emph{derived $\infty$-category} of $\mathscr{A}$ is defined to be the underlying $\infty$-category of the simplicial model category $\operatorname{Ch}(\mathscr{A})$. That is, it is the $\infty$-categorical localization
\begin{equation}
    D(\mathscr{A}) := N(\operatorname{Ch}(\mathscr{A}))[W^{-1}].
\end{equation}
This is a stable $\infty$-category.
\end{defn}
We recall \cite[\S 1.2.2]{schneiders_quasi-abelian_1999} that $D(\mathscr{A})$ is equipped with two canonical $t$-structures. Of these, it is conventional to prefer the \emph{left $t$-structure} which may be described as follows: $D^{\leqslant 0}(\mathscr{A})$ (resp. $D^{\geqslant 0}(\mathscr{A})$) is the sub-$\infty$-category on complexes which are strictly exact in positive (resp. negative) degrees. The heart of this $t$-structure is called the \emph{left heart} of $\mathscr{A}$ \cite[\S 1.2.3]{schneiders_quasi-abelian_1999} and denoted by $LH(\mathscr{A})$. We obtain cohomology functors 
\begin{equation}
    H^i :D(\mathscr{A}) \to LH(\mathscr{A})
\end{equation}
for each $i \in \mathbf{Z}$. The left heart $LH(\mathscr{A})$ may be explicitly described as the full subcategory of\footnote{Here $K(\mathscr{A})$ is the category with $\operatorname{Ob}(K(\mathscr{A})) = \operatorname{Ob}(\operatorname{Ch}(\mathscr{A}))$ and whose morphisms are chain-homotopy classes of morphisms in $\operatorname{Ch}(\mathscr{A})$.} $K(\mathscr{A})$ on two-term complexes
\begin{equation}
 0\to    M^{-1} \xrightarrow[]{d} M^0 \to 0
\end{equation}
with $d$ a monomorphism, \emph{localized} at the class of morphisms $f: [M^{-1} \to M^0] \to [N^{-1} \to N^0]$ such that
\begin{equation}
\begin{tikzcd}
	{M^{-1}} & {M^0} \\
	{N^{-1}} & {N^0}
	\arrow[from=1-1, to=1-2]
	\arrow["{f^{(-1)}}"', from=1-1, to=2-1]
	\arrow["{f^0}", from=1-2, to=2-2]
	\arrow[from=2-1, to=2-2]
\end{tikzcd}
\end{equation}
is a Cartesian and coCartesian square in $\mathscr{A}$. With respect to this description, the canonical functor $I: \mathscr{A} \to LH(\mathscr{A})$ is induced by the functor given by 
    \begin{equation}
        M \mapsto [0 \to M].
    \end{equation}
    This is fully-faithful and admits a left adjoint $C: LH(\mathscr{A}) \to \mathscr{A}$ which is induced by the functor
    \begin{equation}
     [M^{-1} \xrightarrow[]{d} M^0] \mapsto \operatorname{coker}d.
    \end{equation}
    on two-term complexes, so that $\mathscr{A}$ is a reflective subcategory of $LH(\mathscr{A})$. We can also describe the cohomology functor $H^i : D(\mathscr{A}) \to LH(\mathscr{A})$. It is given on objects by 
    \begin{equation}
       H^i: M^\bullet \mapsto [\operatorname{coker} \operatorname{ker} d^{i-1} \to \operatorname{ker} d^i].
    \end{equation}
    In particular, a complex $M^\bullet \in D(\mathscr{A})$ is strict\footnote{Meaning that its differentials are strict morphisms.} if and only if the cohomology objects $H^iM^\bullet \in LH(\mathscr{A})$ factor through the essential image of $\mathscr{A}$. We also see that a complex $M^\bullet$ is strictly exact if and only if $H^iM^\bullet = 0$ for all $i \in \mathbf{Z}$.
\subsection{Properties of the derived category}
Let us first recall the following basic definitions from \cite[Definition 5.5.7.1]{HigherAlgebra}. 
\begin{defn}
\begin{enumerate}[(i)]
    \item Let $\mathscr{C}$ be an $\infty$-category admitting filtered colimits. An object $C \in \mathscr{C}$ is called \emph{compact} if $\operatorname{Hom}(C,-)$ commutes with filtered colimits. We let $\mathscr{C}^\omega \subseteq \mathscr{C}$ denote the full subcategory spanned by the compact objects.
    \item An $\infty$-category $\mathscr{C}$ is called \emph{compactly generated} if there exists a small $\infty$-category $\mathscr{C}_0$ admitting finite colimits, and an equivalence $\operatorname{Ind}(\mathscr{C}_0) \simeq \mathscr{C}$.   
\end{enumerate}
\end{defn}
And here are some well-known properties of compactly generated $\infty$-categories. For proofs of the assertions (ii) and (iii) below, we direct the reader to \cite[\S2.2]{soor_six-functor_2024}, although it is certain that these facts are proved elsewhere. 
\begin{prop}\label{prop:compactgenprops} Let $\mathscr{C}$ be a compactly generated $\infty$-category. 
\begin{enumerate}[(i)]
    \item \cite[\S5.5.7]{HigherToposTheory} The full subcategory $\mathscr{C}^\omega \subseteq \mathscr{C}$ is essentially small, admits finite colimits, and the inclusion induces an equivalence $\operatorname{Ind}(\mathscr{C}^\omega) \xrightarrow[]{\sim} \mathscr{C}$. 
    \item For any regular cardinal $\kappa$, one has that $\kappa$-filtered colimits commute with $\kappa$-small limits in $\mathscr{C}$.
    \item Let $F: \mathscr{C} \leftrightarrows \mathscr{D}:G$ be an adjunction between compactly generated $\infty$-categories. Then the right adjoint $G$ preserves filtered colimits if and only if the left adjoint $F$ preserves compact objects. 
\end{enumerate}
\end{prop}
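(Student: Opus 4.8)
\emph{Proof sketch.} The plan is to take (i) as the foundational statement --- it is essentially \cite[\S5.5.7]{HigherToposTheory} --- and then bootstrap (ii) and (iii) from it together with the corresponding facts for presheaf $\infty$-categories. For (i), I would fix an equivalence $\mathscr{C} \simeq \operatorname{Ind}(\mathscr{C}_0)$ with $\mathscr{C}_0$ small and admitting finite colimits (so that $\mathscr{C}$ is in particular cocomplete), and work inside the resulting fully faithful embedding $\mathscr{C} \hookrightarrow \mathcal{P}(\mathscr{C}_0)$. Recall that every representable $j(c)$, $c \in \mathscr{C}_0$, is compact --- the functor it corepresents is evaluation at $c$, and filtered colimits in $\mathcal{P}(\mathscr{C}_0)$ are computed objectwise --- and, conversely, every compact object of $\operatorname{Ind}(\mathscr{C}_0)$ is a retract of some $j(c)$. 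Hence $\mathscr{C}^\omega$ is the closure of the image of $\mathscr{C}_0$ under retracts, i.e.\ its idempotent completion; idempotent completions of essentially small $\infty$-categories are essentially small, so $\mathscr{C}^\omega$ is essentially small. It is closed under finite colimits in $\mathscr{C}$ because finite colimits commute with filtered colimits, whence a finite colimit of compact objects is compact. Finally, since $\operatorname{Ind}(-)$ already performs idempotent completion, the inclusion $\mathscr{C}_0 \hookrightarrow \mathscr{C}^\omega$ induces equivalences $\operatorname{Ind}(\mathscr{C}^\omega) \simeq \operatorname{Ind}(\mathscr{C}_0) \simeq \mathscr{C}$.

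For (ii), using (i) I would assume $\mathscr{C} = \operatorname{Ind}(\mathscr{C}^\omega)$, which sits inside $\mathcal{P}(\mathscr{C}^\omega) = \operatorname{Fun}((\mathscr{C}^\omega)^{\mathrm{op}}, \mathsf{Spc})$ (the $\infty$-category of spaces) as a full subcategory closed under filtered colimits --- hence under $\kappa$-filtered colimits for every regular cardinal $\kappa$ --- while limits in $\mathscr{C}$ are computed as in $\mathcal{P}(\mathscr{C}^\omega)$. Both limits and colimits in $\mathcal{P}(\mathscr{C}^\omega)$ are computed objectwise, so the assertion reduces to the statement that $\kappa$-filtered colimits commute with $\kappa$-small limits in $\mathsf{Spc}$, a variant of \cite[Proposition 5.3.3.3]{HigherToposTheory}.

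For (iii), suppose first that $F$ preserves compact objects, and let $(D_i)_{i \in \mathcal{I}}$ be a filtered diagram in $\mathscr{D}$ (which is cocomplete, being compactly generated). There is a canonical map $\operatorname{colim}_i G(D_i) \to G(\operatorname{colim}_i D_i)$, and to prove it an equivalence I would test against $\operatorname{Map}_{\mathscr{C}}(C,-)$ for $C \in \mathscr{C}^\omega$; this detects equivalences because, by (i), the functor $X \mapsto \operatorname{Map}_{\mathscr{C}}(-,X)|_{\mathscr{C}^\omega}$ embeds $\mathscr{C} \simeq \operatorname{Ind}(\mathscr{C}^\omega)$ fully faithfully into $\operatorname{Fun}((\mathscr{C}^\omega)^{\mathrm{op}}, \mathsf{Spc})$. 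The chain
\begin{align*}
\operatorname{Map}_{\mathscr{C}}(C, \operatorname{colim}_i G(D_i)) &\simeq \operatorname{colim}_i \operatorname{Map}_{\mathscr{C}}(C, G(D_i)) \simeq \operatorname{colim}_i \operatorname{Map}_{\mathscr{D}}(FC, D_i) \\
&\simeq \operatorname{Map}_{\mathscr{D}}(FC, \operatorname{colim}_i D_i) \simeq \operatorname{Map}_{\mathscr{C}}(C, G(\operatorname{colim}_i D_i))
\end{align*}
uses compactness of $C$, the adjunction, compactness of $FC$, and the adjunction again. Conversely, if $G$ preserves filtered colimits and $C \in \mathscr{C}^\omega$, then for any filtered $(D_i)$ the analogous chain
\begin{align*}
\operatorname{Map}_{\mathscr{D}}(FC, \operatorname{colim}_i D_i) &\simeq \operatorname{Map}_{\mathscr{C}}(C, G(\operatorname{colim}_i D_i)) \simeq \operatorname{Map}_{\mathscr{C}}(C, \operatorname{colim}_i G(D_i)) \\
&\simeq \operatorname{colim}_i \operatorname{Map}_{\mathscr{C}}(C, G(D_i)) \simeq \operatorname{colim}_i \operatorname{Map}_{\mathscr{D}}(FC, D_i)
\end{align*}
shows that $FC$ is compact.

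I do not expect a serious obstacle here: the delicate points are all imported structural facts from \cite{HigherToposTheory} --- that the compact objects of an Ind-category are exactly the retracts of the generators, and that $\operatorname{Ind}$ interacts well with idempotent completion --- together with the observation used twice in (iii) that compact objects jointly detect equivalences, which is itself just the full faithfulness of $\mathscr{C} \simeq \operatorname{Ind}(\mathscr{C}^\omega) \hookrightarrow \mathcal{P}(\mathscr{C}^\omega)$ from (i).
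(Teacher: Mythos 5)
The paper does not actually prove this proposition: part (i) is cited directly to \cite[\S5.5.7]{HigherToposTheory}, and for (ii) and (iii) the reader is referred to \cite[\S2.2]{soor_six-functor_2024}, so there is no in-text proof to compare your argument against. That said, your proposal is a correct and essentially self-contained proof of these standard facts, and it follows the route one would expect. For (i) you correctly invoke that compact objects of $\operatorname{Ind}(\mathscr{C}_0)$ are precisely retracts of representables and that $\operatorname{Ind}$ is insensitive to idempotent completion. For (ii) the reduction is sound: $\operatorname{Ind}(\mathscr{C}^\omega) \subseteq \mathcal{P}(\mathscr{C}^\omega)$ is closed under filtered colimits and under all limits (its objects are the left-exact presheaves, since $\mathscr{C}^\omega$ admits finite colimits), so both sides of the exchange map are computed objectwise; the only point deserving a more careful citation is the claim that $\kappa$-filtered colimits commute with $\kappa$-small limits in $\infty\mathsf{Grpd}$ for arbitrary regular $\kappa$ --- \cite[Proposition 5.3.3.3]{HigherToposTheory} only treats $\kappa = \omega$, and the general case, while true and standard, should be flagged as such rather than folded into ``a variant of''. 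Part (iii) is the usual Yoneda-against-compacts argument and is correct as written; the only implicit input is that the restricted Yoneda embedding $\mathscr{C} \to \operatorname{Fun}((\mathscr{C}^\omega)^{\mathsf{op}}, \infty\mathsf{Grpd})$ is conservative, which you rightly extract from the full faithfulness in (i).
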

Given a small $\infty$-category $\mathscr{D}_0$ admitting finite coproducts, we denote by 
\begin{equation}
 \operatorname{sInd}(\mathscr{D}_0) = \operatorname{Fun}^\Pi(\mathscr{D}_0^\mathsf{op}, \infty\mathsf{Grpd})  
\end{equation} 
its free sifted cocompletion \cite[\S 5.5.8]{HigherToposTheory}. For proofs of the below statements we direct the reader to \cite[\S2.2]{soor_six-functor_2024}.
\begin{prop}\label{prop:Univprops1}
Let $\mathscr{A}$ be a quasi-elementary quasi-abelian category. Fix a (small) generating set of small projective objects $\mathscr{P} \subseteq \mathscr{A}$. Assume that $\mathscr{P}$ is closed under finite products in $\mathscr{A}$.
\begin{enumerate}[(i)]
    \item There is an equivalence of $\infty$-categories $D^{\leqslant 0}(\mathscr{A}) \simeq \operatorname{sInd}(N(\mathscr{P}))$. 
    \item Let $\mathscr{D}$ be any cocomplete $\infty$-category. Precomposition with $N(\mathscr{P}) \to D^{\leqslant 0}(\mathscr{A})$ induces an equivalence of $\infty$-categories $\operatorname{Fun}^L(D^{\leqslant 0}(\mathscr{A}), \mathscr{D}) \xrightarrow[]{\sim} \operatorname{Fun}^\amalg(N(\mathscr{P}), \mathscr{D})$. 
    \item The $\infty$-category $D^{\leqslant 0}(\mathscr{A})$ is compactly generated. The compact objects in $D^{\leqslant 0}(\mathscr{A})$ admit the following description: Let $j :N(\mathscr{P}) \hookrightarrow D(\mathscr{A})$ be the inclusion. Then an object $C$ of $D^{\leqslant 0}(\mathscr{A})$ is compact if and only if there exists a finite diagram $p: K \to \mathscr{P}$ such that $C$ is a retract of $\operatorname{colim} j \circ p$. 
    \item The $t$-structure on $D(\mathscr{A})$ is both left and right complete. In particular (by right completeness) there is a $t$-exact equivalence of $\infty$-categories $D(\mathscr{A}) \simeq \operatorname{Sp}(D^{\leqslant 0}(\mathscr{A}))$, where the latter denotes the $\infty$-category of spectrum objects.
    \item Let $\mathscr{D}$ be any stable presentable $\infty$-category. Precomposition with $N(\mathscr{P}) \to D(\mathscr{A})$ induces an equivalence of $\infty$-categories $\operatorname{Fun}^L(D(\mathscr{A}), \mathscr{D}) \xrightarrow[]{\sim} \operatorname{Fun}^\amalg(N(\mathscr{P}), \mathscr{D})$.
    \item Let $\mathscr{D}$ be any stable presentable $\infty$-category with $t$-structure $(\mathscr{D}^{\leqslant 0}, \mathscr{D}^{\geqslant 0})$. Precomposition with $N(\mathscr{P}) \to D(\mathscr{A})$ induces an equivalence of $\infty$-categories $\operatorname{Fun}^\prime(D(\mathscr{A}), \mathscr{D}) \xrightarrow[]{\sim} \operatorname{Fun}^\amalg(N(\mathscr{P}), \mathscr{D})$. Here $\operatorname{Fun}^\prime$ denotes the colimit-preserving and right $t$-exact functors. 
    \item The $\infty$-category $D(\mathscr{A})$ is compactly generated. The compact objects in $D(\mathscr{A})$ admit the following description: Let $j: N(\mathscr{P}) \to D(\mathscr{A})$ be the inclusion. An object $C$ of $D(\mathscr{A})$ is compact if and only if there exists $n \geqslant 0$ and a finite diagram $p: K \to \mathscr{P}$ such that $C$ is a retract of $\operatorname{colim} \Omega^n j \circ p$. Here $\Omega = [-1]$ denotes\footnote{Recall that our default is \emph{cohomological} indexing notation.} the loops functor on $D(\mathscr{A})$. 
\end{enumerate}
\end{prop}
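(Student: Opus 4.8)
The substance of the Proposition is part~(i); the remaining parts follow from it essentially formally, using standard facts about $\operatorname{sInd}$ (= Lurie's $\mathcal{P}_\Sigma$, c.f.\ \cite[\S5.5.8]{HigherToposTheory}) and about stabilization. By Theorem~\ref{thm:kelly}(i), $D^{\leqslant 0}(\mathscr{A})$ is the underlying $\infty$-category of the combinatorial simplicial model category $\operatorname{Ch}^{\leqslant 0}(\mathscr{A})$ with its projective model structure, so it is presentable, admits all small colimits, and the canonical functor $N(\mathscr{P}) \to D^{\leqslant 0}(\mathscr{A})$ sends $P$ to the complex concentrated in degree~$0$. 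The plan for~(i) is to verify the hypotheses of Lurie's recognition theorem for $\mathcal{P}_\Sigma$: (a) every object of $\mathscr{P}$ is compact projective in $D^{\leqslant 0}(\mathscr{A})$; (b) $\mathscr{P}$ is closed under finite coproducts in $D^{\leqslant 0}(\mathscr{A})$; and (c) $\mathscr{P}$ generates $D^{\leqslant 0}(\mathscr{A})$ under sifted colimits. Granting (a)--(c) one gets $D^{\leqslant 0}(\mathscr{A}) \simeq \mathcal{P}_\Sigma(N(\mathscr{P})) = \operatorname{sInd}(N(\mathscr{P}))$, which is~(i), and then~(ii) is precisely the universal property of $\operatorname{sInd}$ applied to a cocomplete target $\mathscr{D}$.

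Of these, (b) is immediate, since the composite $\mathscr{A} \hookrightarrow LH(\mathscr{A}) \hookrightarrow D^{\leqslant 0}(\mathscr{A})$ preserves finite coproducts (= finite biproducts) and $\mathscr{P}$ is closed under finite products in $\mathscr{A}$ by hypothesis. The main work — and the step I expect to be the principal obstacle — is~(a). The key input is a concrete description of mapping spaces out of $P \in \mathscr{P}$: because all objects of $\operatorname{Ch}^{\leqslant 0}(\mathscr{A})$ are fibrant (fibrations being strict epimorphisms in \emph{positive} degrees, of which there are none) while the degree-$0$ complex on the projective object $P$ is cofibrant (criterion (C) of Theorem~\ref{thm:kelly}(i)), and because $\operatorname{Hom}_{\mathscr{A}}(P,-)$ is exact and hence commutes with the simplicial enrichment, one finds $\pi_n \operatorname{Map}_{D^{\leqslant 0}(\mathscr{A})}(P, M^\bullet) \cong H^{-n}\operatorname{Hom}_{\mathscr{A}}(P, M^\bullet)$, the right-hand side being the naive $\operatorname{Hom}$-complex. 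Now $\operatorname{Hom}_{\mathscr{A}}(P,-)$ is exact (projectivity of $P$) and commutes with small coproducts (smallness of $P$); combined with the fact that homology of complexes of abelian groups commutes with filtered colimits and with geometric realizations, this shows $\operatorname{Map}(P,-)$ commutes with filtered and sifted colimits, i.e.\ $P$ is compact projective. For~(c), every $M^\bullet \in D^{\leqslant 0}(\mathscr{A})$ admits a ($\mathscr{P}$-cellular) cofibrant replacement which is a chain complex of objects of $\mathscr{P}$; the Dold--Kan correspondence in the form valid for quasi-abelian categories (c.f.\ \cite{kelly_homotopy_2021}) then rewrites this complex as the geometric realization $|P_\bullet|$ of a simplicial object whose terms $P_n$ are finite biproducts of objects of $\mathscr{P}$, hence lie in $\mathscr{P}$ by~(b); so $M^\bullet$ is a geometric realization of an $N(\mathscr{P})$-valued diagram.

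Part~(iii) is then the general description of the compact objects of $\mathcal{P}_\Sigma(\mathscr{D}_0)$ as the retracts of finite colimits of objects of $\mathscr{D}_0$ \cite[\S5.3.5, \S5.5.8]{HigherToposTheory}; under the equivalence of~(i) this says that a compact object of $D^{\leqslant 0}(\mathscr{A})$ is a retract of $\operatorname{colim}(j \circ p)$ for some finite $p \colon K \to \mathscr{P}$, and in particular $D^{\leqslant 0}(\mathscr{A})$ is compactly generated. For~(iv): with the left $t$-structure as recalled above, $D^{\geqslant n}(\mathscr{A})$ consists of complexes strictly exact below degree $n$, so $\bigcap_n D^{\geqslant n}(\mathscr{A}) = 0$; since truncations are computed degreewise on chain complexes and $D^{\leqslant 0}(\mathscr{A})$ is compactly generated by objects lying in the heart, the $t$-structure is accessible and compatible with filtered colimits, hence right complete, and left completeness follows from the same degreewise description. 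Right completeness then yields the $t$-exact equivalence $D(\mathscr{A}) \simeq \operatorname{Sp}(D^{\leqslant 0}(\mathscr{A}))$ \cite{HigherAlgebra}.

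Finally (v)--(vii) are formal. For~(v): since $\mathscr{D}$ is stable, any colimit-preserving functor from a presentable $\infty$-category to $\mathscr{D}$ factors uniquely (compatibly with colimits) through the stabilization, so precomposition with $D^{\leqslant 0}(\mathscr{A}) \to \operatorname{Sp}(D^{\leqslant 0}(\mathscr{A})) \simeq D(\mathscr{A})$ induces an equivalence $\operatorname{Fun}^L(D(\mathscr{A}), \mathscr{D}) \xrightarrow{\sim} \operatorname{Fun}^L(D^{\leqslant 0}(\mathscr{A}), \mathscr{D})$, which~(ii) identifies with $\operatorname{Fun}^\amalg(N(\mathscr{P}), \mathscr{D})$. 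For~(vi) one runs the same argument with the $t$-structures in view: the essential image of $N(\mathscr{P}) \to D(\mathscr{A})$ lies in the heart, and since $D^{\leqslant 0}(\mathscr{A})$ is generated under colimits by $\mathscr{P}$ while $\mathscr{D}^{\leqslant 0}$ is closed under colimits in $\mathscr{D}$, a colimit-preserving functor on $D(\mathscr{A})$ is right $t$-exact exactly when its restriction along $N(\mathscr{P}) \to D(\mathscr{A})$ takes values in $\mathscr{D}^{\leqslant 0}$; then~(ii) applies. For~(vii): the stabilization of a compactly generated $\infty$-category is compactly generated, its compact objects being the retracts of $\Omega^n$ ($n \geqslant 0$) of the compact objects of the source; feeding in~(iii) and using $\Omega^n\operatorname{colim} \simeq \operatorname{colim}\Omega^n$ on the stable category $D(\mathscr{A})$ gives the stated description. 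Throughout, the one genuinely non-formal point is~(a) — controlling mapping spaces in $D^{\leqslant 0}(\mathscr{A})$ well enough to recognise the objects of $\mathscr{P}$ as compact projective — together with the Dold--Kan input over a quasi-abelian category used in~(c).
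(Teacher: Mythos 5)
The paper does not actually prove this Proposition: it is recalled from \cite[\S2.2]{soor_six-functor_2024} and the reader is directed there for the argument, so there is nothing in this document to compare your proof against line by line. That said, your proposal is the standard and essentially correct route (Lurie's recognition principle for $\mathcal{P}_\Sigma$, Dold--Kan, and the universal property of stabilization), and it is surely close to what the reference does. One imprecision worth fixing in step~(c): since ``generating'' only provides strict epimorphisms from possibly \emph{infinite} coproducts $\bigoplus_i P_i$, the terms of the $\mathscr{P}$-cellular resolution are infinite coproducts of objects of $\mathscr{P}$ and need not lie in $\mathscr{P}$; this is harmless because an infinite coproduct is a filtered colimit of finite sub-coproducts (which do lie in $\mathscr{P}$ by the closure hypothesis), so the object is still exhibited as a sifted colimit of an $N(\mathscr{P})$-valued diagram, but as written your claim that the simplicial terms ``lie in $\mathscr{P}$'' is too strong. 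Also note that in~(vi) your argument in fact identifies $\operatorname{Fun}^\prime(D(\mathscr{A}),\mathscr{D})$ with $\operatorname{Fun}^\amalg(N(\mathscr{P}),\mathscr{D}^{\leqslant 0})$ rather than $\operatorname{Fun}^\amalg(N(\mathscr{P}),\mathscr{D})$; comparing with the monoidal version stated later in the paper, this appears to be the intended statement, so your reading is the right one.
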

Because the compact objects are stable under finite colimits and retracts, one may also describe $D^{\leqslant 0}(\mathscr{A})^\omega$ as the full subcategory of $D^{\leqslant 0}(\mathscr{A})$ generated under cones, suspensions and retracts by the essential image of $N(\mathscr{P})$. Similarly, we may describe $D(\mathscr{A})^\omega$ as the full subcategory of $D(\mathscr{A})$ generated under cones, shifts and retracts by the essential image of $N(\mathscr{P})$. 

\subsection{Monoidal structure} Now we consider the situation when $\mathscr{A}$ is endowed with a monoidal structure. In the remainder of this section we make the following \emph{assumptions}:
\begin{itemize}
    \item[$\star$] $\mathscr{A}$ is endowed with a closed monoidal structure $(\mathscr{A}, \otimes, \underline{\operatorname{Hom}})$;
    \item[$\star$] The monoidal structure on $\mathscr{A}$ restricts to $\mathscr{P}$;
    \item[$\star$] Every object of $\mathscr{P}$ is flat\footnote{An object $M \in \mathscr{A}$ is called \emph{flat} if $M \otimes -$ is an exact functor in the sense of quasi-abelian categories.}.
\end{itemize}
Kelly has proved the following:
\begin{thm}\cite[Theorem 4.69]{kelly_homotopy_2021}. Under the above assumptions. The projective model structures on $\operatorname{Ch}(\mathscr{A})$ and $\operatorname{Ch}^{\leqslant 0}(\mathscr{A})$ are monoidal, i.e., $(\operatorname{Ch}(\mathscr{A}), \otimes)$ and $(\operatorname{Ch}^{\leqslant 0}(\mathscr{A}), \otimes)$ are monoidal model categories.
\end{thm}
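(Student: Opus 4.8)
This is \cite[Theorem 4.69]{kelly_homotopy_2021}, and the plan is to follow the strategy of its proof, which comes down to verifying the two axioms defining a monoidal model category: the \emph{pushout-product axiom} (for cofibrations $f,g$ the pushout-product $f\square g$ is a cofibration, and acyclic if $f$ or $g$ is) and the \emph{unit axiom} (for a cofibrant replacement $q:Q\to I(\mathbf 1)$ of the monoidal unit and any cofibrant $X$, the map $q\otimes\mathrm{id}_X:Q\otimes X\to X$ is a weak equivalence). For the pushout-product axiom, by the usual closure argument --- $f\mapsto f\square g$ and $g\mapsto f\square g$ each commute with pushouts, transfinite composition and retracts --- it suffices to check it on a set of generating (acyclic) cofibrations. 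By Theorem \ref{thm:kelly} and the cofibrant generation coming from $\mathscr{P}$ (cf. Proposition \ref{prop:Univprops1}) these are, up to saturation, the sphere-to-disk inclusions $\sigma^n_P:S^{n-1}(P)\hookrightarrow D^n(P)$ (with $n\leqslant 0$ and together with $0\to S^0(P)$ in the bounded-above case) and the acyclic generators $0\to D^n(P)$, for $P\in\mathscr P$; here $S^n(P)$ denotes $P$ placed in cohomological degree $n$ and $D^n(P)=[P\xrightarrow{\mathrm{id}}P]$ in degrees $n-1,n$, which is contractible, hence strictly exact.

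By the second $\star$-assumption and closure of $\mathscr P$ under finite direct sums (Proposition \ref{prop:Univprops1}), all complexes arising in a pushout-product of generators are degreewise built from $\mathscr P$; in particular $\operatorname{coker}(\sigma^m_P\square\sigma^n_Q)=(\operatorname{coker}\sigma^m_P)\otimes(\operatorname{coker}\sigma^n_Q)=S^{m+n}(P\otimes Q)$ is degreewise projective, while flatness of $\mathscr P$ (third $\star$-assumption) makes $\sigma^m_P\square\sigma^n_Q$ a degreewise strict monomorphism, so it is a cofibration. For the acyclic case, the pushout-product of $0\to D^n(P)$ with a cofibration $j:X\hookrightarrow Y$ is $\mathrm{id}_{D^n(P)}\otimes j:D^n(P)\otimes X\to D^n(P)\otimes Y$, again a degreewise strict monomorphism with degreewise projective cokernel $D^n(P)\otimes(Y/X)$; since $D^n(P)$ is contractible, so is $D^n(P)\otimes(Y/X)$, whence it is strictly exact and $\mathrm{id}_{D^n(P)}\otimes j$ is an acyclic cofibration (the degreewise strict short exact sequence $0\to D^n(P)\otimes X\to D^n(P)\otimes Y\to D^n(P)\otimes(Y/X)\to 0$ gives a triangle in $D(\mathscr A)$ with strictly exact third term). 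The symmetric argument in the other variable completes the pushout-product axiom.

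For the unit axiom, note $\mathbf 1$ need not lie in $\mathscr P$, so $I(\mathbf 1)$ need not be cofibrant; choose a cofibrant resolution $q:Q\xrightarrow{\sim}I(\mathbf 1)$, which by Theorem \ref{thm:kelly}(i) is a (bounded-above, in the $\operatorname{Ch}^{\leqslant 0}$ case) complex of projective, hence flat, objects, with strictly exact cone $C(q)$. One must show $C(q)\otimes X$ is strictly exact for every cofibrant $X$. Since $C(q)$ is degreewise flat and bounded above, in each cohomological degree only finitely many associated graded pieces of a brutal-truncation filtration of $X$ affect $C(q)\otimes X$, reducing the claim to $X=S^k(P')$ with $P'\in\mathscr P$, where $C(q)\otimes S^k(P')\simeq(C(q)\otimes P')[k]$ is strictly exact because $P'$ is flat. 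Reassembling uses that strict exactness is detected by vanishing of the functors $H^i(-)$ into $LH(\mathscr A)$ together with the long exact sequences attached to the strict short exact sequences of the filtration and compatibility of $H^i$ with filtered colimits; in the unbounded case one additionally invokes $(\mathbf N,\mathrm{SplitMon})$-elementarity and completeness of the $t$-structure from Proposition \ref{prop:Univprops1}(iv) for convergence.

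The main obstacle is the bookkeeping of \emph{strictness}: every cone, cokernel, extension and colimit manipulation that is automatic over a ring must here be performed with strict morphisms and strict exactness in $\mathscr A$, and the only mechanism making $\otimes$ interact well with strict exactness is precisely the flatness of $\mathscr P$. The technical heart is therefore the assertion that tensoring a strictly exact, degreewise flat complex with a cofibrant complex remains strictly exact, together with its compatibility with the truncation filtration used for the unit axiom and, in the unbounded setting, the convergence of that filtration --- exactly the points established in \cite[\S4]{kelly_homotopy_2021}, so in practice one simply cites Theorem 4.69 of \emph{loc. cit.}
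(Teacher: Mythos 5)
The paper does not prove this statement at all: it is imported verbatim as a citation of \cite[Theorem 4.69]{kelly_homotopy_2021}, so there is no internal argument to compare against. Your sketch is a reasonable reconstruction of the standard verification (pushout-product axiom checked on generators, unit axiom via a cofibrant resolution of the unit), which is in the spirit of the cited proof, and you are candid that the technical core --- the strictness bookkeeping and the convergence of the truncation filtration in the unbounded case --- is exactly what is established in \emph{loc.\ cit.} Two small points. First, for the pure cofibration half, the pushout-product $\sigma^m_P\square\sigma^n_Q$ is degreewise a \emph{split} monomorphism by direct computation with the explicit generators (everything in sight is a finite direct sum of copies of $P\otimes Q$), so flatness is not really what is doing the work there; flatness enters in making $\otimes$ interact with strict exactness, as you say later. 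Second, be careful in the unbounded case: the paper's statement of Kelly's existence theorem (Theorem \ref{thm:kelly}(ii)) describes the weak equivalences and fibrations but deliberately does \emph{not} assert that cofibrations in $\operatorname{Ch}(\mathscr{A})$ are precisely the degreewise strict monomorphisms with degreewise projective cokernel, so your check of the acyclic pushout-product against an arbitrary cofibration $j$ should instead be run against the generating cofibrations only (which suffices by the usual saturation argument). Finally, the hypothesis that the monoidal structure restricts to $\mathscr{P}$ is most naturally read as including $1_{\mathscr{A}}\in\mathscr{P}$, in which case the unit is already cofibrant and the unit axiom is essentially automatic given the pushout-product axiom; your longer argument is then superfluous but not incorrect.
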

By Proposition \ref{prop:Univprops1} and the dictionary between model categories and $\infty$-categories, this implies that $(D^{\leqslant 0}(\mathscr{A}), \otimes^\mathbf{L})$ and $(D(\mathscr{A}), \otimes^\mathbf{L})$ are both presentably symmetric monoidal. Building on Proposition \ref{prop:Univprops1} above, one can also formulate universal properties for $D^{\leqslant 0}(\mathscr{A})$ and $D(\mathscr{A})$ as symmetric monoidal $\infty$-categories. For proofs of the below statements we direct the reader to \cite[\S2.3]{soor_six-functor_2024}.
\begin{prop} Under the above assumptions:
\begin{enumerate}[(i)]
    \item The equivalence of Proposition \ref{prop:Univprops1}(i) upgrades to an equivalence of presentably symmetric monoidal $\infty$-categories:
    \begin{equation}
        (\operatorname{sInd}(N(\mathscr{P})), \otimes_{\mathrm{Day}}) \xrightarrow[]{\sim} (D^{\leqslant 0}(\mathscr{A}), \otimes^\mathbf{L}),
    \end{equation}
    where $\otimes_{\mathrm{Day}}$ denotes the Day convolution monoidal structure on the sifted cocompletion;
    \item Let $\mathscr{D}$ be any symmetric monoidal $\infty$-category such that the tensor product $\mathscr{D} \times \mathscr{D} \to \mathscr{D}$ commutes with colimits separately in each variable. Then there is an equivalence of $\infty$-categories 
    \begin{equation}
        \operatorname{Fun}^{L, \otimes}(D^{\leqslant 0}(\mathscr{A}), \mathscr{D}) \simeq \operatorname{Fun}^{\amalg,\otimes}(N(\mathscr{P}), \mathscr{D}).
    \end{equation}
    \item There is an equivalence $D^{\leqslant 0}(\mathscr{A}) \otimes \mathsf{Sp} \simeq D(\mathscr{A})$ of commutative algebra objects in $\mathsf{Pr}^L$. In particular, for any stable presentably symmetric monoidal $\infty$-category $\mathscr{D}$ there is an equivalence of $\infty$-categories
    \begin{equation}
         \operatorname{Fun}^{L, \otimes}(D(\mathscr{A}), \mathscr{D}) \simeq \operatorname{Fun}^{\amalg,\otimes}(N(\mathscr{P}), \mathscr{D}).
    \end{equation}
    \item Let $\mathscr{D}$ be a stable presentably symmetric monoidal $\infty$-category equipped with $t$-structure $(\mathscr{D}^{\leqslant 0}, \mathscr{D}^{\geqslant 0})$ such that the monoidal structure restricts to $\mathscr{D}^{\leqslant 0}$. Then there is an equivalence of $\infty$-categories
    \begin{equation}
        \operatorname{Fun}^{\prime,\otimes}(D(\mathscr{A}), \mathscr{D}) \simeq \operatorname{Fun}^{\amalg, \otimes}(N(\mathscr{P}), \mathscr{D}^{\leqslant0}), 
    \end{equation}
    where $\operatorname{Fun}^{\prime,\otimes}$ denotes those symmetric monoidal functors which are colimit-preserving and right $t$-exact.
\end{enumerate}
\end{prop}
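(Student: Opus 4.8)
The plan is to deduce all four parts from a single universal property --- that of Day convolution on the sifted cocompletion $\operatorname{sInd}(N(\mathscr{P}))$ --- so that the real content sits in part (i). First I would invoke Kelly's theorem that the projective model structure on $\operatorname{Ch}^{\leqslant 0}(\mathscr{A})$ is monoidal; passing to underlying $\infty$-categories this endows $D^{\leqslant 0}(\mathscr{A})$ with a presentably symmetric monoidal structure $\otimes^{\mathbf{L}}$, and because every object of $\mathscr{P}$ is cofibrant and flat, on $\mathscr{P}$ the underived tensor already computes $\otimes^{\mathbf{L}}$. Hence the canonical functor $N(\mathscr{P}) \to D^{\leqslant 0}(\mathscr{A})$ is symmetric monoidal and preserves finite coproducts. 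The equivalence of Proposition \ref{prop:Univprops1}(i) is, by its very construction via the universal property of the sifted cocompletion, the unique sifted-colimit-preserving extension of this functor along the Yoneda embedding $N(\mathscr{P}) \to \operatorname{sInd}(N(\mathscr{P}))$; since Day convolution exhibits $(\operatorname{sInd}(N(\mathscr{P})), \otimes_{\mathrm{Day}})$ as the free symmetric monoidal sifted cocompletion of $N(\mathscr{P})$, this extension is canonically symmetric monoidal, and being an equivalence of underlying $\infty$-categories it is an equivalence of presentably symmetric monoidal $\infty$-categories. This proves (i).

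Part (ii) is then the monoidal refinement of Proposition \ref{prop:Univprops1}(ii): transporting along (i), it becomes the statement that symmetric monoidal colimit-preserving functors out of $(\operatorname{sInd}(N(\mathscr{P})), \otimes_{\mathrm{Day}})$ are detected by restriction along Yoneda, which is exactly the universal property of Day convolution used for (i). For (iii), I would recall that $-\otimes\mathsf{Sp}\colon \mathsf{Pr}^L \to \mathsf{Pr}^L$ is a smashing (idempotent) symmetric monoidal localization onto the stable presentable $\infty$-categories. Proposition \ref{prop:Univprops1}(iv) identifies $D(\mathscr{A})$ with $\operatorname{Sp}(D^{\leqslant 0}(\mathscr{A})) \simeq D^{\leqslant 0}(\mathscr{A}) \otimes \mathsf{Sp}$, and since the inclusion $D^{\leqslant 0}(\mathscr{A}) \hookrightarrow D(\mathscr{A})$ is symmetric monoidal this identification upgrades to one of commutative algebra objects of $\mathsf{Pr}^L$. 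The ``in particular'' then follows by chaining with (ii): for $\mathscr{D}$ stable presentably symmetric monoidal one has $\mathscr{D} \simeq \mathscr{D} \otimes \mathsf{Sp}$, so that, using (ii) and the fact that $\mathscr{D}$ is local for the smashing localization, $\operatorname{Fun}^{L,\otimes}(D(\mathscr{A}),\mathscr{D}) \simeq \operatorname{Fun}^{L,\otimes}(D^{\leqslant 0}(\mathscr{A}),\mathscr{D}) \simeq \operatorname{Fun}^{\amalg,\otimes}(N(\mathscr{P}),\mathscr{D})$.

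For (iv) I would upgrade Proposition \ref{prop:Univprops1}(vi) in the same spirit. When the monoidal structure on $\mathscr{D}$ restricts to $\mathscr{D}^{\leqslant 0}$, the inclusion $\mathscr{D}^{\leqslant 0} \hookrightarrow \mathscr{D}$ preserves colimits (the connective part of a compatible $t$-structure is closed under colimits) and is symmetric monoidal. Given a symmetric monoidal, finite-coproduct-preserving functor $N(\mathscr{P}) \to \mathscr{D}^{\leqslant 0}$, part (ii) extends it to a symmetric monoidal colimit-preserving functor $D^{\leqslant 0}(\mathscr{A}) \to \mathscr{D}^{\leqslant 0} \hookrightarrow \mathscr{D}$, and part (iii) extends that further to $D(\mathscr{A}) \simeq D^{\leqslant 0}(\mathscr{A}) \otimes \mathsf{Sp} \to \mathscr{D}$; by construction this functor carries $D^{\leqslant 0}(\mathscr{A})$ into $\mathscr{D}^{\leqslant 0}$, i.e. it is right $t$-exact. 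Conversely, restricting a colimit-preserving, right $t$-exact, symmetric monoidal functor $D(\mathscr{A}) \to \mathscr{D}$ to $D^{\leqslant 0}(\mathscr{A})$ and then to $N(\mathscr{P})$ yields a symmetric monoidal, finite-coproduct-preserving functor landing in $\mathscr{D}^{\leqslant 0}$. That these two constructions are mutually inverse I would read off from Proposition \ref{prop:Univprops1}(vi) on underlying $\infty$-categories.

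The hard part will be part (i) --- more precisely, the two compatibility checks that make the universal property of the symmetric monoidal sifted cocompletion available: that Kelly's monoidal model structure survives the $\infty$-categorical localization (so that $N(\mathscr{P}) \to D^{\leqslant 0}(\mathscr{A})$ is genuinely symmetric monoidal with presentably symmetric monoidal target), and that Day convolution descends along the monoidal localization $\mathcal{P}(N(\mathscr{P})) \to \operatorname{sInd}(N(\mathscr{P}))$, i.e. that $\operatorname{sInd}$-local equivalences are stable under Day tensoring --- which is where the hypotheses that $\mathscr{P}$ is closed under finite products and that $\otimes$ preserves them get used. Once (i) is secured, parts (ii)--(iv) are formal.
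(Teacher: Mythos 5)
Your argument is correct, but note that the paper itself offers no proof of this Proposition: it is stated as a reminder, with the proof explicitly deferred to \cite[\S 2.3]{soor_six-functor_2024}, so there is nothing in the present text to compare against beyond the statement. Your route --- reducing everything to the universal property of Day convolution on the symmetric monoidal sifted cocompletion in part (i), then using the smashing localization $-\otimes\mathsf{Sp}$ in $\mathsf{Pr}^L$ for (iii) and the closure of the aisle $\mathscr{D}^{\leqslant 0}$ under colimits and tensor for (iv) --- is exactly the expected one, and you have correctly isolated the two points that carry the real content (that the monoidal model structure of Kelly descends to a symmetric monoidal structure on $D^{\leqslant 0}(\mathscr{A})$ with $N(\mathscr{P}) \to D^{\leqslant 0}(\mathscr{A})$ symmetric monoidal, which is where flatness of the objects of $\mathscr{P}$ enters, and that Day convolution is compatible with the localization $\mathcal{P}(N(\mathscr{P})) \to \operatorname{sInd}(N(\mathscr{P}))$, which uses that the tensor product on the additive category $\mathscr{P}$ preserves finite coproducts in each variable).
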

Since $D(\mathscr{A})$ is presentably symmetric monoidal, it is closed symmetric monoidal (by the adjoint functor theorem). We write $R\underline{\operatorname{Hom}}(-,-)$ for the internal Hom-bifunctor on $D(\mathscr{A})$. The following will occasionally be useful:
\begin{prop}\cite[Lemma 2.50]{soor_six-functor_2024}\label{prop:RHomfilteredColimits}
With assumptions as above.
\begin{enumerate}[(i)]
    \item If $C^\bullet, C^{\prime,\bullet} \in D(\mathscr{A})^\omega$, then $ C^\bullet \otimes^\mathbf{L} C^{\prime,\bullet} \in D(\mathscr{A})^\omega$;
    \item If $C^\bullet \in D(\mathscr{A})^\omega$ then $R \underline{\operatorname{Hom}}(C^\bullet, -)$ commutes with filtered colimits. 
\end{enumerate}
\end{prop}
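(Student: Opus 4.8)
The plan is to deduce both parts formally from the explicit description of the compact objects of $D(\mathscr{A})$ recorded in Proposition \ref{prop:Univprops1}(vii) and the discussion following it — namely that $D(\mathscr{A})^\omega$ is the smallest full subcategory of $D(\mathscr{A})$ containing the essential image of $N(\mathscr{P})$ and closed under cones, shifts and retracts — together with the tensor--$R\underline{\operatorname{Hom}}$ adjunction on the closed symmetric monoidal $\infty$-category $D(\mathscr{A})$.

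For (i), the first thing I would pin down is the case of two generators: for $P, P' \in \mathscr{P}$ one has $j(P) \otimes^{\mathbf L} j(P') \simeq j(P \otimes P')$. This follows because the symmetric monoidal enhancement of the equivalence in Proposition \ref{prop:Univprops1}(i), namely $(\operatorname{sInd}(N(\mathscr{P})), \otimes_{\mathrm{Day}}) \simeq (D^{\leqslant 0}(\mathscr{A}), \otimes^{\mathbf L})$, carries $j$ to the Yoneda embedding, on which Day convolution restricts to the given product of $\mathscr{P}$; alternatively, objects of $\mathscr{P}$ placed in degree $0$ are cofibrant in the monoidal model category $\operatorname{Ch}(\mathscr{A})$ and their tensor product is computed degreewise. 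Since the monoidal structure restricts to $\mathscr{P}$ (a standing assumption), $P \otimes P' \in \mathscr{P}$, so $j(P\otimes P') \in N(\mathscr{P}) \subseteq D(\mathscr{A})^\omega$. I would then bootstrap: because $(-)\otimes^{\mathbf L}(-)$ preserves colimits — in particular cones and shifts — separately in each variable and obviously preserves retracts, while cones, shifts and retracts of compact objects are compact, the full subcategory of those $C^\bullet$ for which $C^\bullet \otimes^{\mathbf L} j(P')$ is compact for all $P'$ contains $N(\mathscr{P})$ and is closed under these operations, hence is all of $D(\mathscr{A})^\omega$; fixing such a $C^\bullet$ and running the same argument in the second variable gives the claim.

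For (ii), let $(M_i)_{i}$ be a filtered diagram in $D(\mathscr{A})$. Functoriality of $R\underline{\operatorname{Hom}}(C^\bullet,-)$ provides a canonical comparison map
\begin{equation}
    \alpha \colon \operatorname{colim}_i R\underline{\operatorname{Hom}}(C^\bullet, M_i) \longrightarrow R\underline{\operatorname{Hom}}\bigl(C^\bullet, \operatorname{colim}_i M_i\bigr),
\end{equation}
and I must show $\alpha$ is an equivalence. Since $D(\mathscr{A})$ is compactly generated with $\{\, j(P)[n] : P \in \mathscr{P},\ n \in \mathbf{Z}\,\}$ a set of compact generators, it suffices to check that $\operatorname{Hom}_{D(\mathscr{A})}(Q,\alpha)$ is an equivalence for each such $Q$. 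Using the tensor--$R\underline{\operatorname{Hom}}$ adjunction and compactness of $Q$ identifies the source with $\operatorname{colim}_i \operatorname{Hom}(Q \otimes^{\mathbf L} C^\bullet, M_i)$; using the adjunction and then compactness of $Q \otimes^{\mathbf L} C^\bullet$ — which holds by part (i), both factors being compact — identifies the target with the same colimit, compatibly with $\alpha$. Hence $\operatorname{Hom}(Q,\alpha)$ is an equivalence for all compact generators $Q$, so $\alpha$ is an equivalence.

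I do not expect a serious obstacle here: the only genuine input is part (i), and within that the only point requiring care is the identification of $\otimes^{\mathbf L}$ on the generators $N(\mathscr{P})$ with the given monoidal product of $\mathscr{P}$, which is immediate from the symmetric monoidal universal property already recorded (or from flatness/cofibrancy of the objects of $\mathscr{P}$). Once that is in hand, both statements are routine consequences of compact generation and adjunction.
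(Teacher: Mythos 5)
Your proof is correct. Note that the paper does not reprove this statement --- it is quoted verbatim from [Lemma 2.50] of the cited earlier work, with the reader directed there --- so there is no in-text argument to compare against; but your route is the standard and expected one: the identification $j(P)\otimes^{\mathbf L} j(P')\simeq j(P\otimes P')$ (valid since objects of $\mathscr{P}$ are flat and projective, and the monoidal structure restricts to $\mathscr{P}$ by the standing assumptions), a dévissage over the generation of $D(\mathscr{A})^\omega$ under cones, shifts and retracts from $N(\mathscr{P})$ for (i), and the tensor--$R\underline{\operatorname{Hom}}$ adjunction tested against the compact generators $j(P)[n]$ for (ii).
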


\section{Reminders on complete-bornological spaces}
As in \cite{DAnG} and \cite{soor_six-functor_2024}, our underlying functional analysis is based on the theory of complete bornological spaces. In this section we attempt to summarize the necessary features of this theory. 

Let $(K, |\cdot|)$ be a complete field extension of $\mathbf{Q}_p$, with ring of integers $o = \{ x \in K : |x| = 1\}$. We assume that $|p|= p^{-1}$ so that $|\cdot|$ restricts to the $p$-adic norm on $\mathbf{Q}_p$. By a \emph{$K$-Banach space} we mean a complete normed $K$-vector space. In this article all norms are assumed to satisfy the ultrametric inequality. We recall the following definition:
\begin{defn}
Let $V$ be a $K$-vector space. A \emph{bornology} on $V$ is a collection of $\mathscr{B}$ of \emph{bounded subsets} of $V$ satifying the following properties:
\begin{enumerate}[(i)]
    \item If $B \in \mathscr{B}$ and $B^\prime \subseteq B$ then $B^\prime \in \mathscr{B}$;
    \item If $v \in V$ then $\{v\} \in \mathscr{B}$;
    \item $\mathscr{B}$ is closed under finite unions;
    \item If $B \in \mathscr{B}$ and $\lambda \in K$ then $\lambda B \in \mathscr{B}$;
    \item If $B \in \mathscr{B}$ then the $o$-submodule $o \cdot B \in \mathscr{B}$.
\end{enumerate}
The pair $V = (V,\mathscr{B})$ is called a \emph{(convex) bornological $K$-vector space}. A morphism $\varphi: V \to W$ of $K$-vector spaces is called \emph{bounded} if $\varphi(B) \subseteq W$ is bounded for every bounded subset $B \subseteq V$. In this way we obtain the category $\mathsf{Born}_K$ of bornological $K$-vector spaces.
\end{defn}
A seminormed $K$-vector space acquires a bornology in an obvious way. We do not regard the seminorm as part of the datum of a seminormed $K$-vector space, only the bornology. This is captured in the following definition.
\begin{defn}
The category of \emph{seminormed $K$-vector spaces}, (resp. \emph{normed $K$-vector spaces}, resp. \emph{$K$-Banach spaces}), is defined to be the full subcategory of $\mathsf{Born}_K$ on objects $V$ whose bornology is induced by a seminorm (resp. a norm, resp. a norm making $V$ into a Banach space). We denote these categories by $\mathsf{SNrm}_K, \mathsf{Nrm}_K$, and $\mathsf{Ban}_K$, respectively.
\end{defn}
\begin{defn}
Let $V \in \mathsf{Born}_K$. 
\begin{enumerate}[(i)]
    \item Given a bounded $o$-submodule $B \subseteq V$ we define $V_B := \operatorname{span}_KB \subseteq V$ equipped with the bornology defined by the gauge seminorm $\|x\|_B := \inf \{|\lambda| : x \in \lambda B \}$. 
    \item $V$ is called \emph{complete} if for every $B \in \mathscr{B}$ there exists a bounded $o$-submodule $B^\prime \supseteq B$ such that $V_{B^\prime}$ is a $K$-Banach space. We define $\mathsf{CBorn}_K \subseteq \mathsf{Born}_K$ to be the full subcategory of \emph{complete bornological $K$-vector spaces}. 
\end{enumerate}
\end{defn}
We recall that $\mathsf{Born}_K$ is closed symmetric monoidal \cite{BambozziDaggerBanach}. Given $V, W \in \mathsf{Born}_K$ the bornology on the tensor product $V \otimes W$ is generated by subsets of the form $B \otimes_o B^\prime$ for bounded $o$-submodules $B \subseteq V$, $B^\prime \subseteq V$. The internal Hom, denoted $\underline{\operatorname{Hom}}_K(V,W)$, is the space of bounded linear maps equipped with the equibounded bornology. 

The inclusion $\mathsf{CBorn}_K \subseteq \mathsf{Born}_K$ admits a left adjoint $\widehat{(\cdot)}: \mathsf{Born}_K  \to \mathsf{CBorn}_K$ (completion, c.f. \cite{HogbeNlendThesis, ProsmansHomological}). The internal Hom $\underline{\operatorname{Hom}}_K(V,W)$ between two bornological $K$-vector spaces $V, W$ is complete whenever $W$ is. From this it follows formally that 
\begin{equation}
  (\mathsf{CBorn}_K, \widehat{\otimes}_K, \underline{\operatorname{Hom}}_K)  
\end{equation}
is closed symmetric monoidal, where the \emph{completed tensor product} $\widehat{\otimes}_K$ is the completion of the bornological tensor product. 
There is an adjunction 
\begin{equation}
    \operatorname{diss}: \mathsf{CBorn}_K \leftrightarrows \mathsf{Ind}(\mathsf{Ban}_K) : \operatorname{colim}
\end{equation}
in which the right adjoint $\operatorname{diss}: V \mapsto \indlim \widehat{V}_B$ is fully faithful. The essential image is given by the \emph{essentially monomorphic $\mathrm{Ind}$-objects}, i.e., those $\mathrm{Ind}$-objects which are equivalent to $\mathrm{Ind}$-systems of monomorphisms. Consequently there is an equivalence of categories 
\begin{equation}\label{eq:CBornessentiallymono}
\operatorname{diss}: \mathsf{CBorn}_K \simeq \mathsf{Ind}^m(\mathsf{Ban}_K) : \operatorname{colim}.
\end{equation}
It is possible to give a more efficient ``colimit presentation" than this, which is still functorial. This is the purpose of the following Definition. 
\begin{defn}
Let $V \in \mathsf{Born}_K$. 
\begin{enumerate}[(i)]
    \item We define a transitive, reflexive relation $\lesssim$ on the bounded $o$-submodules of $V$ by $B \lesssim B^\prime$ if there exists a bounded $o$-submodule $S \subseteq K$ such that $B \subseteq S\cdot B^\prime$. If $B \lesssim B^\prime$ we say that \emph{$B^\prime$ absorbs $B$}.
    \item We define an equivalence relation $\asymp$ on the bounded $o$-submodules of $V$ by $B \asymp B^\prime$ if $B \lesssim B^\prime$ and $B^\prime \lesssim B$. 
    \item We let $(\mathfrak{S}(V), \lesssim )$ denote the collection of $\asymp$-equivalence classes viewed as a poset with the partial order induced by $\lesssim$. 
\end{enumerate} 
\end{defn}
\begin{lem}
If $V \in \mathsf{CBorn}_K$ then $\indlim_{[B] \in \mathfrak{S}(V)} \widehat{V}_B \in \mathsf{Ind}(\mathsf{Ban}_K)$ is essentially monomorphic and the natural morphism
    \begin{equation}
\underset{[B] \in \mathfrak{S}(V)}{\operatorname{colim}} \widehat{V}_B \xrightarrow{\sim} V
    \end{equation}
    is an isomorphism in $\mathsf{CBorn}_K$.
\end{lem}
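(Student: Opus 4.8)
The plan is to deduce the statement from the already-known presentation of $\operatorname{diss}(V)$. By \eqref{eq:CBornessentiallymono} and the full faithfulness of $\operatorname{diss}$, the $\mathrm{Ind}$-object $\operatorname{diss}(V)=\indlim_{B\in\mathcal{B}(V)}\widehat{V}_B$, indexed by the directed poset $\mathcal{B}(V)$ of \emph{all} bounded $o$-submodules of $V$ under inclusion, is essentially monomorphic, and its colimit in $\mathsf{CBorn}_K$ is $V$ (the counit $\operatorname{colim}\operatorname{diss}(V)\to V$ being invertible). So it suffices to show that restricting the index to the quotient poset $\mathfrak{S}(V)$ leaves this $\mathrm{Ind}$-object unchanged, which I would do by exhibiting a cofinal functor $q\colon\mathcal{B}(V)\to\mathfrak{S}(V)$ through which the system $B\mapsto\widehat{V}_B$ factors.

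First I would set up $q$ and the factorization via the elementary dictionary between $\lesssim$ and the gauge seminorms. Since every bounded $o$-submodule $S\subseteq K$ is contained in $\lambda o$ for some $\lambda\in K^\times$ and each $B'$ is an $o$-module, $B\lesssim B'$ holds if and only if $B\subseteq\lambda B'$ for some $\lambda\in K^\times$, equivalently $V_B\subseteq V_{B'}$ with the inclusion bounded for the gauge seminorms; hence $B\asymp B'$ if and only if $V_B=V_{B'}$ with equivalent gauge seminorms, which produces a canonical isomorphism $\widehat{V}_B\xrightarrow{\sim}\widehat{V}_{B'}$ compatible with the transition maps coming from $\lesssim$. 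Choosing a representative in each $\asymp$-class therefore gives a functor $\widehat{V}_{(-)}\colon\mathfrak{S}(V)\to\mathsf{Ban}_K$, well-defined up to canonical natural isomorphism, together with a natural isomorphism between $\operatorname{diss}(V)$ and the composite $\mathcal{B}(V)\xrightarrow{q}\mathfrak{S}(V)\xrightarrow{\widehat{V}_{(-)}}\mathsf{Ban}_K$; here $q\colon B\mapsto[B]$ is order-preserving because $B\subseteq B'$ entails $B\lesssim B'$ (the submodule $S=o\subseteq K$ witnessing it).

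Next I would verify that $q$ is cofinal. Both $\mathcal{B}(V)$ and $\mathfrak{S}(V)$ are directed, with joins represented by $B+B'$, and $q$ is surjective on objects; by the criterion of \cite[Theorem~4.1.3.1]{HigherToposTheory} it is enough that for each $[B_0]\in\mathfrak{S}(V)$ the comma poset $\{\,B\in\mathcal{B}(V):B_0\lesssim B\,\}$ be weakly contractible, and it is a nonempty up-set (containing $B_0$) closed under $B+B'$, hence directed, hence contractible. As a cofinal functor does not change the represented $\mathrm{Ind}$-object, this yields $\indlim_{[B]\in\mathfrak{S}(V)}\widehat{V}_B\simeq\operatorname{diss}(V)$ in $\mathsf{Ind}(\mathsf{Ban}_K)$, compatibly with the canonical maps to $V$. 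Essential monomorphicity is then immediate, since $\mathsf{Ind}^m(\mathsf{Ban}_K)$ — the essential image of $\operatorname{diss}$ — is closed under isomorphism; and applying $\operatorname{colim}$ to this isomorphism and using invertibility of the counit identifies the natural morphism $\operatorname{colim}_{[B]\in\mathfrak{S}(V)}\widehat{V}_B\to V$ with an isomorphism in $\mathsf{CBorn}_K$.

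The only genuine subtlety is the cofinality step: $\mathfrak{S}(V)$ is a full subposet of $(\mathcal{B}(V),\lesssim)$ but \emph{not} of $(\mathcal{B}(V),\subseteq)$, so one cannot simply compare with a cofinal subdiagram of the original system and must instead argue with $q$ directly, being careful about which order is in use when checking directedness of the comma posets. Everything else reduces to the routine bornological dictionary above.
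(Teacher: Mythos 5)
Your proof is correct and follows essentially the same route as the paper's: reduce to the known equivalence $\operatorname{diss}:\mathsf{CBorn}_K\simeq\mathsf{Ind}^m(\mathsf{Ban}_K)$ and observe that passing from all bounded $o$-submodules to $\asymp$-classes changes nothing, since $B\asymp B'$ forces $V_B=V_{B'}$. The paper compresses this into one sentence; you have merely made the implicit cofinality of the quotient map $\mathcal{B}(V)\to\mathfrak{S}(V)$ explicit.
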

\begin{proof}
Given the equivalence \eqref{eq:CBornessentiallymono}, the only thing to note is that if $B \asymp B^\prime$ then there is an equality $V_B = V_{B^\prime}$.
\end{proof}
\begin{example}
An object $V \in \mathsf{CBorn}_K$ belongs to the full subcategory $\mathsf{Ban}_K$ if and only if $\mathfrak{S}(V)$ has a terminal object. 
\end{example}
\begin{defn}\label{defn:metrizable}\cite{HogbeNlendThesis}
Let $V \in \mathsf{Born}_K$.
\begin{enumerate}[(i)]
    \item We say that $V$ is of \emph{countable type} if it has a countable base for its bornology. Equivalently, the poset $\mathfrak{S}(V)$ has a countable cofinal subset.
    \item We say that $V$ is \emph{metrizable} if the poset $\mathfrak{S}(V)$ is $\aleph_1$-filtered. 
\end{enumerate}
\end{defn}
The relation to locally-convex vector spaces is the following.
\begin{prop}\cite{HogbeNlendThesis}
There is an adjunction 
\begin{equation}
   (-)^t: \mathsf{Born}_K \leftrightarrows \mathsf{LCVS}_K : (-)^b 
\end{equation}
which is given as follows:
\begin{itemize}
    \item[$\star$] The functor $W \mapsto W^b$ endows a locally-convex $K$-vector space $W$ with its \emph{von Neumann bornology}: One has $W^b = W$ as $K$-vector spaces, and a subset $B \subseteq W^b$ is bounded if for every lattice $L \subseteq V$ there exists $\lambda \in K$ such that $B \subseteq \lambda L$.
    \item[$\star$] \emph{Dually}, the functor $V \mapsto V^t$ endows $V$ with the \emph{topology of bornivorous subsets}: One has $V^t = V$ as $K$-vector spaces, and an $o$-submodule $L \subseteq V^t$ is an open neighbourhood of $0$ if for every bounded subset $B \subseteq V$ there exists $\lambda \in K$ such that $B \subseteq \lambda L$.  
\end{itemize}
\end{prop}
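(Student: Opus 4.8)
The plan is to check that $(-)^t$ and $(-)^b$ are well-defined functors and then to establish the adjunction at the level of Hom-sets: I will show that, inside the set of all $K$-linear maps between the underlying vector spaces, the morphisms $V^t \to W$ in $\mathsf{LCVS}_K$ coincide with the morphisms $V \to W^b$ in $\mathsf{Born}_K$. Naturality in both variables is then immediate, and this exhibits $(-)^t$ as left adjoint to $(-)^b$.

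First I would verify that $(-)^b$ takes values in $\mathsf{Born}_K$: for $W \in \mathsf{LCVS}_K$ the von Neumann bounded subsets satisfy the bornology axioms, the only points needing a word being closure under finite unions (if $B_i \subseteq \lambda_i L$, take $\lambda$ to be whichever of $\lambda_1,\lambda_2$ has larger absolute value and use that a lattice $L$ is an $o$-submodule, so $\lambda_i L \subseteq \lambda L$ and hence $B_1 \cup B_2 \subseteq \lambda L$) and stability under $o\cdot(-)$ (if $B \subseteq \lambda L$ then $o\cdot B \subseteq \lambda\,(o\cdot L) = \lambda L$). Dually I would verify that $(-)^t$ takes values in $\mathsf{LCVS}_K$: the bornivorous $o$-submodules of $V$ are absorbing (a one-point set is bounded), closed under addition (being $o$-submodules), and closed under finite intersection (if $B \subseteq \lambda_i L_i$ then $B \subseteq \lambda(L_1 \cap L_2)$ for a common $\lambda$ chosen as above), hence form a fundamental system of neighbourhoods of $0$ for a locally convex topology. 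Functoriality in each case is a one-line preimage computation: if $f\colon V_1 \to V_2$ is bounded and $L \subseteq V_2$ is bornivorous, then $f^{-1}(L)$ is bornivorous since $f$ carries bounded sets to bounded sets; and if $g\colon W_1 \to W_2$ is continuous, the preimage of a lattice is a lattice.

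The heart of the proof is the following equivalence for a $K$-linear map $f$ between the underlying vector spaces of $V \in \mathsf{Born}_K$ and $W \in \mathsf{LCVS}_K$: the map $f$ is continuous as a map $V^t \to W$ if and only if it is bounded as a map $V \to W^b$. If $f$ is bounded, then for every lattice $L \subseteq W$ and every bounded $B \subseteq V$ we have $f(B) \subseteq \lambda L$, hence $B \subseteq \lambda f^{-1}(L)$, so $f^{-1}(L)$ is a bornivorous $o$-submodule and thus a neighbourhood of $0$ in $V^t$; therefore $f$ is continuous. Conversely, if $f$ is continuous and $B \subseteq V$ is bounded, fix a lattice $L \subseteq W$; then $f^{-1}(L)$ is a neighbourhood of $0$ in $V^t$ and so contains a bornivorous lattice $M$, whence $B \subseteq \lambda M \subseteq \lambda f^{-1}(L)$ and $f(B) \subseteq \lambda L$; since $L$ was arbitrary, $f(B)$ is von Neumann bounded.

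Granting this equivalence, $\operatorname{Hom}_{\mathsf{LCVS}_K}(V^t, W)$ and $\operatorname{Hom}_{\mathsf{Born}_K}(V, W^b)$ are literally the same subset of $\operatorname{Hom}_K(V,W)$, and the identification visibly commutes with pre-composition by bounded maps and post-composition by continuous maps; hence it is a natural isomorphism witnessing $(-)^t \dashv (-)^b$. Of the steps above, the one deserving the most care is the converse direction of the continuity/boundedness equivalence, where one must descend from the given neighbourhood $f^{-1}(L)$ to a bornivorous lattice inside it — but this is immediate from the way the topology on $V^t$ was defined. The only other point to watch is whether $\mathsf{LCVS}_K$ is taken to mean Hausdorff locally convex spaces (in which case $(-)^t$ should be post-composed with Hausdorffification and the bornology of $V$ assumed separated); here I read $\mathsf{LCVS}_K$ as the category of all locally convex $K$-vector spaces, for which the argument above is complete.
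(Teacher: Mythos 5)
Your proof is correct. The paper gives no argument for this proposition at all --- it is recalled from Hogbe-Nlend's thesis as a known fact --- and your argument is exactly the standard one from that literature: check the two functors are well defined, then identify $\operatorname{Hom}_{\mathsf{LCVS}_K}(V^t,W)$ and $\operatorname{Hom}_{\mathsf{Born}_K}(V,W^b)$ as the same subset of $\operatorname{Hom}_K(V,W)$ via the bornivorous/von Neumann duality, with naturality being formal. Your closing caveat about Hausdorffness is well taken but harmless here, since the paper's $\mathsf{LCVS}_K$ is indeed the category of all (not necessarily Hausdorff) locally convex $K$-vector spaces.
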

\begin{lem}\label{lem:frechetAleph1}\cite{HogbeNlendThesis}
\begin{enumerate}[(i)]
    \item If $W \in \mathsf{LCVS}_K$ is metrizable, then the counit morphism $W^{bt} \to W$ is an isomorphism. In particular we obtain a fully-faithful functor from the full subcategory of metrizable objects to $\mathsf{Born}_K$.
    \item If $W \in \mathsf{LCVS}_K$ is complete and metrizable, then $W^b$ is complete as a bornological space and metrizable in the sense of Definition \ref{defn:metrizable}. In particular we obtain a fully-faithful functor $(-)^b: \mathsf{Fr}_K \hookrightarrow \mathsf{CBorn}_K$ from the category $\mathsf{Fr}_K$ of $K$-Fr\'echet spaces. 
\end{enumerate} 
\end{lem}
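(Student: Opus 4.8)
The plan is to deduce both parts from classical non-archimedean functional analysis in the style of \cite{HogbeNlendThesis}; no $\infty$-categorical input is needed, and the two ``in particular'' clauses will then be formal consequences of an adjunction yoga.

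\emph{Part (i).} The counit $\varepsilon_W\colon W^{bt}\to W$ is the identity on underlying $K$-vector spaces and is automatically a continuous bijection, since an open $o$-submodule of $W$ absorbs every von Neumann bounded subset (that is the definition of boundedness), hence is bornivorous, so the topology of bornivorous subsets on $W^{bt}$ refines the original topology of $W$. Thus $\varepsilon_W$ is an isomorphism exactly when the converse holds, i.e. when every bornivorous $o$-submodule of $W$ is open --- the assertion that a metrizable $W$ is ``bornological''. To prove this, fix a descending countable neighbourhood basis $U_1\supseteq U_2\supseteq\cdots$ of $W$ consisting of open $o$-submodules, and suppose $L$ is bornivorous but not open. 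For each $n$ the set $p^{-n}L$ is not open (otherwise $L=p^{n}(p^{-n}L)$ would be), so $U_n\not\subseteq p^{-n}L$ and we may choose $x_n\in U_n\setminus p^{-n}L$. The set $\{x_n\}_{n\geq 1}$ is bounded, because for each $m$ we have $x_n\in U_n\subseteq U_m$ for all $n\geq m$, and $x_1,\dots,x_{m-1}$ is a finite set. Since $L$ is bornivorous, $\{x_n\}\subseteq\mu L$ for some $\mu\in K$; picking an integer $j\geq 1$ with $|\mu|\leq p^{j}$ yields $x_j\in\mu L\subseteq p^{-j}L$, contradicting the choice of $x_j$. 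Hence every bornivorous $o$-submodule is open, $\varepsilon_W$ is invertible for metrizable $W$, and --- a right adjoint being fully faithful on the full subcategory of objects at which its counit is invertible --- $(-)^b$ restricts to a fully-faithful functor on metrizable locally convex spaces.

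\emph{Part (ii).} Let $W$ be Fréchet. For completeness of $W^b$: any von Neumann bounded $S\subseteq W$ is contained in the bounded $o$-submodule $B$ it generates (boundedness is preserved because open $o$-submodules are subgroups), and I claim $B':=\overline{B}$ works. It is again a bounded $o$-submodule (closures of bounded sets are bounded, since open $o$-submodules are closed, and the closure of an $o$-submodule is an $o$-submodule), and $(W_{B'},\|\cdot\|_{B'})$ is a $K$-Banach space by the standard ``Banach disk'' argument: the gauge-norm topology on $W_{B'}$ refines the subspace topology from $W$, so a $\|\cdot\|_{B'}$-Cauchy sequence converges in $W$ (this is where completeness of $W$ is used), and the closedness of the dilates $\mu B'$ forces the limit back into $W_{B'}$ with convergence in $\|\cdot\|_{B'}$; hence $W^b\in\mathsf{CBorn}_K$. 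For metrizability of $W^b$ in the sense of Definition \ref{defn:metrizable}, it suffices to show $\mathfrak{S}(W^b)$ is $\aleph_1$-filtered. Fix a countable neighbourhood basis $(U_m)_m$, and given countably many bounded sets $B^{(k)}$ choose $\lambda^{(k)}_m\in K$ with $B^{(k)}\subseteq\lambda^{(k)}_m U_m$; then choose $\mu_k\in K$ with $|\mu_k|$ so small that $|\mu_k\lambda^{(k)}_m|\leq 1$ for all $m\leq k$, and set $B:=\bigcup_k\mu_k B^{(k)}$. For each fixed $m$, $\sup_k|\mu_k\lambda^{(k)}_m|<\infty$ (the terms with $k\geq m$ are $\leq 1$, the rest are finitely many), so $B$ is absorbed by every $U_m$ and hence bounded, while $B\supseteq\mu_k B^{(k)}$ shows $B^{(k)}\lesssim B$; thus $B$ is the required upper bound. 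Finally, Fréchet spaces are metrizable locally convex spaces, so $(-)^b$ is fully faithful on them by (i), and by the completeness statement its essential image lies in the full subcategory $\mathsf{CBorn}_K\subseteq\mathsf{Born}_K$, giving the fully-faithful functor $(-)^b\colon\mathsf{Fr}_K\hookrightarrow\mathsf{CBorn}_K$.

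The main obstacle I anticipate is purely the non-archimedean scalar bookkeeping in the two diagonalisation arguments --- arranging the sequence $\{x_n\}$ in (i) to be simultaneously bounded and to escape every dilate $\mu L$, and arranging $\bigcup_k\mu_k B^{(k)}$ in (ii) to remain bounded --- together with the minor care needed to run the Banach-disk argument (working with a basis of closed $o$-submodules, using closedness of dilates, and locating exactly where completeness of $W$ enters). None of this is deep, but it is where the actual content sits; the categorical reductions (the counit computation and the fully-faithfulness criterion for right adjoints) are entirely formal once these analytic facts are in place.
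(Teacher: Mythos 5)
Your proof is correct and in fact does more than the paper, which only writes out the metrizability claim in (ii) (attributing it to Mackey via Hogbe-Nlend) and declares the remaining assertions --- that metrizable spaces are bornological, and that $W^b$ is complete for $W$ Fr\'echet --- to be well-known. For the part the paper does prove, your argument is essentially the same: the paper picks $\lambda_n$ with $\lambda_n B_n \subseteq L_n$ and takes $B' = \sum_n \lambda_n B_n$, showing $B' \subseteq L_N + \sum_{n=1}^N \lambda_n B_n$ for each $N$; your doubly-indexed choice of $\lambda^{(k)}_m$ followed by the diagonal scaling $\mu_k$ is a slightly more elaborate packaging of the same countability/absorption trick. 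Your diagonalisation in (i) (extracting a bounded sequence escaping every dilate of a non-open bornivorous lattice) and your Banach-disk argument in (ii) are both sound and are the standard proofs of the facts the paper cites, so your write-up has the advantage of being self-contained.

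One cosmetic repair: in (ii) you set $B := \bigcup_k \mu_k B^{(k)}$, but an element of $\mathfrak{S}(W^b)$ must be a bounded $o$-\emph{submodule}, and a union of submodules need not be one. Replace the union by the sum $\sum_k \mu_k B^{(k)}$ (equivalently, the $o$-submodule generated by your union); your absorption estimate goes through verbatim since for each fixed $m$ one has $\sum_k \mu_k B^{(k)} \subseteq \sum_{k<m}\mu_k B^{(k)} + U_m$ with the first summand a finite sum of bounded sets. This is exactly the form the paper uses.
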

\begin{proof}
We only prove the metrizability part in (ii), the rest of the assertions being well-known. This appears already in \cite[p.200]{HogbeNlendThesis} where it is attributed to Mackey. Let $\{B_n\}_n$ be a countable collection of bounded $o$-submodules of $V$ and let $\{L_n\}_n$ be a (decreasing) fundamental system of lattices defining the Fr\'echet topology. By definition of the von Neumann bornology, for each $n$ there exists $\lambda_n \in K^\times$ such that $\lambda_n B_n \subseteq L_n$. Set $B^\prime:= \sum_{n}\lambda_n B_n$. Obviously, $B_n \lesssim B^\prime$ for each $n$. Further, for each $N \geqslant 0$ one has $B^\prime \subseteq L_N + \sum_{n=1}^N\lambda_n B_n$, which implies that $B^\prime$ is von Neumann bounded. 
\end{proof}
\begin{defn}
An object $V \in \mathsf{CBorn}_K$ is called \emph{conuclear} if for all $K$-Banach spaces $W$, the canonical morphism
\begin{equation}
\underline{\operatorname{Hom}}_K(V,K)\widehat{\otimes}_K W \to \underline{\operatorname{Hom}}_K(V,W)
\end{equation}
is an isomorphism.
\end{defn}
\begin{example}
Let $K$ be a non-trivially valued non-Archimedean field. Let $\varpi \in K$ with $0 < |\varpi| < 1$. Then 
\begin{equation*}
K \langle x/\varpi^\infty \rangle:=  \underset{n}{\operatorname{colim}} K\langle x/ \varpi^n \rangle = \Big\{\sum_{n =0}^\infty a_n x^n : |a_n||\varpi|^{-nk} \xrightarrow[]{n \to \infty} 0 \text{ for some } k \in \mathbf{Z} \Big\}.
\end{equation*}
is a conuclear object of $\mathsf{CBorn}_K$. 
\end{example}
Let us now introduce an important class of $K$-Banach spaces.
\begin{defn}
Let $S$ be a (small) set and $V \in \mathsf{Ban}_K$. The space of \emph{$V$-valued zero sequences} is
\begin{equation}
    c_0(S,V) := \{ \varphi: S \to V: \forall \varepsilon > 0 , \exists \text{ at most finitely many }s \in S: \|\varphi(s)\| > \varepsilon\}, 
\end{equation}
viewed as $K$-Banach space via the norm $\|\varphi\| := \sup_{s \in S} \|\varphi (s)\|$. When $V = K$ we write $c_0(S) := c_0(S,K)$. 
\end{defn}
Here are some basic properties of these spaces.
\begin{prop}
\begin{enumerate}[(i)]
    \item Let $S$ be a (small) set and $V \in \mathsf{CBorn}_K$. There is a natural (in $S$ and $V$) isomorphism
\begin{equation}
    \operatorname{Hom}_K(c_0(S),V) \cong \{\text{functions }f:S \to V: f(S) \subseteq V \text{ is bounded}\}. 
\end{equation}
    \item 
Let $S, S^\prime$ be (small) sets. There are canonical isomorphisms
\begin{equation}
    c_0(S) \widehat{\otimes}_K c_0(S^\prime) \cong  c_0(S \times S^\prime) \cong c_0(S, c_0(S^\prime)). 
    \end{equation}
\end{enumerate}
\end{prop}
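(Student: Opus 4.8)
The plan is to treat (i) and (ii) separately, deducing everything from the ``orthonormal basis'' $\{e_s\}_{s\in S}$ of $c_0(S)$, where $e_s$ is the indicator function of $\{s\}$; naturality in $S$ and in $V$ will be evident from the constructions and I will not comment on it further.

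For (i), I would define the map from left to right by $\varphi \mapsto (s \mapsto \varphi(e_s))$. Since $\|e_s\|_{c_0(S)} = 1$ for all $s$, the family $\{e_s\}_s$ is a bounded subset of $c_0(S)$, so for any bounded linear $\varphi$ the set $\{\varphi(e_s)\}_s$ is bounded in $V$, and the map lands in the right-hand side. For the inverse, given $f\colon S\to V$ with $f(S)$ bounded I would use completeness of $V$ to pick a bounded $o$-submodule $B \supseteq o\cdot f(S)$ with $V_B \in \mathsf{Ban}_K$; then for $g \in c_0(S)$ the elements $g(s)f(s)$ lie in $|g(s)|\cdot B$ and tend to $0$, so $\varphi_f(g) := \sum_s g(s)f(s)$ converges in the Banach space $V_B \subseteq V$, and $\varphi_f$ carries the $\|\cdot\|_{c_0(S)}$-ball of radius $|\lambda|$ into the closure of $\lambda B$ in $V_B$, hence is bounded. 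That $\varphi \mapsto (\varphi\circ e)$ and $f \mapsto \varphi_f$ are mutually inverse comes down to $\varphi_f(e_s) = f(s)$ together with the fact that any $g\in c_0(S)$ is the limit — in the sense of the bornology — of its finite partial sums $\sum_{s\in F}g(s)e_s$, which bounded linear maps preserve. (Alternatively, using $\mathsf{CBorn}_K \simeq \mathsf{Ind}^m(\mathsf{Ban}_K)$ and compactness of the single Banach space $c_0(S)$ in $\mathsf{Ind}(\mathsf{Ban}_K)$ one reduces at once to $V \in \mathsf{Ban}_K$, where this is the universal property of $c_0(S)$ as the completion of $\bigoplus_S K$ with the sup-norm.)

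For (ii), I would first prove the auxiliary identification $c_0(S)\widehat{\otimes}_K W \cong c_0(S,W)$ for $W\in\mathsf{Ban}_K$: the obvious linear map $c_0(S)\otimes_K W \to c_0(S,W)$, $g\otimes w \mapsto (s\mapsto g(s)w)$, is contractive (this uses the ultrametric inequality to control $o$-linear combinations of elementary tensors) and restricts to an isometry on the dense subspace $\bigoplus_{s\in S}(e_s\otimes W)$ because $\|e_s\|=1$; hence it extends to an isometric isomorphism between completions, $c_0(S,W)$ being already complete. Second, I would write down the currying bijection $c_0(S\times S^\prime) \to c_0(S,c_0(S^\prime))$, $\varphi \mapsto (s\mapsto\varphi(s,-))$: the ``zero-sequence'' conditions correspond because $\{s : \|\varphi(s,-)\| > \varepsilon\}$ is the image under the first projection of the finite set $\{(s,s^\prime): |\varphi(s,s^\prime)|>\varepsilon\}$, and this is an isometry since $\sup_{(s,s^\prime)} = \sup_s\sup_{s^\prime}$. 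Composing the two (the first with $W = c_0(S^\prime)$) gives
\begin{equation*}
 c_0(S)\widehat{\otimes}_K c_0(S^\prime) \;\cong\; c_0(S,c_0(S^\prime)) \;\cong\; c_0(S\times S^\prime),
\end{equation*}
which is the assertion. (One could instead get $c_0(S)\widehat{\otimes}_K c_0(S^\prime)\cong c_0(S\times S^\prime)$ directly from part (i) by Yoneda, using $\operatorname{Hom}(c_0(S)\widehat{\otimes}_K c_0(S^\prime),V)\cong\operatorname{Hom}(c_0(S),\underline{\operatorname{Hom}}_K(c_0(S^\prime),V))$ and applying (i) twice to identify this with the functor $V \mapsto \{h\colon S\times S^\prime\to V : h(S\times S^\prime)\text{ bounded}\}$.)

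The only genuinely non-formal point is the isometry in the auxiliary identification $c_0(S)\widehat{\otimes}_K W\cong c_0(S,W)$: it is here that the non-Archimedean hypothesis is essential (over $\mathbf{R}$ one obtains only a contraction, reflecting the difference between the projective and injective tensor norms), and it is the one place where one must argue with the gauge seminorms defining the bornologies rather than with underlying vector spaces. Everything else — that the displayed bijections are morphisms of bornological, not merely $K$-linear, spaces; the convergence and boundedness bookkeeping in (i); the density of finitely-supported functions — is routine, so I expect the write-up to be short once that estimate is in place.
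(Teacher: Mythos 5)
The paper states this proposition without proof, so there is nothing to compare against; judged on its own terms, your argument is correct and is the standard one. Part (i) via the ``orthonormal basis'' $\{e_s\}$, the completeness of $V$ to land the series $\sum_s g(s)f(s)$ in a Banach piece $V_B$, and the density of finitely supported functions is exactly right, and part (ii) via $c_0(S)\widehat{\otimes}_K W \cong c_0(S,W)$ followed by currying is the expected route.

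Two small points you leave implicit. First, in the isometry claim for $c_0(S)\otimes_K W \to c_0(S,W)$ on finitely supported elements, the inequality $\bigl\|\sum_{s\in F} e_s\otimes w_s\bigr\|_{\otimes}\geqslant \max_s\|w_s\|$ does not follow from $\|e_s\|=1$ alone; you should pair with the norm-one coordinate functionals $\delta_{s_0}\colon c_0(S)\to K$, which induce contractions $c_0(S)\otimes_K W\to W$ sending $\sum_s e_s\otimes w_s$ to $w_{s_0}$. (The reverse inequality is the ultrametric estimate you cite.) Second, you tacitly use that the completed bornological tensor product of two Banach spaces is the usual completed projective tensor product of Banach spaces; this is true in the framework of the paper but deserves a one-line citation. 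Your parenthetical Yoneda alternative for (ii) would additionally require upgrading (i) to an identification of $\underline{\operatorname{Hom}}_K(c_0(S),V)$ with the bounded functions \emph{as a bornological space} (equibounded bornology versus uniformly bounded families), so it is good that it is not load-bearing.
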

Now we can state the important categorical properties of $\mathsf{CBorn}_K$. The main takeaway is that the formalism of \S\ref{sec:quasiabelian} applies with $\mathscr{A}= \mathsf{CBorn}_K$. 
\begin{prop}\label{prop:CbornKproperties}
\begin{enumerate}[(i)]
    \item $\mathsf{CBorn}_K$ is a quasi-abelian category. A morphism $\varphi: V \to W$ is strict if and only if $\operatorname{im} \varphi \subseteq W$ is bornologically closed and the bornology on $\operatorname{im}\varphi$ coincides with the quotient bornology on $V/\operatorname{ker} \varphi$. 
    \item $\mathsf{CBorn}_K$ is $\operatorname{AdMon}$-elementary (in particular it is complete and cocomplete). A generating family of $\operatorname{AdMon}$-tiny projective objects is given by $\mathscr{P} = \{c_0(S)\}_S$ for $S$ ranging over (small) sets. 
    \item In $\mathsf{CBorn}_K$, colimits of (essentially) monomorphic filtered systems are strongly exact. 
\end{enumerate}
\end{prop}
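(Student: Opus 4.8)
The plan is to reduce all three statements to the corresponding facts for the quasi-abelian category $\mathsf{Ban}_K$ of $K$-Banach spaces, using the tools already recalled: the dissection equivalence $\operatorname{diss}\colon\mathsf{CBorn}_K\xrightarrow{\sim}\mathsf{Ind}^m(\mathsf{Ban}_K)$, the functorial colimit presentation $V\simeq\operatorname{colim}_{[B]\in\mathfrak{S}(V)}\widehat{V}_B$, and the fact that $\mathsf{CBorn}_K\hookrightarrow\mathsf{Born}_K$ is a reflective localization with reflector $\widehat{(\cdot)}$, so that colimits in $\mathsf{CBorn}_K$ (in particular cokernels and coproducts) are computed by applying $\widehat{(\cdot)}$ to the colimit formed in $\mathsf{Born}_K$. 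In essence these statements are due to Prosmans--Schneiders, Bambozzi and Kelly (see \cite{ProsmansHomological, BambozziDaggerBanach, kelly_homotopy_2021}).

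For (i): first record that $\mathsf{Ban}_K$ is quasi-abelian --- kernels are norm-closed subspaces with the subspace norm, cokernels are quotients by the norm-closure of the image, and strictness of pullbacks and pushouts is the standard check, cf. \cite{schneiders_quasi-abelian_1999}. In $\mathsf{CBorn}_K$ the kernel of $\varphi\colon V\to W$ is then the set-theoretic kernel with the subspace bornology (a bornologically closed, hence complete, subobject) and the cokernel is $\widehat{W/\operatorname{im}\varphi}$. Stability of strict epimorphisms under pullback and of strict monomorphisms under pushout I would obtain either by transporting through the fully faithful exact functor $\operatorname{diss}$ (using that $\mathsf{Ind}^m$ of a quasi-abelian category is quasi-abelian) or by forming the square in $\mathsf{Born}_K$ and applying $\widehat{(\cdot)}$. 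The description of strict morphisms is then pure unwinding: $\operatorname{im}\varphi=\ker(\operatorname{coker}\varphi)$ is the bornological closure $\overline{\operatorname{im}\varphi}$ with its subspace bornology, $\operatorname{coim}\varphi=\operatorname{coker}(\ker\varphi\hookrightarrow V)$ is $V/\ker\varphi$ with its quotient bornology, and the comparison $\operatorname{coim}\varphi\to\operatorname{im}\varphi$ is an isomorphism precisely when $\operatorname{im}\varphi$ is bornologically closed and its subspace bornology coincides with the quotient bornology.

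For (ii): cocompleteness and completeness follow from the reflective-localization picture together with (co)completeness of $\mathsf{Born}_K$. Projectivity of $c_0(S)$ descends to the Banach level: a bounded family $f\colon S\to W$ with image in a bounded disk $B$ factors through $\widehat{W}_B\in\mathsf{Ban}_K$ by the representability $\operatorname{Hom}(c_0(S),V)\cong\{\text{bounded functions }S\to V\}$; for a strict epimorphism $\varphi\colon V\twoheadrightarrow W$ one checks that, cofinally in $\mathfrak{S}(W)$, the piece $\widehat{W}_B$ is a Banach quotient of some $\widehat{V}_{C'}$ --- this is where strictness ($\operatorname{coim}\varphi\xrightarrow{\sim}W$) enters --- and $c_0(S)$ is projective in $\mathsf{Ban}_K$ relative to quotient maps (lift the standard basis vectors, the $c_0$-decay keeping the lift bounded), so $f$ lifts. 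For generation: write $V\simeq\operatorname{colim}_{[B]}\widehat{V}_B$ and use that every Banach space $E$ is a strict quotient of $c_0(E_{\leqslant 1})$ via $e_x\mapsto x$; then $\bigoplus_{[B]}c_0((\widehat{V}_B)_{\leqslant 1})\to\bigoplus_{[B]}\widehat{V}_B\to V$ is a strict epimorphism from a coproduct of objects of $\mathscr{P}$, the required strictness assertions being instances of (iii). Finally, $\operatorname{AdMon}$-tininess of $c_0(S)$ unwinds to the statement that a bounded family $S\to\operatorname{colim}_iV_i$ out of a filtered diagram of strict monomorphisms factors, boundedly and uniquely, through some $V_{i_0}$, which holds because a bounded disk of such a colimit is, up to $\asymp$, the image of a bounded disk from one stage and the transition maps are injective.

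For (iii): transport to $\mathsf{Ind}^m(\mathsf{Ban}_K)$ and use the description of bounded subsets of a colimit of an essentially monomorphic filtered system. Granting this, $\operatorname{colim}_i$ commutes with cokernels (a cokernel is a quotient followed by the left adjoint $\widehat{(\cdot)}$, and filtered colimits commute with both) and with kernels (a bounded subset of the colimit comes from a single stage --- which is where the monomorphic hypothesis is used, and precisely what fails for general filtered colimits); putting these together shows the comparison $\operatorname{coim}\to\operatorname{im}$ of any morphism of such diagrams is preserved, i.e. the colimit functor is strongly exact on essentially monomorphic filtered diagrams. The one genuinely delicate ingredient, common to (ii) and (iii), is the lemma that a bounded $o$-submodule of a colimit of an essentially monomorphic filtered system is absorbed by a bounded $o$-submodule coming from a single stage, together with the bookkeeping of the gap between the honest quotient in $\mathsf{Born}_K$ and the completed cokernel in $\mathsf{CBorn}_K$ (equivalently between ``$\operatorname{im}\varphi$ bornologically closed'' and ``the closure of $\operatorname{im}\varphi$'') and the precise way $\widehat{(\cdot)}$ interacts with pullbacks, pushouts and filtered colimits; I expect this to be the main obstacle, while everything else is a faithful transcription of the Banach-space case.
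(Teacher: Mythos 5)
The paper does not actually prove this Proposition: it appears in the ``Reminders'' section as a summary of known facts about $\mathsf{CBorn}_K$, with the proofs living in the cited literature (Prosmans, Bambozzi et al., Kelly, and \cite{soor_six-functor_2024}), so there is no argument in the text to compare yours against. Your reconstruction is correct in outline and follows the standard route (reduce to $\mathsf{Ban}_K$ via the reflective localization and the monomorphic $\operatorname{Ind}$-presentation). Two points deserve to be made explicit rather than left as ``unwinding''. First, the identification $\ker(\operatorname{coker}\varphi)=\overline{\operatorname{im}\varphi}$ with its subspace bornology rests on the fact that, over a non-Archimedean base, the quotient of a complete bornological space by a bornologically closed subspace is again complete (so that the completed cokernel $\widehat{W/\operatorname{im}\varphi}$ really is $W/\overline{\operatorname{im}\varphi}$); this is a theorem of the cited references, not a formality, and it is also what makes the pieces $W_{\varphi(C)}$ Banach in your projectivity argument. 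Second, in (iii) the preservation of kernels needs that the \emph{target} system $W_\bullet$ is monomorphic as well as the source, so that an element of some $V_{i_0}$ whose image dies in $\operatorname{colim}_i W_i$ already dies in $W_{i_0}$; your sketch uses this implicitly. Neither point is a gap in the approach, only in the write-up.
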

\section{Proof of the main result}
In \cite{soor_six-functor_2024}, attached a to derived rigid space $X$ we defined a stack $X_{\mathrm{str}}$ called the \emph{stratifying stack of $X$}. Let us recall here all the features of this construction starting from the definition of derived rigid spaces.

Recall that we work relative to the $\infty$-category $D(\mathsf{CBorn}_K)$, that is the derived category of complete-bornological spaces. Our category of ``analytic" algebras is simply\footnote{In derived geometry it appears to be conventional to use homological indexing convention for algebra objects.} $\mathsf{dAlg} := \mathsf{CAlg}(D_{\geqslant 0}(\mathsf{CBorn}_K))$. Our category of affines is nothing but $\mathsf{dAff}:= \mathsf{dAlg}^\mathsf{op}$. For $A \in \mathsf{dAlg}$ we denote the corresponding object of $\mathsf{dAff}$ by the formal expression $\operatorname{dSp}(A)$. For such, the category of quasi-coherent sheaves is defined to be 
\begin{equation}
    \operatorname{QCoh}(\operatorname{dSp}(A)) := \operatorname{Mod}_AD(\mathsf{CBorn}_K),
\end{equation}
which, via the pullback functors, yields a functor $\mathsf{dAff}^\mathsf{op} \to \mathsf{Cat}_\infty$. We define the $\infty$-category of \emph{prestacks} to be $\mathsf{PStk} := \operatorname{Fun}(\mathsf{dAff}^\mathsf{op}, \infty\mathsf{Grpd})$. By Kan extension along the Yoneda embedding, $\operatorname{QCoh}$ extends to a functor $\mathsf{PStk}^\mathsf{op} \to \mathsf{Cat}_\infty$. 

The category $\mathsf{dAfndAlg}$ of \emph{derived affinoid algebras} was defined as a certain full subcategory of $\mathsf{dAlg}$. To be precise, $\mathsf{dAfndAlg}$ is the full subcategory on objects $A$ such that $\pi_0A$ is an affinoid algebra in the classical sense (a quotient of a Tate algebra in finitely many variables), and for $m \geqslant 1$, $\pi_mA$ is finitely-generated as a $\pi_0A$-module. Then we define $\mathsf{dAfnd} := \mathsf{dAfndAlg}^\mathsf{op}$. We defined a certain Grothendieck topology on $\mathsf{dAfnd}$ (the \emph{weak topology}). The covering sieves in this Grothendieck topology are generated by finite families of \emph{derived rational localizations}. The functor $\operatorname{QCoh}: \mathsf{dAfnd}^\mathsf{op} \to \mathsf{Cat}_\infty$ is a sheaf in the weak topology. The category $\mathsf{dRig}$ of \emph{derived rigid spaces} was defined as a certain full subcategory of $\operatorname{Shv}_{\mathrm{weak}}(\mathsf{dAfnd}, \infty\mathsf{Grpd})$. The $\infty$-category $\mathsf{dRig}$ is equipped with a Grothendieck topology, the \emph{strong Grothendieck topology.} The subcategory $\mathsf{dAfnd}$ equipped with the the weak topology, is a basis for  $\mathsf{dRig}$, equipped with the strong topology. The value of $\operatorname{QCoh}$ on $\mathsf{dRig}$ can be defined by Kan extension along $\mathsf{dAfnd}^\mathsf{op} \to \mathsf{dRig}^\mathsf{op}$, and it follows formally that $\operatorname{QCoh}: \mathsf{dRig}^\mathsf{op} \to \mathsf{Cat}_\infty$ is a sheaf in the strong topology.

Let us briefly comment on a notational convention. We write $\widehat{\otimes}_X$ for the monoidal structure on $\operatorname{QCoh}(X)$. When $X = \operatorname{dSp}(A)$ is a derived affinoid we write $\widehat{\otimes}_X = \widehat{\otimes}^\mathbf{L}_A$ for the monoidal structure on $\operatorname{QCoh}(\operatorname{dSp}(A)) = \operatorname{Mod}_AD(\mathsf{CBorn}_K)$. That is, we suppress the $\mathbf{L}$ when the subscript is a ``space".

The category of classical rigid spaces embeds fully-faithfully into $\mathsf{dRig}$. The inclusion admits a right adjoint $X \mapsto X_0$, the functor of \emph{classical truncation}, which extends $\operatorname{dSp}(A) \mapsto \operatorname{dSp}(\pi_0A)$ on derived affinoids. Every $X \in \mathsf{dRig}$ has an underlying topological space $|X|$ which is invariant under classical truncation: $|X_0| \xrightarrow[]{\sim} |X|$. In the case when $X = \operatorname{dSp}(A)$ is a derived affinoid, then $|X| \cong |\operatorname{Spa}(\pi_0A, (\pi_0A)^\circ)|$ is the Huber spectrum of the classical truncation. In general $|X|$ is the underlying topological space of the analytic adic space associated to $X_0$. 

Now let us recall the definition of the stratifying stack $X_{\mathrm{str}}$, first in the case when $X = \operatorname{dSp}(A)$ is a derived affinoid. We first define the algebra (resp. space) of \emph{germs along the diagonal}:
\begin{equation}
\begin{aligned}
    (A \widehat{\otimes}^\mathbf{L}_KA)^\dagger_\Delta  := \underset{ U \supseteq \Delta X}{\operatorname{colim}} (A \widehat{\otimes}^\mathbf{L}_KA)_U && \text{ and } && (X \subseteq X \times X)^\dagger:=\operatorname{dSp}((A \widehat{\otimes}^\mathbf{L}_KA)^\dagger_\Delta).
\end{aligned}
\end{equation}
Here the colimit is taken\footnote{This is important because the colimit does not exist in $\mathsf{dAfndAlg}$.} in $\mathsf{dAlg}$, and ranges over all affinoid subdomains $U \subseteq X \times X$ such that $|U|$ contains the diagonal $|\Delta X| \subseteq |X \times X|$. In a similar way one may define the germ $(X \subseteq X^{n+1})^\dagger$ along the diagonal, for any $n \geqslant 0$. Letting $n$ vary, these can be arranged into a simplicial object $(X \subseteq X^{\bullet +1})^\dagger$. The \emph{stratifying stack} is defined as 
\begin{equation}
    X_{\mathrm{str}} := \underset{[n] \in \Delta^\mathsf{op}}{\operatorname{colim}} (X \subseteq X^{\bullet + 1})^\dagger,
\end{equation}
where the colimit is taken in $\mathsf{PStk}$. For general $X \in \mathsf{dRig}$ the value of $X_{\mathrm{str}}$ is defined by Kan extension of the functor $(-)_\mathrm{str}$, so that $X_{\mathrm{str}} = \operatorname{colim}_{(Y \to X )\in\mathsf{dAfnd}_{/X}} Y_{\mathrm{str}}$. 

We can consider the canonical morphism $p:X \to X_{\mathrm{str}}$ and the induced adjunctions $p^* \dashv p_*$ and $p_! \dashv p^!$ on quasi-coherent sheaves. As it turns out, there is a canonical equivalence $p_! \xrightarrow[]{\sim} p_*$. We define the \emph{monad of infinite-order differential operators}, resp. the \emph{comonad of analytic jets}, to be 
\begin{equation}
    \begin{aligned}
        \mathcal{D}^\infty_X := p^!p_! && \text{and} && \mathcal{J}^\infty_X := p^*p_*,
    \end{aligned}
\end{equation}
acting on $\operatorname{QCoh}(X)$. By the Giraud(--Rezk--Lurie) axioms \cite[Proposition 6.1.0.1]{HigherToposTheory}, the following square is Cartesian\footnote{More specifically, we are using the axiom which says that all groupoid objects in an $\infty$-topos are effective.}:
\begin{equation}
\begin{tikzcd}
	{( X \subseteq X \times X)^\dagger} & X \\
	X & {X_{\mathrm{str}}}
	\arrow["{\widetilde{\pi}_1}", from=1-1, to=1-2]
	\arrow["{\widetilde{\pi}_2}"', from=1-1, to=2-1]
	\arrow["\lrcorner"{anchor=center, pos=0.125}, draw=none, from=1-1, to=2-2]
	\arrow["p", from=1-2, to=2-2]
	\arrow["p"', from=2-1, to=2-2]
\end{tikzcd}
\end{equation}
here $\widetilde{\pi}_1, \widetilde{\pi}_2 : (X \subseteq X \times X)^\dagger \to X$ are the two projections. Hence by base change\footnote{The base-change isomorphism is furnished by the six-functor formalism of \cite[Theorem 4.67]{soor_six-functor_2024}.} the underlying functor of $\mathcal{D}^\infty_X$ can be described as $\mathcal{D}^\infty_X \simeq \widetilde{\pi}_{2,*} \widetilde{\pi}^!_1$. When $X = \operatorname{dSp}(A)$
 is a derived affinoid this is 
 \begin{equation}\label{eq:underlyingendofunctor}
    \mathcal{D}^\infty_X \simeq \tilde{\pi}_{2,*}  \tilde{\pi}_1^! \simeq R\underline{\operatorname{Hom}}_A((A \widehat{\otimes}_KA)^\dagger_\Delta,-),
\end{equation}
where $R\underline{\operatorname{Hom}}_A$ is taken with respect to the $A$-module structure on the first factor. It is \emph{critically important} to note that $(A \widehat{\otimes}_KA)^\dagger_\Delta$ is an $A$-$A$ bimodule.  We adopt the following convention:
\begin{itemize}
    \item[$\star$] The left $A$-module structure on $\mathcal{D}^\infty_XM^\bullet$ (for $M^\bullet \in \operatorname{QCoh}(X)$) comes from the $A$-module structure on the \emph{first factor} of $(A\widehat{\otimes}_KA)^\dagger_\Delta$,
    \item[$\star$] The right $A$-module structure on $\mathcal{D}^\infty_XM^\bullet$ comes from the $A$-module structure on the \emph{second factor} of $(A\widehat{\otimes}_KA)^\dagger_\Delta$. 
\end{itemize}
As an endofunctor of $\operatorname{QCoh}(X)$, $\mathcal{D}^\infty_X(-)$ is viewed as an $A$-module via the \emph{right} $A$-module structure. However, there may be certain situations where we wish to use the \emph{left} $A$-module structure, which we will try to make clear. 

The first step towards describing modules over the monad $\mathcal{D}^\infty_X$ as modules over a ring is the following. 
\begin{lem}\label{lem:naturaltransformlema}
Let $A$ be a (derived) affinoid algebra and let  $X=\operatorname{dSp}(A)$. Then the object $\mathcal{D}^\infty_X1_X$ acquires the canonical structure of an algebra object in the $\infty$-category 
\begin{equation}
    {}_A\operatorname{BMod}_A D(\mathsf{CBorn}_K) = \operatorname{QCoh}(X \times X)
\end{equation} 
of $A$-$A$ bimodule objects under convolution. There is a canonical morphism of monads
\begin{equation}\label{eq:naturaltransformlema}
(-) \widehat{\otimes}_X \mathcal{D}^\infty_X 1_X  \to \mathcal{D}^\infty_X(-).
\end{equation}
We emphasise that the tensor product is taken with respect to the \emph{right} $A$-module structure on $\mathcal{D}^\infty_X1_X$. The $A$-module structure on the left side of \eqref{eq:naturaltransformlema} comes from the \emph{left} $A$-module structure on $\mathcal{D}^\infty_X1_X$.
\end{lem}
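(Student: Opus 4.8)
The plan is to produce the algebra structure and the natural transformation simultaneously from the abstract machinery of Appendix \ref{sec:monoidsfromMonads}, by verifying that the monad $\mathcal{D}^\infty_X$ satisfies the hypotheses of that appendix, namely that its underlying endofunctor is \emph{lax-linear} over the symmetric monoidal category $\operatorname{QCoh}(X) = \operatorname{Mod}_A D(\mathsf{CBorn}_K)$. So the first step is to exhibit the lax-linear structure on the endofunctor $M^\bullet \mapsto R\underline{\operatorname{Hom}}_A((A\widehat{\otimes}_K A)^\dagger_\Delta, M^\bullet)$ from \eqref{eq:underlyingendofunctor}. Here I would use that $(A\widehat{\otimes}_K A)^\dagger_\Delta$ is an $A$-$A$ bimodule (as emphasised just before the statement): viewing the endofunctor as $R\underline{\operatorname{Hom}}_A$ in the first ($A$-module) variable while the \emph{output} $A$-module structure is the right one, the lax-linearity morphisms $F(M^\bullet) \widehat{\otimes}_A N^\bullet \to F(M^\bullet \widehat{\otimes}_A N^\bullet)$ come from the standard lax-monoidal structure of internal $\operatorname{Hom}$, using the left $A$-module structure on $(A\widehat{\otimes}_K A)^\dagger_\Delta$ to form $F(M^\bullet) \widehat{\otimes}_A N^\bullet$ and then post-composing with evaluation. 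This is essentially the observation that $R\underline{\operatorname{Hom}}_A(B, -)$ for a bimodule $B$ is automatically lax $A$-linear via assembly.

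The second step is to check that this lax-linear structure on the endofunctor is compatible with the monad structure on $\mathcal{D}^\infty_X$, i.e.\ that the unit $\operatorname{id} \to \mathcal{D}^\infty_X$ and multiplication $\mathcal{D}^\infty_X \circ \mathcal{D}^\infty_X \to \mathcal{D}^\infty_X$ are maps of lax-linear endofunctors. The cleanest route is not to verify this by hand but to observe that the monad structure itself arises geometrically: $\mathcal{D}^\infty_X \simeq \widetilde\pi_{2,*}\widetilde\pi_1^!$ comes from the groupoid object $(X \subseteq X^{\bullet+1})^\dagger$, and the convolution monoidal structure on $\operatorname{QCoh}(X\times X) = {}_A\operatorname{BMod}_A D(\mathsf{CBorn}_K)$ is exactly the structure for which kernels act on $\operatorname{QCoh}(X)$. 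So $\mathcal{D}^\infty_X 1_X = (A\widehat{\otimes}_K A)^\dagger_\Delta$ (as a bimodule) is the convolution kernel representing the monad, and the monoid structure on a kernel representing a monad is forced by the monadicity of the correspondence $\{\text{kernels}\} \to \{\text{endofunctors}\}$, which is lax-monoidal. Concretely: acting-by-kernels is a lax-monoidal functor ${}_A\operatorname{BMod}_A D(\mathsf{CBorn}_K) \to \operatorname{Fun}(\operatorname{QCoh}(X),\operatorname{QCoh}(X))$ (where the target has composition), the endofunctor $\mathcal{D}^\infty_X$ lies in its image, and the appendix's machinery shows the preimage inherits an algebra structure whose image comes with the comparison map \eqref{eq:naturaltransformlema}. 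I would cite the relevant proposition from Appendix \ref{sec:monoidsfromMonads} for this passage.

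The third, more bookkeeping-heavy, step is to pin down that $(-)\widehat\otimes_A \mathcal{D}^\infty_X 1_X$, as the functor induced by acting with the kernel $\mathcal{D}^\infty_X 1_X$ via the \emph{right} $A$-module structure, really does land in the correct variance, and that the comparison \eqref{eq:naturaltransformlema} is a map of monads and not merely of endofunctors — this is where the conventions set up in the bullet points before the statement (left structure = first factor, right structure = second factor) get used. The unit of $(-)\widehat\otimes_A \mathcal{D}^\infty_X 1_X$ is $\operatorname{id} \to (-)\widehat\otimes_A \mathcal{D}^\infty_X 1_X$ induced by the algebra unit $A \to \mathcal{D}^\infty_X 1_X$, which under the comparison goes to the monad unit $\operatorname{id}\to\mathcal{D}^\infty_X$; compatibility with multiplication reduces to associativity of the lax-linearity constraint against the monad multiplication, which is exactly the coherence proved abstractly in the appendix.

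The main obstacle I expect is the second step: making precise, at the $\infty$-categorical level, that the monad multiplication on $\mathcal{D}^\infty_X = \widetilde\pi_{2,*}\widetilde\pi_1^!$ is compatible with the lax-linear structure, equivalently that the convolution-on-kernels functor is genuinely (lax-)monoidal in a way that recovers composition of these particular push-pull functors. The geometric input — that $(X\subseteq X^{\bullet+1})^\dagger$ is a groupoid and the relevant squares are Cartesian, so that base change applies — is already available from \cite{soor_six-functor_2024}, but threading it through the formalism of Appendix \ref{sec:monoidsfromMonads} without circularity (the appendix is designed precisely to package this) is the delicate point. Everything else — the lax-linearity of $R\underline{\operatorname{Hom}}_A$ against a bimodule, and the variance bookkeeping — is routine given the conventions already fixed.
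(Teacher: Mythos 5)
Your overall architecture --- produce the algebra structure and the comparison map simultaneously from the $\iota \dashv \kappa$ adjunction of Appendix \ref{sec:monoidsfromMonads} --- matches the paper's, but two of your key assertions are wrong, and the first changes what the machinery outputs. The lax-linearity you need is over the \emph{base} $\operatorname{QCoh}(*) = D(\mathsf{CBorn}_K)$, not over $\operatorname{QCoh}(X)$. Corollary \ref{cor:Bimodmonoidal} is an adjunction between ${}_A\operatorname{BMod}_A\mathscr{V}$ and $\operatorname{Fun}^{\mathrm{lax}}_{\mathscr{V}}(\operatorname{RMod}_A\mathscr{V},\operatorname{RMod}_A\mathscr{V})$ with $\mathscr{V}=D(\mathsf{CBorn}_K)$; it is precisely because the endofunctors are only required to be lax-linear over the base that $\kappa(F)=F(A)$ lands in bimodules under convolution, where a noncommutative algebra such as $\wideparen{\mathcal{D}}_X(X)$ can live. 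Lax-linearity over $(\operatorname{QCoh}(X),\widehat{\otimes}_A)$ would, via the analogue of Lemma \ref{lem:LaxLinV}, produce an algebra object in $\operatorname{QCoh}(X)$ under $\widehat{\otimes}_A$, which is not the statement. Worse, your proposed structure maps $F(M^\bullet)\widehat{\otimes}_A N^\bullet \to F(M^\bullet\widehat{\otimes}_A N^\bullet)$ do not exist for $F = R\underline{\operatorname{Hom}}_A((A\widehat{\otimes}_K A)^\dagger_\Delta,-)$: the Hom is taken against the first $A$-structure of the bimodule while the output carries the second, so the evident formula $\eta\otimes n \mapsto (b\mapsto \eta(b)\otimes n)$ is not balanced over $A$ --- $\eta(b\cdot a)\neq \eta(b)\cdot a$ in general. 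This is just the fact that differential operators do not commute with functions. The correct, and much cheaper, input is the one the paper uses: $p_!$ is $\operatorname{QCoh}(*)$-linear, hence by Theorem \ref{HaugsengLaxTheoremC} its right adjoint $p^!$, and therefore $\mathcal{D}^\infty_X = p^!p_!$, carries a canonical lax $\operatorname{QCoh}(*)$-linear structure; no groupoid or base-change input is needed for this lemma.

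Second, your claim that ``the endofunctor $\mathcal{D}^\infty_X$ lies in the image'' of the kernels-to-endofunctors functor $\iota$ is false in general, and the introduction says so explicitly: the monad is \emph{not} always given by tensoring with $\mathcal{D}^\infty_X 1_X$. There is also no ``monadicity'' of that correspondence at play. The actual mechanism is to apply the lax monoidal \emph{right adjoint} $\kappa$ of Corollary \ref{cor:Bimodmonoidal} to the algebra object $\mathcal{D}^\infty_X \in \operatorname{Alg}(\operatorname{Fun}^{\mathrm{lax}}_{\mathscr{V}}(\operatorname{QCoh}(X),\operatorname{QCoh}(X)))$, which yields the algebra $\mathcal{D}^\infty_X1_X$ in bimodules under convolution, and then to take the counit $\iota\kappa\to\operatorname{id}$ of the induced adjunction on algebra objects; that counit \emph{is} the morphism of monads \eqref{eq:naturaltransformlema}, and it is genuinely not invertible in general (that failure is the subject of Theorem \ref{thm:PNFtensor}). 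With these two corrections, your third step (variance bookkeeping, compatibility of unit and multiplication) is delivered automatically by the appendix and needs no separate argument.
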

\begin{proof}
We note that the functor $p^!$ is right adjoint to $p_!$ which is $\operatorname{QCoh}(*)$-linear. Therefore by Theorem \ref{HaugsengLaxTheoremC}, the functor $p^!$ and hence also $\mathcal{D}^\infty_X = p^!p_!$ acquires a canonical lax $\operatorname{QCoh}(*)$-linear structure. Applying the functor $\kappa$ of Corollary \ref{cor:Bimodmonoidal}, then $\mathcal{D}^\infty_X1_X$ acquires the structure of an algebra object in the category of $A$-$A$ bimodule objects under convolution. Further, the morphism \eqref{eq:naturaltransformlema} of monads is obtained from the counit of the adjunction induced by Corollary \ref{cor:Bimodmonoidal} on algebra objects. 
\end{proof}
\begin{rmk}
One can alternatively construct the algebra structure on $\mathcal{D}^\infty_X1_X$ via an ``adjoint" Fourier--Mukai transform. Namely, the usual Fourier--Mukai transform gives a (strongly monoidal) functor
\begin{equation}\label{eq:FMexample1}
    \operatorname{QCoh}(X \times X) \to \operatorname{Fun}^L_{\operatorname{QCoh}(*)}(\operatorname{QCoh}(X), \operatorname{QCoh}(X))
\end{equation}
By Theorem \ref{HaugsengLaxTheoremC} there is a strongly monoidal functor 
\begin{equation}\label{eq:FMexample2}
    \operatorname{Fun}^L_{\operatorname{QCoh}(*)}(\operatorname{QCoh}(X), \operatorname{QCoh}(X)) \to \operatorname{Fun}^{R, \mathrm{lax}}_{\operatorname{QCoh}(*)}(\operatorname{QCoh}(X), \operatorname{QCoh}(X))^{\mathsf{op}},
\end{equation}
obtained by passing to adjoints.  By Corollary \ref{cor:Bimodmonoidal} there is an adjunction 
\begin{equation}\label{eq:FMexample3}
\operatorname{QCoh}(X \times X) \leftrightarrows \operatorname{Fun}^{R, \mathrm{lax}}_{\operatorname{QCoh}(*)}(\operatorname{QCoh}(X), \operatorname{QCoh}(X)),
\end{equation}
in which the left adjoint is strongly monoidal (for convolution), hence the right adjoint is canonically lax monoidal. The object $(A \widehat{\otimes}_K A)^\dagger_\Delta \in \operatorname{QCoh}(X \times X)$ is a coalgebra under convolution. The image of this object under the composite of \eqref{eq:FMexample1}, \eqref{eq:FMexample2} and the right adjoint in \eqref{eq:FMexample3} gives the object $\mathcal{D}^\infty_X1_X \in \operatorname{QCoh}(X \times X)$ together with its algebra object structure (with respect to convolution).
\end{rmk}
The main point of the rest of this section is to identify a class of objects such that the natural transformation \eqref{eq:naturaltransformlema} restricts to an equivalence on such objects. Furthermore, the class of such objects should be large enough to include the examples of interest, e.g. the underlying $A$-modules of $\mathcal{C}$-complexes \cite[\S 8]{bode_six_2021}, and be preserved by the monad $\mathcal{D}^\infty_X$. For this purpose we introduce the following definition.
\begin{defn}
Let $A \in \mathsf{dAfnd}$ and let $X = \operatorname{dSp}(A)$.
\begin{enumerate}[(i)]
    \item We define 
\begin{equation}
   \operatorname{Fr}(X) \subseteq \operatorname{QCoh}(X) 
\end{equation}
to be the full sub-$\infty$-category spanned by those objects $M^\bullet$ whose underlying object $M^\bullet \in D(\mathsf{CBorn}_K)$ is such that, for each $j \in \mathbf{Z}$, $H^j(M^\bullet)$ is a Fr\'echet space.
\item We define 
\begin{equation}
    \operatorname{sFr}(X) \subseteq \operatorname{Fr}(X) 
\end{equation}
to be the full sub-$\infty$-category spanned by those objects $M^\bullet$ such that $A_U \widehat{\otimes}^\mathbf{L}_A M^\bullet \in \operatorname{Fr}(U)$ for every affinoid subdomain $U \subseteq X$. We may refer to such objects as \emph{stably Fr\'echet} complexes. 
\end{enumerate}
\end{defn}
Before proceeding further we fix notations as in \cite[\S4.16]{soor_six-functor_2024}. If $X = \operatorname{Sp}(A)$ is a classical affinoid rigid space equipped with an \'etale morphism $X \to \mathbf{D}^r_K$, we let $x_1,\dots,x_r \in A$ be the corresponding \'etale coordinates. For every $0 \leqslant m < \infty$ we define the $K$-linear pairing
\begin{equation}
    K \langle dx/p^m\rangle \times K \langle p^m \partial \rangle \to K 
\end{equation}
by $((dx)^\alpha, \partial^\beta) := \alpha! \delta_{\alpha \beta}$, for every pair of multi-indices $\alpha, \beta \in \mathbf{N}^r$. We define $K \langle dx/p^\infty \rangle := \operatorname{colim}_m K \langle dx/p^m \rangle$ and $K\langle p^\infty \partial \rangle := \operatorname{lim}_m K \langle p^m \partial \rangle$ where the (co)limits are taken in $\mathsf{CBorn}_K$. 
\begin{lem}\label{lem:frechetiso}
    If $V \in \mathsf{CBorn}_K$ is a Fr\'echet space, then the canonical morphism
    \begin{equation}\label{eq:frechetiso}
        V \widehat{\otimes}_K^\mathbf{L} K \langle p^\infty \partial \rangle \to R \underline{\operatorname{Hom}}_K(K \langle dx/p^\infty \rangle, V) 
    \end{equation}
    is an equivalence in $D(\mathsf{CBorn}_K)$. Further, the natural morphism 
    \begin{equation}
        V \widehat{\otimes}_K^\mathbf{L} K \langle p^\infty \partial \rangle \to V \widehat{\otimes}_K K \langle p^\infty \partial \rangle
    \end{equation} 
    is an equivalence, so that both sides are concentrated in degree $0$.
\end{lem}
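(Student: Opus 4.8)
The plan is to reduce everything to an explicit computation with the bases $(dx)^\alpha$ and $\partial^\beta$, using that $V$ is Fréchet and hence can be written as a countable limit of Banach spaces along surjective transition maps. First I would unwind the definitions: $K\langle dx/p^\infty\rangle = \operatorname{colim}_m K\langle dx/p^m\rangle$ is a countable colimit of Banach spaces, and each $K\langle dx/p^m\rangle$ is an orthonormalizable Banach space with orthonormal basis $\{(dx)^\alpha\}_{\alpha \in \mathbf{N}^r}$ (after rescaling by the appropriate power of $p$), i.e.\ it is of the form $c_0(\mathbf{N}^r)$ up to renorming. Dually, the pairing $((dx)^\alpha, \partial^\beta) = \alpha!\,\delta_{\alpha\beta}$ identifies $K\langle p^m\partial\rangle$ with the dual Banach space $\underline{\operatorname{Hom}}_K(K\langle dx/p^m\rangle, K)$, so that $K\langle p^\infty\partial\rangle = \lim_m K\langle p^m\partial\rangle$ is conuclear (an inverse limit of duals of $c_0$-spaces along compact-type transition maps). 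Then $R\underline{\operatorname{Hom}}_K(K\langle dx/p^\infty\rangle, V) = R\underline{\operatorname{Hom}}_K(\operatorname{colim}_m K\langle dx/p^m\rangle, V) = \lim_m R\underline{\operatorname{Hom}}_K(K\langle dx/p^m\rangle, V)$, turning the colimit into a limit.

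Next I would handle a single Banach stage. For $P = c_0(S)$ one has $R\underline{\operatorname{Hom}}_K(c_0(S), V) = \underline{\operatorname{Hom}}_K(c_0(S), V)$ concentrated in degree $0$ (since $c_0(S)$ is projective in $\mathsf{CBorn}_K$ by Proposition \ref{prop:CbornKproperties}), and when $V$ is Banach the conuclearity-type identity gives $\underline{\operatorname{Hom}}_K(c_0(S), K)\widehat{\otimes}_K V \xrightarrow{\sim} \underline{\operatorname{Hom}}_K(c_0(S), V)$; more precisely I want the cleaner statement that for any Banach $V$, $\underline{\operatorname{Hom}}_K(c_0(S), V) \cong \ell^\infty(S, V)$ and $\underline{\operatorname{Hom}}_K(c_0(S),K)\widehat{\otimes}_K V$ is the closure of the algebraic tensor, which maps in. The subtlety is that this algebraic-vs-completed issue is exactly why we need to pass to the \emph{colimit over $m$}: the maps $K\langle dx/p^{m}\rangle \to K\langle dx/p^{m+1}\rangle$ are ``compact'' (they factor through finite-rank-in-the-limit behaviour via the shrinking polydiscs), so in the colimit/limit the distinction washes out — this is the content of $K\langle dx/p^\infty\rangle$ being of compact type and $K\langle p^\infty\partial\rangle$ being conuclear. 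Concretely, I would show the map \eqref{eq:frechetiso} is, at the level of the explicit presentations, the assembly $V \widehat{\otimes}_K \lim_m K\langle p^m\partial\rangle \to \lim_m \big(V\widehat{\otimes}_K K\langle p^m\partial\rangle\big) \to \lim_m \underline{\operatorname{Hom}}_K(K\langle dx/p^m\rangle, V)$, and argue each arrow is an equivalence: the second because each $K\langle p^m\partial\rangle$ is a nuclear Fréchet-type object so $V\widehat{\otimes}_K K\langle p^m\partial\rangle \to \underline{\operatorname{Hom}}_K(K\langle dx/p^m\rangle, V)$ is already an iso for $V$ Banach and passes to the Fréchet $V$ by writing $V = \lim_k V_k$, and the first because $V\widehat{\otimes}_K(-)$ commutes with this particular countable limit (here is where Fréchet-ness of $V$ and the Mittag-Leffler/metrizability input from Lemma \ref{lem:frechetAleph1} is used to kill $\lim^1$-type obstructions and to control the completed tensor product).

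For the second assertion — that $V\widehat{\otimes}^{\mathbf L}_K K\langle p^\infty\partial\rangle \to V\widehat{\otimes}_K K\langle p^\infty\partial\rangle$ is an equivalence with both sides in degree $0$ — I would argue that $K\langle p^\infty\partial\rangle$, being a countable inverse limit of orthonormalizable (hence flat) Banach spaces along surjective maps, is itself flat in $\mathsf{CBorn}_K$, or at least that $\operatorname{Tor}^{\mathsf{CBorn}_K}_i(V, K\langle p^\infty\partial\rangle) = 0$ for $i>0$ when $V$ is Fréchet; equivalently, resolve $V$ by a complex of $c_0$-spaces (using Proposition \ref{prop:CbornKproperties}(ii)) and observe that tensoring with each $K\langle p^m\partial\rangle$ is exact on the relevant strict complexes and that the limit over $m$ of these exact complexes stays strictly exact because the transition maps are strict epimorphisms (invoking Proposition \ref{prop:CbornKproperties}(iii) on essentially monomorphic filtered systems, applied to the appropriate presentation). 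I expect the \textbf{main obstacle} to be precisely this interchange of the derived completed tensor product $V\widehat{\otimes}^{\mathbf L}_K(-)$ with the countable inverse limit $\lim_m(-)$: completed tensor products do not commute with infinite limits in general, and the argument genuinely relies on the compact/nuclear nature of the transition maps $K\langle p^m\partial\rangle \to K\langle p^{m-1}\partial\rangle$ together with metrizability of $V$ — so I would isolate this as a lemma about $\widehat{\otimes}^{\mathbf L}_K$ against conuclear Fréchet objects, proved by reduction to the Banach case and an explicit $\lim^1$-vanishing estimate coming from the ultrametric inequality.
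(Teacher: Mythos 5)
Your overall architecture matches the paper's: turn $R\underline{\operatorname{Hom}}_K(\operatorname{colim}_m K\langle dx/p^m\rangle, V)$ into a countable limit, identify each Banach stage via the pairing $((dx)^\alpha,\partial^\beta)=\alpha!\delta_{\alpha\beta}$, and invoke (strong) flatness of $K\langle p^\infty\partial\rangle$ for the second assertion — the latter is exactly \cite[Corollary 5.36]{bode_six_2021}, which the paper cites rather than reproves; note that your sketch of it (``a countable inverse limit of flat Banach spaces along surjections is flat'') is not a formal fact and Proposition \ref{prop:CbornKproperties}(iii) cannot be applied to an inverse system, so that step should simply be outsourced to Bode.

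The genuine gap is at the step you yourself flag as the main obstacle: passing from Banach $V$ to Fr\'echet $V$, i.e.\ commuting $V\widehat{\otimes}_K(-)$ (equivalently the relevant $R\underline{\operatorname{Hom}}$ computation) with the countable limit over $m$. You propose to do this via the topological presentation $V=\lim_k V_k$ as a countable inverse limit of Banach spaces plus an ad hoc $\varprojlim^1$ estimate, which is left unproved and is exactly the kind of interchange of $\widehat{\otimes}$ with infinite limits that fails in general. The paper resolves this with a different, and cleaner, decomposition that your proposal does not use: by Lemma \ref{lem:frechetAleph1}(ii) a Fr\'echet space is \emph{metrizable as a bornological space}, so $V\simeq\operatorname{colim}_{[B]\in\mathfrak{S}(V)}V_B$ is an $\aleph_1$-filtered colimit of Banach spaces (a colimit, not a limit), this colimit is already the colimit in $D(\mathsf{CBorn}_K)$ by Proposition \ref{prop:CbornKproperties}(iii), and countable limits commute with $\aleph_1$-filtered colimits for formal reasons (Proposition \ref{prop:Univprops1}(vii) together with Proposition \ref{prop:compactgenprops}(ii)). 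This reduces the whole statement to the case of a single Banach space $V_B$, where $R^1\lim$ vanishes by the Mittag--Leffler result \cite[Theorem 5.24]{bode_six_2021} and the identification $V_B\widehat{\otimes}_K K\langle p^\infty\partial\rangle\cong\lim_n(V_B\widehat{\otimes}_K K\langle p^n\partial\rangle)$ is an explicit computation with rapidly decreasing series; one then concludes because $\widehat{\otimes}^{\mathbf L}_K$ commutes with colimits in each variable. In short: the role of Lemma \ref{lem:frechetAleph1} is not to ``kill $\varprojlim^1$ obstructions'' but to supply the $\aleph_1$-filtered \emph{colimit} presentation that makes the limit--colimit interchange formal; without that idea your argument does not close.
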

\begin{proof}
By Lemma \ref{lem:frechetAleph1} $V$ can be presented as an $\aleph_1$-filtered colimit 
\begin{equation}
    V \simeq \underset{[B] \in \mathfrak{S}(V)}{\operatorname{colim}} V_B
\end{equation} 
of Banach spaces. By Proposition \ref{prop:CbornKproperties}(iii) this is even the colimit in $D(\mathsf{CBorn}_K)$. Using this together with the fact that we can exchange countable limits with $\aleph_1$-filtered colimits (by Proposition \ref{prop:Univprops1}(vii) and Proposition \ref{prop:compactgenprops}(ii)), we obtain 
\begin{equation}
\begin{aligned}
     R \underline{\operatorname{Hom}}_K(K \langle dx/p^\infty \rangle, V) &\simeq R\underset{n}{\operatorname{lim}}L\underset{[B]}{\operatorname{colim}}  \underline{\operatorname{Hom}}_K(K \langle dx/p^n \rangle, V_B) \\
     &\simeq L\underset{[B]}{\operatorname{colim}}R\underset{n}{\operatorname{lim}} \underline{\operatorname{Hom}}_K(K \langle dx/p^n \rangle, V_B).
\end{aligned}
\end{equation}
Now cofinality together with the Mittag-Leffler result of \cite[Theorem 5.24]{bode_six_2021} implies that
\begin{equation}
    R\underset{n}{\operatorname{lim}} \underline{\operatorname{Hom}}_K(K \langle dx/p^n \rangle, V_B) \simeq R\underset{n}{\operatorname{lim}} ( V_B  \widehat{\otimes}_KK \langle  p^n \partial\rangle ) \simeq \underset{n}{\operatorname{lim}} ( V_B \widehat{\otimes}_K K \langle p^n \partial\rangle  ).
\end{equation}
viewed as an object in degree $0$. Next we note that 
\begin{equation}
   V_B \widehat{\otimes}_K K\langle p^\infty \partial\rangle\cong  \underset{n}{\operatorname{lim}} (V_B \widehat{\otimes}_K K\langle p^n \partial \rangle),
\end{equation}
in $\mathsf{CBorn}_K$, because both sides can be written as rapidly decreasing series with coefficients in the Banach space $V_B$. Because $K\langle p^\infty \partial\rangle$ is \emph{strongly flat} \cite[Corollary 5.36]{bode_six_2021}, we obtain $V_B \widehat{\otimes}_K K\langle p^\infty \partial\rangle \simeq V_B \widehat{\otimes}_K^\mathbf{L} K\langle p^\infty \partial\rangle$. Hence we may conclude by using that $\widehat{\otimes}^\mathbf{L}_K$ is compatible with colimits (separately in each variable). 
\end{proof}
Let us continue to recall some objects and morphisms introduced in \cite[\S4.16]{soor_six-functor_2024}. We previously defined the \emph{germ of the zero section} as an object of $\mathsf{dAff}$:
\begin{equation}
    (X \subseteq TX)^\dagger = \operatorname{dSp}( A\widehat{\otimes}_K K \langle dx/p^\infty \rangle).
\end{equation}
There is a augmentation $\varepsilon: A\widehat{\otimes}_K K \langle dx/p^\infty \rangle \to A$ and two algebra morphisms $\sigma, \tau: A \to A\widehat{\otimes}_K K \langle dx/p^\infty \rangle$. One has 
\begin{equation}
    \sigma:= \operatorname{id} \otimes 1 : A \to A \widehat{\otimes}_KK \langle dx/p^\infty\rangle,
\end{equation}
which gives the \emph{left} $A$-module structure on $A \widehat{\otimes}_KK \langle dx/p^\infty\rangle$. The algebra morphism $\tau$ sends a function to its Taylor series:
\begin{equation}
    \tau(a) := \sum_{\alpha \in \mathbf{N}^r} \frac{1}{\alpha!} \partial^\alpha (a)\otimes (dx)^\alpha.  
\end{equation}
This gives the \emph{right} $A$-module structure on $A \widehat{\otimes}_KK \langle dx/p^\infty\rangle$. There is an algebra morphism
\begin{equation}\label{eq:Psimorphism}
    \psi: A \widehat{\otimes} K \langle dx/p^\infty\rangle \to (A \widehat{\otimes}_K K \langle dx/p^\infty\rangle) \widehat{\otimes}_A (A \widehat{\otimes}_K K \langle dx/p^\infty\rangle)
\end{equation}
determined by $\psi(a\otimes 1) = (a \otimes 1) \otimes (1 \otimes 1)$ and 
\begin{equation}
    \psi(1 \otimes dx_i) = (1\otimes dx_i) \otimes (1 \otimes 1) + (1 \otimes 1) \otimes (1 \otimes dx_i). 
\end{equation}
It is important to remember that the tensor product \eqref{eq:Psimorphism} is taken with respect to the right $A$-module structure (via $\tau$) on the first factor, and the left  $A$-module structure on the second factor. Using the morphisms $\varepsilon, \sigma, \tau, \psi$ we obtain a groupoid object
\begin{equation}
\begin{tikzcd}
	\cdots & {(X \subseteq TX)^\dagger \times_X (X \subseteq TX)^\dagger} & {(X \subseteq TX)^\dagger} & X
	\arrow[shift right, from=1-1, to=1-2]
	\arrow[shift left, from=1-1, to=1-2]
	\arrow[shift right=3, from=1-1, to=1-2]
	\arrow[shift left=3, from=1-1, to=1-2]
	\arrow[from=1-2, to=1-3]
	\arrow[shift left=2, from=1-2, to=1-3]
	\arrow[shift right=2, from=1-2, to=1-3]
	\arrow[shift right, from=1-3, to=1-4]
	\arrow[shift left, from=1-3, to=1-4]
\end{tikzcd}
\end{equation}
where we suppressed the degeneracy maps and we emphasise that the fiber product is taken with respect to $\sigma$ and $\tau$. Let us call this groupoid object $\operatorname{exp}(\mathcal{T}_X)$. Now we recall the following from \cite[Theorem 4.97]{soor_six-functor_2024}:
\begin{thm}\label{thm:ExpTX}
With notations as above. The morphism
\begin{equation}
    A \widehat{\otimes}_K K \langle dx/p^\infty \rangle \to (A \widehat{\otimes}_KA)^\dagger_\Delta
\end{equation}
is an equivalence in $\mathsf{dAlg}$ and induces an equivalence 
\begin{equation}
    \operatorname{exp}(\mathcal{T}_X) \simeq (X \subseteq X^{\bullet +1})^\dagger
\end{equation}
of simplicial objects in $\mathsf{dAff}$.
\end{thm}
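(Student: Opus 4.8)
The plan is to prove the algebra statement first, by a local computation using the étale coordinates, and then bootstrap to the simplicial statement. \emph{Reduction:} the germ $(X\subseteq X\times X)^\dagger$ depends only on an admissible open neighbourhood of $\Delta X$ in $X\times X$ — if $W\supseteq\Delta X$ is such, the affinoid subdomains of $X\times X$ contained in $W$ and with underlying set containing $|\Delta X|$ are cofinal among all affinoid neighbourhoods of $|\Delta X|$, since $|\Delta X|$ is quasi-compact — so $(A\widehat{\otimes}^{\mathbf L}_KA)^\dagger_\Delta\simeq\operatorname{colim}_{U\subseteq W}(A\widehat{\otimes}^{\mathbf L}_KA)_U$. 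Put $z_i:=1\otimes x_i-x_i\otimes 1\in A\widehat{\otimes}_KA$; their common zero locus is $X\times_{\mathbf{D}^r_K}X$, and since $X\to\mathbf{D}^r_K$ is étale (hence unramified) the relative diagonal $\Delta X\hookrightarrow X\times_{\mathbf{D}^r_K}X$ is an open immersion, while it is also a closed immersion because affinoids are separated. Thus $\Delta X$ is open and closed in $\{z=0\}$ and we may fix $W\supseteq\Delta X$ admissible open in $X\times X$ with $W\cap\{z=0\}=\Delta X$.

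\emph{Étale rigidity:} consider $f\colon X\times X\to X\times\mathbf{D}^r_K$, $(u,v)\mapsto(u,(x_i(v)-x_i(u))_i)$. Both its source and its target are étale over $\mathbf{D}^r_K\times\mathbf{D}^r_K$ — the target via $x$ followed by the shear $(\eta,\zeta)\mapsto(\eta,\eta+\zeta)$ — and $f$ is a morphism over $\mathbf{D}^r_K\times\mathbf{D}^r_K$, so $f$ is étale. It carries $\Delta X$ isomorphically onto $X\times\{0\}$, and $f^{-1}(X\times\{0\})=\{z=0\}$, so $f|_W$ is étale with fibre $\Delta X\xrightarrow{\sim}X\times\{0\}$ over the zero section; after shrinking $W$ around $\Delta X$, $f|_W$ is injective, hence (being étale and injective) an open immersion onto an admissible open $W'\supseteq X\times\{0\}$ in $X\times\mathbf{D}^r_K$. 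Transporting through $f|_W$ and using that the polydisc neighbourhoods $X\times\mathbf{D}^r_{K,|p|^m}$ of the zero section are cofinal (and lie in $W'$ for $m\gg 0$) gives $(X\subseteq X\times X)^\dagger\simeq\operatorname{colim}_m\operatorname{dSp}(A\widehat{\otimes}_KK\langle dx/p^m\rangle)=\operatorname{dSp}(A\widehat{\otimes}_KK\langle dx/p^\infty\rangle)$. Under this identification $a\otimes 1\leftrightarrow\sigma(a)$, while the class of $1\otimes a$ is forced to equal its convergent Taylor series $\sum_{\alpha\in\mathbf{N}^r}\frac{1}{\alpha!}\partial^\alpha(a)\otimes(dx)^\alpha=\tau(a)$ — this is exactly what the rigidity of $f$ says — so the comparison respects the two $A$-algebra structures and the augmentation. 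All objects in sight are classical affinoid algebras ($A$ is $K$-flat, rational localizations of classical affinoids are classical, and a countable filtered colimit of monomorphisms of these is strongly exact by Proposition~\ref{prop:CbornKproperties}(iii), hence again in degree $0$), so this is an isomorphism in $\mathsf{dAlg}$, proving the first assertion.

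\emph{The simplicial statement:} the same argument runs at every level. The map $X^{n+1}\to X\times\mathbf{D}^{rn}_K$, $(u_0,\dots,u_n)\mapsto(u_0,(x_i(u_j)-x_i(u_0))_{i,1\leqslant j\leqslant n})$, is étale and restricts to an isomorphism on $\Delta X$, so the rigidity argument identifies $(X\subseteq X^{n+1})^\dagger$ with the $n$-fold composable fibre product $(X\subseteq TX)^\dagger\times_X\cdots\times_X(X\subseteq TX)^\dagger$ along $(\tau,\sigma)$, i.e.\ with $\operatorname{exp}(\mathcal{T}_X)_n$. One then checks the face and degeneracy maps match; this is the (elementary but notation-heavy) combinatorics of the difference coordinates, under which the $j$-th composition of infinitesimal paths becomes addition of the corresponding $dx^{(j)}$ — the meaning of $\psi$ and of the name $\operatorname{exp}$ — and the only geometric content is the case $n=1$, where it is the compatibility of the first assertion with $(\widetilde{\pi}_1,\widetilde{\pi}_2)$ and the diagonal already noted. (Equivalently: both simplicial objects satisfy the Segal condition, checkable after the fully faithful, limit-preserving $\mathsf{dAff}\hookrightarrow\mathsf{PStk}$, and the comparison is already an equivalence on $0$- and $1$-simplices.) This gives $\operatorname{exp}(\mathcal{T}_X)\simeq(X\subseteq X^{\bullet+1})^\dagger$.

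\emph{Main obstacle:} the crux is the rigidity step — the rigid-analytic fact that an étale morphism which is an isomorphism on (a neighbourhood of) $\Delta X$ is an isomorphism on an admissible open neighbourhood, equivalently the quantitative statement that the Taylor series $\sum_\alpha\frac{1}{\alpha!}\partial^\alpha(a)(dx)^\alpha$ converges on the radius-$|p|^m$ polydisc neighbourhood of the diagonal for $m$ large. This is where the étale hypothesis is genuinely used and where the statement would fail otherwise; the remaining ingredients (cofinality of neighbourhoods, bookkeeping of the bimodule structures, degree-$0$ concentration) are routine.
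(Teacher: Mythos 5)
This theorem is not proved in the paper at all: it is recalled verbatim from \cite[Theorem 4.97]{soor_six-functor_2024}, so there is no in-paper argument to compare yours against. Judged on its own terms, your proof is essentially the standard (and, as far as I can tell, the intended) argument: use the \'etale coordinates to build the ``difference map'' $f:(u,v)\mapsto(u,x(v)-x(u))$, observe it is \'etale because it is a morphism of spaces \'etale over $\mathbf{D}^r_K\times\mathbf{D}^r_K$, use clopen-ness of $\Delta X$ in $X\times_{\mathbf{D}^r_K}X$ plus \'etale rigidity to identify a neighbourhood of the diagonal with a neighbourhood of the zero section, and then invoke cofinality of the polydisc tubes together with convergence of the Taylor expansion $\tau$; the degree-$0$ concentration via flatness and strong exactness of monomorphic filtered colimits (Proposition \ref{prop:CbornKproperties}(iii)) is handled correctly, and you rightly identify the quantitative Taylor-convergence as the only genuinely non-formal input. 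Two points deserve slightly more care than you give them. First, for the tube/cofinality argument you need your auxiliary neighbourhood $W$ to be quasi-compact (e.g.\ a finite union of affinoids, or the Zariski-open complement of $\{z=0\}\setminus\Delta X$ replaced by a rational thickening $\{|e|\geqslant\varepsilon\}$ of the idempotent $e$ cutting out $\Delta X$); a general admissible open $W$ does not let you run the ``$\max_i|z_i|$ is bounded below off $U$'' argument, and $\{|z|\leqslant|p|^m\}\cap W$ need not be affinoid. Second, ``one checks the face and degeneracy maps match'' hides potential $\infty$-categorical coherence issues; the saving grace, which you should state explicitly, is that all the algebras in sight are concentrated in degree $0$, so both simplicial objects lie in the image of the nerve of an ordinary simplicial diagram of classical affinoid algebras and the verification is honestly $1$-categorical. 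Neither issue is a real gap, but both are exactly the places where a referee would push.
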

In the following Proposition we improve \cite[Proposition 4.102]{soor_six-functor_2024} to an equivalence of algebra objects (not just $A$-modules). 
\begin{prop}\label{prop:algebraiso}
Let $X = \operatorname{Sp}(A)$ be a classical affinoid equipped with an \'etale morphism $X \to \mathbf{D}^r_K$. There is an equivalence $\mathcal{D}^\infty_X1_X \simeq \wideparen{\mathcal{D}}_X(X)$ of algebra objects in $\operatorname{QCoh}(X)$.
\end{prop}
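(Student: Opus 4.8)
The plan is to make the abstract algebra structure of Lemma~\ref{lem:naturaltransformlema} completely explicit via Cartier duality, and then recognize the result as Ardakov--Wadsley's ring. First I would combine the Remark following Lemma~\ref{lem:naturaltransformlema} with Theorem~\ref{thm:ExpTX}: the object $(A\widehat{\otimes}_K A)^\dagger_\Delta$, a coalgebra under convolution in $\operatorname{QCoh}(X\times X)$, is identified with $C := A\widehat{\otimes}_K K\langle dx/p^\infty\rangle$ equipped with the counit $\varepsilon$ and the comultiplication $\psi$ of \eqref{eq:Psimorphism} --- these being, respectively, the augmentation and the composition law of the groupoid object $\operatorname{exp}(\mathcal{T}_X)$, whose equivalence with $(X\subseteq X^{\bullet+1})^\dagger$ as \emph{simplicial} objects (hence as convolution coalgebras) is exactly the content of Theorem~\ref{thm:ExpTX}. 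Applying the composite of \eqref{eq:FMexample1}, \eqref{eq:FMexample2} and the right adjoint of \eqref{eq:FMexample3} then presents $\mathcal{D}^\infty_X 1_X$ as the convolution-dual algebra $R\underline{\operatorname{Hom}}_A({}_\sigma C, A)$, the multiplication being the convolution product dual to $\psi$, with the middle $\widehat{\otimes}_A$ balanced by the Taylor map $\tau$ on the first leg and by $\sigma$ on the second. Using the free-module adjunction to strip off the first tensor factor, then Lemma~\ref{lem:frechetiso} with $V = A$ (which is Banach, hence Fr\'echet), the underlying object becomes $A\widehat{\otimes}_K K\langle p^\infty\partial\rangle$, concentrated in degree $0$; this recovers the underlying-object equivalence $\mathcal{D}^\infty_X 1_X \simeq \wideparen{\mathcal{D}}_X(X)$ of \cite[Proposition 4.102]{soor_six-functor_2024}.

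It remains to match the two algebra structures on this common degree-$0$ object. Since the relevant mapping spaces in $D(\mathsf{CBorn}_K)$ are discrete, it suffices to exhibit an isomorphism of ordinary associative $K$-algebras compatible with the $A$-action, and I would obtain it from the universal property of the Rinehart enveloping algebra. The augmentation $\varepsilon$ dualizes to an $A$-algebra map $A\to\mathcal{D}^\infty_X 1_X$, and the primitivity of the $dx_i$ under $\psi$ --- together with the fact that the right $A$-module leg of $C$ is $\tau$ --- dualizes to an $A$-linear morphism $\mathcal{T}_X\to\mathcal{D}^\infty_X 1_X$. The bulk of the computation is checking Rinehart's relations $\xi\cdot a - a\cdot\xi = \rho(\xi)(a)$ and $\xi\cdot\eta - \eta\cdot\xi = [\xi,\eta]$: the first is exactly where $\tau$ enters, as convolving the delta-distribution $\phi_a$ past the primitive distribution $\phi_{\partial_i}$ produces the correction term $\partial_i(a)$, and the second holds because the coordinate vector fields of the polydisk commute. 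Granting the relations, the universal property yields an algebra map $U(\mathcal{T}_X) = A\langle\partial_1,\dots,\partial_r\rangle\to\mathcal{D}^\infty_X 1_X$ from the finite-order differential operators; since $\psi$ makes the order filtration multiplicative on the dual this is filtered, and since $K\langle p^\infty\partial\rangle = \lim_n K\langle p^n\partial\rangle$ has the polynomials as a dense subalgebra (dually to $K\langle dx/p^\infty\rangle = \operatorname{colim}_n K\langle dx/p^n\rangle$) it extends by continuity to $\wideparen{\mathcal{D}}_X(X) = \lim_n\mathcal{D}^n_X(X)$. The extension is an equivalence by the underlying-object identification above, and intertwines the two algebra structures by construction; reading this off as a morphism of algebra objects in $\operatorname{QCoh}(X\times X)$, hence in $\operatorname{QCoh}(X)$ via either module leg, finishes the proof.

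I expect the main obstacle to be the bookkeeping of the $A$-bimodule structures through the machinery of Appendix~\ref{sec:monoidsfromMonads} (the functor $\kappa$ of Corollary~\ref{cor:Bimodmonoidal} and the adjoint Fourier--Mukai transforms of the Remark): one must be certain that the convolution multiplication on $\mathcal{D}^\infty_X 1_X$ really is the $\psi$-dual with the $\tau$-leg and the $\sigma$-leg in the positions indicated, since it is precisely this asymmetry that produces the noncommutativity of $\wideparen{\mathcal{D}}_X(X)$ --- a symmetric choice would instead give the commutative algebra $A\widehat{\otimes}_K K\langle p^\infty\partial\rangle$ with $A$ central. A secondary, more routine point is justifying the simultaneous passage to the dense subalgebra of finite-order operators and the continuity of both products, so that agreement on $A\langle\partial_1,\dots,\partial_r\rangle$ propagates to the completion; on the $\mathcal{D}^\infty_X 1_X$ side this uses compatibility of $\psi$ with the truncations $K\langle dx/p^\infty\rangle\to K\langle dx/p^n\rangle$, and on the $\wideparen{\mathcal{D}}_X(X)$ side it is standard from \cite{Dcap1}.
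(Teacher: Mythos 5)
Your proposal is correct and its first half coincides with the paper's proof: both reduce, via Theorem~\ref{thm:ExpTX} and the machinery of Lemma~\ref{lem:naturaltransformlema}, to the explicit model $R\underline{\operatorname{Hom}}_A(A\widehat{\otimes}_K K\langle dx/p^\infty\rangle, A)$ with multiplication dual to $\psi$ and with the bimodule legs given by $\sigma$ and $\tau$, and both then invoke Lemma~\ref{lem:frechetiso} (with $V=A$) to conclude that the underlying object is $A\widehat{\otimes}_K K\langle p^\infty\partial\rangle$ concentrated in degree $0$, so that the comparison becomes a $1$-categorical statement. Where you diverge is in matching the two multiplications: the paper computes directly on the topological basis, showing $m(\partial^\alpha\otimes\partial^\beta)=\partial^{\alpha+\beta}$ from the primitivity of the $dx_i$ and then $\partial^\alpha.a=\sum_{\beta+\gamma=\alpha}\binom{\alpha}{\beta}\partial^\beta(a)\partial^\gamma$ from the $\tau$-balancing, whence $m(f\partial^\alpha\otimes g\partial^\beta)$ is the Ardakov--Wadsley product; you instead verify the Rinehart relations for $A\to\mathcal{D}^\infty_X1_X$ and $\mathcal{T}_X\to\mathcal{D}^\infty_X1_X$, invoke the universal property of $U(\mathcal{T}_X)$, and extend by density. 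The computational core is identical (your commutator $\xi\cdot a-a\cdot\xi=\rho(\xi)(a)$ is exactly the paper's formula for $\partial^\alpha.a$ in the case $|\alpha|=1$), and your route is arguably more conceptual, but it shifts weight onto the completion step: one must know that both products are bounded bilinear maps and that agreement on the finite-order operators propagates to $\wideparen{\mathcal{D}}_X(X)=\varprojlim_n\mathcal{D}^n_X(X)$, a point where bornological versus topological density requires a word (harmless here since everything in sight is Fr\'echet and $(-)^b$ is fully faithful on complete metrizable spaces). The paper's basis computation quietly uses the same continuity to pass from finite sums $f\partial^\alpha$ to general elements, so neither argument is strictly more elementary; yours makes the analytic input explicit, the paper's makes the combinatorial input explicit. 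Your closing worry about which leg carries $\tau$ is exactly the right thing to worry about, and the paper resolves it the same way you indicate, via the formula $(\eta\widetilde{\otimes}\eta')(j\otimes j')=\eta(\tau(\eta'(j'))j)$.
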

\begin{proof}
Let $q: X \to \operatorname{X}/\operatorname{exp}(\mathcal{T}_X)$ be the canonical morphism. Thanks to Theorem \ref{thm:ExpTX} and the construction of Lemma \ref{lem:naturaltransformlema} we know that there is an equivalence of algebra objects\footnote{In the category of $A$-$A$ bimodule objects with the convolution monoidal structure.} $\mathcal{D}^\infty_X1_X \simeq q^!q_!1_X$. Now by base-change one has 
\begin{equation}
    q^!q_! 1_X \simeq R\underline{\operatorname{Hom}}_A(A \widehat{\otimes}_KK \langle dx/p^\infty \rangle , A)
\end{equation}
and using Lemma \ref{lem:frechetiso} above we know that 
\begin{equation}\label{eq:Dcapiso}
   \wideparen{\mathcal{D}}_X(X) \simeq A \widehat{\otimes}_K K \langle p^\infty \partial \rangle  \simeq R\underline{\operatorname{Hom}}_A(A \widehat{\otimes}_KK \langle dx/p^\infty \rangle , A), 
\end{equation}
as left $A$-modules, so we deduce that $q^!q_! 1_X$ is (strongly) flat and concentrated in degree $0$. In particular it comes from an algebra object of the ordinary category ${}_A\operatorname{BMod}_A\mathsf{CBorn}_K$. We recall that the isomorphism \eqref{eq:Dcapiso} identifies each $\partial^\alpha$ with the $A$-linear map determined by 
\begin{equation}\label{eq:idenpartial}
    \partial^\alpha((dx)^\beta)=\alpha! \delta_{\alpha \beta}.
\end{equation}
We recall the definitions of the algebra morphisms $\varepsilon, \sigma, \tau$ and $\psi$ from the previous page. We regard $\underline{\operatorname{Hom}}_A(A \widehat{\otimes}_K K \langle dx/p^\infty\rangle, A)$ as a left $A$-module via $(a.\eta)(-) := \eta(\sigma(a) \cdot -)$ and as a right $A$-module via $(\eta.a)(-) := \eta(\tau(a)\cdot-)$, for $a \in A$ and $\eta \in \underline{\operatorname{Hom}}_A(A \widehat{\otimes}_K K \langle dx/p^\infty\rangle, A)$.  Under the identification \eqref{eq:idenpartial}, the right and left actions become 
\begin{equation}
\begin{aligned}
    a.\partial^\alpha = a\partial^\alpha && \text{ and } && \partial^\alpha.a = \sum_{\beta + \gamma = \alpha} \binom{\beta}{ \alpha} \partial^\beta(a)\partial^\gamma. 
\end{aligned}
\end{equation}
We need to check that the composite 
\begin{multline}\label{eq:composite}
\underline{\operatorname{Hom}}_A(A \widehat{\otimes}_K K \langle dx/p^\infty\rangle, A) \widehat{\otimes}_A \underline{\operatorname{Hom}}_A(A \widehat{\otimes}_K K \langle dx/p^\infty\rangle, A) \\
\to \underline{\operatorname{Hom}}_A((A \widehat{\otimes}_K K \langle dx/p^\infty\rangle) \widehat{\otimes}_A (A \widehat{\otimes}_K K \langle dx/p^\infty\rangle),A)\\
\xrightarrow[]{\psi^\lor} \underline{\operatorname{Hom}}_A(A \widehat{\otimes}_K K \langle dx/p^\infty\rangle, A)
\end{multline}
agrees with the multiplication on $\wideparen{\mathcal{D}}_X(X)$. In the first line, the tensor product is taken with respect to the right $A$-module structure on the first factor and the left $A$-module structure on the second factor. To be completely explicit, the first morphism sends $\eta \otimes \eta^\prime$ to the morphism $\eta \widetilde{\otimes} \eta^\prime$ determined by 
\begin{equation}
   (\eta \widetilde{\otimes}\eta^\prime)(j\otimes j^\prime) := (\eta.\eta^\prime(j^\prime))(j) = \eta(\tau(\eta^\prime(j^\prime))j),
\end{equation}
for $j, j^\prime \in A \widehat{\otimes}_K K \langle dx/p^\infty\rangle$. One checks that 
\begin{equation}\label{eq:partialproduct}
    \partial^\alpha \widetilde{\otimes} \partial^\beta((1\otimes dx)^\gamma \otimes (1\otimes dx)^\epsilon) = \alpha! \beta!\delta_{\alpha \gamma} \delta_{\beta \epsilon}.
\end{equation} 
Let us denote the composite \eqref{eq:composite} by $m$. It follows from \eqref{eq:partialproduct} that
\begin{equation}
\begin{aligned}
    m(\partial^\alpha\otimes\partial^\beta)((1 \otimes dx)^\gamma) &= (\partial^\alpha\widetilde{\otimes}\partial^\beta)(\psi((1\otimes dx)^\gamma)) \\ 
    &= (\partial^\alpha\widetilde{\otimes}\partial^\beta)\Big(\binom{\gamma}{\delta} (1 \otimes dx)^\delta \otimes (1 \otimes dx)^\epsilon\Big)\\
    &= \gamma ! \delta_{\alpha + \beta, \gamma } \\
    & = \partial^{\alpha + \beta }((1\otimes dx)^\gamma),
\end{aligned}
\end{equation}
so that $m(\partial^\alpha\otimes\partial^\beta) = \partial^{\alpha + \beta }$. Using this together with the fact that $m$ is balanced and left $A$-linear, one has 
\begin{equation}
\begin{aligned}
    m(f\partial^\alpha \otimes g\partial^\beta) &= m((f.\partial^\alpha.g) \otimes \partial^\alpha) \\
    &=  m\Big( \sum_{\eta + \nu = \alpha} \binom{\alpha}{\eta} f \partial^\eta(g)\partial^\nu \otimes \partial^\beta\Big) \\
    &= \sum_{\eta + \nu = \alpha} \binom{\alpha}{\eta} f \partial^\eta(g)\partial^{\nu+\beta}.
\end{aligned}
\end{equation}
Therefore $m(f\partial^\alpha \otimes g\partial^\beta) = \sum_{\eta + \nu = \alpha} \binom{\alpha}{\eta}f \partial^\eta(g) \partial^{\nu + \beta}$ agrees with the multiplication on $\wideparen{\mathcal{D}}_X(X)$, as required. 
\end{proof}

\begin{thm}\label{thm:PNFtensor}
Suppose that $X = \operatorname{Sp}(A)$ is a smooth classical affinoid equipped with an \'etale morphism $X \to \mathbf{D}^r_K$. Let $M^\bullet \in \operatorname{Fr}(X)$. Then the canonical morphism
\begin{equation}
     M^\bullet \widehat{\otimes}_X \mathcal{D}^\infty_X1_X \to \mathcal{D}^\infty_XM^\bullet
\end{equation}
of Lemma \ref{lem:naturaltransformlema}, is an equivalence.
\end{thm}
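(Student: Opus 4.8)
\emph{Strategy.} The plan is to make both sides of the map completely explicit and thereby reduce to the functional-analytic statement of Lemma \ref{lem:frechetiso}, and then to propagate that statement from a single Fréchet space to an arbitrary complex with Fréchet cohomology by a dévissage along Postnikov towers.

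\emph{Identifying the two sides.} For the right-hand side, \eqref{eq:underlyingendofunctor} gives $\mathcal{D}^\infty_X M^\bullet \simeq R\underline{\operatorname{Hom}}_A((A\widehat{\otimes}_KA)^\dagger_\Delta, M^\bullet)$, the $\operatorname{Hom}$ being formed via the $A$-module structure on the first factor. By Theorem \ref{thm:ExpTX} that first factor is $A\widehat{\otimes}_K K\langle dx/p^\infty\rangle$ with its module structure via $\sigma = \operatorname{id}\otimes 1$, hence free over $A$, so the free--forgetful adjunction yields $\mathcal{D}^\infty_X M^\bullet \simeq R\underline{\operatorname{Hom}}_K(K\langle dx/p^\infty\rangle, M^\bullet)$ in $D(\mathsf{CBorn}_K)$. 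For the left-hand side, Proposition \ref{prop:algebraiso} identifies $\mathcal{D}^\infty_X 1_X \simeq \wideparen{\mathcal{D}}_X(X)$, concentrated in degree $0$; the Poincaré--Birkhoff--Witt basis $\{\partial^\alpha\}$ together with strong flatness of $K\langle p^\infty\partial\rangle$ (\cite[Cor. 5.36]{bode_six_2021}) exhibits it as a topologically free \emph{right} $A$-module $\wideparen{\mathcal{D}}_X(X) \simeq K\langle p^\infty\partial\rangle \widehat{\otimes}_K A$. Since the tensor product in the theorem is taken over the right $A$-module structure, this gives $M^\bullet \widehat{\otimes}_X \mathcal{D}^\infty_X 1_X \simeq M^\bullet \widehat{\otimes}^\mathbf{L}_K K\langle p^\infty\partial\rangle$. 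One then has to check that, under these identifications, the canonical morphism of Lemma \ref{lem:naturaltransformlema} becomes the morphism $M^\bullet \widehat{\otimes}^\mathbf{L}_K K\langle p^\infty\partial\rangle \to R\underline{\operatorname{Hom}}_K(K\langle dx/p^\infty\rangle, M^\bullet)$ induced by the duality pairing $((dx)^\alpha,\partial^\beta) = \alpha!\,\delta_{\alpha\beta}$: unwinding the counit-of-lax-linearity construction, on underlying objects $m\otimes\partial^\alpha$ is sent to the $A$-linear map $(dx)^\beta \mapsto \alpha!\,\delta_{\alpha\beta}\,m$, as required.

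\emph{Reducing to Lemma \ref{lem:frechetiso}.} It remains to prove that this pairing morphism is an equivalence for every $M^\bullet$ with Fréchet cohomology. For $M^\bullet$ a single Fréchet space in degree $0$ this is exactly Lemma \ref{lem:frechetiso}. The source and target functors of $M^\bullet$ are both exact, the morphism between them is natural, and the truncations $\tau^{\geqslant n}M^\bullet$ again lie in $\operatorname{Fr}(X)$; so, using the fibre sequences $\tau^{\geqslant n+1}M^\bullet \to \tau^{\geqslant n}M^\bullet \to H^n(M^\bullet)[-n]$ and induction on cohomological amplitude, the morphism is an equivalence on all \emph{bounded} complexes with Fréchet cohomology. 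For a general $M^\bullet$ one writes it as a limit of its truncations, using completeness of the $t$-structure (Proposition \ref{prop:Univprops1}(iv)); the target commutes with this limit automatically, and for the source one runs the same argument as in the proof of Lemma \ref{lem:frechetiso}: present each cohomology space as an $\aleph_1$-filtered colimit of Banach spaces (Lemma \ref{lem:frechetAleph1}), exchange the countable limit with the $\aleph_1$-filtered colimit (Proposition \ref{prop:compactgenprops}(ii)), and invoke the Mittag-Leffler result \cite[Thm. 5.24]{bode_six_2021} together with strong flatness of $K\langle p^\infty\partial\rangle$.

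\emph{Main obstacle.} The delicate point is analytic rather than algebraic: tensoring with $K\langle p^\infty\partial\rangle$ does not commute with arbitrary limits, so the passage from bounded to unbounded $M^\bullet$ genuinely needs the Mittag-Leffler estimates and the countable-limit/$\aleph_1$-colimit exchange, exactly as isolated in Lemma \ref{lem:frechetiso}. A secondary, purely bookkeeping difficulty is reconciling the abstract morphism of Lemma \ref{lem:naturaltransformlema} (built from the $!$-formalism and lax-linearity) with the concrete duality pairing, in particular matching the left and right $A$-module structures arising from $\sigma$ and $\tau$.
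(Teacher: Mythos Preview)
Your identification of both sides and the reduction to the duality pairing between $K\langle dx/p^\infty\rangle$ and $K\langle p^\infty\partial\rangle$ is exactly what the paper does, and Lemma~\ref{lem:frechetiso} is the key input in both arguments. The difference lies in how one propagates from a single Fr\'echet space in degree $0$ to an arbitrary $M^\bullet\in\operatorname{Fr}(X)$.

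The paper does this in one step: writing the target as $R\lim_n \underline{\operatorname{Hom}}_K(K\langle dx/p^n\rangle, M^\bullet)$ and applying the Milnor $\lim$/$R^1\lim$ short exact sequence \cite[Lemma~3.3]{bode_six_2021} gives, for each $j$, an expression for $H^j$ of the target involving only $H^{j-1}(M^\bullet)$ and $H^j(M^\bullet)$. Lemma~\ref{lem:frechetiso} kills the $R^1\lim$ term and identifies the remaining $\lim$ term, and strong flatness of $K\langle p^\infty\partial\rangle$ handles the source. No boundedness hypothesis is needed.

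Your d\'evissage is fine for bounded complexes, but the unbounded step as you wrote it does not quite work. Writing $M^\bullet$ as a limit along a single Postnikov tower does not produce \emph{bounded} pieces when $M^\bullet$ is unbounded in both directions, so you cannot feed them back into the inductive step. Moreover, the sketch for why the tensor side commutes with that limit (``present each cohomology space as an $\aleph_1$-filtered colimit \dots'') is not an argument about commuting a tensor product past a tower of truncations; it is the proof of Lemma~\ref{lem:frechetiso} itself, transplanted into the wrong place. The clean fix is to observe that both functors have finite $t$-amplitude---the source is $t$-exact by strong flatness, the target has amplitude $[0,1]$ as an $R\lim$ over $\mathbf{N}$ of $t$-exact functors---so for each fixed $j$ one may replace $M^\bullet$ by the bounded truncation $\tau^{[j-1,j]}M^\bullet$ without changing $H^j$ of either side. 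That reduces everything to the bounded case, and is essentially the content of the paper's short-exact-sequence argument.
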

\begin{proof}
Using Theorem \ref{thm:ExpTX}, what we need to show is that the canonical morphism 
\begin{equation}\label{eq:canonical45}
    M^\bullet  \widehat{\otimes}^\mathbf{L}_AR\underline{\operatorname{Hom}}_K(K\langle dx/p^\infty \rangle,A) \to  R\underline{\operatorname{Hom}}_K(K\langle dx/p^\infty \rangle,M^\bullet)
\end{equation}
is an equivalence in $D(\mathsf{CBorn}_K)$. The right side of \eqref{eq:canonical45} is equivalent to 
\begin{equation}
    R \underset{n}{\operatorname{lim}} \underline{\operatorname{Hom}}_K(K\langle dx/p^n \rangle,M^\bullet),
\end{equation} 
and therefore by \cite[Lemma 3.3]{bode_six_2021}, for each $j \in \mathbf{Z}$ we obtain a short-exact sequence 
    \begin{multline}
         0 \to R^1 \underset{n}{\operatorname{lim}} \underline{\operatorname{Hom}}_K(K\langle dx/p^n \rangle,H^{j-1}(M^\bullet)) \\ \to H^j(R\underline{\operatorname{Hom}}_K(K\langle dx/p^\infty \rangle,M^\bullet))  \\ \to \underset{n}{\operatorname{lim}} \underline{\operatorname{Hom}}_K(K\langle dx/p^n \rangle,H^{j}(M^\bullet)) \to 0.
    \end{multline}
    Now by Lemma \ref{lem:frechetiso}, the first term is zero and the second term is isomorphic to 
    \begin{equation}
        H^{j}(M^\bullet) \widehat{\otimes}_K K \langle p^\infty \partial \rangle \cong H^j( M^\bullet \widehat{\otimes}^\mathbf{L}_K  K \langle p^\infty \partial \rangle), 
    \end{equation}
    where we again used \cite[Corollary 5.36]{bode_six_2021}. This shows that \eqref{eq:canonical45} is an isomorphism after taking cohomology, and therefore an equivalence. 
\end{proof}
\begin{prop}\label{prop:PNFpreserve}
Let $X = \operatorname{Sp}(A)$ be a smooth classical affinoid which admit an \'etale morphism to a polydisk. Then the endofunctors $\mathcal{D}^\infty_X(-)$ and $(-)\widehat{\otimes}_X \mathcal{D}^\infty_X1_X $ preserve the full subcategories $\operatorname{Fr}(X)$ and $\operatorname{sFr}(X)$ of $\operatorname{QCoh}(X)$. 
\end{prop}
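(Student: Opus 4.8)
The plan is to reduce both assertions to the single statement that $\mathcal{D}^\infty_X(-)$ preserves $\operatorname{Fr}(X)$ — which is already essentially contained in the proof of Theorem \ref{thm:PNFtensor} — and then to bootstrap to $\operatorname{sFr}(X)$ by a base-change argument along affinoid subdomain inclusions. First note that since Theorem \ref{thm:PNFtensor} says the natural transformation \eqref{eq:naturaltransformlema} is an equivalence on $\operatorname{Fr}(X)$, the two endofunctors $\mathcal{D}^\infty_X(-)$ and $(-)\widehat{\otimes}_X\mathcal{D}^\infty_X1_X$ coincide on $\operatorname{Fr}(X)$, hence also on $\operatorname{sFr}(X)\subseteq\operatorname{Fr}(X)$; so it suffices to show that $\mathcal{D}^\infty_X(-)$ preserves $\operatorname{Fr}(X)$ and $\operatorname{sFr}(X)$.

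For $\operatorname{Fr}(X)$: let $M^\bullet\in\operatorname{Fr}(X)$. Using \eqref{eq:underlyingendofunctor} and Theorem \ref{thm:ExpTX} we have $\mathcal{D}^\infty_XM^\bullet\simeq R\underline{\operatorname{Hom}}_K(K\langle dx/p^\infty\rangle,M^\bullet)\simeq R\operatorname{lim}_n\underline{\operatorname{Hom}}_K(K\langle dx/p^n\rangle,M^\bullet)$. Running exactly the argument from the proof of Theorem \ref{thm:PNFtensor} — the short exact sequence of \cite[Lemma 3.3]{bode_six_2021} together with the Mittag-Leffler vanishing coming from Lemma \ref{lem:frechetiso} — gives, for each $j$, an isomorphism $H^j(\mathcal{D}^\infty_XM^\bullet)\cong\operatorname{lim}_n\underline{\operatorname{Hom}}_K(K\langle dx/p^n\rangle,H^j(M^\bullet))$ concentrated in degree $0$. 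It then remains to observe that this is a Fr\'echet space: each $K\langle dx/p^n\rangle$ is a Banach space, and the internal hom $\underline{\operatorname{Hom}}_K(-,-)$ in $\mathsf{CBorn}_K$ preserves limits in its second variable, so writing the Fr\'echet space $H^j(M^\bullet)$ as a countable limit of Banach spaces presents $\underline{\operatorname{Hom}}_K(K\langle dx/p^n\rangle,H^j(M^\bullet))$ — and hence the whole limit over $n$ — as a countable limit of (equibounded internal homs of Banach spaces, hence) Banach spaces, which is therefore Fr\'echet. Thus $\mathcal{D}^\infty_XM^\bullet\in\operatorname{Fr}(X)$. (Equivalently, the proof of Theorem \ref{thm:PNFtensor} identifies this cohomology group with $H^j(M^\bullet)\widehat{\otimes}_KK\langle p^\infty\partial\rangle$.)

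For $\operatorname{sFr}(X)$: by the previous paragraph $\mathcal{D}^\infty_XM^\bullet$ already lies in $\operatorname{Fr}(X)$ whenever $M^\bullet$ does, so for $M^\bullet\in\operatorname{sFr}(X)$ we must only check that $A_U\widehat{\otimes}^\mathbf{L}_A\mathcal{D}^\infty_XM^\bullet\in\operatorname{Fr}(U)$ for every affinoid subdomain $j\colon U\hookrightarrow X$. The key is a base-change equivalence $j^*\mathcal{D}^\infty_X\simeq\mathcal{D}^\infty_Uj^*$. To produce it, note first that $U$ is again \'etale over $\mathbf{D}^r_K$ — the composite $U\hookrightarrow X\to\mathbf{D}^r_K$ is \'etale, with \'etale coordinates the restrictions of $x_1,\dots,x_r$ — so that Theorem \ref{thm:ExpTX}, Lemma \ref{lem:frechetiso} and Proposition \ref{prop:algebraiso} apply to $U$ with the \emph{same} auxiliary objects $K\langle dx/p^\infty\rangle$ and $K\langle p^\infty\partial\rangle$. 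In particular, by Theorem \ref{thm:ExpTX} the simplicial object $(U\subseteq U^{\bullet+1})^\dagger$ is the base change of $(X\subseteq X^{\bullet+1})^\dagger$ along $U^{\bullet+1}\to X^{\bullet+1}$; passing to the colimit over $\Delta^{\mathsf{op}}$, the commuting square with vertices $U,X,U_{\mathrm{str}},X_{\mathrm{str}}$ (horizontal maps $j,\bar j$, vertical maps $p_U,p_X$) is Cartesian. Since $j$ and $\bar j$ are affinoid subdomain (``open immersion'') morphisms, $j^*\simeq j^!$ and $\bar j^*\simeq\bar j^!$ in the six-functor formalism of \cite{soor_six-functor_2024}, and combining the base-change isomorphism $\bar j^*p_{X,!}\simeq p_{U,!}j^*$ with the equality $j^!p_X^!\simeq p_U^!\bar j^!$ (functoriality of $(-)^!$ applied to $p_Xj=\bar jp_U$) yields
\begin{equation*}
    j^*\mathcal{D}^\infty_X=j^*p_X^!p_{X,!}\simeq p_U^!\bar j^*p_{X,!}\simeq p_U^!p_{U,!}j^*=\mathcal{D}^\infty_Uj^*.
\end{equation*}
Therefore $A_U\widehat{\otimes}^\mathbf{L}_A\mathcal{D}^\infty_XM^\bullet\simeq\mathcal{D}^\infty_U(A_U\widehat{\otimes}^\mathbf{L}_AM^\bullet)$; the input lies in $\operatorname{Fr}(U)$ because $M^\bullet\in\operatorname{sFr}(X)$, and the output then lies in $\operatorname{Fr}(U)$ by the $\operatorname{Fr}$-case applied to $U$. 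As $U$ was arbitrary, $\mathcal{D}^\infty_XM^\bullet\in\operatorname{sFr}(X)$.

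The hardest part is the Cartesian-ness of the square relating $p_U$ and $p_X$ — i.e. that $(-)_{\mathrm{str}}$ sends affinoid subdomain inclusions to homotopy monomorphisms forming pullback squares against the canonical atlas $p$ — together with the identification $j^!\simeq j^*$ for such $j$; granting these, the functional analysis is entirely supplied by Theorem \ref{thm:PNFtensor} and Lemma \ref{lem:frechetiso}. If one prefers to avoid the stacky base change, the equivalence $j^*\mathcal{D}^\infty_X\simeq\mathcal{D}^\infty_Uj^*$ on $\operatorname{Fr}(X)$ can instead be obtained algebraically: by Theorem \ref{thm:PNFtensor} and Proposition \ref{prop:algebraiso}, $\mathcal{D}^\infty_XM^\bullet\simeq M^\bullet\widehat{\otimes}^\mathbf{L}_A\wideparen{\mathcal{D}}_X(X)$, and one rearranges $A_U\widehat{\otimes}^\mathbf{L}_A(M^\bullet\widehat{\otimes}^\mathbf{L}_A\wideparen{\mathcal{D}}_X(X))\simeq(A_U\widehat{\otimes}^\mathbf{L}_AM^\bullet)\widehat{\otimes}^\mathbf{L}_{A_U}\wideparen{\mathcal{D}}_U(U)$ using that $\wideparen{\mathcal{D}}_X(X)\cong A\widehat{\otimes}_KK\langle p^\infty\partial\rangle$ is free over its left $A$-module structure and that $\wideparen{\mathcal{D}}_U(U)\cong A_U\widehat{\otimes}_A\wideparen{\mathcal{D}}_X(X)$ as $(A_U,A_U)$-bimodules.
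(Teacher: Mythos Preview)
Your argument is correct and very close to the paper's, but organised in the mirror direction. The paper proves preservation for the \emph{tensor} functor $(-)\widehat{\otimes}_X\mathcal{D}^\infty_X1_X$ directly --- via $M^\bullet\widehat{\otimes}^\mathbf{L}_A\mathcal{D}^\infty_X1_X\simeq M^\bullet\widehat{\otimes}^\mathbf{L}_KK\langle p^\infty\partial\rangle$ and strong flatness \cite[Corollary~5.36]{bode_six_2021} --- and then tacitly invokes Theorem~\ref{thm:PNFtensor} to deduce the claim for $\mathcal{D}^\infty_X$; you start from $\mathcal{D}^\infty_X$ and invoke Theorem~\ref{thm:PNFtensor} to get the tensor functor. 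Both routes arrive at the same cohomology identification $H^j\cong H^j(M^\bullet)\widehat{\otimes}_KK\langle p^\infty\partial\rangle$, so the $\operatorname{Fr}(X)$ case is fine either way.

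There is, however, a genuine concern with your \emph{primary} argument for $\operatorname{sFr}(X)$. The step $j^*p_X^!\simeq p_U^!\bar j^*$ relies on $\bar j^!\simeq\bar j^*$ for the induced morphism $\bar j\colon U_{\mathrm{str}}\to X_{\mathrm{str}}$, and this is not something the paper (or its predecessor) establishes unconditionally. Indeed, the corresponding compatibility statement on $\operatorname{Strat}$ is Lemma~\ref{lem:UpperstarPullbackCompatibilityOfDeRhamDmoduleForgetfulFunctor!!!}, which is proved only on the Fr\'echet locus and whose proof \emph{uses} Proposition~\ref{prop:PNFpreserve}. So as written your stacky base-change is circular. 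The good news is that your ``algebraic alternative'' in the final paragraph --- rearranging via $\wideparen{\mathcal{D}}_U(U)\simeq A_U\widehat{\otimes}^\mathbf{L}_A\wideparen{\mathcal{D}}_X(X)$, i.e.\ $\mathcal{D}^\infty_X1_X\widehat{\otimes}^\mathbf{L}_AA_U\simeq\mathcal{D}^\infty_U1_U$ --- is exactly what the paper does and is fully rigorous; you should promote it to the actual argument and drop the six-functor detour.
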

\begin{proof}
Let $M^\bullet \in \operatorname{QCoh}(X)$. After forgetting to $D(\mathsf{CBorn}_K)$, the object $M^\bullet \widehat{\otimes}^\mathbf{L}_X\mathcal{D}^\infty_X 1_X $ is nothing but 
\begin{equation}
    M^\bullet \widehat{\otimes}_A^\mathbf{L} A\widehat{\otimes}^\mathbf{L}_KK \langle p^\infty \partial\rangle  \simeq  M^\bullet \widehat{\otimes}_K^\mathbf{L} K \langle p^\infty \partial\rangle.
\end{equation}
where we used \cite[Corollary 5.36]{bode_six_2021}, and by the result of \emph{loc. cit.} again one has $H^j(M^\bullet \widehat{\otimes}_K^\mathbf{L} K \langle p^\infty \partial\rangle) \cong H^j(M^\bullet) \widehat{\otimes}_K K \langle p^\infty \partial\rangle$. This proves that $(-)\widehat{\otimes}_X \mathcal{D}^\infty_X1_X$ preserves $\operatorname{Fr}(X)$. In order to show that $(-)\widehat{\otimes}_X \mathcal{D}^\infty_X1_X$ preserves $\operatorname{sFr}(X) \subseteq \operatorname{Fr}(X)$ it then suffices to show that the natural morphism $\mathcal{D}^\infty_X 1_X \widehat{\otimes}^\mathbf{L}_A A_U \to \mathcal{D}^\infty_U1_U$ is an equivalence, for each affinoid subdomain $U \subseteq X$. This can be deduced (for instance) from Proposition \ref{prop:algebraiso}, because $  \wideparen{\mathcal{D}}_X(X)\xrightarrow[]{\sim} \wideparen{\mathcal{D}}_U(U) \widehat{\otimes}^\mathbf{L}_A A_U$. \end{proof}
\begin{cor}\label{cor:Dmodequivalence}
Let $X = \operatorname{Sp}(A)$ be a smooth classical affinoid equipped with an \'etale morphism $X \to \mathbf{D}^r_K$. The morphism \eqref{eq:naturaltransformlema} restricts to an equivalence of monads on $\operatorname{Fr}(X)$. Consequently, there is an equivalence of $\infty$-categories
    \begin{equation}
\operatorname{RMod}_{\mathcal{D}^\infty_X1_X}\operatorname{Fr}(X) \simeq  \operatorname{Mod}_{\mathcal{D}^\infty_X}\operatorname{Fr}(X).
    \end{equation}
The same holds with $\operatorname{sFr}(X)$ in place of $\operatorname{Fr}(X)$.
\end{cor}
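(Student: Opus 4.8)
The plan is to deduce this purely formally from Lemma \ref{lem:naturaltransformlema}, Theorem \ref{thm:PNFtensor} and Proposition \ref{prop:PNFpreserve}, with no further analysis. First I would observe that, by Proposition \ref{prop:PNFpreserve}, the endofunctor underlying the monad $\mathcal{D}^\infty_X$ and the endofunctor $(-)\widehat{\otimes}_X\mathcal{D}^\infty_X1_X$ both send the full subcategory $\operatorname{Fr}(X)\subseteq\operatorname{QCoh}(X)$ into itself, and likewise $\operatorname{sFr}(X)$. Since $\operatorname{Fr}(X)$ is a full subcategory, the monad structure on $\mathcal{D}^\infty_X$ --- together with the monad structure on $(-)\widehat{\otimes}_X\mathcal{D}^\infty_X1_X$ coming from the algebra structure on $\mathcal{D}^\infty_X1_X$ of Lemma \ref{lem:naturaltransformlema} --- restricts to a monad on $\operatorname{Fr}(X)$ (resp. $\operatorname{sFr}(X)$), and the morphism of monads \eqref{eq:naturaltransformlema} restricts to a morphism of monads there.

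Next I would invoke Theorem \ref{thm:PNFtensor}, which says exactly that the natural transformation underlying \eqref{eq:naturaltransformlema} is an equivalence on every object of $\operatorname{Fr}(X)$ (hence of $\operatorname{sFr}(X)$, since $\operatorname{sFr}(X)\subseteq\operatorname{Fr}(X)$). A morphism of monads whose underlying natural transformation is objectwise an equivalence is an equivalence of monads; so \eqref{eq:naturaltransformlema} is an equivalence of monads on $\operatorname{Fr}(X)$, and an equivalence of monads induces an equivalence on the associated $\infty$-categories of modules. This already gives $\operatorname{Mod}_{(-)\widehat{\otimes}_X\mathcal{D}^\infty_X1_X}\operatorname{Fr}(X)\simeq\operatorname{Mod}_{\mathcal{D}^\infty_X}\operatorname{Fr}(X)$.

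Finally I would identify $\operatorname{Mod}_{(-)\widehat{\otimes}_X\mathcal{D}^\infty_X1_X}\operatorname{Fr}(X)$ with $\operatorname{RMod}_{\mathcal{D}^\infty_X1_X}\operatorname{Fr}(X)$. This is the general dictionary that for an algebra object $R$ of a monoidal $\infty$-category acting on a module $\infty$-category $\mathscr{M}$, the $\infty$-category of right $R$-modules in $\mathscr{M}$ agrees with the $\infty$-category of modules over the monad $(-)\otimes R$ on $\mathscr{M}$; here the monoidal category is $\operatorname{QCoh}(X\times X)$ with convolution, $\mathscr{M}$ is $\operatorname{Fr}(X)$, $R=\mathcal{D}^\infty_X1_X$, and --- as emphasised in Lemma \ref{lem:naturaltransformlema} --- the action uses the right $A$-module structure while the left $A$-module structure equips the result with its $A$-module structure. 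The comparison is precisely the one packaged by Corollary \ref{cor:Bimodmonoidal} and the machinery of Appendix \ref{sec:monoidsfromMonads}. Composing the two equivalences gives the claim, and the $\operatorname{sFr}(X)$ case is verbatim the same.

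I expect the only real subtlety --- and the reason this is a corollary rather than a triviality --- to be the bookkeeping: checking that the restriction of the monad to the full subcategory $\operatorname{Fr}(X)$ is compatible with the entire monad structure (unit and multiplication, together with their higher coherences) in the $\infty$-categorical sense, and that the algebra-module versus monad-module comparison is genuinely the one produced in the appendix, with the correct (left versus right) $A$-module structures used consistently throughout. Once those identifications are nailed down, every step is formal.
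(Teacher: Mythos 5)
Your proposal is correct and follows exactly the paper's own argument, which is literally ``Follows by assembling Theorem \ref{thm:PNFtensor} and Proposition \ref{prop:PNFpreserve}''; you have simply spelled out the assembly (restriction of the monads via Proposition \ref{prop:PNFpreserve}, objectwise equivalence via Theorem \ref{thm:PNFtensor}, and the algebra-versus-monad dictionary from Corollary \ref{cor:Bimodmonoidal}). No gaps.
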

\begin{proof}
Follows by assembling Theorem \ref{thm:PNFtensor} and Proposition \ref{prop:PNFpreserve}.
\end{proof}
\emph{In the remainder of this subsection $X = \operatorname{Sp}(A)$ denotes a classical affinoid equipped with an \'etale morphism $X \to \mathbf{D}^r_K$}. We recall the definition of the Banach completed differential operators $\mathcal{D}^n_X(X)$ from \cite[\S 2]{bode_six_2021}. These are Noetherian Banach algebras and $\wideparen{\mathcal{D}}_X(X) = \operatorname{lim}_n \mathcal{D}^n_X(X)$ gives a presentation of $\wideparen{\mathcal{D}}_X(X)$ as a Fr\'echet--Stein algebra. 
\begin{defn}\label{defn:Ccomplexprime}
An object $M^\bullet \in \operatorname{RMod}_{\wideparen{\mathcal{D}}_X(X)}D(\mathsf{CBorn}_K)$ is called a \emph{$\mathcal{C}$-complex} if:
\begin{enumerate}[(i)]
    \item each $M_n^\bullet := M^\bullet \widehat{\otimes}^\mathbf{L}_{\wideparen{\mathcal{D}}_X(X)} \mathcal{D}^n_X(X)$ is such that each  $H^j(M_n^\bullet) $ is a finitely-generated $\mathcal{D}^n_X(X)$-module and $H^j(M_n^\bullet) = 0$ for $|j| \gg 0$;
    \item the canonical morphism $M^\bullet \to R \operatorname{lim}_n M_n^\bullet$ is an equivalence. 
\end{enumerate}
We denote the full subcategory spanned by such objects, by $D_\mathcal{C}(X)$. 
\end{defn}
\begin{rmk}\label{rmk:iiprime}
By \cite[Lemma 8.11]{bode_six_2021}, condition (ii) in Definition \ref{defn:Ccomplexprime} can be replaced with the following (which may be easier to check in practice):
\begin{enumerate}
    \item[(ii)${}^\prime$] for each $j \in \mathbf{Z}$ the canonical morphism $H^j(M^\bullet) \to\operatorname{lim}_n H^j(  M_n^\bullet)$ is an isomorphism.
\end{enumerate}
\end{rmk}
\begin{lem}\label{lem:Ccomplex}
Suppose that $M^\bullet$ is a $\mathcal{C}$-complex. Then the  underlying object $M^\bullet \in \operatorname{QCoh}(X)$ belongs to the full subcategory $\operatorname{Fr}(X) \subseteq \operatorname{QCoh}(X)$, so that one has an inclusion 
\begin{equation}
    D_\mathcal{C}(X) \subseteq \operatorname{RMod}_{\wideparen{\mathcal{D}}_X(X)}\operatorname{Fr}(X).
\end{equation}
\end{lem}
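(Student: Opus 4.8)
The plan is to reduce the statement, via Remark \ref{rmk:iiprime}, to the well-known fact that a countable inverse limit of $K$-Banach spaces is a $K$-Fr\'echet space.

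First I would fix $j \in \mathbf{Z}$. Since $M^\bullet$ is a $\mathcal{C}$-complex, condition (ii)${}^{\prime}$ of Remark \ref{rmk:iiprime} applies, so the canonical morphism $H^j(M^\bullet) \to \operatorname{lim}_n H^j(M_n^\bullet)$ is an isomorphism in $LH(\mathsf{CBorn}_K)$. By condition (i) of Definition \ref{defn:Ccomplexprime}, each $H^j(M_n^\bullet)$ is a finitely-generated module over the Noetherian Banach algebra $\mathcal{D}^n_X(X)$; in particular it is an honest object of $\mathsf{CBorn}_K$, and, being finitely generated over a Noetherian Banach algebra, it carries a canonical $K$-Banach structure, with respect to which all module maps are automatically bounded (see \cite{Dcap1} or \cite[\S2]{bode_six_2021}). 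Because $\mathsf{CBorn}_K$ is a reflective subcategory of $LH(\mathsf{CBorn}_K)$, it is closed under limits, so the limit above may be computed in $\mathsf{CBorn}_K$; thus $H^j(M^\bullet)$ is exhibited as a countable inverse limit, formed in $\mathsf{CBorn}_K$, of a tower of $K$-Banach spaces.

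It then remains to observe that such a limit is a Fr\'echet space. The inclusion $\mathsf{CBorn}_K \hookrightarrow \mathsf{Born}_K$ (right adjoint to completion) and the functor $(-)^b\colon \mathsf{LCVS}_K \to \mathsf{Born}_K$ (right adjoint to $(-)^t$) both preserve limits, so $\operatorname{lim}_n H^j(M_n^\bullet)$ is identified with $(-)^b$ applied to the inverse limit of the corresponding tower of Banach spaces formed in $\mathsf{LCVS}_K$. That locally-convex limit is a closed subspace of a countable product of Banach spaces, hence a Fr\'echet space, so by Lemma \ref{lem:frechetAleph1}(ii) its image under $(-)^b$ lies in the essential image of $\mathsf{Fr}_K \hookrightarrow \mathsf{CBorn}_K$. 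Alternatively, one can verify directly using Lemma \ref{lem:frechetAleph1} that the limit is complete and metrizable: completeness is automatic, while a diagonal argument shows that $\mathfrak{S}$ of a countable product of Banach spaces is $\aleph_1$-filtered, and this property passes to the bornological subspace cut out by the limit.

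Combining the two steps, every cohomology group of a $\mathcal{C}$-complex is a Fr\'echet space, so the underlying object lies in $\operatorname{Fr}(X) \subseteq \operatorname{QCoh}(X)$; since a $\mathcal{C}$-complex is by definition an object of $\operatorname{RMod}_{\wideparen{\mathcal{D}}_X(X)}D(\mathsf{CBorn}_K)$, this yields the asserted inclusion $D_\mathcal{C}(X) \subseteq \operatorname{RMod}_{\wideparen{\mathcal{D}}_X(X)}\operatorname{Fr}(X)$. The only mildly delicate point — the main obstacle — is the passage between the bornological and locally-convex pictures in the third step, i.e.\ making sure that the inverse limit taken in $\mathsf{CBorn}_K$ genuinely lands in the subcategory of Fr\'echet spaces; once that is pinned down, the rest is just quoting the relevant structural facts about $\mathcal{C}$-complexes and finitely-generated modules over Noetherian Banach algebras.
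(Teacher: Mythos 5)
Your argument is correct and follows exactly the route the paper intends: the paper's entire proof is ``This is clear from Remark \ref{rmk:iiprime}'', and what you have written is precisely the fleshing-out of that remark --- each $H^j(M_n^\bullet)$ is Banach as a finitely-generated module over a Noetherian Banach algebra, and the countable inverse limit supplied by condition (ii)${}^\prime$ is therefore Fr\'echet. Your care about computing the limit in $\mathsf{CBorn}_K$ versus $LH(\mathsf{CBorn}_K)$ and about the compatibility of $(-)^b$ with limits is sound and fills in details the paper leaves implicit.
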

\begin{proof}
This is clear from Remark \ref{rmk:iiprime}.
\end{proof}
We recall the following Theorem from \cite[Theorem 4.101]{soor_six-functor_2024}:
\begin{thm}\label{thm:StratMonadicity}
Let $X = \operatorname{dSp}(A)$ be a smooth classical affinoid which is \'etale over a polydisk. Then the adjunction $p^! \dashv p_!$ is monadic, so that there is an equivalence of $\infty$-categories 
\begin{equation}
    \operatorname{Strat}(X) \simeq \operatorname{Mod}_{\mathcal{D}^\infty_X } \operatorname{QCoh}(X). 
\end{equation}
\end{thm}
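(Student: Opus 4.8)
The plan is to verify the hypotheses of the Barr--Beck--Lurie monadicity theorem for the adjunction $p_! \dashv p^!$. Both $\operatorname{QCoh}(X_{\mathrm{str}})$ and $\operatorname{QCoh}(X)$ are stable presentable $\infty$-categories, $p_!$ preserves colimits, and $p^!$ is its right adjoint; since $\mathcal{D}^\infty_X = p^!p_!$ by definition, it suffices to check that $p^!$ is conservative and preserves geometric realizations of $p^!$-split simplicial objects. Granting this, Barr--Beck--Lurie yields $\operatorname{Strat}(X) = \operatorname{QCoh}(X_{\mathrm{str}}) \simeq \operatorname{Mod}_{\mathcal{D}^\infty_X}\operatorname{QCoh}(X)$.

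The key structural input is that, for $X$ smooth and \'etale over a polydisk, the morphism $p: X \to X_{\mathrm{str}}$ is \emph{cohomologically smooth}: the object $\omega_p := p^!1_{X_{\mathrm{str}}} \in \operatorname{QCoh}(X)$ is invertible and there is a natural equivalence $p^!(-) \simeq p^*(-)\widehat{\otimes}_X\omega_p$. To establish this I would use Theorem \ref{thm:ExpTX}, which identifies the \v{C}ech nerve of $p$ with the groupoid $\operatorname{exp}(\mathcal{T}_X)$ and exhibits $(X\subseteq X\times X)^\dagger = \operatorname{dSp}(A\widehat{\otimes}_K K\langle dx/p^\infty\rangle)$; thus, via base change along the Cartesian square defining $(X\subseteq X\times X)^\dagger$ together with $\mathcal{D}^\infty_X \simeq \widetilde{\pi}_{2,*}\widetilde{\pi}_1^!$, the problem reduces to cohomological smoothness of the ``dagger tangent bundle'' $\operatorname{dSp}(K\langle dx/p^\infty\rangle)\to *$. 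The latter is essentially the computation already carried out in Lemma \ref{lem:frechetiso}: $R\underline{\operatorname{Hom}}_K(K\langle dx/p^\infty\rangle,K)\simeq K\langle p^\infty\partial\rangle$ is strongly flat and concentrated in degree $0$, and is invertible up to the twist by $\wedge^r$ of the free rank-$r$ module spanned by the \'etale coordinates $dx_1,\dots,dx_r$ (invertible because $X$ is smooth of dimension $r$).

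With cohomological smoothness in hand the two monadicity hypotheses follow. Since $X_{\mathrm{str}} = \operatorname{colim}_{[n]\in\Delta^{\mathsf{op}}}(X\subseteq X^{\bullet+1})^\dagger$ with $X$ in degree $0$, the morphism $p$ is an effective epimorphism of prestacks, so $\operatorname{QCoh}$ satisfies $*$-descent along $p$ and $p^*$ is conservative; because $p^!(-)\simeq p^*(-)\widehat{\otimes}_X\omega_p$ with $\omega_p$ invertible, $p^!$ is conservative too. The same equivalence writes $p^!$ as the composite of the colimit-preserving functor $p^*$ with $\widehat{\otimes}_X\omega_p$, which is itself colimit-preserving (an equivalence, with inverse $\widehat{\otimes}_X\omega_p^{-1}$); hence $p^!$ preserves all colimits, a fortiori all geometric realizations, in particular the $p^!$-split ones.

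The main obstacle is the cohomological smoothness statement --- specifically, upgrading $p^!$ from an a priori merely right-adjoint functor to one that preserves colimits and is a twist of $p^*$. This is exactly where smoothness of $X$ and the explicit \'etale-coordinate description of the germ of the diagonal (Theorem \ref{thm:ExpTX}) are essential rather than cosmetic. A further subtlety is that the equivalence $p^!\simeq p^*\widehat{\otimes}_X\omega_p$ must be made compatible with the groupoid structure of $\operatorname{exp}(\mathcal{T}_X)$ (equivalently, realized $2$-functorially on the \v{C}ech nerve of $p$), so that the resulting comparison is an equivalence of \emph{monads} and the Barr--Beck--Lurie output is an equivalence of module categories, not merely of underlying $\infty$-categories.
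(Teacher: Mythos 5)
First, a point of order: this paper does not actually prove Theorem \ref{thm:StratMonadicity}; it is recalled verbatim from \cite[Theorem 4.101]{soor_six-functor_2024}, so there is no in-paper proof to measure your argument against. Your skeleton --- apply Barr--Beck--Lurie to $p_!\dashv p^!$, so that it suffices to show $p^!$ is conservative and preserves geometric realizations of $p^!$-split simplicial objects --- is the right one. But the way you propose to verify those two hypotheses contains a step that is not just unproved but false.

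The fatal step is the claim that $p$ is cohomologically smooth, i.e.\ that $\omega_p:=p^!1_{X_{\mathrm{str}}}$ is invertible and $p^!(-)\simeq p^*(-)\widehat{\otimes}_X\omega_p$. If this held, then $p^!$ would preserve all colimits (as a composite of $p^*$ with tensoring by an invertible object), hence so would $\mathcal{D}^\infty_X=p^!p_!$. But by \eqref{eq:underlyingendofunctor} and Theorem \ref{thm:ExpTX} one has $\mathcal{D}^\infty_X\simeq R\underline{\operatorname{Hom}}_A(A\widehat{\otimes}_KK\langle dx/p^\infty\rangle,-)$, and the second proposition of \S\ref{sec:essentialimage} shows precisely that this functor does \emph{not} commute with filtered colimits: if it did, the jets object would be dualizable, which is refuted there by an explicit double-quantifier computation. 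Concretely, your formula would force $\mathcal{D}^\infty_X(-)\simeq \mathcal{J}^\infty_X(-)\widehat{\otimes}_X\omega_p$ to be tensoring with a fixed bimodule on all of $\operatorname{QCoh}(X)$, whence $\mathcal{D}^\infty_X1_X\simeq(A\widehat{\otimes}_KK\langle dx/p^\infty\rangle)\widehat{\otimes}_A\omega_p$, an LB-type object; Proposition \ref{prop:algebraiso} instead gives the non-Banach nuclear Fr\'echet space $\wideparen{\mathcal{D}}_X(X)=A\widehat{\otimes}_KK\langle p^\infty\partial\rangle$, and these cannot agree in $\mathsf{CBorn}_K$. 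This asymmetry between $p^*$ (jets, a colimit of Banach spaces) and $p^!$ (differential operators, a limit of Banach spaces) is exactly the ``functional-analytic reason'' the paper gives for why $\mathcal{D}^\infty_X$ is \emph{not} tensoring with $\mathcal{D}^\infty_X1_X$ in general, and why Theorem \ref{thm:PNFtensor} only holds after restricting to $\operatorname{Fr}(X)$. (The morphism $p$ is \emph{prim}, not suave; the suaveness discussed in \S\ref{sec:essentialimage} concerns $X\to *$, not $p$.) Consequently both Barr--Beck--Lurie hypotheses remain open in your write-up: conservativity of $p^!$ cannot be inherited from $p^*$ via a twist, and preservation of $p^!$-split realizations cannot be deduced from preservation of all colimits. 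One would instead have to argue directly --- e.g.\ by exploiting that the augmentation $(A\widehat{\otimes}_KA)^\dagger_\Delta\to A$ splits the unit $A\to(A\widehat{\otimes}_KA)^\dagger_\Delta$, so that the relevant $\operatorname{Tot}$-towers and skeleta become constant Pro-/Ind-objects in the style of Appendix \ref{subsec:NCdescent} --- which is a genuinely different mechanism from cohomological smoothness.
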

Assembling all the above together with Theorem \ref{thm:StratMonadicity} we may draw the following diagram relating various categories.
\begin{equation}\label{eq:bigdiagram}
\begin{tikzcd}
	& {\operatorname{Strat}(X)} \\
	{\operatorname{Mod}_{\mathcal{D}^\infty_X}\operatorname{Fr}(X)} & {\operatorname{Mod}_{\mathcal{D}^\infty_X}\operatorname{QCoh}(X)} \\
	{\operatorname{RMod}_{\mathcal{D}^\infty_X1_X}\operatorname{Fr}(X)} & {\operatorname{RMod}_{\mathcal{D}^\infty_X1_X}\operatorname{QCoh}(X)} \\
	{\operatorname{RMod}_{\wideparen{\mathcal{D}}_X(X)}\operatorname{Fr}(X)} & {\operatorname{RMod}_{\wideparen{\mathcal{D}}_X(X)}\operatorname{QCoh}(X)} \\
	{D_\mathcal{C}(X)}
	\arrow[hook, from=2-1, to=2-2]
	\arrow["\simeq"', no head, from=2-2, to=1-2]
	\arrow["\simeq"', no head, from=3-1, to=2-1]
	\arrow[hook, from=3-1, to=3-2]
	\arrow["{\simeq }"', no head, from=4-1, to=3-1]
	\arrow[hook, from=4-1, to=4-2]
	\arrow["\simeq"', no head, from=4-2, to=3-2]
	\arrow[hook', from=5-1, to=4-1]
\end{tikzcd}
\end{equation}
In particular we obtain the following.
\begin{thm}
Let $X= \operatorname{Sp}(A)$ be a smooth classical affinoid equipped with an \'etale morphism $X \to \mathbf{D}^r_K$. Then there is a fully-faithful functor of $\infty$-categories
\begin{equation}
    D_\mathcal{C}(X) \hookrightarrow \operatorname{Strat}(X). 
\end{equation}
\end{thm}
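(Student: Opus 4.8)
The statement is a formal consequence of the diagram \eqref{eq:bigdiagram}: the plan is to read off the composite along its left column, from bottom to top, and then across the top, checking that each constituent arrow is either an equivalence or a fully-faithful functor. Concretely, I would assemble
\[
D_\mathcal{C}(X) \hookrightarrow \operatorname{RMod}_{\wideparen{\mathcal{D}}_X(X)}\operatorname{Fr}(X) \xrightarrow{\ \sim\ } \operatorname{RMod}_{\mathcal{D}^\infty_X1_X}\operatorname{Fr}(X) \xrightarrow{\ \sim\ } \operatorname{Mod}_{\mathcal{D}^\infty_X}\operatorname{Fr}(X) \hookrightarrow \operatorname{Mod}_{\mathcal{D}^\infty_X}\operatorname{QCoh}(X) \xrightarrow{\ \sim\ } \operatorname{Strat}(X).
\]
The first inclusion is Lemma \ref{lem:Ccomplex}: by Remark \ref{rmk:iiprime} a $\mathcal{C}$-complex has Fr\'echet cohomology in each degree, so $D_\mathcal{C}(X)$ is a full subcategory of $\operatorname{RMod}_{\wideparen{\mathcal{D}}_X(X)}\operatorname{Fr}(X)$. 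The first equivalence is induced, by functoriality of $\operatorname{RMod}_{(-)}$, from the equivalence of algebra objects $\mathcal{D}^\infty_X1_X \simeq \wideparen{\mathcal{D}}_X(X)$ of Proposition \ref{prop:algebraiso}. The second equivalence is Corollary \ref{cor:Dmodequivalence}, which rests on Theorem \ref{thm:PNFtensor} and Proposition \ref{prop:PNFpreserve} and says that the morphism of monads \eqref{eq:naturaltransformlema} restricts to an equivalence on $\operatorname{Fr}(X)$.

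For the remaining two arrows: the inclusion $\operatorname{Mod}_{\mathcal{D}^\infty_X}\operatorname{Fr}(X) \hookrightarrow \operatorname{Mod}_{\mathcal{D}^\infty_X}\operatorname{QCoh}(X)$ follows from the general principle that if a monad $T$ on an $\infty$-category $\mathscr{D}$ preserves a full subcategory $\mathscr{C} \subseteq \mathscr{D}$, then the comparison functor $\operatorname{Mod}_{T|_{\mathscr{C}}}(\mathscr{C}) \to \operatorname{Mod}_T(\mathscr{D})$ is fully-faithful, with essential image the $T$-modules whose underlying object lies in $\mathscr{C}$; here $T = \mathcal{D}^\infty_X$, $\mathscr{C} = \operatorname{Fr}(X)$, and the required preservation is Proposition \ref{prop:PNFpreserve}. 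Finally, $\operatorname{Mod}_{\mathcal{D}^\infty_X}\operatorname{QCoh}(X) \simeq \operatorname{Strat}(X)$ is Theorem \ref{thm:StratMonadicity}. Since a composite of fully-faithful functors and equivalences is fully-faithful, this yields the asserted embedding $D_\mathcal{C}(X) \hookrightarrow \operatorname{Strat}(X)$.

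Since every nontrivial ingredient has already been established, there is essentially no obstacle here; the only point deserving care is the bookkeeping of forgetful functors, i.e. checking that all the fully-faithfulness assertions refer to mutually compatible comparison functors, so that the composite is genuinely the functor sending a $\mathcal{C}$-complex to its underlying complex of bornological spaces, viewed in $\operatorname{QCoh}(X)$ via $p^!$ and equipped with the $\mathcal{D}^\infty_X$-action coming from \eqref{eq:naturaltransformlema} and Proposition \ref{prop:algebraiso}. This coherence is precisely what is recorded by the commutativity of \eqref{eq:bigdiagram}, so no further argument is needed.
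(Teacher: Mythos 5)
Your proposal is correct and is exactly the paper's own argument: the theorem is stated as an immediate consequence of the diagram \eqref{eq:bigdiagram}, whose arrows are supplied by Lemma \ref{lem:Ccomplex}, Proposition \ref{prop:algebraiso}, Corollary \ref{cor:Dmodequivalence} (via Theorem \ref{thm:PNFtensor} and Proposition \ref{prop:PNFpreserve}), and Theorem \ref{thm:StratMonadicity}, and the fully-faithful embedding is the composite along the path you describe. Your remark about the compatibility of the forgetful/comparison functors is precisely what the commutativity of \eqref{eq:bigdiagram} records, so nothing further is needed.
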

\section{Descent for $\wideparen{\mathcal{D}}$-modules}\label{sec:DescentforDCap}
In this section we continue to let $X = \operatorname{Sp}(A)$ be a smooth affinoid equipped with an \'etale morphism $X \to \mathbf{D}^r_K$.
Let $X_w$ (resp. $X_{n,w}$) denote the poset of affinoid subdomains (resp. $p^n$-accessible subdomains\footnote{By this we mean $p^n\mathcal{T}_X$-accessible, in the sense of \cite[\S4.5]{Dcap1}.}) of $X$. By using \cite[Proposition 4.6.2.17]{HigherAlgebra} and unstraightening we obtain functors 
\begin{equation}
\begin{aligned}
    \operatorname{RMod}_{\wideparen{\mathcal{D}}_X(-)}D(\mathsf{CBorn}_K) : X_w^\mathsf{op} &\to \mathsf{Cat}_\infty\\
    \operatorname{RMod}_{\mathcal{D}^n_X(-)}D(\mathsf{CBorn}_K) : X_{n,w}^\mathsf{op} &\to \mathsf{Cat}_\infty
\end{aligned}
\end{equation}
which send $t: V \hookrightarrow U$ to the pullback functors $(-)\widehat{\otimes}_{\wideparen{\mathcal{D}}_X(U)}^\mathbf{L}\wideparen{\mathcal{D}}_X(V)$ and $(-)\widehat{\otimes}_{\mathcal{D}^n_X(U)}^\mathbf{L}\mathcal{D}^n_X(V)$ respectively. We recall that $\wideparen{\mathcal{D}}_X$ (resp. $\mathcal{D}^n_X(X)$) is a sheaf of algebras on $X_w$ (resp. $X_{n,w}$). More precisely, we recall that Ardakov and Wadsley have proved the counterpart of Tate acyclicity for this Fr\'echet-Stein algebra.  
\begin{prop}\label{prop:DCapdescent}
Let $X_w$ (resp. $X_{n,w}$) be the poset of affinoid subdomains (resp. $p^n$-accessible subdomains) of $X$ equipped with the \emph{weak G-topology}. Then:
\begin{enumerate}[(i)]
    \item \cite[\S 8.1]{Dcap1} Let $\{U_i \to X\}_{i=1}^s$ be a covering in $X_w$. Then the augmented (alternating) \v{C}ech complex $C^\bullet_{\mathrm{aug}}(\{U_i\},\wideparen{\mathcal{D}}_X)$ is exact. 
    \item \cite[Theorem 3.5]{Dcap1} Let $\{U_i \to X\}_{i=1}^s$ be a covering in $X_{w,n}$.Then the augmented (alternating) \v{C}ech complex $C^\bullet_{\mathrm{aug}}(\{U_i\},\mathcal{D}^n_X)$ is exact. 
\end{enumerate}
\end{prop}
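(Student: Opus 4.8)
The statement being an assertion about exactness of augmented Čech complexes, the plan is essentially to cite and repackage the Ardakov--Wadsley results in the form we need, so I will spell out only the translation steps. For part (i), the relevant input is the Tate-acyclicity theorem for the Fréchet--Stein sheaf $\wideparen{\mathcal{D}}_X$ established in \cite[\S 8.1]{Dcap1}: there Ardakov and Wadsley prove that $\wideparen{\mathcal{D}}_X$ is a sheaf in the weak $G$-topology, which by the standard equivalence between the sheaf condition and exactness of the augmented Čech complex for finite coverings (using that the weak $G$-topology has a basis of coverings by finitely many affinoid subdomains) immediately gives exactness of $C^\bullet_{\mathrm{aug}}(\{U_i\},\wideparen{\mathcal{D}}_X)$ for any finite covering $\{U_i \to X\}_{i=1}^s$. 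One subtlety to address: the complex should be understood with the natural bornologies, and exactness here means strict exactness in $\mathsf{CBorn}_K$; this follows because each term is a coadmissible $\wideparen{\mathcal{D}}_X(X)$-module (hence Fréchet) and the Čech differentials are morphisms of Fréchet spaces with closed image, so strictness is automatic by the open mapping theorem.

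For part (ii), the input is \cite[Theorem 3.5]{Dcap1}, which is precisely the statement that, on the site $X_{n,w}$ of $p^n\mathcal{T}_X$-accessible affinoid subdomains with the weak $G$-topology, the presheaf $\mathcal{D}^n_X$ of Banach algebras satisfies Tate acyclicity, i.e.\ the augmented Čech complex $C^\bullet_{\mathrm{aug}}(\{U_i\},\mathcal{D}^n_X)$ is exact for every finite covering. Here the terms are Banach spaces and the same open-mapping-theorem argument shows the exactness is strict. So there is nothing further to prove beyond recording the citation and noting the compatibility of the two topologies (namely that restriction along $X_{n,w} \hookrightarrow X_w$ is compatible with the covering families).

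The one genuinely non-routine point — and the part of the write-up that deserves care rather than a bare citation — is to make sure the notion of ``exact'' in the proposition matches the quasi-abelian-category conventions of \S\ref{sec:quasiabelian}: we want \emph{strict} exactness of the augmented Čech complex in $\mathsf{CBorn}_K$, equivalently that its image in $D(\mathsf{CBorn}_K)$ vanishes. This is where one invokes Lemma \ref{lem:frechetAleph1}(ii) to view the Fréchet terms inside $\mathsf{CBorn}_K$, and Proposition \ref{prop:CbornKproperties}(i) to identify strictness with the topological/bornological closedness of the images; the Ardakov--Wadsley statements are phrased in terms of locally convex spaces, so this dictionary is the only real content to verify. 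I do not expect any serious obstacle here: for Fréchet and Banach spaces the bornological and topological worlds agree by Lemma \ref{lem:frechetAleph1}, and the open mapping theorem supplies strictness for free. Thus the proof reduces to: (a) cite \cite[\S 8.1]{Dcap1} and \cite[Theorem 3.5]{Dcap1}; (b) translate ``sheaf''/``Tate acyclic'' into ``augmented Čech complex exact''; (c) upgrade ordinary exactness to strict exactness via the open mapping theorem and the bornological-topological comparison of \S\ref{sec:quasiabelian}--\S3.
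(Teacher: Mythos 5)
Your proposal matches the paper's treatment: the proposition is stated with the citations to \cite[\S 8.1]{Dcap1} and \cite[Theorem 3.5]{Dcap1} doing all the work, and the upgrade to \emph{strict} exactness is handled exactly as you describe — in a following remark, by observing the terms are Fr\'echet (resp.\ Banach) spaces and invoking the open mapping theorem. Your write-up is just a slightly more explicit version of the same argument, so there is nothing to correct.
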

\begin{rmk}
This immediately implies that $C^\bullet_{\mathrm{aug}}(\{U_i\},\wideparen{\mathcal{D}}_X)$ (resp. $C^\bullet_{\mathrm{aug}}(\{U_i\},\mathcal{D}^n_X)$) is strictly exact: as it is a complex of Fr\'echet (resp. Banach) spaces, we can appeal to the open mapping theorem. 
\end{rmk}
\begin{lem}\label{lem:DCaphomotopyepi}
\begin{enumerate}[(i)]
    \item Let $U, V\subseteq X$ be affinoid subdomains. Then the canonical morphism
\begin{equation}\label{eq:DCapderived}
\wideparen{\mathcal{D}}_X(U) \widehat{\otimes}^\mathbf{L}_{\wideparen{\mathcal{D}}_X(X)} \wideparen{\mathcal{D}}_X(V) \to \wideparen{\mathcal{D}}_X(U\cap V)
\end{equation}
is an equivalence of $\wideparen{\mathcal{D}}_X(U)$-$\wideparen{\mathcal{D}}_X(V)$ bimodule objects in $D(\mathsf{CBorn}_K)$. 
\item Let $U, V \subseteq X$ be $p^n$-accessible affinoid subdomains. Then the canonical morphism \begin{equation}
\mathcal{D}^n_X(U) \widehat{\otimes}^\mathbf{L}_{\mathcal{D}^n_X(X)} \mathcal{D}^n_X(V) \to \mathcal{D}^n_X(U\cap V)
\end{equation}
is an equivalence of $\mathcal{D}^n_X(U)$-$\mathcal{D}^n_X(V)$ bimodule objects in $D(\mathsf{CBorn}_K)$. 
\end{enumerate}
\end{lem}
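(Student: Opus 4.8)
The plan is to reduce the statement to a known acyclicity result for the sheaf of algebras, in the spirit of how Tate acyclicity passes to derived tensor products. Concretely, fix a covering of $X$ by the affinoid subdomains $\{U, V\}$ together with $U \cap V$; the relevant \v{C}ech-type complex is the two-term complex $C^\bullet = [\wideparen{\mathcal{D}}_X(U) \oplus \wideparen{\mathcal{D}}_X(V) \to \wideparen{\mathcal{D}}_X(U \cap V)]$, placed so that the augmented version $\wideparen{\mathcal{D}}_X(X) \to C^\bullet$ is strictly exact by Proposition \ref{prop:DCapdescent}(i) and the accompanying remark (using the open mapping theorem for Fr\'echet spaces). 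I would first observe that $\wideparen{\mathcal{D}}_X(W)$ is flat over $\wideparen{\mathcal{D}}_X(X)$ for every affinoid subdomain $W \subseteq X$ --- this is exactly the content that makes the Fr\'echet--Stein formalism work, and it is recorded in \cite{Dcap1} (the localization maps $\wideparen{\mathcal{D}}_X(X) \to \wideparen{\mathcal{D}}_X(W)$ are flat on both sides). Hence $(-)\widehat{\otimes}^{\mathbf{L}}_{\wideparen{\mathcal{D}}_X(X)} \wideparen{\mathcal{D}}_X(V)$ is $t$-exact, i.e. computes the ordinary completed tensor product in degree $0$ on modules concentrated in degree $0$.

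Next I would apply $\wideparen{\mathcal{D}}_X(U) \widehat{\otimes}^{\mathbf{L}}_{\wideparen{\mathcal{D}}_X(X)} (-)$ to the augmented \v{C}ech complex. Because $\wideparen{\mathcal{D}}_X(U)$ is flat over $\wideparen{\mathcal{D}}_X(X)$, this derived tensor product preserves strict exactness, so the resulting complex $\wideparen{\mathcal{D}}_X(U) \to \wideparen{\mathcal{D}}_X(U) \widehat{\otimes}^{\mathbf{L}}_{\wideparen{\mathcal{D}}_X(X)} C^\bullet$ is again strictly exact. Reading off the terms, $\wideparen{\mathcal{D}}_X(U) \widehat{\otimes}^{\mathbf{L}}_{\wideparen{\mathcal{D}}_X(X)} \wideparen{\mathcal{D}}_X(V) \simeq \wideparen{\mathcal{D}}_X(U) \widehat{\otimes}_{\wideparen{\mathcal{D}}_X(X)} \wideparen{\mathcal{D}}_X(V)$ sits in degree $0$, and strict exactness of the augmented complex forces the canonical map to $\wideparen{\mathcal{D}}_X(U \cap V)$ to be an isomorphism in $D(\mathsf{CBorn}_K)$. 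One then checks that this isomorphism is compatible with the $\wideparen{\mathcal{D}}_X(U)$-$\wideparen{\mathcal{D}}_X(V)$ bimodule structures, which is automatic since every map in sight is a map of algebras (or of bimodules over them) and the tensor product carries the evident bimodule structure. Part (ii) for $\mathcal{D}^n_X$ is verbatim the same argument, replacing Fr\'echet spaces by Banach spaces and invoking Proposition \ref{prop:DCapdescent}(ii) together with flatness of the Banach localizations $\mathcal{D}^n_X(X) \to \mathcal{D}^n_X(W)$ (e.g. \cite[\S3]{Dcap1}).

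The main obstacle I anticipate is purely functional-analytic bookkeeping rather than anything structural: one must be careful that "derived tensor product" in $D(\mathsf{CBorn}_K)$ genuinely agrees with the naive completed tensor product in the flat case, and that the strict exactness of the \v{C}ech complex (which lives in $\mathsf{CBorn}_K$ via Lemma \ref{lem:frechetAleph1}(ii) for the Fr\'echet case) is preserved under $\widehat{\otimes}^{\mathbf{L}}$. This is where flatness of the localization maps is doing the real work; without it the derived tensor product could introduce higher $\mathrm{Tor}$ terms that obstruct the comparison. I would isolate the flatness statement as the key input, cite it precisely from \cite{Dcap1}, and note that the rest is a formal consequence of Proposition \ref{prop:DCapdescent} exactly as in the commutative Tate acyclicity argument. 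A minor additional point to verify is that a covering by two subdomains suffices to deduce the bimodule equivalence for the intersection --- this is standard, since the two-term \v{C}ech complex for $\{U,V\}$ already encodes the gluing data for $U \cap V$.
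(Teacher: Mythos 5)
Your argument has a genuine gap at its foundation: the lemma is stated for \emph{arbitrary} affinoid subdomains $U, V \subseteq X$, which need not form an admissible covering of $X$, whereas Proposition \ref{prop:DCapdescent} only asserts exactness of the augmented \v{C}ech complex for a \emph{covering}. If, say, $U = V$ is a proper subdomain, the complex $\wideparen{\mathcal{D}}_X(X) \to \wideparen{\mathcal{D}}_X(U)\oplus\wideparen{\mathcal{D}}_X(V) \to \wideparen{\mathcal{D}}_X(U\cap V)$ is visibly not exact in the middle, so the proposed starting point is unavailable. Even in the case where $\{U,V\}$ does cover $X$, the step ``reading off the terms'' does not work as stated: applying $\wideparen{\mathcal{D}}_X(U)\widehat{\otimes}^{\mathbf{L}}_{\wideparen{\mathcal{D}}_X(X)}(-)$ to the augmented \v{C}ech complex produces a complex whose middle and last terms involve $\wideparen{\mathcal{D}}_X(U)\widehat{\otimes}^{\mathbf{L}}_{\wideparen{\mathcal{D}}_X(X)}\wideparen{\mathcal{D}}_X(U)$ and $\wideparen{\mathcal{D}}_X(U)\widehat{\otimes}^{\mathbf{L}}_{\wideparen{\mathcal{D}}_X(X)}\wideparen{\mathcal{D}}_X(U\cap V)$, which are themselves instances of the isomorphism being proved; exactness of the tensored complex cannot isolate the one summand you want without first identifying those other terms (e.g.\ via an idempotency statement $\wideparen{\mathcal{D}}_X(U)\widehat{\otimes}^{\mathbf{L}}_{\wideparen{\mathcal{D}}_X(X)}\wideparen{\mathcal{D}}_X(U)\simeq\wideparen{\mathcal{D}}_X(U)$, which you neither state nor prove), so the argument is circular. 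Finally, the blanket flatness of $\wideparen{\mathcal{D}}_X(X)\to\wideparen{\mathcal{D}}_X(W)$ is delicate in the Fr\'echet--Stein setting: the available results (e.g.\ \cite[Proposition 5.37]{bode_six_2021}) give acyclicity only for \emph{coadmissible} modules, and ``flat'' must be interpreted bornologically, so it cannot be cited as a black box.

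The paper's proof takes a different and much shorter route that sidesteps all of this. Using the \'etale coordinates one has $\wideparen{\mathcal{D}}_X(W)\simeq A_W\widehat{\otimes}^{\mathbf{L}}_K K\langle p^\infty\partial\rangle$ for every affinoid subdomain $W\subseteq X$, so by associativity of the derived tensor product the map in question is identified with
\begin{equation*}
\bigl(A_U\widehat{\otimes}^{\mathbf{L}}_A A_V\bigr)\widehat{\otimes}^{\mathbf{L}}_K K\langle p^\infty\partial\rangle \longrightarrow A_{U\cap V}\widehat{\otimes}^{\mathbf{L}}_K K\langle p^\infty\partial\rangle,
\end{equation*}
and the statement reduces to derived Tate acyclicity for the structure sheaf, $A_U\widehat{\otimes}^{\mathbf{L}}_A A_V\xrightarrow{\sim} A_{U\cap V}$, cited from \cite{BBKNonArch}. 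Note that this works for arbitrary pairs $U, V$ and handles the bimodule structure automatically. I would recommend abandoning the \v{C}ech route here; the \v{C}ech input of Proposition \ref{prop:DCapdescent} is used in the paper only afterwards, in Lemma \ref{lem:Dcapdescent}, to establish descendability once the present lemma is in hand.
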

\begin{proof}
We only prove (i) as the proof of (ii) is very similar. Let $A_U, A_V, A_{U \cap V}$ denote the corresponding affinoid localizations. Using the isomorphism $\wideparen{\mathcal{D}}_X(U) \simeq A_U \widehat{\otimes}_K^\mathbf{L} K \langle p^\infty \partial \rangle$, and associativity properties of the (derived) tensor product, the morphism \label{eq:Dcapderived} is equivalent to the morphism \begin{equation}
A_U \widehat{\otimes}^\mathbf{L}_{A} A_V \widehat{\otimes}_K^\mathbf{L} K \langle p^\infty \partial \rangle \to A_{U \cap V} \widehat{\otimes}_K^\mathbf{L} K \langle p^\infty \partial\rangle,
\end{equation} 
which evidently comes from $A_U \widehat{\otimes}^\mathbf{L}_A A_V \to A_{U \cap V}$ by tensoring on the right. The latter is an equivalence by \cite[Theorem 5.16]{BBKNonArch}.
\end{proof}
\begin{lem}\label{lem:Dcapdescent}
With notations as above. Let $\{U_i \to X\}_{i=1}^s$ be an admissible covering of $X$ by affinoid subdomains. Then:
\begin{enumerate}[(i)]
    \item The morphism $\wideparen{\mathcal{D}}_X(X) \to \prod_{i=1}^s \wideparen{\mathcal{D}}_X(U_i)$ is descendable in the sense of Definition \ref{defn:descendableNC}. 
    \item Let $Y := \coprod_{i=1}^s U_i$. Then the augmented simplicial object $\wideparen{\mathcal{D}}_X(Y^{\bullet+1/X})$ satisfies the Beck-Chevalley condition of Definition 
    \ref{defn:Beckchevalley}. 
    \item The canonical morphism  \begin{equation}\operatorname{RMod}_{\wideparen{\mathcal{D}}_X(X)}D(\mathsf{CBorn}_K) \to \underset{[m] \in \Delta}{\operatorname{lim}} \operatorname{RMod}_{\wideparen{\mathcal{D}}_X(Y^{m+1/X})}D(\mathsf{CBorn}_K)
    \end{equation}
    is an equivalence of $\infty$-categories. In particular $\operatorname{RMod}_{\wideparen{\mathcal{D}}_X(-)}D(\mathsf{CBorn}_K)$
    is a sheaf on $X_w$. 
\end{enumerate}
\end{lem}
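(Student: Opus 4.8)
The plan is to deduce all three parts from the noncommutative descent machinery of Appendix \ref{subsec:NCdescent}, bootstrapping from the Tate-acyclicity statement of Proposition \ref{prop:DCapdescent}(i) and the homotopy-epimorphism property of Lemma \ref{lem:DCaphomotopyepi}(i). First I would prove (i): the morphism of algebras $\wideparen{\mathcal{D}}_X(X) \to \prod_{i=1}^s \wideparen{\mathcal{D}}_X(U_i)$ is descendable. By Proposition \ref{prop:DCapdescent}(i), together with the following remark on strict exactness (via the open mapping theorem for Fréchet spaces), the augmented alternating \v{C}ech complex $C^\bullet_{\mathrm{aug}}(\{U_i\}, \wideparen{\mathcal{D}}_X)$ is strictly exact as a complex of $\wideparen{\mathcal{D}}_X(X)$-bimodules in $\mathsf{CBorn}_K$; hence it is null-homotopic after a finite filtration, which is exactly what is needed to verify the index-of-descendability bound in Definition \ref{defn:descendableNC}. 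Concretely, one exhibits the unit object $1_{\wideparen{\mathcal{D}}_X(X)}$ as a retract (in the appropriate sense on the level of the Tor-filtration) of a finite complex built from $\prod_i \wideparen{\mathcal{D}}_X(U_i)$; the strict exactness of the \v{C}ech complex gives the contracting homotopies and the finiteness of the cover gives the bound on the length.

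Next, for (ii), I would use Lemma \ref{lem:DCaphomotopyepi}(i) to identify the terms of the \v{C}ech nerve. Writing $Y = \coprod_i U_i$, one has $\wideparen{\mathcal{D}}_X(Y^{m+1/X}) \simeq \wideparen{\mathcal{D}}_X(U_{i_0}) \widehat{\otimes}^\mathbf{L}_{\wideparen{\mathcal{D}}_X(X)} \cdots \widehat{\otimes}^\mathbf{L}_{\wideparen{\mathcal{D}}_X(X)} \wideparen{\mathcal{D}}_X(U_{i_m})$ on each component, because the successive intersections $U_{i_0} \cap \cdots \cap U_{i_m}$ are computed by iterating the equivalence \eqref{eq:DCapderived}. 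This presentation shows that the augmented simplicial object is precisely the \v{C}ech nerve of the descendable algebra morphism of part (i), so the Beck--Chevalley condition of Definition \ref{defn:Beckchevalley} holds automatically — the base-change squares in question are those relating $(-)\widehat{\otimes}^\mathbf{L}_{\wideparen{\mathcal{D}}_X(X)}\wideparen{\mathcal{D}}_X(U_i)$ along the faces, and these commute because the tensor product is associative and the terms are flat base-changes of one another, again by Lemma \ref{lem:DCaphomotopyepi}(i).

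Finally, (iii) follows formally by combining (i) and (ii) with the noncommutative Barr--Beck / descendability theorem in Appendix \ref{subsec:NCdescent}: a descendable morphism of $E_1$-algebras whose \v{C}ech nerve satisfies Beck--Chevalley induces an equivalence between right modules over the base and the limit of right modules over the nerve. The ``in particular'' clause — that $\operatorname{RMod}_{\wideparen{\mathcal{D}}_X(-)}D(\mathsf{CBorn}_K)$ is a sheaf on $X_w$ — is then the observation that a presheaf valued in $\mathsf{Cat}_\infty$ on a site with a basis is a sheaf as soon as it satisfies \v{C}ech descent for the generating covers, which is exactly the displayed equivalence applied to an arbitrary finite affinoid cover.

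The main obstacle I expect is part (i), the descendability claim: strict exactness of the \v{C}ech complex in the quasi-abelian category $\mathsf{CBorn}_K$ is not by itself the same as having a \emph{finite} filtration by null-homotopic pieces in the derived $\infty$-category, so one must be careful that the noncommutative descendability criterion of Definition \ref{defn:descendableNC} is genuinely implied. The key input is that the cover is finite (so the \v{C}ech complex has length $s$), that each term is a flat $\wideparen{\mathcal{D}}_X(X)$-module on both sides (inherited from flatness of $A_{U_i}$ over $A$ together with strong flatness of $K\langle p^\infty \partial\rangle$), and that strict exactness in $\mathsf{CBorn}_K$ passes to genuine exactness in the derived category because all the objects involved are Fréchet and the differentials are strict. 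Granting these, the Tor-spectral-sequence / filtration argument standard in the commutative case (Mathew's criterion) carries over verbatim in the noncommutative setting worked out in the appendix.
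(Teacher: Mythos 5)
Your proposal is correct and follows essentially the same route as the paper: strict exactness of the finite alternating \v{C}ech complex (Proposition \ref{prop:DCapdescent} plus the open-mapping-theorem remark) combined with the homotopy-epimorphism identification of Lemma \ref{lem:DCaphomotopyepi} gives descendability and the Beck--Chevalley condition, and Lemma \ref{lem:NCdescent2} then yields (iii). One small correction: Definition \ref{defn:descendableNC} involves no ``index of descendability''; the point is simply that the finite strictly exact \v{C}ech complex exhibits $\wideparen{\mathcal{D}}_X(X)$ as a finite limit of objects of $\langle \prod_i \wideparen{\mathcal{D}}_X(U_i)\rangle$ (the intersection terms lying in this subcategory by the convolution identification of Lemma \ref{lem:DCaphomotopyepi}), which is exactly how the paper phrases it, as an $R\operatorname{lim}$ of a finite diagram.
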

\begin{proof}
(i): By combining Proposition \ref{prop:DCapdescent} with Lemma \ref{lem:DCaphomotopyepi}, and using the Dold-Kan correspondence, one deduces that 
\begin{equation*}
    \wideparen{\mathcal{D}}_X(X) \to R\operatorname{lim} \Big(
\begin{tikzcd}[column sep=small]
	{\prod_{i} \wideparen{\mathcal{D}}_X(U_i)} & {\prod_{i < j} \wideparen{\mathcal{D}}_X(U_i) \widehat{\otimes}^\mathbf{L}_{\wideparen{\mathcal{D}}_X(X)}\wideparen{\mathcal{D}}_X(U_j)} & \cdots
	\arrow[shift right, from=1-1, to=1-2]
	\arrow[shift left, from=1-1, to=1-2]
	\arrow[from=1-2, to=1-3]
	\arrow[shift right=2, from=1-2, to=1-3]
	\arrow[shift left=2, from=1-2, to=1-3]
\end{tikzcd} \Big),
\end{equation*}
as $\wideparen{\mathcal{D}}_X(X)$-$\wideparen{\mathcal{D}}_X(X)$ bimodule objects in $D(\mathsf{CBorn}_K)$. We note that the limit on the right is finite because we used the alternating \v{C}ech complex. This establishes (i).

(ii): This is immediate from Lemma \ref{lem:DCaphomotopyepi}. 

(iii): By using (i) and (ii) above, this follows from Lemma \ref{lem:NCdescent2}. 
\end{proof}
In an entirely similar way one has the following. 
\begin{lem}\label{lem:Dndescent}
With notations as above. Let $\{U_i \to X\}_{i=1}^s$ be an admissible covering of $X$ by $p^n$-accessible affinoid subdomains. Then:
\begin{enumerate}[(i)]
    \item The morphism $\mathcal{D}^n_X(X) \to \prod_{i=1}^s \mathcal{D}^n_X(U_i)$ is descendable in the sense of Definition \ref{defn:descendableNC}. 
    \item Let $Y := \coprod_{i=1}^s U_i$. Then the augmented simplicial object $\mathcal{D}^n_X(Y^{\bullet+1/X})$ satisfies the Beck-Chevalley condition of Definition 
    \ref{defn:Beckchevalley}. 
    \item The canonical morphism  \begin{equation}\operatorname{RMod}_{\mathcal{D}^n_X(X)}D(\mathsf{CBorn}_K) \to \underset{[m] \in \Delta}{\operatorname{lim}} \operatorname{RMod}_{\mathcal{D}^n_X(Y^{m+1/X})}D(\mathsf{CBorn}_K)
    \end{equation}
    is an equivalence of $\infty$-categories. In particular $\operatorname{RMod}_{\mathcal{D}^n_X(-)}D(\mathsf{CBorn}_K)$
    is a sheaf on $X_{n,w}$. 
\end{enumerate}
\end{lem}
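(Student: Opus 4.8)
The plan is to adapt the proof of Lemma \ref{lem:Dcapdescent} word for word, replacing $\wideparen{\mathcal{D}}_X$ by $\mathcal{D}^n_X$ throughout and invoking part (ii) of Proposition \ref{prop:DCapdescent} and of Lemma \ref{lem:DCaphomotopyepi} in place of part (i); so I only indicate the changes.

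For (i), I would combine the exactness of the augmented alternating \v{C}ech complex $C^\bullet_{\mathrm{aug}}(\{U_i\},\mathcal{D}^n_X)$ provided by Proposition \ref{prop:DCapdescent}(ii) --- which is in fact strictly exact, being a complex of Banach spaces, by the open mapping theorem --- with the base-change equivalence of Lemma \ref{lem:DCaphomotopyepi}(ii). Through the Dold--Kan correspondence these together show that
\begin{equation}
    \mathcal{D}^n_X(X) \xrightarrow{\ \sim\ } R\operatorname{lim}\Big( \textstyle\prod_{i}\mathcal{D}^n_X(U_i) \rightrightarrows \prod_{i<j}\mathcal{D}^n_X(U_i)\widehat{\otimes}^\mathbf{L}_{\mathcal{D}^n_X(X)}\mathcal{D}^n_X(U_j) \to \cdots \Big)
\end{equation}
is an equivalence of $\mathcal{D}^n_X(X)$-$\mathcal{D}^n_X(X)$ bimodule objects in $D(\mathsf{CBorn}_K)$, in which the limit on the right is \emph{finite} precisely because we worked with the alternating complex. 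This is exactly the condition for $\mathcal{D}^n_X(X)\to\prod_{i=1}^s\mathcal{D}^n_X(U_i)$ to be descendable in the sense of Definition \ref{defn:descendableNC}.

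For (ii), the Beck--Chevalley condition of Definition \ref{defn:Beckchevalley} for the augmented simplicial object $\mathcal{D}^n_X(Y^{\bullet+1/X})$ is immediate from Lemma \ref{lem:DCaphomotopyepi}(ii), exactly as in Lemma \ref{lem:Dcapdescent}(ii). Given (i) and (ii), part (iii) --- the equivalence $\operatorname{RMod}_{\mathcal{D}^n_X(X)}D(\mathsf{CBorn}_K)\simeq\operatorname{lim}_{[m]\in\Delta}\operatorname{RMod}_{\mathcal{D}^n_X(Y^{m+1/X})}D(\mathsf{CBorn}_K)$, and hence sheafiness on $X_{n,w}$ --- then follows from Lemma \ref{lem:NCdescent2}.

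Since every ingredient has already been set up in the $p^n$-adic form, I do not expect any serious obstacle. The two points deserving a line of care are: first, that the pairwise and higher intersections of the $U_i$ remain $p^n$-accessible, so that $\mathcal{D}^n_X$ is defined on them and Lemma \ref{lem:DCaphomotopyepi}(ii) applies, which holds because $p^n\mathcal{T}_X$-accessibility in the sense of \cite[\S4.5]{Dcap1} is stable under passage to subdomains; and second, that the finiteness of the \v{C}ech limit --- which is what makes the morphism descendable --- is preserved by using the alternating complex throughout, so that the abstract noncommutative descent criterion of Lemma \ref{lem:NCdescent2} applies verbatim.
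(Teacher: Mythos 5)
Your proposal is correct and is essentially identical to the paper's own argument: the paper's proof of this lemma is literally ``the same, \emph{mutatis mutandis}, as the proof of Lemma \ref{lem:Dcapdescent}'', i.e.\ exactly the substitution of Proposition \ref{prop:DCapdescent}(ii) and Lemma \ref{lem:DCaphomotopyepi}(ii) for their part-(i) counterparts that you carry out. Your added remark that intersections of $p^n$-accessible subdomains are again $p^n$-accessible (so that $\mathcal{D}^n_X$ is defined on the higher terms of the \v{C}ech nerve) is a point the paper leaves implicit, and it is correctly resolved by \cite[\S4]{Dcap1}.
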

\begin{proof}
This is the same, \emph{mutandis mutatis}, as the proof of Lemma \ref{lem:Dcapdescent}. 
\end{proof}
For each $n \geqslant 0$ we denote by
\begin{equation}
    \operatorname{RMod}^{\mathrm{b,fg}}_{\mathcal{D}^{n}_X(X)} D(\mathsf{CBorn}_K) \subseteq  \operatorname{RMod}_{\mathcal{D}^{n}_X(X)} D(\mathsf{CBorn}_K)
\end{equation}
the full subcategory spanned by (cohomologically) bounded complexes with finitely-generated cohomology groups.
\begin{lem}\label{lem:fgpullback}
Let $U \subseteq X$ be a $p^n$-accessible affinoid subdomain. Then:
\begin{enumerate}[(i)]
    \item Finitely-generated right $\mathcal{D}^n_X(X)$-modules are acyclic for $(-)\widehat{\otimes}_{\mathcal{D}^n_X(X)}\mathcal{D}^n_X(U)$.
    \item The pullback functor $(-)\widehat{\otimes}^\mathbf{L}_{\mathcal{D}^n_X(X)} \mathcal{D}^n_X(U)$ restricts to a functor 
\begin{equation}
\operatorname{RMod}^{\mathrm{b,fg}}_{\mathcal{D}^{n}_X(X)} D(\mathsf{CBorn}_K) \to \operatorname{RMod}^{\mathrm{b,fg}}_{\mathcal{D}^{n}_X(U)} D(\mathsf{CBorn}_K), 
\end{equation}
which furthermore is $t$-exact. In particular we obtain a sub-prestack
\begin{equation}
\operatorname{RMod}^{\mathrm{b,fg}}_{\mathcal{D}^{n}_X(-)} D(\mathsf{CBorn}_K) \subseteq \operatorname{RMod}_{\mathcal{D}^{n}_X(-)} D(\mathsf{CBorn}_K): X_w^\mathsf{op} \to \mathsf{Cat}_\infty. 
\end{equation}
\end{enumerate}
\end{lem}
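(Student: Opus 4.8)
The plan is to reduce everything to the algebraic flatness of the localization homomorphism $\mathcal{D}^n_X(X) \to \mathcal{D}^n_X(U)$, combined with the good behaviour of finitely-generated modules over Noetherian Banach algebras.

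For (i), I would first recall from \cite{Dcap1} that $\mathcal{D}^n_X(X)$ and $\mathcal{D}^n_X(U)$ are two-sided Noetherian Banach algebras and that the localization map $\mathcal{D}^n_X(X) \to \mathcal{D}^n_X(U)$ is flat on both sides in the purely algebraic sense --- in \emph{loc.\ cit.} this is seen by passing to the associated graded, where the map becomes the affinoid localization $A \to A_U$ base-changed along a polynomial extension. Given a finitely-generated right $\mathcal{D}^n_X(X)$-module $M$, Noetherianity lets me pick a resolution $P_\bullet \to M$ by finite free right $\mathcal{D}^n_X(X)$-modules; since finitely-generated modules over a Noetherian Banach algebra form an abelian category in which every morphism is strict (images of maps between finitely-generated modules are finitely-generated submodules, hence bornologically closed, so one can apply the open mapping theorem), this is a strict resolution and hence computes $M \widehat{\otimes}^\mathbf{L}_{\mathcal{D}^n_X(X)}\mathcal{D}^n_X(U)$. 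As the $P_i$ are finite free, $P_i \widehat{\otimes}_{\mathcal{D}^n_X(X)}\mathcal{D}^n_X(U) = \mathcal{D}^n_X(U)^{\oplus \operatorname{rk} P_i}$ with no completion entering, so the derived pullback is computed by the base-changed complex of finite free $\mathcal{D}^n_X(U)$-modules, whose cohomology is the algebraic $\operatorname{Tor}^{\mathcal{D}^n_X(X)}_\bullet(M, \mathcal{D}^n_X(U))$; this vanishes in positive degrees by flatness, and in degree $0$ equals the algebraic tensor product. Finally I would check that the algebraic tensor product agrees with $M \widehat{\otimes}_{\mathcal{D}^n_X(X)}\mathcal{D}^n_X(U)$, by applying the right-exact functor $(-)\widehat{\otimes}_{\mathcal{D}^n_X(X)}\mathcal{D}^n_X(U)$ to a finite presentation of $M$ and noting that the image of $\mathcal{D}^n_X(U)^a \to \mathcal{D}^n_X(U)^b$ is a finitely-generated, hence bornologically closed, submodule of $\mathcal{D}^n_X(U)^b$, so the bornological and algebraic cokernels coincide.

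For (ii), I would use that $\mathcal{D}^n_X(X)$ is Auslander regular and in particular of finite global dimension (\cite{Dcap1}, \cite[\S2]{bode_six_2021}): any cohomologically bounded complex with finitely-generated cohomology is then perfect, i.e.\ equivalent to a bounded complex $P^\bullet$ of finitely-generated projective modules, and $M^\bullet \widehat{\otimes}^\mathbf{L}_{\mathcal{D}^n_X(X)}\mathcal{D}^n_X(U) \simeq P^\bullet \widehat{\otimes}_{\mathcal{D}^n_X(X)}\mathcal{D}^n_X(U)$ is a bounded complex of finitely-generated projective $\mathcal{D}^n_X(U)$-modules, hence cohomologically bounded with finitely-generated cohomology. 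For $t$-exactness I would argue that flatness of $\mathcal{D}^n_X(U)$ over $\mathcal{D}^n_X(X)$ makes $(-)\widehat{\otimes}^\mathbf{L}_{\mathcal{D}^n_X(X)}\mathcal{D}^n_X(U)$ both right $t$-exact (automatic for a colimit-preserving left derived functor) and left $t$-exact --- alternatively, filtering $M^\bullet$ by its canonical truncations exhibits it as a finite iterated extension of shifted finitely-generated modules, each acyclic for the pullback by (i), so the pullback commutes with cohomology. Compatibility of these restriction functors along composable inclusions of $p^n$-accessible subdomains is inherited from the ambient prestack $\operatorname{RMod}_{\mathcal{D}^n_X(-)}D(\mathsf{CBorn}_K)$, so the full subcategories assemble into the asserted sub-prestack.

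The main obstacle I anticipate is purely functional-analytic bookkeeping: justifying that the completed tensor product coincides with the algebraic one on finitely-generated modules and that $(-)\widehat{\otimes}_{\mathcal{D}^n_X(X)}\mathcal{D}^n_X(U)$ introduces no hidden higher derived functors on finite free modules. This reduces to the standard facts that finitely-generated submodules of finitely-generated modules over a Noetherian Banach algebra are closed and that finitely-generated quotients are automatically complete, for which I would cite \cite{Dcap1} and \cite[\S2]{bode_six_2021}; the algebraic flatness of $\mathcal{D}^n_X(U)$ over $\mathcal{D}^n_X(X)$ is also essential but is taken as input from \cite{Dcap1} rather than proved here.
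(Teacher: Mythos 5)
Your proposal is correct and rests on the same two pillars as the paper's proof: the abstract (two-sided) flatness of $\mathcal{D}^n_X(X) \to \mathcal{D}^n_X(U)$ from \cite[Theorem 4.9]{Dcap1}, and the reduction of (ii) to (i) by a truncation/spectral-sequence argument giving $H^j(M^\bullet \widehat{\otimes}^\mathbf{L}_{\mathcal{D}^n_X(X)}\mathcal{D}^n_X(U)) \cong H^j(M^\bullet)\widehat{\otimes}_{\mathcal{D}^n_X(X)}\mathcal{D}^n_X(U)$. The one genuine difference is that the paper black-boxes the passage from abstract flatness to acyclicity for the completed tensor product by citing \cite[Lemma 5.32]{bode_six_2021}, whereas you re-derive it: strict finite free resolutions exist by Noetherianity, are strict because finitely-generated submodules of finitely-generated modules over a Noetherian Banach algebra are closed (so the open mapping theorem applies), finite free modules are projective in the quasi-abelian module category so the resolution computes $\widehat{\otimes}^\mathbf{L}$, and no completion intervenes on finite free modules. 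That unfolding is a valid self-contained substitute for the citation. Your perfect-complex argument in (ii) via finite global dimension is an unnecessary detour (and carries the extra burden of justifying Auslander regularity/finite global dimension), but your fallback via canonical truncations is exactly the paper's argument, so nothing is lost.
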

\begin{proof}
(i): By \cite[Theorem 4.9]{Dcap1}, $\mathcal{D}^n_X(U)$ is flat on both sides as an abstract $\mathcal{D}^n_X(X)$-module. Hence the claim follows from \cite[Lemma 5.32]{bode_six_2021}, noting that the $\operatorname{Tor}$-groups in \emph{loc. cit.} refer to the abstract  $\operatorname{Tor}$-groups. 
 
 (ii): Let $M^\bullet$ be a bounded complex with finitely-generated cohomology groups. Using the result of (i), an easy spectral sequence argument implies that
\begin{equation}
    H^j(M^\bullet \widehat{\otimes}^\mathbf{L}_{\mathcal{D}^n_X(X)} \mathcal{D}^n_X(U)) \cong H^j (M^\bullet)\widehat{\otimes}_{\mathcal{D}^n_X(X)} \mathcal{D}^n_X(U),
\end{equation}
proving the Lemma. 
\end{proof}
\begin{thm}\label{thm:Fgbdescent}
Let $\{U_i \to X\}_{i=1}^s$ be an admissible covering of $X$ by $p^n$-accessible subdomains. Then the canonical morphism
\begin{equation}\label{eq:Bfgdescent}
    \operatorname{RMod}^{\mathrm{b,fg}}_{\mathcal{D}^n_X(X)}D(\mathsf{CBorn}_K) \to \underset{[m] \in \Delta}{\operatorname{lim}} \operatorname{RMod}^{\mathrm{b,fg}}_{\mathcal{D}^n_X(Y^{m+1/X})}D(\mathsf{CBorn}_K)
\end{equation}
is an equivalence of $\infty$-categories.
\end{thm}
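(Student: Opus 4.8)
The plan is to deduce this from the descent statement Lemma~\ref{lem:Dndescent}(iii), which provides an equivalence of $\infty$-categories $\operatorname{RMod}_{\mathcal{D}^n_X(X)}D(\mathsf{CBorn}_K)\xrightarrow{\ \sim\ }\lim_{[m]\in\Delta}\operatorname{RMod}_{\mathcal{D}^n_X(Y^{m+1/X})}D(\mathsf{CBorn}_K)$, and to show that it restricts to~\eqref{eq:Bfgdescent}. A limit of fully faithful functors is fully faithful, and by Lemma~\ref{lem:fgpullback}(ii) the $\mathrm{b,fg}$-subcategories are preserved by the transition functors, so the right-hand side of~\eqref{eq:Bfgdescent} is a full subcategory of the limit above. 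An equivalence of $\infty$-categories restricts to an equivalence between two chosen full subcategories exactly when, under the equivalence, an object lies in the source subcategory if and only if its image lies in the target subcategory — and the restricted functor is then automatically fully faithful. So it suffices to prove: an object $M^\bullet\in\operatorname{RMod}_{\mathcal{D}^n_X(X)}D(\mathsf{CBorn}_K)$ is cohomologically bounded with finitely-generated cohomology if and only if each $M^\bullet\widehat\otimes^\mathbf{L}_{\mathcal{D}^n_X(X)}\mathcal{D}^n_X(Y^{m+1/X})$ is. For the ``only if'' direction I would write $Y^{m+1/X}=\coprod_{i_0,\dots,i_m}(U_{i_0}\cap\cdots\cap U_{i_m})$, note each intersection is again a $p^n$-accessible affinoid subdomain contained in $U_{i_0}$, and apply Lemma~\ref{lem:fgpullback}(ii) twice (along $U_{i_0}\subseteq X$, then along the intersection inside $U_{i_0}$) to see that each factor of $M^\bullet\widehat\otimes^\mathbf{L}_{\mathcal{D}^n_X(X)}\mathcal{D}^n_X(Y^{m+1/X})=\prod_{i_0,\dots,i_m}M^\bullet\widehat\otimes^\mathbf{L}_{\mathcal{D}^n_X(X)}\mathcal{D}^n_X(U_{i_0}\cap\cdots\cap U_{i_m})$ is $\mathrm{b,fg}$; the product is finite, so the total object is $\mathrm{b,fg}$ over $\mathcal{D}^n_X(Y^{m+1/X})=\prod_{i_0,\dots,i_m}\mathcal{D}^n_X(U_{i_0}\cap\cdots\cap U_{i_m})$.

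For the ``if'' direction only the case $m=0$ is needed, i.e.\ that each $M^\bullet_i:=M^\bullet\widehat\otimes^\mathbf{L}_{\mathcal{D}^n_X(X)}\mathcal{D}^n_X(U_i)$ is $\mathrm{b,fg}$. I would first get boundedness of $M^\bullet$: by \cite[Theorem 4.9]{Dcap1} each $\mathcal{D}^n_X(U_i)$ is flat over $\mathcal{D}^n_X(X)$, so $(-)\widehat\otimes^\mathbf{L}_{\mathcal{D}^n_X(X)}\mathcal{D}^n_X(U_i)$ is $t$-exact (this is the computation already used in the proof of Lemma~\ref{lem:fgpullback}(ii)), while by Lemma~\ref{lem:Dndescent}(i) the morphism $\mathcal{D}^n_X(X)\to\prod_i\mathcal{D}^n_X(U_i)$ is descendable, hence the family of base-change functors is conservative. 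Taking $K$ large enough that all the $M^\bullet_i$ have cohomology concentrated in degrees $[-K,K]$, the truncations $\tau^{\geqslant K+1}M^\bullet$ and $\tau^{\leqslant -K-1}M^\bullet$ have vanishing base change along every $U_i$ (by $t$-exactness), hence vanish (by conservativity), so $M^\bullet$ is bounded. Next, finite generation of the cohomology: by $t$-exactness $H^j(M^\bullet)\widehat\otimes_{\mathcal{D}^n_X(X)}\mathcal{D}^n_X(U_i)\cong H^j(M^\bullet_i)$ is a finitely-generated $\mathcal{D}^n_X(U_i)$-module for every $i$, so the claim reduces to: a $\mathcal{D}^n_X(X)$-module that becomes finitely-generated after base change to each member of the cover is finitely-generated. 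This is faithfully flat descent of finite generation, and it applies because $\prod_i\mathcal{D}^n_X(U_i)$ is \emph{faithfully} flat over $\mathcal{D}^n_X(X)$: flat by \cite[Theorem 4.9]{Dcap1}, and faithful because the augmented alternating \v{C}ech complex of $\mathcal{D}^n_X$ from Proposition~\ref{prop:DCapdescent}(ii) is a finite exact complex whose non-augmentation terms are flat, hence remains exact after $(-)\widehat\otimes_{\mathcal{D}^n_X(X)}N$ for any module $N$, exhibiting $N\hookrightarrow\prod_i N\widehat\otimes_{\mathcal{D}^n_X(X)}\mathcal{D}^n_X(U_i)$ as injective.

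The hard part, and the only step needing real care, is the functional-analytic bookkeeping running silently through the last paragraph: one must identify the bornological completed tensor products $\widehat\otimes_{\mathcal{D}^n_X(X)}$ (and their derived versions) with the abstract tensor products over the Noetherian ring $\mathcal{D}^n_X(X)$, so that the descent argument can be executed as an elementary statement of commutative algebra rather than a bornological one; this is exactly the content supplied by \cite[Theorem 4.9]{Dcap1} together with the $\operatorname{Tor}$-comparison of \cite[Lemma 5.32]{bode_six_2021}, and I would isolate the needed comparison (for bounded complexes under flat base change, $\widehat\otimes^\mathbf{L}=\otimes^\mathbf{L}$) as a preliminary lemma before the proof proper. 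Granted that, together with Lemmas~\ref{lem:Dndescent} and~\ref{lem:fgpullback}, the remaining argument is formal.
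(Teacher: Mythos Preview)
Your overall reduction is sound: by Lemma~\ref{lem:Dndescent}(iii) the ambient categories match, so it suffices to show that an object $M^\bullet$ of $\operatorname{RMod}_{\mathcal{D}^n_X(X)}D(\mathsf{CBorn}_K)$ lies in the $\mathrm{b,fg}$ subcategory if and only if all of its pullbacks do. The ``only if'' direction is exactly Lemma~\ref{lem:fgpullback}(ii), and your treatment of it is fine.

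The gap is in the ``if'' direction, specifically in the boundedness step. You claim that $(-)\widehat{\otimes}^\mathbf{L}_{\mathcal{D}^n_X(X)}\mathcal{D}^n_X(U_i)$ is $t$-exact on \emph{all} of $\operatorname{RMod}_{\mathcal{D}^n_X(X)}D(\mathsf{CBorn}_K)$, invoking the abstract flatness from \cite[Theorem 4.9]{Dcap1} and the computation in Lemma~\ref{lem:fgpullback}(ii). But that computation uses part~(i) of the same lemma, which only gives acyclicity of \emph{finitely-generated} modules for $(-)\widehat{\otimes}_{\mathcal{D}^n_X(X)}\mathcal{D}^n_X(U_i)$: this is precisely where the $\operatorname{Tor}$-comparison of \cite[Lemma 5.32]{bode_six_2021} is applied, and it does not extend to arbitrary bornological $\mathcal{D}^n_X(X)$-modules. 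Your proposed preliminary lemma ``$\widehat{\otimes}^\mathbf{L}=\otimes^\mathbf{L}$ for bounded complexes under flat base change'' is simply not true without a finiteness hypothesis on the cohomology---already in degree zero the completed and abstract tensor products differ for modules that are not finitely generated over the Noetherian Banach algebra. Consequently you cannot conclude that $\tau^{>K}M^\bullet$ or $\tau^{<-K}M^\bullet$ has vanishing base change, and the conservativity argument does not get off the ground. The same issue recurs in the finite-generation step: to run faithfully-flat descent of finite generation you need $H^j(M^\bullet)\widehat{\otimes}_{\mathcal{D}^n_X(X)}\mathcal{D}^n_X(U_i)\cong H^j(M^\bullet_i)$, which again presupposes the $t$-exactness you have not established.

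The paper's proof sidesteps this entirely. Rather than trying to deduce properties of $M^\bullet$ from its pullbacks via $t$-exactness, it computes $H^j(M^\bullet_{-1})$ directly from the hypercohomology spectral sequence $E_2^{pq}=R^p\operatorname{lim}_{[m]}H^qM^\bullet_m\Rightarrow H^{p+q}(M^\bullet_{-1})$. Here one only ever handles the objects $H^qM^\bullet_m$, which are finitely generated by hypothesis, so Lemma~\ref{lem:fgpullback} does apply and shows they form a compatible system under base change. The noncommutative Kiehl theorem \cite[\S5]{Dcap1} then kills the higher $R^p\operatorname{lim}$ terms, the spectral sequence degenerates, and one reads off that $H^j(M^\bullet_{-1})$ is the equalizer of a diagram of finitely-generated modules, hence finitely generated by the descent theorem of \emph{loc.\ cit.} The point is that every step stays inside the world of finitely-generated modules, where the bornological/abstract comparison is available.
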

\begin{proof}
Let $(M_m^\bullet)_{[m] \in \Delta}$ be an object of the right-hand side of \eqref{eq:Bfgdescent}. Because of Lemma \ref{lem:Dndescent}, the only thing to show is that $M_{-1}^\bullet := R\operatorname{lim}_{[m] \in \Delta} M_m^\bullet$ is bounded with finitely-generated cohomology groups. We consider the hypercohomology spectral sequence
\begin{equation}\label{eq:spectralsequenceFG}
E_2^{pq}:R^p\underset{[m] \in \Delta}{\operatorname{lim}} H^q M_m^\bullet \Rightarrow H^{p+q}(M_{-1}).
\end{equation}
This converges because $(M_m^\bullet)_{[m] \in \Delta}$ is uniformly cohomologically bounded by Lemma \ref{lem:fgpullback}. Further, Lemma \ref{lem:fgpullback} implies that 
\begin{equation}
    H^q(M_k^\bullet) \widehat{\otimes}_{\mathcal{D}^n_X(Y^{k+1/X})} \mathcal{D}^n_X(Y^{l+1/X}) \cong H^q(M_l^\bullet)
\end{equation}
is an isomorphism for every cosimplicial morphism $[l] \to [k]$. Thus the counterpart of Kiehl's theorem in this setting \cite[\S 5]{Dcap1} then implies that 
\begin{equation}
    R^p\underset{[m] \in \Delta}{\operatorname{lim}} H^q M_m^\bullet = 0,  \text{ whenever } p > 0. 
\end{equation}
Thus the spectral sequence \eqref{eq:spectralsequenceFG} collapses and gives an isomorphism
\begin{equation}
\begin{aligned}
    H^nM^\bullet_{-1} &\cong \underset{[m] \in \Delta}{\operatorname{lim}} H^n(M_m^\bullet)\\
    &= \operatorname{eq}\big(H^n(M_0^\bullet) \rightrightarrows H^n(M^\bullet_1)\big) 
    \end{aligned}
\end{equation}
which, by the theorem of descent for finitely-generated $\mathcal{D}^n_X$-modules \cite[\S 5]{Dcap1}, is a finitely generated $\mathcal{D}^n_X(X)$-module. Further, we see that the cohomological amplitude of $M_{-1}^\bullet$ is contained in the same interval as $M_0^\bullet$. 
\end{proof}
\begin{scholium}
Looking at the proof of Theorem \ref{thm:Fgbdescent}, we notice that in fact 
\begin{equation}
  \operatorname{RMod}^{[c,d], \mathrm{fg}}_{\mathcal{D}^n_X(X)}D(\mathsf{CBorn}_K) \to \underset{[m] \in \Delta}{\operatorname{lim}} \operatorname{RMod}^{[c,d],\mathrm{fg}}_{\mathcal{D}^n_X(Y^{m+1/X})}D(\mathsf{CBorn}_K)
\end{equation}
is an equivalence of $\infty$-categories, for any interval $[c,d] \subseteq \mathbf{R}$ with $d < \infty$.
\end{scholium}
We remark that since each $\mathcal{D}^n_X(X)$ is flat (on both sides) as an abstract $\mathcal{D}^{n+1}_X(X)$-module the pullback functors restrict\footnote{To be precise, this abstract flatness together with \cite[Lemma 5.32]{bode_six_2021} implies that finitely generated $\mathcal{D}^{n+1}_X(X)$-modules are acyclic for $-\widehat{\otimes}_{\mathcal{D}^{n+1}_X(X)} \mathcal{D}^n_X(X)$, and then an easy spectral-sequence argument gives the claim.} to functors
\begin{equation}
\operatorname{RMod}^{\mathrm{b,fg}}_{\mathcal{D}^{n+1}_X(X)}D(\mathsf{CBorn}_K) \to \operatorname{RMod}^{\mathrm{b,fg}}_{\mathcal{D}^{n}_X(X)} D(\mathsf{CBorn}_K)
\end{equation}
for each $n$. There is an obvious functor 
\begin{equation}
   \phi: D_\mathcal{C}(X) \to \underset{n}{\operatorname{lim}} \operatorname{RMod}^{\mathrm{b,fg}}_{\mathcal{D}^n_X(X)} D(\mathsf{CBorn}_K)
\end{equation}
which, on objects, sends $M^\bullet \mapsto (M_n^\bullet)_n$ where $M_n^\bullet = M^\bullet \widehat{\otimes}_{\wideparen{\mathcal{D}}_X(X)}^\mathbf{L} \mathcal{D}^n_X(X)$. 
\begin{thm}\label{thm:CComplexdescent}
The functor
\begin{equation}\label{eq:Ccomplexlimit}
   \phi: D_\mathcal{C}(X) \to \underset{n}{\operatorname{lim}} \operatorname{RMod}^{\mathrm{b,fg}}_{\mathcal{D}^n_X(X)} D(\mathsf{CBorn}_K)
\end{equation}
is an equivalence of $\infty$-categories. 
\end{thm}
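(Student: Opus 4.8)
The plan is to exhibit an explicit homotopy inverse to $\phi$. Since the transition maps in the target are the base-change functors $(-)\widehat{\otimes}^\mathbf{L}_{\mathcal{D}^{n+1}_X(X)}\mathcal{D}^n_X(X)$, the base-change functors $(-)\widehat{\otimes}^\mathbf{L}_{\wideparen{\mathcal{D}}_X(X)}\mathcal{D}^n_X(X)$ are mutually compatible and assemble into a functor $\operatorname{RMod}_{\wideparen{\mathcal{D}}_X(X)}D(\mathsf{CBorn}_K)\to\operatorname{lim}_n\operatorname{RMod}_{\mathcal{D}^n_X(X)}D(\mathsf{CBorn}_K)$ whose restriction to $D_\mathcal{C}(X)$ is $\phi$; its right adjoint is the limit of the restriction-of-scalars functors and sends a compatible system $(M^\bullet_n)_n$ to $\Psi\big((M^\bullet_n)_n\big):=R\operatorname{lim}_n M^\bullet_n$, the limit being taken in $\operatorname{RMod}_{\wideparen{\mathcal{D}}_X(X)}D(\mathsf{CBorn}_K)$ after restriction of scalars. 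On $D_\mathcal{C}(X)$ the composite $\Psi\circ\phi$ is the identity precisely by condition~(ii) of Definition~\ref{defn:Ccomplexprime}. Equivalently, $\phi$ is fully faithful: for $M^\bullet\in D_\mathcal{C}(X)$ and any $\mathcal{C}$-complex $N^\bullet$, using $N^\bullet\simeq R\operatorname{lim}_n N^\bullet_n$ together with the adjunction between base change and restriction of scalars one obtains $\operatorname{Map}_{\wideparen{\mathcal{D}}_X(X)}(M^\bullet,N^\bullet)\simeq R\operatorname{lim}_n\operatorname{Map}_{\wideparen{\mathcal{D}}_X(X)}(M^\bullet,N^\bullet_n)\simeq R\operatorname{lim}_n\operatorname{Map}_{\mathcal{D}^n_X(X)}(M^\bullet_n,N^\bullet_n)$, which is exactly the mapping space in $\operatorname{lim}_n\operatorname{RMod}^{\mathrm{b,fg}}_{\mathcal{D}^n_X(X)}D(\mathsf{CBorn}_K)$.

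It therefore remains to prove essential surjectivity, namely that for any object $(M^\bullet_n)_n$ of the right-hand side the complex $N^\bullet:=R\operatorname{lim}_n M^\bullet_n$ lies in $D_\mathcal{C}(X)$ and that the counit $N^\bullet\widehat{\otimes}^\mathbf{L}_{\wideparen{\mathcal{D}}_X(X)}\mathcal{D}^m_X(X)\to M^\bullet_m$ is an equivalence for every $m$. This is carried out on cohomology, in close analogy with (and building on) the proof of Theorem~\ref{thm:Fgbdescent} and Bode's analysis of $\mathcal{C}$-complexes in \cite[\S8]{bode_six_2021}. First, by Lemma~\ref{lem:fgpullback}(i) the derived base change along $\mathcal{D}^{n+1}_X(X)\to\mathcal{D}^n_X(X)$ is underived on finitely generated modules, so a spectral-sequence argument gives $H^q(M^\bullet_n)\cong H^q(M^\bullet_{n+1})\widehat{\otimes}_{\mathcal{D}^{n+1}_X(X)}\mathcal{D}^n_X(X)$; thus each tower $(H^q(M^\bullet_n))_n$ is a coherent sheaf for the Fr\'echet--Stein structure, i.e.\ its limit $\operatorname{lim}_n H^q(M^\bullet_n)$ is a coadmissible $\wideparen{\mathcal{D}}_X(X)$-module. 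By the vanishing of higher inverse limits for coadmissible towers (Schneider--Teitelbaum; cf.\ \cite[\S5]{Dcap1}) one has $R^p\operatorname{lim}_n H^q(M^\bullet_n)=0$ for $p>0$, so the hypercohomology spectral sequence $R^p\operatorname{lim}_n H^q(M^\bullet_n)\Rightarrow H^{p+q}(N^\bullet)$ --- which converges because the tower is uniformly cohomologically bounded by the $t$-exactness in Lemma~\ref{lem:fgpullback}(ii) --- degenerates to $H^q(N^\bullet)\cong\operatorname{lim}_n H^q(M^\bullet_n)$, a coadmissible module, with $N^\bullet$ bounded in the range of $M^\bullet_0$. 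Finally, coadmissibility of $H^q(N^\bullet)$ is exactly what makes it acyclic for $(-)\widehat{\otimes}_{\wideparen{\mathcal{D}}_X(X)}\mathcal{D}^m_X(X)$ (vanishing of the higher completed $\operatorname{Tor}$, via \cite[Lemma~5.32]{bode_six_2021} and the Fr\'echet--Stein formalism), so the base-change spectral sequence collapses and gives $H^q\big(N^\bullet\widehat{\otimes}^\mathbf{L}_{\wideparen{\mathcal{D}}_X(X)}\mathcal{D}^m_X(X)\big)\cong H^q(N^\bullet)\widehat{\otimes}_{\wideparen{\mathcal{D}}_X(X)}\mathcal{D}^m_X(X)\cong H^q(M^\bullet_m)$; tracking the canonical maps shows this isomorphism is induced by the counit, whence both $N^\bullet\in D_\mathcal{C}(X)$ (conditions~(i) and~(ii)) and $\phi\circ\Psi\simeq\operatorname{id}$.

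The main obstacle is this last step: showing that the cohomology of the derived limit $N^\bullet=R\operatorname{lim}_n M^\bullet_n$ is coadmissible, is computed degreewise by the naive inverse limit, and behaves as expected under completed base change. This is where the full strength of the Fr\'echet--Stein machinery must be imported --- the vanishing of $R^{>0}\operatorname{lim}$ for coadmissible towers, the identification of coadmissible modules with coherent sheaves, and the comparison of abstract with completed $\operatorname{Tor}$ --- and where one must be careful about the functional-analytic subtleties (strictness of morphisms, the gap between $\widehat{\otimes}$ and the abstract tensor product, and the behaviour of $R\operatorname{lim}$ inside $D(\mathsf{CBorn}_K)$) that are absent over a discrete base ring. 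By contrast, the construction of the adjunction $\phi\dashv\Psi$, the mapping-space computation, and the reduction of everything to the level of cohomology via spectral sequences are formal, given the results already assembled in \S\ref{sec:DescentforDCap}.
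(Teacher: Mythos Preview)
Your proposal is correct and follows essentially the same route as the paper: construct the right adjoint $\Psi = R\operatorname{lim}_n$, use condition~(ii) of Definition~\ref{defn:Ccomplexprime} for $\Psi\circ\phi\simeq\operatorname{id}$, and then prove $\phi\circ\Psi\simeq\operatorname{id}$ by computing $H^q(N^\bullet)$ via the $R\operatorname{lim}$ spectral sequence and invoking acyclicity of coadmissible modules for the base change to $\mathcal{D}^m_X(X)$. One minor point on references: the vanishing of $R^1\operatorname{lim}_n H^q(M_n^\bullet)$ you need is the \emph{bornological} one in $D(\mathsf{CBorn}_K)$, for which the paper appeals to Bode's Mittag--Leffler result \cite[Theorem~5.26]{bode_six_2021} (the system being pre-nuclear with dense images), rather than the purely algebraic Schneider--Teitelbaum statement; and the acyclicity of coadmissible modules for $(-)\widehat{\otimes}_{\wideparen{\mathcal{D}}_X(X)}\mathcal{D}^m_X(X)$ is \cite[Corollary~5.38]{bode_six_2021}.
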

\begin{proof}
We will first show that, given a Cartesian section $(N_n^\bullet)_n$ belonging to the left-side of \eqref{eq:Ccomplexlimit}, then $N^\bullet:= R\operatorname{lim}_n N_n^\bullet$ belongs to the full subcategory of $\mathcal{C}$-complexes, so that we obtain a right adjoint $\psi: (N_n^\bullet) \mapsto N^\bullet := R\operatorname{lim}_n N_n^\bullet$ to the functor $\phi$ above. (If we can show that $N^\bullet$ is a $\mathcal{C}$-complex, then it will immediately follow that $\phi$ is fully-faithful, as by definition we have $M^\bullet \simeq \psi \phi (M^\bullet)$ for any $\mathcal{C}$-complex $M^\bullet$). 

For each $j \in \mathbf{Z}$, the system $\{H^j(N_n^\bullet)\}_n$ satisfies $H^j(N_{n+1}^\bullet) \widehat{\otimes}_{\mathcal{D}^{n+1}_X(X)} \mathcal{D}^{n}_X(X) \xrightarrow[]{\sim} H^j(N_{n}^\bullet)$, by flatness\footnote{See previous footnote.} of $\mathcal{D}^{n}_X(X)$ as a (left) $\mathcal{D}^{n+1}_X(X)$-module. In particular the system $\{H^j(N_n^\bullet)\}_n$ is pre-nuclear with dense images (c.f. the remark under \cite[Definition 5.24]{bode_six_2021}). Hence the usual short-exact sequence 
\begin{equation}
    0 \to R^1 \underset{n}{\operatorname{lim}} H^{j-1}(N_n^\bullet) \to H^j(N^\bullet) \to \underset{n}{\operatorname{lim}} H^j(N_n^\bullet) \to 0
\end{equation}
together with the Mittag-Leffler result of \cite[Theorem 5.26]{bode_six_2021} implies that $H^j(N^\bullet) \cong \operatorname{lim}_nH^j(N^\bullet_n)$ has coadmissible cohomology. We claim that for each $m$, the canonical morphism
\begin{equation}\label{eq:canonicalNmap}
     N^\bullet \widehat{\otimes}^\mathbf{L}_{\wideparen{\mathcal{D}}_X(X)} \mathcal{D}^m_X(X)  \to N_m^\bullet
\end{equation}
is an equivalence. We may take cohomology. By \cite[Corollary 5.38]{bode_six_2021} coadmissible right $\wideparen{\mathcal{D}}_X(X)$-modules are acyclic for $(-) \widehat{\otimes}_{\wideparen{\mathcal{D}}_X(X)} \mathcal{D}^m_X(X)$. This implies that 
\begin{equation}
     H^j(N^\bullet \widehat{\otimes}^\mathbf{L}_{\wideparen{\mathcal{D}}_X(X)} \mathcal{D}^m_X(X)) \cong H^j(N^\bullet)\widehat{\otimes}_{\wideparen{\mathcal{D}}_X(X)} \mathcal{D}^m_X(X)
\end{equation}
for each $j \in \mathbf{Z}$: indeed, when $N^\bullet$ is bounded-above, this is an easy spectral-sequence argument, and in general one writes $N^\bullet$ as a (homotopy) colimit of its truncations, and uses that the derived tensor product commutes with colimits separately in each variable. However by properties of coadmissible modules we know that 
\begin{equation}
H^j(N^\bullet)\widehat{\otimes}_{\wideparen{\mathcal{D}}_X(X)} \mathcal{D}^m_X(X) \cong H^j(N_m). 
\end{equation}
This implies that \eqref{eq:canonicalNmap} is an equivalence. By the comment above this implies that $\phi$ is fully-faithful. In fact, looking at \eqref{eq:canonicalNmap} we see that $\phi\psi \simeq \operatorname{id}$, so that $(\phi, \psi)$ give an equivalence of categories.
\end{proof}
\begin{rmk}\label{rmk:isolate}
We isolate the following useful fact from the proof of Theorem \ref{thm:CComplexdescent}. Let $(N_n^\bullet)_n$ be a Cartesian section belonging to the right side of \eqref{eq:Ccomplexlimit}. Then for each $j \in \mathbf{Z}$, $N^\bullet := R \operatorname{lim}_n N_n^\bullet$ satisfies $H^j(N^\bullet) \xrightarrow[]{\sim} \operatorname{lim}_n H^j(N_n^\bullet)$. 
\end{rmk}
\begin{lem}\label{lem:D_ntransversellemma}
Let $M^\bullet \in D_\mathcal{C}(X)$ and let $n \geqslant 0$. Then for every $j \in \mathbf{Z}$ there is an isomorphism
\begin{equation}
    H^j(M^\bullet \widehat{\otimes}_{\wideparen{\mathcal{D}}_X(X)}^\mathbf{L} \mathcal{D}^n_X(X)) \cong H^j(M^\bullet)\widehat{\otimes}_{\wideparen{\mathcal{D}}_X(X)} \mathcal{D}^n_X(X).
\end{equation}
\end{lem}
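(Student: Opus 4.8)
The statement is, after renaming, precisely the isomorphism $H^j(N^\bullet \widehat{\otimes}^\mathbf{L}_{\wideparen{\mathcal{D}}_X(X)}\mathcal{D}^m_X(X)) \cong H^j(N^\bullet)\widehat{\otimes}_{\wideparen{\mathcal{D}}_X(X)}\mathcal{D}^m_X(X)$ established in the proof of Theorem \ref{thm:CComplexdescent}, applied with $N^\bullet = M^\bullet$ and $N_n^\bullet = M_n^\bullet := M^\bullet \widehat{\otimes}^\mathbf{L}_{\wideparen{\mathcal{D}}_X(X)}\mathcal{D}^n_X(X)$; this is legitimate because, by definition of a $\mathcal{C}$-complex (Definition \ref{defn:Ccomplexprime}(ii)), $M^\bullet \simeq R\operatorname{lim}_n M_n^\bullet$. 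So the proof is in principle a one-line cross-reference, but since the Lemma will be invoked on its own I would reprove it, isolating the two inputs that drive it: (a) each $H^j(M^\bullet)$ is a coadmissible right $\wideparen{\mathcal{D}}_X(X)$-module; (b) Bode's acyclicity result \cite[Corollary 5.38]{bode_six_2021}, namely that a coadmissible right $\wideparen{\mathcal{D}}_X(X)$-module $V$ is acyclic for $(-)\widehat{\otimes}_{\wideparen{\mathcal{D}}_X(X)}\mathcal{D}^n_X(X)$, so that $V \widehat{\otimes}^\mathbf{L}_{\wideparen{\mathcal{D}}_X(X)}\mathcal{D}^n_X(X)$ is concentrated in degree $0$.

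For (a): by Remark \ref{rmk:iiprime} we have $H^j(M^\bullet) \cong \operatorname{lim}_n H^j(M_n^\bullet)$, with each $H^j(M_n^\bullet)$ finitely generated over $\mathcal{D}^n_X(X)$. Since $\mathcal{D}^n_X(X)$ is flat on both sides as an abstract $\mathcal{D}^{n+1}_X(X)$-module, finitely generated $\mathcal{D}^{n+1}_X(X)$-modules are acyclic for $(-)\widehat{\otimes}_{\mathcal{D}^{n+1}_X(X)}\mathcal{D}^n_X(X)$ by \cite[Lemma 5.32]{bode_six_2021}; applying $(-)\widehat{\otimes}^\mathbf{L}_{\mathcal{D}^{n+1}_X(X)}\mathcal{D}^n_X(X)$ to $M_{n+1}^\bullet$ therefore yields $M_n^\bullet$ and gives $H^j(M_{n+1}^\bullet)\widehat{\otimes}_{\mathcal{D}^{n+1}_X(X)}\mathcal{D}^n_X(X) \cong H^j(M_n^\bullet)$ for every $j$. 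Hence $\{H^j(M_n^\bullet)\}_n$ is a coadmissible system and $H^j(M^\bullet)$ is coadmissible, exactly as recorded in the proof of Theorem \ref{thm:CComplexdescent}.

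Granting (a) and (b), I would deduce the Lemma by d\'evissage along the canonical truncation of $M^\bullet$, using that $(-)\widehat{\otimes}^\mathbf{L}_{\wideparen{\mathcal{D}}_X(X)}\mathcal{D}^n_X(X)$ is right $t$-exact. For cohomologically bounded $M^\bullet$ one inducts on the amplitude: the base case is (b), and the inductive step applies the long exact cohomology sequence to the triangle $\tau^{\leqslant b-1}M^\bullet \to M^\bullet \to H^b(M^\bullet)[-b]$, the last term tensoring (by (b)) to something concentrated in degree $b$ and $\tau^{\leqslant b-1}M^\bullet$ tensoring (by right $t$-exactness) into degrees $\leqslant b-1$. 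For $M^\bullet$ bounded above but possibly unbounded below, the triangle $\tau^{\leqslant j-2}M^\bullet \to M^\bullet \to \tau^{\geqslant j-1}M^\bullet$ together with the right $t$-exactness of the first term reduces $H^j(M^\bullet\widehat{\otimes}^\mathbf{L}_{\wideparen{\mathcal{D}}_X(X)}\mathcal{D}^n_X(X))$ to the bounded complex $\tau^{\geqslant j-1}M^\bullet$. Finally, for a general $\mathcal{C}$-complex (which need not be bounded above) one writes $M^\bullet \simeq \operatorname{colim}_m \tau^{\leqslant m}M^\bullet$; by (b) the cofibre $H^{m+1}(M^\bullet)[-(m+1)]$ of $\tau^{\leqslant m}M^\bullet \to \tau^{\leqslant m+1}M^\bullet$ remains concentrated in degree $m+1$ after tensoring with $\mathcal{D}^n_X(X)$, so the system $\{H^j(\tau^{\leqslant m}M^\bullet\widehat{\otimes}^\mathbf{L}_{\wideparen{\mathcal{D}}_X(X)}\mathcal{D}^n_X(X))\}_m$ is eventually constant and, because $H^j$ and $(-)\widehat{\otimes}^\mathbf{L}_{\wideparen{\mathcal{D}}_X(X)}\mathcal{D}^n_X(X)$ commute with filtered colimits in $D(\mathsf{CBorn}_K)$, its value is $H^j(M^\bullet\widehat{\otimes}^\mathbf{L}_{\wideparen{\mathcal{D}}_X(X)}\mathcal{D}^n_X(X))$. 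I expect the only delicate point to be this last filtered-colimit step -- equivalently the claim that $D^{\geqslant k}(\mathsf{CBorn}_K)$ is closed under filtered colimits, which holds since filtered colimits are exact in the left heart $LH(\mathsf{CBorn}_K)$ -- and it is in any case the same verification used, more tersely, after \eqref{eq:canonicalNmap} in the proof of Theorem \ref{thm:CComplexdescent}. Everything else is formal bookkeeping with (a) and (b).
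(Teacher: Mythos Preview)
Your proposal is correct and follows essentially the same approach as the paper's proof: both rely on (a) the coadmissibility of each $H^j(M^\bullet)$ and (b) Bode's acyclicity result \cite[Corollary 5.38]{bode_six_2021}, then reduce the general case to the bounded-above case by writing $M^\bullet$ as a homotopy colimit of its truncations. The only cosmetic difference is that the paper handles the bounded-above case by invoking ``an easy spectral-sequence argument'' whereas you spell out the d\'evissage explicitly via truncation triangles, and you are more careful about justifying the filtered-colimit step; these are equivalent presentations of the same argument.
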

\begin{proof}
By \cite[Corollary 5.38]{bode_six_2021}, coadmissible right $\wideparen{\mathcal{D}}_X(X)$-modules are acyclic for $(-)\widehat{\otimes}_{\wideparen{\mathcal{D}}_X(X)}^\mathbf{L} \mathcal{D}^n_X(X)$, which gives the claim (when $M^\bullet$ is bounded-above, this is an easy spectral-sequence argument, and in general one writes $M^\bullet$ as a homotopy colimit of its truncations and uses the commutation of tensor products with colimits). 
\end{proof}
\begin{lem}\label{lem:Coadmissiblepullback}
Let $M^\bullet \in D_\mathcal{C}(X)$ and let $U \subseteq X$ be an affinoid subdomain. Then:
\begin{enumerate}[(i)]
    \item $M_U^\bullet:= M^\bullet \widehat{\otimes}^\mathbf{L}_{\wideparen{\mathcal{D}}_X(X)} \wideparen{\mathcal{D}}_X(U)$ belongs to $D_\mathcal{C}(U)$.
    \item For each $j \in \mathbf{Z}$, one has $H^j(M_U^\bullet) \cong H^j(M^\bullet) \widehat{\otimes}_{\wideparen{\mathcal{D}}_X(X)}\wideparen{\mathcal{D}}_X(U)$.
\end{enumerate}
 In particular we obtain a sub-prestack
\begin{equation}
    D_\mathcal{C}(-)  \subseteq \operatorname{RMod}_{\wideparen{\mathcal{D}}_X(-)} D(\mathsf{CBorn}_K) : X_w^\mathsf{op} \to \mathsf{Cat}_\infty. 
\end{equation}
\end{lem}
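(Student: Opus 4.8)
The plan is to establish (ii) first, then deduce (i) by verifying the two conditions of Definition \ref{defn:Ccomplexprime}; the sub-prestack assertion will then be formal. I would start from the observation that, since $U \hookrightarrow X \to \mathbf{D}^r_K$ is \'etale, $\wideparen{\mathcal{D}}_X(U)$ inherits \'etale coordinates from $X$, so that --- exactly as in the proof of Lemma \ref{lem:DCaphomotopyepi}, using the strong flatness of $K\langle p^\infty \partial\rangle$ \cite[Corollary 5.36]{bode_six_2021} --- there is an equivalence $\wideparen{\mathcal{D}}_X(U) \simeq \wideparen{\mathcal{D}}_X(X) \widehat{\otimes}^\mathbf{L}_A A_U$, where $A_U$ acts on $\wideparen{\mathcal{D}}_X(X)$ through the subalgebra $A \hookrightarrow \wideparen{\mathcal{D}}_X(X)$ of functions. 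Consequently, for any $M^\bullet$ one has $M_U^\bullet \simeq M^\bullet \widehat{\otimes}^\mathbf{L}_A A_U$, the tensor product being taken over the functions.

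For part (ii): the affinoid (rational) localization $A \to A_U$ is flat, so $(-) \widehat{\otimes}^\mathbf{L}_A A_U$ is exact and computes cohomology naively on complexes with Fr\'echet cohomology \cite{BBKNonArch, soor_six-functor_2024}; hence $H^j(M_U^\bullet) \cong H^j(M^\bullet) \widehat{\otimes}_A A_U \cong H^j(M^\bullet) \widehat{\otimes}_{\wideparen{\mathcal{D}}_X(X)} \wideparen{\mathcal{D}}_X(U)$. Alternatively, I would argue exactly as in Lemma \ref{lem:D_ntransversellemma}: for $M^\bullet$ bounded above this is a spectral-sequence computation using that the coadmissible modules $H^j(M^\bullet)$ are acyclic for $(-) \widehat{\otimes}_{\wideparen{\mathcal{D}}_X(X)} \wideparen{\mathcal{D}}_X(U)$ --- which follows, as in \cite[Corollary 5.38]{bode_six_2021}, from the exactness of restriction of coadmissible $\wideparen{\mathcal{D}}$-modules to affinoid subdomains \cite{Dcap1} --- and in general one writes $M^\bullet$ as the homotopy colimit of its truncations and uses that $\widehat{\otimes}^\mathbf{L}$ commutes with colimits. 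Finally $H^j(M^\bullet) \widehat{\otimes}_{\wideparen{\mathcal{D}}_X(X)} \wideparen{\mathcal{D}}_X(U)$ is a coadmissible $\wideparen{\mathcal{D}}_X(U)$-module by \cite{Dcap1}, which gives (ii).

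For part (i): I would fix $n_0$ with $U$ being $p^n$-accessible for all $n \geqslant n_0$, so that $\{\mathcal{D}^n_X(U)\}_{n \geqslant n_0}$ is a Fr\'echet--Stein presentation of $\wideparen{\mathcal{D}}_X(U)$. By transitivity of the derived tensor product and compatibility of the Fr\'echet--Stein truncations with restriction, $(M_U^\bullet)_n := M_U^\bullet \widehat{\otimes}^\mathbf{L}_{\wideparen{\mathcal{D}}_X(U)} \mathcal{D}^n_X(U) \simeq M_n^\bullet \widehat{\otimes}^\mathbf{L}_{\mathcal{D}^n_X(X)} \mathcal{D}^n_X(U)$ for $n \geqslant n_0$, with $M_n^\bullet := M^\bullet \widehat{\otimes}^\mathbf{L}_{\wideparen{\mathcal{D}}_X(X)} \mathcal{D}^n_X(X) \in \operatorname{RMod}^{\mathrm{b,fg}}_{\mathcal{D}^n_X(X)} D(\mathsf{CBorn}_K)$ since $M^\bullet$ is a $\mathcal{C}$-complex. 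Lemma \ref{lem:fgpullback} then yields $(M_U^\bullet)_n \in \operatorname{RMod}^{\mathrm{b,fg}}_{\mathcal{D}^n_X(U)} D(\mathsf{CBorn}_K)$ with $H^j((M_U^\bullet)_n) \cong H^j(M_n^\bullet) \widehat{\otimes}_{\mathcal{D}^n_X(X)} \mathcal{D}^n_X(U)$, which is condition (i) of Definition \ref{defn:Ccomplexprime}. For condition (ii) of that Definition, I would combine Lemma \ref{lem:D_ntransversellemma} (so that $H^j(M_n^\bullet) \cong H^j(M^\bullet) \widehat{\otimes}_{\wideparen{\mathcal{D}}_X(X)} \mathcal{D}^n_X(X)$) with the description of coadmissible modules under restriction \cite{Dcap1} to identify $\{H^j((M_U^\bullet)_n)\}_{n \geqslant n_0}$ with the coadmissible system of $H^j(M_U^\bullet) = H^j(M^\bullet) \widehat{\otimes}_{\wideparen{\mathcal{D}}_X(X)} \wideparen{\mathcal{D}}_X(U)$ (using part (ii) just proved); then \cite[Theorem 5.26]{bode_six_2021} gives $R^1\operatorname{lim}_n H^j((M_U^\bullet)_n) = 0$ and $\operatorname{lim}_n H^j((M_U^\bullet)_n) \cong H^j(M_U^\bullet)$, whence $M_U^\bullet \xrightarrow{\sim} R\operatorname{lim}_n (M_U^\bullet)_n$ by the usual short-exact sequence relating $R\operatorname{lim}$ and $\operatorname{lim}$, and $M_U^\bullet \in D_\mathcal{C}(U)$. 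Equivalently, since $((M_U^\bullet)_n)_{n \geqslant n_0}$ is then a Cartesian section with entries in $\operatorname{RMod}^{\mathrm{b,fg}}$, one may just invoke Theorem \ref{thm:CComplexdescent} over $U$.

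The final claim is formal: running the same argument with an arbitrary affinoid subdomain $V \subseteq U$ in place of $U \subseteq X$ shows that every restriction functor of $\operatorname{RMod}_{\wideparen{\mathcal{D}}_X(-)} D(\mathsf{CBorn}_K) : X_w^\mathsf{op} \to \mathsf{Cat}_\infty$ carries $D_\mathcal{C}(-)$ into $D_\mathcal{C}(-)$, and fullness of the subcategories $D_\mathcal{C}(-)$ promotes this to the asserted sub-prestack. I expect the main obstacle to be the functional-analytic input in (ii) --- the flatness of the affinoid localization $A \to A_U$ (equivalently, the coadmissible-flatness of $\wideparen{\mathcal{D}}_X(U)$ over $\wideparen{\mathcal{D}}_X(X)$) ensuring that $(-)\widehat{\otimes}^\mathbf{L}_A A_U$ computes cohomology naively on Fr\'echet modules --- together with the bookkeeping (the role of $n_0$) needed to align the Fr\'echet--Stein presentations of $\wideparen{\mathcal{D}}_X(X)$ and $\wideparen{\mathcal{D}}_X(U)$; everything else is transitivity of tensor products and a repetition of the Mittag-Leffler argument already used in the proof of Theorem \ref{thm:CComplexdescent}.
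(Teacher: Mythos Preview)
Your proposal is correct and matches the paper's approach: part (ii) via acyclicity of coadmissible modules for $(-)\widehat{\otimes}_{\wideparen{\mathcal{D}}_X(X)}\wideparen{\mathcal{D}}_X(U)$ (the paper cites \cite[Proposition~5.37]{bode_six_2021}) plus the spectral-sequence/truncation argument, then condition~(i) of Definition~\ref{defn:Ccomplexprime} via Lemma~\ref{lem:fgpullback} and condition~(ii) via the Mittag--Leffler/$R^1\operatorname{lim}$ argument (the paper packages this as Remark~\ref{rmk:isolate} together with \cite[Proposition~5.33]{bode_six_2021}). One caution: your ``first'' route to (ii), via bornological flatness of $A \to A_U$ on arbitrary Fr\'echet complexes, is not justified as stated in the complete-bornological setting --- stick with the coadmissible-acyclicity argument you call the ``alternative'', which is exactly what the paper does.
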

\begin{proof}
In the following argument we always take $n$ large enough so that $U$ is $p^n$-accessible. By associativity properties of the tensor product and Lemma \ref{lem:fgpullback} we know that each $M_U^\bullet  \widehat{\otimes}^\mathbf{L}_{\wideparen{\mathcal{D}}_X(U)}\mathcal{D}^n_X(U)$ is bounded with finitely-generated cohomology. Now we show that the canonical morphism
\begin{equation}\label{eq:MUmorphism}
   M_U^\bullet \to R \underset{n}{\operatorname{lim}} M_U^\bullet  \widehat{\otimes}^\mathbf{L}_{\wideparen{\mathcal{D}}_X(U)}\mathcal{D}^n_X(U) 
\end{equation}
is an equivalence. Let us first examine the left side of \eqref{eq:MUmorphism}. By \cite[Proposition 5.37]{bode_six_2021}, coadmissible right $\wideparen{\mathcal{D}}_X(X)$-modules are acyclic for $(-)\widehat{\otimes}_{\wideparen{\mathcal{D}}_X(X)} \wideparen{\mathcal{D}}_X(U)$. This implies that 
\begin{equation}
    H^j(M_U^\bullet) \cong H^j(M^\bullet) \widehat{\otimes}_{\wideparen{\mathcal{D}}_X(X)} \wideparen{\mathcal{D}}_X(U)
\end{equation}
for each $j \in \mathbf{Z}$. Indeed, when $M^\bullet$ is bounded above this follows from an easy spectral-sequence argument, and in general one writes $M^\bullet$ as a colimit of its truncations and uses the commutation of tensor products with colimits.

Now let us examine the right side of \eqref{eq:MUmorphism}. Using Remark \ref{rmk:isolate} above, one has 
\begin{equation}
H^j(R\underset{n}{\operatorname{lim}}M_U^\bullet  \widehat{\otimes}^\mathbf{L}_{\wideparen{\mathcal{D}}_X(U)}\mathcal{D}^n_X(U)) \cong \underset{n}{\operatorname{lim}} H^j(M^\bullet \widehat{\otimes}^\mathbf{L}_{\wideparen{\mathcal{D}}_X(X)}  \mathcal{D}^n_X(U)).
\end{equation}
By Lemma \ref{lem:D_ntransversellemma}, we know that
\begin{equation}
    H^j(M^\bullet \widehat{\otimes}^\mathbf{L}_{\wideparen{\mathcal{D}}_X(X)}  \mathcal{D}^n_X(U)) \cong H^j(M^\bullet) \widehat{\otimes}_{\wideparen{\mathcal{D}}_X(X)}\mathcal{D}^n_X(U). 
\end{equation}
So, to show that \eqref{eq:MUmorphism} is an equivalence we are reduced to showing that
\begin{equation}
    H^j(M^\bullet) \widehat{\otimes}_{\wideparen{\mathcal{D}}_X(X)} \wideparen{\mathcal{D}}_X(U) \to \underset{n}{\operatorname{lim}} H^j(M^\bullet) \widehat{\otimes}_{\wideparen{\mathcal{D}}_X(X)}\mathcal{D}^n_X(U)
\end{equation}
is an isomorphism for each $j \in \mathbf{Z}$. This follows from \cite[Proposition 5.33]{bode_six_2021} (see also the preceding discussion about Ardakov--Wadsley's ``cap tensor product" in \emph{loc. cit.}). 
\end{proof}
\begin{cor}\label{cor:ccomplexsFr}
Suppose that $M^\bullet$ is a $\mathcal{C}$-complex. Then the underlying object $M^\bullet \in \operatorname{QCoh}(X)$ belongs to $\operatorname{sFr}(X)$, so that there is an inclusion
\begin{equation}
    D_\mathcal{C}(X) \subseteq \operatorname{RMod}_{\wideparen{\mathcal{D}}_X(X)} \operatorname{sFr}(X).
\end{equation}
\end{cor}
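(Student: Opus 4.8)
The plan is to reduce everything to results already in hand, namely Lemma \ref{lem:Ccomplex}, Lemma \ref{lem:Coadmissiblepullback}, and the (essentially formal) fact that the forgetful functor from right $\wideparen{\mathcal{D}}_X(X)$-modules to $\operatorname{QCoh}(X)$ intertwines the $\wideparen{\mathcal{D}}$-module pullback along an open immersion with the quasi-coherent pullback. By Lemma \ref{lem:Ccomplex} the underlying object $M^\bullet \in \operatorname{QCoh}(X)$ already lies in $\operatorname{Fr}(X)$, so by the definition of $\operatorname{sFr}(X)$ the only thing left to check is that $A_U \widehat{\otimes}^\mathbf{L}_A M^\bullet \in \operatorname{Fr}(U)$ for every affinoid subdomain $U \subseteq X$. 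Note that such a $U$ is again a smooth affinoid \'etale over a polydisk (it is an open subspace of $X$), so $\operatorname{Fr}(U)$ and Bode's category $D_\mathcal{C}(U)$ are defined and Lemma \ref{lem:Coadmissiblepullback}(i) applies.

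The key point is to identify, on underlying objects of $D(\mathsf{CBorn}_K)$, the quasi-coherent pullback $A_U \widehat{\otimes}^\mathbf{L}_A M^\bullet$ with the $\wideparen{\mathcal{D}}$-module pullback $M_U^\bullet := M^\bullet \widehat{\otimes}^\mathbf{L}_{\wideparen{\mathcal{D}}_X(X)} \wideparen{\mathcal{D}}_X(U)$, where $M^\bullet$ carries the $A$-module structure coming from $A \hookrightarrow \wideparen{\mathcal{D}}_X(X)$. For this it suffices to know that $\wideparen{\mathcal{D}}_X(U) \simeq \wideparen{\mathcal{D}}_X(X) \widehat{\otimes}^\mathbf{L}_A A_U$ as a left $\wideparen{\mathcal{D}}_X(X)$-module, the tensor product being formed via the right-multiplication $A$-module structure on $\wideparen{\mathcal{D}}_X(X)$; associativity of $\widehat{\otimes}^\mathbf{L}$ then gives $M_U^\bullet \simeq M^\bullet \widehat{\otimes}^\mathbf{L}_{\wideparen{\mathcal{D}}_X(X)} (\wideparen{\mathcal{D}}_X(X) \widehat{\otimes}^\mathbf{L}_A A_U) \simeq M^\bullet \widehat{\otimes}^\mathbf{L}_A A_U$. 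This $\wideparen{\mathcal{D}}$-module equivalence is the non-commutative analogue of $\mathcal{D}_U \cong \mathcal{D}_X \otimes_{\mathcal{O}_X} \mathcal{O}_U$; in the present setting it can be obtained exactly as in the proof of Lemma \ref{lem:DCaphomotopyepi}, using the presentation $\wideparen{\mathcal{D}}_X(-) \simeq A_{(-)} \widehat{\otimes}_K K\langle p^\infty \partial \rangle$ (which, because $X$ is \'etale over a polydisk, may also be written with the scalars on the right), the strong flatness of $K\langle p^\infty \partial\rangle$ \cite[Corollary 5.36]{bode_six_2021}, and the derived rational localization theorem \cite[Theorem 5.16]{BBKNonArch}; alternatively it is contained in Ardakov--Wadsley's localization theory \cite[\S4.5]{Dcap1}. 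I expect the care required to keep track of left- versus right-multiplication $A$-structures here — so that the \emph{left} $\wideparen{\mathcal{D}}_X(X)$-module structure on $\wideparen{\mathcal{D}}_X(U)$ is correctly recovered — to be the only real obstacle in the argument.

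Granting the identification $A_U \widehat{\otimes}^\mathbf{L}_A M^\bullet \simeq M_U^\bullet$ in $D(\mathsf{CBorn}_K)$, I would finish as follows. By Lemma \ref{lem:Coadmissiblepullback}(i), $M_U^\bullet$ is a $\mathcal{C}$-complex on $U$, so by Lemma \ref{lem:Ccomplex} applied over $U$ its underlying object lies in $\operatorname{Fr}(U)$. Hence $A_U \widehat{\otimes}^\mathbf{L}_A M^\bullet \in \operatorname{Fr}(U)$ for every affinoid subdomain $U \subseteq X$, which is precisely the condition $M^\bullet \in \operatorname{sFr}(X)$. The asserted inclusion $D_\mathcal{C}(X) \subseteq \operatorname{RMod}_{\wideparen{\mathcal{D}}_X(X)} \operatorname{sFr}(X)$ is then immediate, both sides being full subcategories of $\operatorname{RMod}_{\wideparen{\mathcal{D}}_X(X)} D(\mathsf{CBorn}_K)$ and the defining membership condition having just been verified. (One could instead argue via Lemma \ref{lem:Coadmissiblepullback}(ii), which computes $H^j(M_U^\bullet) \cong H^j(M^\bullet) \widehat{\otimes}_{\wideparen{\mathcal{D}}_X(X)} \wideparen{\mathcal{D}}_X(U)$, a coadmissible and hence Fr\'echet $\wideparen{\mathcal{D}}_X(U)$-module — but this route still requires the same underlying-object comparison with $A_U \widehat{\otimes}^\mathbf{L}_A M^\bullet$.)
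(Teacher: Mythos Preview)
Your proposal is correct and follows essentially the same route as the paper's proof: use Lemma \ref{lem:Coadmissiblepullback} to see that the $\wideparen{\mathcal{D}}$-module pullback $M_U^\bullet$ is again a $\mathcal{C}$-complex, identify its underlying $A_U$-module with $M^\bullet \widehat{\otimes}^\mathbf{L}_A A_U$, and conclude via Lemma \ref{lem:Ccomplex} applied over $U$. The paper compresses the identification step into a single clause (``whose underlying object in $\operatorname{QCoh}(X)$ is $M^\bullet\widehat{\otimes}_A^\mathbf{L}A_U$''), whereas you spell out the justification via $\wideparen{\mathcal{D}}_X(U) \simeq \wideparen{\mathcal{D}}_X(X) \widehat{\otimes}^\mathbf{L}_A A_U$ and associativity; but this is the same argument.
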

\begin{proof}
Let $U \subseteq X$ be an affinoid subdomain. Then by Lemma \ref{lem:Coadmissiblepullback},  $M^\bullet \widehat{\otimes}_{\wideparen{\mathcal{D}}_X(X)}^\mathbf{L} \wideparen{\mathcal{D}}_U(U)$ is again a $\mathcal{C}$-complex, whose underlying object in $\operatorname{QCoh}(X)$ is $M^\bullet\widehat{\otimes}_A^\mathbf{L}A_U$. Now the claim follows from Lemma \ref{lem:Ccomplex}. 
\end{proof}
Lemma \ref{lem:Coadmissiblepullback} also has the following consequence. By functoriality of pullbacks, we obtain a functor $X_{w}^\mathsf{op} \to \operatorname{Fun}(\Delta^1,\mathsf{Cat}_\infty)$ which sends $U \in X_{w}$ to the $1$-morphism
\begin{equation}
\operatorname{RMod}_{\wideparen{\mathcal{D}}_X(U)}D(\mathsf{CBorn}_K)\to   \underset{n}{\operatorname{lim}}\operatorname{RMod}_{\mathcal{D}^n_X(U)}D(\mathsf{CBorn}_K).
\end{equation}
The upshot of Lemma \ref{lem:Coadmissiblepullback} is that, by restriction of this functor, we obtain a functor $X^\mathsf{op}_{w} \to \operatorname{Fun}(\Delta^1, \mathsf{Cat}_\infty)$ which sends $U \in X_{w}$ to the (invertible) 1-morphism
\begin{equation}
    D_\mathcal{C}(U) \xrightarrow[]{\sim} \underset{n}{\operatorname{lim}} \operatorname{RMod}_{\mathcal{D}^n_X(U)}^{\mathrm{b,fg}}D(\mathsf{CBorn}_K). 
\end{equation}
We may use this implicitly in the following.
\begin{thm}\label{thm:CCOmplexanalytictopology}
Let $\{U_i \to X\}_{i=1}^s$ be an admissible covering of $X$ by affinoid subdomains. Let $Y:= \coprod_{i=1}^s U_i \to X$. Then the canonical morphism
\begin{equation}\label{eq:Ccomplexdescent}
    D_\mathcal{C}(X) \to \underset{[m] \in \Delta}{\operatorname{lim}} D_\mathcal{C}(Y^{m+1/X}) 
\end{equation}
is an equivalence of $\infty$-categories. 
\end{thm}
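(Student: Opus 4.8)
The proof will combine three ingredients already available: that $D_\mathcal{C}(-)$ is a \emph{full} sub-prestack of the sheaf $\operatorname{RMod}_{\wideparen{\mathcal{D}}_X(-)}D(\mathsf{CBorn}_K)$ on $X_w$ (Lemma \ref{lem:Dcapdescent}(iii) and Lemma \ref{lem:Coadmissiblepullback}); that it is the limit over $n$ of the prestacks $\operatorname{RMod}^{\mathrm{b,fg}}_{\mathcal{D}^n_X(-)}D(\mathsf{CBorn}_K)$ (Theorem \ref{thm:CComplexdescent}); and that each $n$-th such prestack is a sheaf on the finer site $X_{n,w}$ (Theorem \ref{thm:Fgbdescent}). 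Throughout I abbreviate $M^\bullet_m := M^\bullet\widehat{\otimes}^\mathbf{L}_{\wideparen{\mathcal{D}}_X(X)}\wideparen{\mathcal{D}}_X(Y^{m+1/X})$ and, for an affinoid subdomain $U \subseteq X$, $M^\bullet_U := M^\bullet\widehat{\otimes}^\mathbf{L}_{\wideparen{\mathcal{D}}_X(X)}\wideparen{\mathcal{D}}_X(U)$.

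The \emph{fully faithful} half is essentially formal. The functor \eqref{eq:Ccomplexdescent} is well-defined by Lemma \ref{lem:Coadmissiblepullback}(i) (applied componentwise over $Y^{m+1/X}$, a finite disjoint union of affinoid subdomains on which $D_\mathcal{C}$ is the product). It is fully faithful because, for $M^\bullet, M'^\bullet \in D_\mathcal{C}(X)$, the mapping space in $D_\mathcal{C}(X)$ is that in $\operatorname{RMod}_{\wideparen{\mathcal{D}}_X(X)}D(\mathsf{CBorn}_K)$ (fullness), which by Lemma \ref{lem:Dcapdescent}(iii) is $\lim_{[m]}$ of the mapping spaces in $\operatorname{RMod}_{\wideparen{\mathcal{D}}_X(Y^{m+1/X})}D(\mathsf{CBorn}_K)$, which by fullness again is $\lim_{[m]}$ of the mapping spaces in $D_\mathcal{C}(Y^{m+1/X})$. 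For \emph{essential surjectivity} it then suffices to establish the local criterion: if $M^\bullet \in \operatorname{RMod}_{\wideparen{\mathcal{D}}_X(X)}D(\mathsf{CBorn}_K)$ has $M^\bullet_{U_i} \in D_\mathcal{C}(U_i)$ for every $i$, then $M^\bullet \in D_\mathcal{C}(X)$. Indeed, a Cartesian section $(M^\bullet_m)_{[m]}$ of $D_\mathcal{C}(Y^{\bullet+1/X})$ is in particular one of $\operatorname{RMod}_{\wideparen{\mathcal{D}}_X(Y^{\bullet+1/X})}D(\mathsf{CBorn}_K)$, hence descends by Lemma \ref{lem:Dcapdescent}(iii) to some $M^\bullet$ with each $M^\bullet_{U_i}$ a direct factor of $M^\bullet_0 \in D_\mathcal{C}(Y) = \prod_i D_\mathcal{C}(U_i)$, thus a $\mathcal{C}$-complex; the criterion gives $M^\bullet \in D_\mathcal{C}(X)$, and it manifestly maps to $(M^\bullet_m)_{[m]}$.

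To prove the criterion I would verify the two conditions of Definition \ref{defn:Ccomplexprime} for $M^\bullet$. Condition (i): fix $n \geq 0$. Since $p^n$-accessible affinoid subdomains form a basis for the weak G-topology of $X$ (\cite[\S4.5]{Dcap1}) and $X$ is quasi-compact, refine each $U_i$ to a finite $p^n$-accessible covering $\{V_{ik}\}_k$; then $\{V_{ik}\}_{i,k}$ is a finite admissible $p^n$-accessible covering of $X$, with $W := \coprod_{i,k}V_{ik}$. By functoriality of restriction and Lemma \ref{lem:Coadmissiblepullback}(i), $M^\bullet_{V_{ik}} = (M^\bullet_{U_i})_{V_{ik}}$ is a $\mathcal{C}$-complex on $V_{ik}$, so $M_n^\bullet \widehat{\otimes}^\mathbf{L}_{\mathcal{D}^n_X(X)}\mathcal{D}^n_X(V_{ik}) \simeq M^\bullet_{V_{ik}}\widehat{\otimes}^\mathbf{L}_{\wideparen{\mathcal{D}}_X(V_{ik})}\mathcal{D}^n_X(V_{ik})$ lies in $\operatorname{RMod}^{\mathrm{b,fg}}_{\mathcal{D}^n_X(V_{ik})}D(\mathsf{CBorn}_K)$ (condition (i) for the $\mathcal{C}$-complex $M^\bullet_{V_{ik}}$), and by Lemma \ref{lem:DCaphomotopyepi}(ii) and Lemma \ref{lem:fgpullback}(ii) the same holds over every Čech piece $W^{m+1/X}$. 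Thus $(M_n^\bullet \widehat{\otimes}^\mathbf{L}_{\mathcal{D}^n_X(X)}\mathcal{D}^n_X(W^{m+1/X}))_{[m]}$ is a Cartesian section of $\operatorname{RMod}^{\mathrm{b,fg}}_{\mathcal{D}^n_X(W^{\bullet+1/X})}D(\mathsf{CBorn}_K)$ whose limit is $M_n^\bullet$ (Lemma \ref{lem:Dndescent}(iii)), so Theorem \ref{thm:Fgbdescent} gives $M_n^\bullet \in \operatorname{RMod}^{\mathrm{b,fg}}_{\mathcal{D}^n_X(X)}D(\mathsf{CBorn}_K)$, which is condition (i). Consequently $(M_n^\bullet)_n$ is a Cartesian section of $\lim_n \operatorname{RMod}^{\mathrm{b,fg}}_{\mathcal{D}^n_X(X)}D(\mathsf{CBorn}_K)$ (transition equivalences via associativity of $\widehat{\otimes}^\mathbf{L}$), so by Theorem \ref{thm:CComplexdescent} the object $N^\bullet := R\lim_n M_n^\bullet$ is a $\mathcal{C}$-complex with $N^\bullet\widehat{\otimes}^\mathbf{L}_{\wideparen{\mathcal{D}}_X(X)}\mathcal{D}^n_X(X)\simeq M_n^\bullet$. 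It remains to see that the canonical map $M^\bullet \to N^\bullet$ (induced by $M^\bullet \to M_n^\bullet$) is an equivalence: as $\operatorname{RMod}_{\wideparen{\mathcal{D}}_X(-)}D(\mathsf{CBorn}_K)$ is a sheaf along $\{U_i\}$, this can be checked after $(-)\widehat{\otimes}^\mathbf{L}_{\wideparen{\mathcal{D}}_X(X)}\wideparen{\mathcal{D}}_X(U_i)$, where $M^\bullet_{U_i}$ and $N^\bullet_{U_i}$ are $\mathcal{C}$-complexes on $U_i$ (the latter by Lemma \ref{lem:Coadmissiblepullback}(i)) and the map induces an equivalence on every Banach-level reduction (both reduce, compatibly with the map, to the same derived tensor product of $M^\bullet$ with the relevant Banach algebra, using $N^\bullet\widehat{\otimes}^\mathbf{L}_{\wideparen{\mathcal{D}}_X(X)}\mathcal{D}^n_X(X)\simeq M_n^\bullet$ and associativity), hence is an equivalence by condition (ii) of Definition \ref{defn:Ccomplexprime}. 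Therefore $M^\bullet \simeq N^\bullet \in D_\mathcal{C}(X)$.

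The main obstacle I anticipate is bridging the gap between the arbitrary affinoid covering $\{U_i\}$ of the statement and the $p^n$-accessible coverings required by Theorem \ref{thm:Fgbdescent} and Lemma \ref{lem:Dndescent}: one must, for each $n$ separately, descend along a $p^n$-accessible refinement, which relies on such subdomains (and the Čech pieces of covers by them) being cofinal, and one must keep straight the associativity isomorphisms for $\widehat{\otimes}^\mathbf{L}$ and the coherence of the restriction maps in the $\infty$-categorical setting — all of which are, however, supplied by the prestack structures set up in the preceding lemmas.
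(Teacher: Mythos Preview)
Your argument is correct, but it takes a noticeably longer route than the paper's, and the ``main obstacle'' you flag is in fact a non-issue.

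The paper's proof is a one-line reduction to Theorem \ref{thm:CComplexdescent} and Theorem \ref{thm:Fgbdescent}. One simply observes that, for $n$ large enough that all of the $U_i$ are $p^n$-accessible (such $n$ exists because the covering is finite and every affinoid subdomain is $p^n$-accessible for $n \gg 0$; see \cite[\S4.5]{Dcap1}), the restriction squares
\[
\begin{tikzcd}
D_\mathcal{C}(X) \arrow[r]\arrow[d] & D_\mathcal{C}(Y^{m+1/X}) \arrow[d]\\
\operatorname{RMod}^{\mathrm{b,fg}}_{\mathcal{D}^n_X(X)}D(\mathsf{CBorn}_K) \arrow[r] & \operatorname{RMod}^{\mathrm{b,fg}}_{\mathcal{D}^n_X(Y^{m+1/X})}D(\mathsf{CBorn}_K)
\end{tikzcd}
\]
commute (by Lemma \ref{lem:Coadmissiblepullback}). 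Passing to the limit over such $n$, the vertical arrows become equivalences by Theorem \ref{thm:CComplexdescent}; then taking $\lim_{[m]\in\Delta}$ and commuting the two limits, \eqref{eq:Ccomplexdescent} is identified with $\lim_n$ applied to the equivalence of Theorem \ref{thm:Fgbdescent}. No refinement of the covering, no explicit check of Definition \ref{defn:Ccomplexprime}, and no separate fully-faithful/essentially-surjective argument is needed.

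Your approach --- descend first inside $\operatorname{RMod}_{\wideparen{\mathcal{D}}_X(-)}$ using Lemma \ref{lem:Dcapdescent}(iii), then verify the two conditions of Definition \ref{defn:Ccomplexprime} for the descended object --- is sound and perhaps more conceptually transparent, but you have made it harder for yourself in the verification of condition (i). You refine to a $p^n$-accessible covering for each $n$; this is unnecessary because it suffices to check condition (i) for a cofinal set of $n$ (the transition functors $\operatorname{RMod}^{\mathrm{b,fg}}_{\mathcal{D}^{n+1}_X(X)} \to \operatorname{RMod}^{\mathrm{b,fg}}_{\mathcal{D}^n_X(X)}$ are well-defined, as noted just before Theorem \ref{thm:CComplexdescent}), and for $n$ large the original $\{U_i\}$ already does the job. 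With that simplification your condition-(i) step becomes a direct application of Theorem \ref{thm:Fgbdescent} to $\{U_i\}$ itself, and your final step --- comparing $M^\bullet$ with $N^\bullet := R\lim_n M_n^\bullet$ locally --- is then essentially the inverse of the paper's limit-swapping manoeuvre, carried out object-by-object.
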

\begin{proof}
In the following argument we always take $n$ large enough so that all the $\{U_i\}_{i=1}^s$ are $p^n$-accessible. By Lemma \ref{lem:Coadmissiblepullback} the following square commutes:
\begin{equation}
\begin{tikzcd}[column sep=large]
	{\operatorname{RMod}_{\mathcal{D}^n_X(X)}^{\mathrm{b,fg}}D(\mathsf{CBorn}_K)} & {\operatorname{RMod}_{\mathcal{D}^n_X(Y^{m+1/X})}^{\mathrm{b,fg}}D(\mathsf{CBorn}_K)} \\
	{D_\mathcal{C}(X)} & {D_\mathcal{C}(Y^{m+1/X})}
	\arrow[from=1-1, to=1-2]
	\arrow[from=2-1, to=1-1]
	\arrow[from=2-1, to=2-2]
	\arrow[from=2-2, to=1-2]
\end{tikzcd}
\end{equation}
The bottom arrow is $-\widehat{\otimes}^\mathbf{L}_{\wideparen{\mathcal{D}}_X(X)}\wideparen{\mathcal{D}}_X(Y^{m+1/X})$, the right is $-\widehat{\otimes}^\mathbf{L}_{\wideparen{\mathcal{D}}_X(Y^{m+1/X})} \mathcal{D}^n_X(Y^{m+1/X})$, the top is $- \widehat{\otimes}^\mathbf{L}_{\mathcal{D}^n_X(X)} \mathcal{D}^n_X(Y^{m+1/X})$, and the left is $-\widehat{\otimes}^\mathbf{L}_{\wideparen{\mathcal{D}}_X(X)} \mathcal{D}^n_X(X)$. Passing to the limit over $n$, using Theorem \ref{thm:CComplexdescent}, and then taking the limit over $[m] \in \Delta$, we see that the morphism \eqref{eq:Ccomplexdescent} is equivalent to 
\begin{equation}
\begin{aligned}
\underset{n}{\operatorname{lim}} \operatorname{RMod}^{\mathrm{b,fg}}_{\mathcal{D}^n_X(X)} D(\mathsf{CBorn}_K) &\to \underset{[m] \in \Delta}{\operatorname{lim}} \underset{n}{\operatorname{lim}} \operatorname{RMod}^{\mathrm{b,fg}}_{\mathcal{D}^n_X(Y^{m+1/X})} D(\mathsf{CBorn}_K) \\
&\simeq \underset{n}{\operatorname{lim}} \underset{[m] \in \Delta}{\operatorname{lim}}\operatorname{RMod}^{\mathrm{b,fg}}_{\mathcal{D}^n_X(Y^{m+1/X})} D(\mathsf{CBorn}_K),
\end{aligned}
\end{equation}
where in the last line we used that limits commute with limits. Hence, the claim follows by taking limits over $n$ in Theorem \ref{thm:Fgbdescent}. 
\end{proof}
\begin{defn}
We define a pair of full subcategories
\begin{equation}
    (D_\mathcal{C}^{\leqslant 0}(X), D^{\geqslant 0}_\mathcal{C}(X)) 
\end{equation}
of $D_\mathcal{C}(X)$ by $M^\bullet \in D_\mathcal{C}^{\leqslant 0}(X)$ (resp. $M^\bullet \in D_\mathcal{C}^{\geqslant 0}(X)$) if $H^j(M^\bullet) = 0$ for all $j \geqslant 1$ (resp. if $H^j(M^\bullet) = 0$ for all $j \leqslant -1$). 
\end{defn}
\begin{lem}\label{lem:texact}
With notations as above. 
\begin{enumerate}[(i)]
    \item The pair $(D_\mathcal{C}^{\leqslant 0}(X), D^{\geqslant 0}_\mathcal{C}(X)) $ determines a $t$-structure on $D_\mathcal{C}(X)$. 
    \item For $U \subseteq X$ an affinoid subdomain, the pullback $- \widehat{\otimes}^\mathbf{L}_{\wideparen{\mathcal{D}}_X(X)}\wideparen{\mathcal{D}}_X(U) $ restricts to a $t$-exact functor $D_\mathcal{C}(X) \to D_\mathcal{C}(U)$. 
\end{enumerate}
\end{lem}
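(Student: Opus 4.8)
The plan is to obtain the $t$-structure on $D_\mathcal{C}(X)$ by restricting the natural $t$-structure on the ambient category $\mathcal{M} := \operatorname{RMod}_{\wideparen{\mathcal{D}}_X(X)}D(\mathsf{CBorn}_K)$ — the one induced from the left $t$-structure on $D(\mathsf{CBorn}_K)$, with $\wideparen{\mathcal{D}}_X(X)$ regarded as connective — and then checking that $D_\mathcal{C}(X)$ is closed under the ambient truncation functors. Writing $\tau^{\leqslant 0}$, $\tau^{\geqslant 1}$ for the truncations in $\mathcal{M}$, the key step is: for $M^\bullet \in D_\mathcal{C}(X)$, the complexes $\tau^{\leqslant 0}M^\bullet$ and $\tau^{\geqslant 1}M^\bullet$ are again $\mathcal{C}$-complexes. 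First I would note that each $H^j(M^\bullet) \cong \operatorname{lim}_n H^j(M_n^\bullet)$ is coadmissible, so the cohomology groups of $\tau^{\leqslant 0}M^\bullet$ and $\tau^{\geqslant 1}M^\bullet$ (which are the $H^j(M^\bullet)$ in the relevant range and $0$ elsewhere) are coadmissible as well. Then I re-verify conditions (i) and (ii)${}'$ of Definition \ref{defn:Ccomplexprime} for these truncations: by acyclicity of coadmissible modules for $(-)\widehat{\otimes}_{\wideparen{\mathcal{D}}_X(X)}\mathcal{D}^n_X(X)$ from \cite[Corollary 5.38]{bode_six_2021} — exactly as in the proof of Lemma \ref{lem:D_ntransversellemma}, whose argument uses only coadmissibility, not the $\mathcal{C}$-complex property — one gets $H^j((\tau^{\leqslant 0}M^\bullet)_n) \cong H^j(\tau^{\leqslant 0}M^\bullet)\widehat{\otimes}_{\wideparen{\mathcal{D}}_X(X)}\mathcal{D}^n_X(X)$, which for $j \leqslant 0$ equals $H^j(M_n^\bullet)$ (finitely generated, and vanishing for $j \ll 0$ since $M^\bullet$ is a $\mathcal{C}$-complex) and for $j \geqslant 1$ vanishes; condition (ii)${}'$ then follows in the same way from (ii)${}'$ for $M^\bullet$. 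Hence $\tau^{\leqslant 0}M^\bullet, \tau^{\geqslant 1}M^\bullet \in D_\mathcal{C}(X)$, and the truncation triangle lives in $D_\mathcal{C}(X)$.

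With the truncations in hand, the remaining axioms of a $t$-structure are inherited from $\mathcal{M}$: the inclusions $D_\mathcal{C}^{\leqslant 0}(X) \subseteq D_\mathcal{C}^{\leqslant 1}(X)$ and $D_\mathcal{C}^{\geqslant 1}(X) \subseteq D_\mathcal{C}^{\geqslant 0}(X)$ are immediate from the definitions, and the orthogonality $\operatorname{Map}(M^\bullet, N^\bullet) \simeq \ast$ for $M^\bullet \in D_\mathcal{C}^{\leqslant 0}(X)$, $N^\bullet \in D_\mathcal{C}^{\geqslant 1}(X)$ holds because $D_\mathcal{C}(X) \subseteq \mathcal{M}$ is full with $D_\mathcal{C}^{\leqslant 0}(X) \subseteq \mathcal{M}^{\leqslant 0}$ and $D_\mathcal{C}^{\geqslant 1}(X) \subseteq \mathcal{M}^{\geqslant 1}$. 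This proves (i). (As an alternative one could deduce (i) from Theorem \ref{thm:CComplexdescent}: each $\operatorname{RMod}^{\mathrm{b,fg}}_{\mathcal{D}^n_X(X)}D(\mathsf{CBorn}_K)$ carries a $t$-structure because $\mathcal{D}^n_X(X)$ is Noetherian and boundedness is preserved under truncation, the transition functors $(-)\widehat{\otimes}^\mathbf{L}_{\mathcal{D}^{n+1}_X(X)}\mathcal{D}^n_X(X)$ are $t$-exact by the flatness noted before Theorem \ref{thm:CComplexdescent}, and a limit of stable $\infty$-categories with $t$-structures along $t$-exact functors carries a $t$-structure with levelwise truncations; one then matches this with $(D_\mathcal{C}^{\leqslant 0}(X), D_\mathcal{C}^{\geqslant 0}(X))$ using Lemma \ref{lem:D_ntransversellemma} together with condition (ii)${}'$.)

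For (ii), the fact that $-\widehat{\otimes}^\mathbf{L}_{\wideparen{\mathcal{D}}_X(X)}\wideparen{\mathcal{D}}_X(U)$ restricts to a functor $D_\mathcal{C}(X) \to D_\mathcal{C}(U)$ is Lemma \ref{lem:Coadmissiblepullback}(i), and $t$-exactness is then immediate from Lemma \ref{lem:Coadmissiblepullback}(ii): since $H^j(M_U^\bullet) \cong H^j(M^\bullet)\widehat{\otimes}_{\wideparen{\mathcal{D}}_X(X)}\wideparen{\mathcal{D}}_X(U)$, vanishing of $H^j(M^\bullet)$ for $j \geqslant 1$ (resp. for $j \leqslant -1$) forces vanishing of $H^j(M_U^\bullet)$ in the same range, so the functor preserves both $D_\mathcal{C}^{\leqslant 0}$ and $D_\mathcal{C}^{\geqslant 0}$. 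I expect the only genuinely delicate point to be the bookkeeping in the first paragraph — confirming that both defining conditions of a $\mathcal{C}$-complex survive ambient truncation, and in particular that the cohomology functors in Definition \ref{defn:Ccomplexprime} are those of the very $t$-structure we are restricting from $\mathcal{M}$; once that is pinned down, everything reduces to the coadmissibility–acyclicity inputs already assembled in Lemmas \ref{lem:D_ntransversellemma} and \ref{lem:Coadmissiblepullback}.
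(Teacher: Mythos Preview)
Your proposal is correct and follows essentially the same approach as the paper: for (i) you reduce to showing that $\tau^{\leqslant 0}M^\bullet$ and $\tau^{\geqslant 1}M^\bullet$ remain $\mathcal{C}$-complexes, which comes down to the acyclicity input encapsulated in Lemma \ref{lem:D_ntransversellemma}, and for (ii) you invoke Lemma \ref{lem:Coadmissiblepullback}. The paper's proof is simply a terser version of yours; your explicit remark that the proof of Lemma \ref{lem:D_ntransversellemma} only needs coadmissibility of the cohomology (rather than the full $\mathcal{C}$-complex hypothesis) is exactly the unstated point that makes the paper's one-line reference work.
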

\begin{proof}
(i): We need to check that if $M^\bullet \in D_\mathcal{C}(X)$, then so does $\tau^{\leqslant 0}M^\bullet$ and $\tau^{\geqslant 0}M^\bullet$. Looking at Definition \ref{defn:Ccomplexprime}, we see that this follows immediately from Lemma \ref{lem:D_ntransversellemma} above.

(ii): This is Lemma \ref{lem:Coadmissiblepullback}.
\end{proof}
\begin{lem}\label{lem:intervaldescent}
Let $[c,d] \subseteq \mathbf{R}$ be any interval with $d < \infty$. Let $\{U_i \to X\}_{i=1}^n$ be an admissible covering of $X$ by affinoid subdomains. Let $Y = \coprod_{i=1}^nU_i$. Then the natural morphism
\begin{equation}\label{eq:boundeddescentCComplex}
    D^{[c,d]}_\mathcal{C}(X) \to \underset{[m] \in \Delta}{\operatorname{lim}} D^{[c,d]}_\mathcal{C}(Y^{m+1/X})
\end{equation}
is an equivalence of $\infty$-categories.
\end{lem}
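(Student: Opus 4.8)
The plan is to bootstrap from the sheaf property of Theorem \ref{thm:CCOmplexanalytictopology} using the $t$-exactness recorded in Lemma \ref{lem:texact}. Write $\Phi \colon D_\mathcal{C}(X) \xrightarrow{\ \sim\ } \lim_{[m] \in \Delta} D_\mathcal{C}(Y^{m+1/X})$ for the equivalence of Theorem \ref{thm:CCOmplexanalytictopology}. Each $Y^{m+1/X}$ is a finite disjoint union of affinoid subdomains of $X$, hence carries the $t$-structure of Lemma \ref{lem:texact}(i), and the cosimplicial transition functors together with the evaluation functors $D_\mathcal{C}(X) \to D_\mathcal{C}(Y^{m+1/X})$ are (disjoint unions of) pullbacks along affinoid subdomain inclusions, hence $t$-exact by Lemma \ref{lem:texact}(ii). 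Because the transition functors are $t$-exact, $\lim_{[m]} D^{[c,d]}_\mathcal{C}(Y^{m+1/X})$ is precisely the full subcategory of $\lim_{[m]} D_\mathcal{C}(Y^{m+1/X})$ on Cartesian sections all of whose terms lie in the bounded-amplitude subcategories. So it suffices to check that $\Phi$ and $\Phi^{-1}$ preserve the bounded-amplitude condition, since a fully faithful, essentially surjective functor between full subcategories is an equivalence.

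The forward direction is immediate: for $M^\bullet \in D^{[c,d]}_\mathcal{C}(X)$, $t$-exactness of the evaluation functors forces $M^\bullet \widehat{\otimes}^\mathbf{L}_{\wideparen{\mathcal{D}}_X(X)} \wideparen{\mathcal{D}}_X(Y^{m+1/X}) \in D^{[c,d]}_\mathcal{C}(Y^{m+1/X})$ for all $m$. For the converse I would first note that the covering restriction $(-)|_Y \colon D_\mathcal{C}(X) \to D_\mathcal{C}(Y) \simeq \prod_i D_\mathcal{C}(U_i)$ is conservative: for every $m$ the map $Y^{m+1/X} \to X$ factors through $Y \to X$, so a morphism of $D_\mathcal{C}(X)$ which becomes invertible after restriction to $Y$ becomes invertible at every level of the cosimplicial diagram, hence — $\Phi$ being an equivalence — is itself invertible. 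A $t$-exact, conservative functor reflects the $t$-structure: if $\tau^{\geqslant d+1}N^\bullet \neq 0$ then its image $\tau^{\geqslant d+1}(N^\bullet|_Y) \simeq (\tau^{\geqslant d+1}N^\bullet)|_Y$ is nonzero, so $N^\bullet|_Y \notin D^{\leqslant d}_\mathcal{C}(Y)$, and dually for $D^{\geqslant c}$. Hence, given a Cartesian section $(M_m^\bullet)_m$ with each $M_m^\bullet \in D^{[c,d]}_\mathcal{C}$, the object $N^\bullet := \Phi^{-1}((M_m^\bullet)_m)$ satisfies $N^\bullet|_Y \simeq M_0^\bullet \in D^{[c,d]}_\mathcal{C}(Y)$ and therefore $N^\bullet \in D^{[c,d]}_\mathcal{C}(X)$. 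This establishes \eqref{eq:boundeddescentCComplex}.

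The argument is essentially formal once Theorem \ref{thm:CCOmplexanalytictopology} and Lemma \ref{lem:texact} are available, so I do not expect a genuine obstacle here — the real content sits in those earlier results. The only mildly delicate point is the conservativity of $(-)|_Y$, which as indicated is a formal consequence of the descent equivalence; alternatively one can obtain it directly from the computation $H^j(N^\bullet|_{U_i}) \cong H^j(N^\bullet) \widehat{\otimes}_{\wideparen{\mathcal{D}}_X(X)} \wideparen{\mathcal{D}}_X(U_i)$ of Lemma \ref{lem:Coadmissiblepullback}(ii) together with the fact that a coadmissible $\wideparen{\mathcal{D}}_X(X)$-module restricting to zero on every member of an admissible affinoid covering vanishes (Ardakov--Wadsley, \cite[\S 8]{Dcap1}), after which the $t$-structure reflection argument proceeds exactly as above.
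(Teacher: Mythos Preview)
Your proof is correct, and it takes a genuinely different route from the paper's own argument. The paper proceeds by a direct spectral-sequence computation: for a Cartesian section $(M_m^\bullet)_{[m]}$ it sets $M_{-1}^\bullet := R\lim_{[m]} M_m^\bullet$, uses Lemma~\ref{lem:Coadmissiblepullback} to obtain uniform boundedness, runs the hypercohomology spectral sequence $E_2^{pq} = R^p\lim_{[m]} H^q M_m^\bullet \Rightarrow H^{p+q}(M_{-1}^\bullet)$, and appeals to the Kiehl-type vanishing of \cite[\S 8]{Dcap1} to kill the higher terms and conclude $H^n M_{-1}^\bullet \cong \operatorname{eq}(H^n M_0^\bullet \rightrightarrows H^n M_1^\bullet)$. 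Your argument instead reduces everything to the formal fact that a $t$-exact conservative functor between stable $\infty$-categories with $t$-structures reflects membership in $D^{[c,d]}$, applied to $(-)|_Y$; the conservativity input is extracted cleanly from the descent equivalence $\Phi$ itself. This is shorter and avoids the spectral sequence and the appeal to Kiehl at this step, and in fact your proof does not use the hypothesis $d < \infty$ at all (that hypothesis enters the paper's proof only through spectral-sequence convergence). On the other hand, the paper's argument yields the explicit identification $H^n M_{-1}^\bullet \cong \lim_{[m]} H^n M_m^\bullet$ as a byproduct, which your formal argument does not produce.
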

\begin{proof}
Let $(M^\bullet_m)_{[m] \in \Delta}$ be an object of the right-side of \eqref{eq:boundeddescentCComplex}. Due to Theorem \ref{thm:CCOmplexanalytictopology}, and Lemma \ref{lem:texact} the only thing to prove is that $M^\bullet_{-1} := R \operatorname{lim}_{[m] \in \Delta} M^\bullet_m$ has cohomology in the interval $[c,d]$. One argues in essentially the same way as the proof of Theorem \ref{thm:Fgbdescent}. We consider the hypercohomology spectral sequence 
\begin{equation}\label{eq:spectralseqcoadmissible}
    E_2^{pq}: R^p \underset{[m] \in \Delta}{\operatorname{lim}} H^qM_m^\bullet \Rightarrow H^{p+q}(M_{-1}^\bullet).
\end{equation}
This converges because $(M^\bullet_m)_{[m] \in \Delta}$ is uniformly homologically bounded by Lemma \ref{lem:Coadmissiblepullback}. Lemma \ref{lem:Coadmissiblepullback} also implies that 
\begin{equation}
    H^q(M_k^\bullet) \widehat{\otimes}_{\wideparen{\mathcal{D}}_X(Y^{k+1/X})} \wideparen{\mathcal{D}}_X(Y^{l+1/X}) \cong H^q(M_l^\bullet)
\end{equation}
is an isomorphism for every cosimplicial morphism $[l] \to [k]$. Then the counterpart of Kiehl's theorem in this setting \cite[\S8]{Dcap1} implies that 
\begin{equation}
    R^p \underset{[m] \in \Delta}{\operatorname{lim}} H^q M_n^\bullet = 0, \text{ whenever }p>0. 
\end{equation}
Thus the spectral-sequence \eqref{eq:spectralseqcoadmissible} collapses and gives an isomorphism
\begin{equation}
    H^nM^\bullet_{-1} \cong \underset{[m] \in \Delta}{\operatorname{lim}} H^n M^\bullet_m = \operatorname{eq}(H^n(M_0^\bullet) \rightrightarrows H^n(M_1^\bullet)),
\end{equation}
which shows that the cohomological amplitude of $M^\bullet_{-1}$ is contained in the same interval as $M_0^\bullet$. 
\end{proof}
\section{Compatibility with restrictions}
Let $X= \operatorname{Sp}(A)$ be a classical affinoid equipped with an \'etale morphism $X \to \mathbf{D}^r_K$. Let $U \subseteq X$ be an affinoid subdomain. We recall from \cite[Example 4.77]{soor_six-functor_2024} that the square 
\begin{equation}
\begin{tikzcd}[cramped]
	U & {U_{\mathrm{str}}} \\
	X & {X_{\mathrm{str}}}
	\arrow[from=1-1, to=1-2]
	\arrow[from=1-1, to=2-1]
	\arrow["\ulcorner"{anchor=center, pos=0.125}, draw=none, from=1-1, to=2-2]
	\arrow[from=1-2, to=2-2]
	\arrow[from=2-1, to=2-2]
\end{tikzcd}
\end{equation}
is Cartesian in the category $\mathsf{PStk}$. We recall that $\operatorname{Strat}(-)$ and $\operatorname{QCoh}(-)$ are functorial with respect to upper-star pullbacks. Let $X_w$ be the poset of affinoid subdomains of $X$. Using the base-change equivalences from the six-functor formalism obtained in \cite[Theorem 4.67]{soor_six-functor_2024} we obtain a natural transformation of functors $X_w^\mathsf{op} \to \mathsf{Cat}_\infty$:
\begin{equation}\label{eq:qCohfibration}
p_{(-),!}: \operatorname{QCoh}(-) \to \operatorname{Strat}(-),
\end{equation}
whose component for each $U \in X_w$ is the lower-shriek pushforwards $p_{U,!}: \operatorname{QCoh}(U) \to \operatorname{Strat}(U)$ induced by $p_U: U \to U_{\mathrm{str}}$.
By passing to right adjoints pointwise we a \emph{lax natural transformation}:
\begin{equation}\label{eq:uppershirekLaxmorphism}
p^!_{(-)}: \operatorname{Strat}(-) \to \operatorname{QCoh}(-),
\end{equation}
whose component over $U \in X_w$ is the upper-shriek pullback $p_U^!$. This essentially means that the natural transformations in each square 
\begin{equation}
\begin{tikzcd}
	{\operatorname{Strat}(X)} & {\operatorname{QCoh}(X)} \\
	{\operatorname{Strat}(U)} & {\operatorname{QCoh}(U)}
	\arrow["{p_X^!}", from=1-1, to=1-2]
	\arrow["{t_{\mathrm{str}}^*}", from=1-1, to=2-1]
	\arrow[draw=none, from=1-2, to=2-1]
	\arrow[shorten >=5pt, Rightarrow, from=1-2, to=2-1]
	\arrow["{t^*}", from=1-2, to=2-2]
	\arrow["{p_U^!}", from=2-1, to=2-2]
\end{tikzcd}
\end{equation}
can be composed vertically in a natural way. Here $t: U \hookrightarrow X$ is the inclusion.
\begin{lem}\label{lem:UpperstarPullbackCompatibilityOfDeRhamDmoduleForgetfulFunctor!!!} With notations as above. The natural transformation 
\begin{equation}
t^* p_X^! \to p_U^! t_{\mathrm{str}}^*
\end{equation}
restricts to an equivalence on the full subcategory spanned by those $M \in \operatorname{Strat}(X)$ such that $p_X^!M$ belongs to $\operatorname{Fr(X)} \subseteq \operatorname{QCoh}(X)$. 
\end{lem}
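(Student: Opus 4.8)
The plan is to reduce to the monadic picture of Theorem \ref{thm:StratMonadicity} and then to the ``tensor'' description of $\mathcal{D}^\infty$ on Fr\'echet objects provided by Theorem \ref{thm:PNFtensor}, Proposition \ref{prop:algebraiso} and Proposition \ref{prop:PNFpreserve}. Write $\mathcal{E}(X) \subseteq \operatorname{Strat}(X)$ for the full subcategory of those $M$ with $p_X^!M \in \operatorname{Fr}(X)$; under Theorem \ref{thm:StratMonadicity} it is identified with $\operatorname{Mod}_{\mathcal{D}^\infty_X}\operatorname{Fr}(X)$, on which $p_X^!$ is the forgetful functor (it preserves $\operatorname{Fr}(X)$ by Proposition \ref{prop:PNFpreserve}), and similarly for $U$. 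Recall that the natural transformation in question is the mate of the base-change equivalence $t_{\mathrm{str}}^* p_{X,!} \xrightarrow{\sim} p_{U,!} t^*$, i.e.\ of the naturality of the transformation \eqref{eq:qCohfibration}.

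First I would prove the statement on ``free'' objects. For $V \in \operatorname{Fr}(X)$ one has $p_{X,!}V \in \mathcal{E}(X)$, since $p_X^! p_{X,!}V = \mathcal{D}^\infty_X V \in \operatorname{Fr}(X)$ by Proposition \ref{prop:PNFpreserve}. Precomposing the Beck--Chevalley transformation $t^*p_X^! \to p_U^!t_{\mathrm{str}}^*$ with $p_{X,!}$ and using $t_{\mathrm{str}}^* p_{X,!} \simeq p_{U,!} t^*$ identifies its component at $p_{X,!}V$ with the canonical map $t^*\mathcal{D}^\infty_X V \to \mathcal{D}^\infty_U t^*V$. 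Now Theorem \ref{thm:PNFtensor} over $X$, together with Proposition \ref{prop:algebraiso} and the localisation compatibility $\wideparen{\mathcal{D}}_X(X)\widehat{\otimes}^{\mathbf{L}}_A A_U \simeq \wideparen{\mathcal{D}}_U(U)$ (from the proof of Proposition \ref{prop:PNFpreserve}), rewrites the source as $(t^*V)\widehat{\otimes}^{\mathbf{L}}_{A_U}\wideparen{\mathcal{D}}_U(U)$, and under this identification the map becomes the comparison morphism of Lemma \ref{lem:naturaltransformlema} over $U$; this is an equivalence by Theorem \ref{thm:PNFtensor} over $U$, since by Proposition \ref{prop:PNFpreserve} the objects that occur remain in the relevant Fr\'echet subcategory. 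Throughout this step one must keep track of the left versus right $A$-module structures on $\mathcal{D}^\infty_X 1_X$ as in Lemma \ref{lem:naturaltransformlema}.

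Next I would bootstrap from free objects to all of $\mathcal{E}(X)$ by a resolution argument. Every $M \in \mathcal{E}(X)$ is the geometric realisation of its monadic bar resolution $B_\bullet = B_\bullet(p_{X,!}, \mathcal{D}^\infty_X, p_X^!M)$, which is $p_X^!$-split and each of whose terms is $p_{X,!}$ applied to an object of $\operatorname{Fr}(X)$ (Proposition \ref{prop:PNFpreserve}). Both $t^*$ and $t_{\mathrm{str}}^*$ preserve colimits, and $p_X^!$, $p_U^!$ preserve split geometric realisations; since by the previous step $\beta$ is an equivalence on each $B_n$, the simplicial object $t_{\mathrm{str}}^* B_\bullet$ is $p_U^!$-split, and then $p_U^! t_{\mathrm{str}}^* M$ and $t^* p_X^! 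M$ are both computed as the realisation of the (equivalent) simplicial objects $p_U^! t_{\mathrm{str}}^* B_\bullet$ and $t^* p_X^! B_\bullet$; chasing through, $\beta_M$ is the realisation of $\beta_{B_\bullet}$, hence an equivalence.

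The hard part is making the last step rigorous: $t_{\mathrm{str}}^*$ is given to us as an upper-star functor on $\operatorname{Strat}(-)$, whereas the monadic comparison uses the upper-shriek functor $p^!$, and one must reconcile the two descriptions coherently --- i.e.\ organise the monadic adjunctions for $X$ and for $U$ into a single morphism of adjunctions lying over $t^*$, so that the $p_X^!$-splitting of $B_\bullet$, its image $t_{\mathrm{str}}^* B_\bullet$, and the forgetful functors are all tracked simultaneously. This is exactly what the parametrised (``family'') Barr--Beck theorem of Appendix \ref{subsec:BarrBeckFamilies} is for, and it is the only genuinely delicate input; the rest of the argument is the bookkeeping sketched above.
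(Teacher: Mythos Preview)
Your two-step strategy---prove the equivalence on free objects $p_{X,!}V$, then bootstrap via the monadic bar resolution---is exactly the paper's approach, and your step 2 matches the paper's computation almost verbatim. Two points deserve comment.

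First, in step 1 you take $V \in \operatorname{Fr}(X)$ and, to conclude, invoke Theorem \ref{thm:PNFtensor} over $U$ on $t^*V$. That requires $t^*V \in \operatorname{Fr}(U)$, which is precisely the condition $V \in \operatorname{sFr}(X)$; your appeal to Proposition \ref{prop:PNFpreserve} does not supply this, since that proposition concerns preservation under $\mathcal{D}^\infty_X$, not under $t^*$. The paper handles this by running step 1 for $N \in \operatorname{sFr}(X)$ (so $t^*N \in \operatorname{Fr}(U)$ by definition) and then claiming in step 2 that the bar-resolution terms $N_n = (\mathcal{D}^\infty_X)^n p_X^!M$ lie in $\operatorname{sFr}(X)$. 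This is the correct way to arrange things, and you should do the same.

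Second, your final paragraph overstates the difficulty. The paper's proof of this lemma does \emph{not} invoke the parametrised Barr--Beck machinery of Appendix \ref{subsec:BarrBeckFamilies}; that is reserved for the later Lemma \ref{lem:restrictions}, where one must organise the monadic equivalences into a genuine morphism of prestacks. Here one only needs: (a) the natural transformation $t^*p_X^! \to p_U^!t_{\mathrm{str}}^*$ is already given and is a map of simplicial objects when applied to $p_{X,!}N_\bullet$; (b) $p_X^!(p_{X,!}N_\bullet)$ is split, so its image under $t^*$ is split, and by step 1 this split simplicial object is equivalent to $p_U^!(t_{\mathrm{str}}^*p_{X,!}N_\bullet)$, whence the latter is $p_U^!$-split; (c) $p_U^!$ therefore commutes with its geometric realisation. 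No further coherence is required.
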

Before proving this Lemma, let me say why it is relevant, first introducing the following Definition: 
\begin{defn}
We define $\operatorname{Strat}_{\operatorname{Fr}}(X) \subseteq \operatorname{Strat}(X)$ (resp. $\operatorname{Strat}_{\operatorname{sFr}}(X) \subseteq \operatorname{Strat}(X)$)  to be the full sub-$\infty$-category spanned by objects $M$ such that $p^!_XM $ belongs to $\operatorname{Fr}(X)$ (resp. $\operatorname{sFr}(X)$). 
\end{defn} The upshot of Lemma \ref{lem:UpperstarPullbackCompatibilityOfDeRhamDmoduleForgetfulFunctor!!!} is twofold:
\begin{cor}\label{Cor:CRYSPNFpullbackcompatible}
\begin{enumerate}[(i)]
    \item $\operatorname{Strat}_{\operatorname{sFr}}(-)$ forms a sub-prestack of $\operatorname{Strat}(-)$ on $X_w$, with respect to the upper-star functors.
    \item The lax natural transformation \eqref{eq:uppershirekLaxmorphism} restricts to a natural transformation
    \begin{equation}\label{eq:PNFfibration}
       p^!_{(-)}: \operatorname{Strat}_{\operatorname{sFr}}(-) \to \operatorname{sFr}(-).
    \end{equation}
    We emphasise that the lax structure here is in fact strong, so that \eqref{eq:PNFfibration} is a natural transformation of functors $X_w^\mathsf{op} \to \mathsf{Cat}_\infty$, in which both sides are viewed as prestacks via the \emph{upper-star} functors. 
\end{enumerate}
\end{cor}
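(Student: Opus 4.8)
The plan is to deduce Corollary \ref{Cor:CRYSPNFpullbackcompatible} formally from Lemma \ref{lem:UpperstarPullbackCompatibilityOfDeRhamDmoduleForgetfulFunctor!!!}, which is the sole nontrivial input; the only other ingredients are the transitivity of affinoid subdomains (an affinoid subdomain of an affinoid subdomain is again an affinoid subdomain) and the associativity of $\widehat{\otimes}^{\mathbf{L}}$.

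First I would prove (i). Fix an affinoid subdomain $t\colon U\hookrightarrow X$ and an object $M\in\operatorname{Strat}_{\operatorname{sFr}}(X)$, so that $p_X^!M\in\operatorname{sFr}(X)\subseteq\operatorname{Fr}(X)$. Lemma \ref{lem:UpperstarPullbackCompatibilityOfDeRhamDmoduleForgetfulFunctor!!!} then yields an equivalence $p_U^!t_{\mathrm{str}}^*M\simeq t^*p_X^!M=A_U\widehat{\otimes}^{\mathbf{L}}_A p_X^!M$, so it is enough to check that this object lies in $\operatorname{sFr}(U)$. For any affinoid subdomain $V\subseteq U$ — which is then also an affinoid subdomain of $X$ — associativity of the derived tensor product gives $A_V\widehat{\otimes}^{\mathbf{L}}_{A_U}\bigl(A_U\widehat{\otimes}^{\mathbf{L}}_A p_X^!M\bigr)\simeq A_V\widehat{\otimes}^{\mathbf{L}}_A p_X^!M$, which has Fréchet cohomology because $p_X^!M\in\operatorname{sFr}(X)$; taking $V=U$ disposes of the Fréchet condition on $U$ itself. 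Hence $t^*p_X^!M\in\operatorname{sFr}(U)$ and therefore $t_{\mathrm{str}}^*M\in\operatorname{Strat}_{\operatorname{sFr}}(U)$, proving (i). The identical computation with a general $N\in\operatorname{sFr}(X)$ in place of $p_X^!M$ (and no $p^!$ in sight) shows that $\operatorname{sFr}(-)$ is itself a sub-prestack of $\operatorname{QCoh}(-)$ on $X_w$ for the upper-star functors — a fact needed to make sense of the target of \eqref{eq:PNFfibration}.

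Next I would prove (ii). By the very definition of $\operatorname{Strat}_{\operatorname{sFr}}$, the component of $p^!_{(-)}$ at each $U\in X_w$ carries $\operatorname{Strat}_{\operatorname{sFr}}(U)$ into $\operatorname{sFr}(U)$, and by (i) together with the previous paragraph both $\operatorname{Strat}_{\operatorname{sFr}}(-)$ and $\operatorname{sFr}(-)$ are functors $X_w^{\mathsf{op}}\to\mathsf{Cat}_\infty$ via their respective upper-star functors. It remains to see that the lax natural transformation \eqref{eq:uppershirekLaxmorphism}, obtained by pointwise passage to right adjoints, becomes \emph{strict} after this restriction. Its lax coherence $2$-cells are precisely the Beck--Chevalley (mate) transformations $t^*p_X^!\to p_U^!t_{\mathrm{str}}^*$, and by Lemma \ref{lem:UpperstarPullbackCompatibilityOfDeRhamDmoduleForgetfulFunctor!!!} these are equivalences once restricted to $\operatorname{Strat}_{\operatorname{sFr}}$. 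A lax natural transformation all of whose coherence cells are invertible is a genuine natural transformation: concretely, one restricts the categorical datum encoding the lax transformation (produced as recalled just before \eqref{eq:uppershirekLaxmorphism}, using Theorem \ref{HaugsengLaxTheoremC}) to the full sub-$\infty$-categories $\operatorname{Strat}_{\operatorname{sFr}}(U)$ and $\operatorname{sFr}(U)$, on which the transition squares now commute, to obtain the desired natural transformation $p^!_{(-)}\colon\operatorname{Strat}_{\operatorname{sFr}}(-)\to\operatorname{sFr}(-)$ of functors $X_w^{\mathsf{op}}\to\mathsf{Cat}_\infty$.

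All of the real content sits in Lemma \ref{lem:UpperstarPullbackCompatibilityOfDeRhamDmoduleForgetfulFunctor!!!}; granting it, the rest is bookkeeping. The step I expect to demand the most care is the last one — confirming that pointwise invertibility of the Beck--Chevalley $2$-cells genuinely upgrades the lax transformation to a functor $X_w^{\mathsf{op}}\to\operatorname{Fun}(\Delta^1,\mathsf{Cat}_\infty)$ rather than merely a lax one — but this is exactly the kind of coherence handled by the formalism set up for Theorem \ref{HaugsengLaxTheoremC}.
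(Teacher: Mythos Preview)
Your proposal is correct and follows exactly the route the paper intends: the paper states this Corollary as an immediate consequence of Lemma \ref{lem:UpperstarPullbackCompatibilityOfDeRhamDmoduleForgetfulFunctor!!!} without further proof, and you have correctly unpacked the two formal deductions (the associativity/transitivity check that $\operatorname{sFr}$ is stable under $t^*$, and the observation that invertibility of the Beck--Chevalley $2$-cells upgrades the lax transformation to a strict one). Your caveat about the coherence in the last step is well-placed but, as you note, is handled by the formalism behind Theorem \ref{HaugsengLaxTheoremC}.
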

\begin{proof}[Proof of Lemma \ref{lem:UpperstarPullbackCompatibilityOfDeRhamDmoduleForgetfulFunctor!!!}] 
We would like to show that, for $M \in \operatorname{Strat}_{\operatorname{Fr}}(X)$, and an affinoid subdomain $t: U \hookrightarrow X$, the natural morphism
\begin{equation}
    t^*p_X^! M \to p_U^! t^*_{\mathrm{str}}M
\end{equation}
is an equivalence. Let us first consider the case when $M = p_{X,!}N$ for some $N \in \operatorname{sFr}(X)$. (That such $M$ belongs to $\operatorname{Strat}_{\operatorname{sFr}}(X)$ is a consequence of Proposition \ref{prop:PNFpreserve}). Then, we are asking for 
\begin{equation}
  t^*\mathcal{D}^\infty_XN =  t^*p_{X}^!p_{X,!} N \to p_U^!t^*_{\mathrm{str}} p_{X,!}N \simeq p^!_Up_{U,!} t^*N = \mathcal{D}^\infty_Ut^*N
\end{equation}
to be an equivalence, where in the last part we used base-change. Because $N$ and $t^*N$ are a Frechet-strict complexes, by Theorem \ref{thm:PNFtensor} this will follow if $t^*\mathcal{D}_X^\infty1_X \simeq \mathcal{D}^\infty_U1_U$. But this can be deduced, for instance, from Proposition \ref{prop:algebraiso}, because $  \wideparen{\mathcal{D}}_X(X) \widehat{\otimes}^\mathbf{L}_A A_U \xrightarrow[]{\sim} \wideparen{\mathcal{D}}_U(U)$. 

Now let us consider the general case. By monadicity (Theorem \ref{thm:StratMonadicity}) of the adjunction $p_{X,!} \dashv p_X^!$, every $M \in \operatorname{Strat}_{\operatorname{Fr}}(X)$ may be expressed as the colimit of a $p_X^!$-split simplicial object:
\begin{equation}
    M \simeq \underset{[n] \in \Delta^\mathsf{op}}{\operatorname{colim}}p_{X,!}N_n, 
\end{equation}
where $N_n := (p_X^!p_{X,!})^np_X^!M$. Each $N_n$ belongs to $\operatorname{sFr}(X)$, by Proposition \ref{prop:PNFpreserve}. Using that $p_X^!$ (resp. $p_U^!$) commutes with geometric realizations of $p_X^!$-split  (resp. $p_U^!$-split) simplicial objects, and that the upper-star functors are colimit-preserving, we compute  
\begin{equation}
\begin{aligned}
    t^*p_{X}^!M &\simeq \underset{[n] \in \Delta^\mathsf{op}}{\operatorname{colim}} t^*p_{X}^!p_{X,!}N_n \\ &\simeq \underset{[n] \in \Delta^\mathsf{op}}{\operatorname{colim}} p_{U}^! t_{\mathrm{str}}^*p_{X,!}N_n  \simeq p_U^!t_{\mathrm{str}}^*M,
\end{aligned}
 \end{equation}
 where we used the previous case. 
\end{proof}
\begin{lem}\label{lem:restrictions}
With notations as above. Let $t: U \to X$ be the inclusion.
\begin{enumerate}[(i)]
  \item There are restriction functors
    \begin{equation}
\operatorname{Mod}_{\mathcal{D}^\infty_X} \operatorname{sFr}(X) \to \operatorname{Mod}_{\mathcal{D}^\infty_U} \operatorname{sFr}(U)
 \end{equation}
    induced by $t^*$ on $\operatorname{sFr}(X)$. These functors are natural in $U$, so that we obtain a prestack $\operatorname{Mod}_{\mathcal{D}^\infty_{(-)}} \operatorname{sFr}(-)$ on $X_w$. 
     \item The equivalences $\operatorname{Strat}_{\operatorname{sFr}}(X) \simeq \operatorname{Mod}_{\mathcal{D}^\infty_X}\operatorname{sFr}(X)$ is compatible with these restriction functors, and the upper-star pullback functors on $\operatorname{Strat}_{\operatorname{sFr}}$ (c.f. Corollary \ref{Cor:CRYSPNFpullbackcompatible}), so that we obtain an equivalence \begin{equation}
     \operatorname{Mod}_{\mathcal{D}^\infty_{(-)}} \operatorname{sFr}(-) \simeq \operatorname{Strat}_{\operatorname{sFr}}(-) 
     \end{equation}of prestacks on $X_w$.
     \end{enumerate}
\end{lem}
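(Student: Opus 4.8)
The plan is to deduce both parts at once, by recognising the family $p^!_{(-)}\colon \operatorname{Strat}_{\operatorname{sFr}}(-) \to \operatorname{sFr}(-)$ as a \emph{relatively monadic} functor over the poset $X_w$ and then invoking the parametrized Barr--Beck theorem of Appendix~\ref{subsec:BarrBeckFamilies}.

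First I would set up the relative adjunction. By Proposition~\ref{prop:PNFpreserve} the monad $\mathcal{D}^\infty_U = p_U^!p_{U,!}$ preserves $\operatorname{sFr}(U)$ for every affinoid subdomain $U \subseteq X$; hence $p_{U,!}$ carries $\operatorname{sFr}(U)$ into $\operatorname{Strat}_{\operatorname{sFr}}(U)$, while $p_U^!$ carries $\operatorname{Strat}_{\operatorname{sFr}}(U)$ into $\operatorname{sFr}(U)$ by the very definition of $\operatorname{Strat}_{\operatorname{sFr}}$. As all four categories are full subcategories, the restricted pair $p_{U,!}\colon \operatorname{sFr}(U) \leftrightarrows \operatorname{Strat}_{\operatorname{sFr}}(U) \colon p_U^!$ is again an adjunction, with monad the restriction of $\mathcal{D}^\infty_U$ to $\operatorname{sFr}(U)$, and $\operatorname{Strat}_{\operatorname{sFr}}(U)$ is identified fibrewise with $\operatorname{Mod}_{\mathcal{D}^\infty_U}\operatorname{sFr}(U)$. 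Letting $U$ vary, Corollary~\ref{Cor:CRYSPNFpullbackcompatible} assembles these into an honest adjunction of natural transformations of functors $X_w^\mathsf{op} \to \mathsf{Cat}_\infty$: the left adjoints $p_{(-),!}$ are natural by base change (the restriction of \eqref{eq:qCohfibration}), and the right adjoints $p^!_{(-)}$ are natural --- not merely lax --- because the lax structure of \eqref{eq:uppershirekLaxmorphism} becomes strong on $\operatorname{Strat}_{\operatorname{sFr}}$, which is exactly Corollary~\ref{Cor:CRYSPNFpullbackcompatible}(ii), i.e.\ Lemma~\ref{lem:UpperstarPullbackCompatibilityOfDeRhamDmoduleForgetfulFunctor!!!}.

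Next I would feed this relative adjunction into the parametrized Barr--Beck theorem of Appendix~\ref{subsec:BarrBeckFamilies}. Its output is a prestack of module categories $\operatorname{Mod}_{\mathcal{D}^\infty_{(-)}}\operatorname{sFr}(-)$ on $X_w$ whose transition functors along $t\colon U \hookrightarrow X$ are the canonical lifts of $t^*\colon \operatorname{sFr}(X) \to \operatorname{sFr}(U)$ through the forgetful functors (concretely, these lifts use the base-change equivalence $t^*\mathcal{D}^\infty_X N \simeq \mathcal{D}^\infty_U t^*N$ for $N \in \operatorname{sFr}(X)$ extracted from the proof of Lemma~\ref{lem:UpperstarPullbackCompatibilityOfDeRhamDmoduleForgetfulFunctor!!!} via Theorem~\ref{thm:PNFtensor} and $t^*\mathcal{D}^\infty_X 1_X \simeq \mathcal{D}^\infty_U 1_U$), together with an equivalence of prestacks $\operatorname{Mod}_{\mathcal{D}^\infty_{(-)}}\operatorname{sFr}(-) \simeq \operatorname{Strat}_{\operatorname{sFr}}(-)$; this is precisely parts~(i) and~(ii). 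The hypotheses to verify are, essentially: (a) the left adjoints of the relative adjunction form a natural transformation (equivalently, the relevant Beck--Chevalley squares commute), which is arranged above; and (b) each fibre functor $p_U^!\colon \operatorname{Strat}_{\operatorname{sFr}}(U) \to \operatorname{sFr}(U)$ is monadic. For (b) I would note that an affinoid subdomain $U \subseteq X$ is itself a smooth classical affinoid \'etale over a polydisk --- the subdomain inclusion $U \hookrightarrow X$ is \'etale, so the composite $U \to X \to \mathbf{D}^r_K$ is \'etale --- whence Theorem~\ref{thm:StratMonadicity} applies to $U$ and gives monadicity of $p_U^!$ on all of $\operatorname{Strat}(U)$; restricting along the full inclusions $\operatorname{sFr}(U) \subseteq \operatorname{QCoh}(U)$ and $\operatorname{Strat}_{\operatorname{sFr}}(U) \subseteq \operatorname{Strat}(U)$ and using that $\mathcal{D}^\infty_U$ preserves $\operatorname{sFr}(U)$ (so $\operatorname{Strat}_{\operatorname{sFr}}(U)$ is exactly the full subcategory of $\mathcal{D}^\infty_U$-modules with underlying object in $\operatorname{sFr}(U)$), monadicity of $p_U^!$ on $\operatorname{Strat}_{\operatorname{sFr}}(U)$ is inherited: conservativity is immediate, and the required geometric realizations of $p_U^!$-split simplicial objects, computed in $\operatorname{Strat}(U)$, remain in $\operatorname{Strat}_{\operatorname{sFr}}(U)$ because $\operatorname{sFr}(U) \subseteq \operatorname{QCoh}(U)$ is closed under retracts. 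Over $X$ this recovers the pointwise equivalence $\operatorname{Strat}_{\operatorname{sFr}}(X) \simeq \operatorname{Mod}_{\mathcal{D}^\infty_X}\operatorname{sFr}(X)$ obtained by restricting Theorem~\ref{thm:StratMonadicity}.

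The hard part here is not any individual computation but the coherence bookkeeping --- producing the restriction functors on module categories \emph{as a morphism of prestacks}, with all higher-simplicial compatibilities, rather than merely as a compatible system of functors. This is exactly the work the parametrized Barr--Beck theorem of Appendix~\ref{subsec:BarrBeckFamilies} is designed to absorb; the only genuinely new geometric input beyond \S\ref{sec:quasiabelian}--\S\ref{sec:DescentforDCap} is the Beck--Chevalley statement for $p^!$ on Fr\'echet objects (Lemma~\ref{lem:UpperstarPullbackCompatibilityOfDeRhamDmoduleForgetfulFunctor!!!}) together with Theorem~\ref{thm:StratMonadicity} for the fibrewise monadicity.
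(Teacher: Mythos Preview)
Your proposal is correct and follows the same approach as the paper: the paper's proof is the one-liner ``This follows by combining Corollary~\ref{Cor:CRYSPNFpullbackcompatible}, Theorem~\ref{thm:StratMonadicity} and Corollary~\ref{cor:BarrBeckFamiliesCor},'' and you have unpacked exactly these three ingredients. One small point: you phrase the hypotheses of the parametrized Barr--Beck as ``(a) Beck--Chevalley for the left adjoints'' and ``(b) fibrewise monadicity,'' but Corollary~\ref{cor:BarrBeckFamiliesCor} actually takes the \emph{right adjoint} $p^!_{(-)}$ as the given natural transformation (which you already have from Corollary~\ref{Cor:CRYSPNFpullbackcompatible}(ii)) and also requires condition~(iii) that the transition functors $t_{\mathrm{str}}^*$ on $\operatorname{Strat}_{\operatorname{sFr}}$ preserve the relevant geometric realizations --- this is immediate from your retract argument together with the fact that $t_{\mathrm{str}}^*$ preserves all colimits in $\operatorname{Strat}$, but it is worth stating explicitly.
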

\begin{proof}
This follows by combining Corollary \ref{Cor:CRYSPNFpullbackcompatible}, Theorem \ref{thm:StratMonadicity} and Corollary \ref{cor:BarrBeckFamiliesCor}.
\end{proof}
\begin{rmk}\label{rmk:monadfunctor}
Before moving on we give a more concrete description of the restriction functors in (i). Due to Lemma \ref{lem:UpperstarPullbackCompatibilityOfDeRhamDmoduleForgetfulFunctor!!!} and base-change, for each $M \in \operatorname{sFr}(X)$ there is a canonical equivalence \begin{equation}\label{eq:sigmatranformation}
  \sigma: \mathcal{D}^\infty_U t^* \simeq  t^*\mathcal{D}^\infty_X 
\end{equation} 
of functors on $\operatorname{sFr}(X)$. Due to naturality of base-change, this satisfies the expected naturalities making $t^*$ into a \emph{monad functor} from $\mathcal{D}^\infty_U1_U$ to $\mathcal{D}^\infty_X 1_X$. Roughly speaking, this says that there are diagrams
\begin{equation}
\begin{tikzcd}
	{t^* } & {\mathcal{D}^\infty_X t^*} \\
	& {t^*\mathcal{D}^\infty_U}
	\arrow["{\eta_U t^*}", from=1-1, to=1-2]
	\arrow["{t^* \eta_X}"', from=1-1, to=2-2]
	\arrow["{\sigma }", from=1-2, to=2-2]
\end{tikzcd}
\end{equation}
and 
\begin{equation}
\begin{tikzcd}
	{\mathcal{D}^\infty_Xt^*\mathcal{D}^\infty_U} & {\mathcal{D}^\infty_X\mathcal{D}^\infty_Xt^*} & {\mathcal{D}^\infty_Xt^*} \\
	{t^*\mathcal{D}^\infty_U\mathcal{D}^\infty_U} && {t^*\mathcal{D}^\infty_U}
	\arrow["{\mathcal{D}^\infty_X\sigma}", from=1-1, to=1-2]
	\arrow["{\mu_X t^*}", from=1-2, to=1-3]
	\arrow["{\sigma \mathcal{D}^\infty_U}", from=2-1, to=1-1]
	\arrow["{t^*\mu_U}", from=2-1, to=2-3]
	\arrow["{\sigma }"', from=2-3, to=1-3]
\end{tikzcd}
\end{equation}
of functors on $\operatorname{sFr}(X)$, which are homotopy-commutative, together with various higher coherences. Then, if $M \in \operatorname{Mod}_{\mathcal{D}^\infty_X}\operatorname{sFr}(X)$, there is a canonical $\mathcal{D}^\infty_U$-module structure on $t^*M$ in which the action of $\mathcal{D}^\infty_U$ is given by the composite 
\begin{equation}
    \mathcal{D}^\infty_U t^*M \xrightarrow[]{\sigma} t^*\mathcal{D}^\infty_X M \xrightarrow[]{t^*\mathrm{act}} t^*M.
\end{equation}
\end{rmk}
\begin{lem}
    \begin{enumerate}[(i)]
        \item There are restriction functors 
     \begin{equation}
         \operatorname{RMod}_{\mathcal{D}^\infty_X1_X} \operatorname{QCoh}(X) \to \operatorname{RMod}_{\mathcal{D}^\infty_U1_U} \operatorname{QCoh}(U)
     \end{equation}
     induced by $t^*$ on $\operatorname{sFr}(X)$. These functors are natural in $U$, so that we obtain a prestack on $X_w$.
     \item The equivalence $\operatorname{RMod}_{\mathcal{D}^\infty_X1_X} \operatorname{sFr}(X) \simeq \operatorname{Mod}_{\mathcal{D}^\infty_X} \operatorname{sFr}(X)$ is compatible with these restrictions and those coming from (i), so that we obtain an equivalence 
     \begin{equation}
         \operatorname{RMod}_{\mathcal{D}^\infty_{(-)}1_{(-)}} \operatorname{sFr}(-) \simeq \operatorname{Mod}_{\mathcal{D}^\infty_{(-)}} \operatorname{sFr}(-)
     \end{equation}
     of prestacks on $X_w$. 
    \end{enumerate}
\end{lem}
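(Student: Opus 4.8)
The plan is to prove the two parts in turn, in parallel with the proof of Lemma~\ref{lem:restrictions}, and to reduce everything to the pointwise statements already available. For part~(i) the key inputs are: the pullback $t^*\colon\operatorname{QCoh}(X)\to\operatorname{QCoh}(U)$ is a colimit-preserving symmetric monoidal functor; the pullback $(t\times t)^*\colon\operatorname{QCoh}(X\times X)\to\operatorname{QCoh}(U\times U)$ is monoidal for the convolution monoidal structures and compatible with the module actions on $\operatorname{QCoh}(X)$, $\operatorname{QCoh}(U)$ respectively (these compatibilities come from the six-functor formalism of \cite{soor_six-functor_2024}); and $(t\times t)^*$ carries the algebra object $\mathcal{D}^\infty_X1_X$ to $\mathcal{D}^\infty_U1_U$ as algebra objects for convolution. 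The last point is the equivalence $\mathcal{D}^\infty_X1_X\widehat{\otimes}^{\mathbf{L}}_AA_U\xrightarrow{\sim}\mathcal{D}^\infty_U1_U$ from the proof of Proposition~\ref{prop:PNFpreserve}, upgraded to an equivalence of algebras by Proposition~\ref{prop:algebraiso} together with the localisation $\wideparen{\mathcal{D}}_X(X)\widehat{\otimes}^{\mathbf{L}}_AA_U\xrightarrow{\sim}\wideparen{\mathcal{D}}_U(U)$. From these one obtains a base-change functor $\operatorname{RMod}_{\mathcal{D}^\infty_X1_X}\operatorname{QCoh}(X)\to\operatorname{RMod}_{(t\times t)^*\mathcal{D}^\infty_X1_X}\operatorname{QCoh}(U)\simeq\operatorname{RMod}_{\mathcal{D}^\infty_U1_U}\operatorname{QCoh}(U)$, which restricts on the $\operatorname{sFr}$-subcategories to the functor induced by $t^*$. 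Naturality in $U$, and hence the prestack structure, is then obtained as in \S\ref{sec:DescentforDCap}: one assembles the pairs $\big(\operatorname{QCoh}(U),\mathcal{D}^\infty_U1_U\big)$ into a single functor out of $X_w^{\mathsf{op}}$ landing in ``symmetric monoidal categories equipped with an algebra in the associated convolution category'' and applies the module-category fibration of \cite[Proposition~4.6.2.17]{HigherAlgebra}.

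For part~(ii) I would argue that the equivalence $\operatorname{RMod}_{\mathcal{D}^\infty_X1_X}\operatorname{sFr}(X)\simeq\operatorname{Mod}_{\mathcal{D}^\infty_X}\operatorname{sFr}(X)$ of Corollary~\ref{cor:Dmodequivalence} is natural in $U$. That equivalence is induced by the morphism of monads $(-)\widehat{\otimes}_X\mathcal{D}^\infty_X1_X\to\mathcal{D}^\infty_X(-)$ of Lemma~\ref{lem:naturaltransformlema}, which restricts to an equivalence of monads on $\operatorname{sFr}(X)$ by Theorem~\ref{thm:PNFtensor} and Proposition~\ref{prop:PNFpreserve}. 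So it suffices to check that this morphism is a morphism of monad functors with respect to the monad-functor structure $\sigma\colon\mathcal{D}^\infty_Ut^*\xrightarrow{\sim}t^*\mathcal{D}^\infty_X$ of Remark~\ref{rmk:monadfunctor} on the target side and the canonical equivalence $(t^*-)\widehat{\otimes}_U\mathcal{D}^\infty_U1_U\simeq t^*\big((-)\widehat{\otimes}_X\mathcal{D}^\infty_X1_X\big)$ (which exists because $t^*$ is monoidal and $t^*\mathcal{D}^\infty_X1_X\simeq\mathcal{D}^\infty_U1_U$) on the source side; i.e., that the square of endofunctors of $\operatorname{sFr}(X)$ with these data as edges commutes, together with the higher coherences. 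Both sides of \eqref{eq:naturaltransformlema} arise by base change from the Cartesian squares of Theorem~\ref{thm:ExpTX}, and the required compatibility is just naturality of base change. Feeding this into the functoriality of $\operatorname{Mod}(-)$ in monad functors — the mechanism underlying the parametrised monadicity statement Corollary~\ref{cor:BarrBeckFamiliesCor} used for Lemma~\ref{lem:restrictions} — upgrades the pointwise equivalences to an equivalence of prestacks $\operatorname{RMod}_{\mathcal{D}^\infty_{(-)}1_{(-)}}\operatorname{sFr}(-)\simeq\operatorname{Mod}_{\mathcal{D}^\infty_{(-)}}\operatorname{sFr}(-)$ on $X_w$, which combined with part~(i) proves the Lemma.

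The main obstacle is the coherence bookkeeping in part~(ii): producing a single $\infty$-functor out of $X_w^{\mathsf{op}}$ that records, for each $U$, the category $\operatorname{sFr}(U)$ together with both monads $(-)\widehat{\otimes}_U\mathcal{D}^\infty_U1_U$ and $\mathcal{D}^\infty_U(-)$ and the equivalence between them, all compatibly with restriction, so that passage to module categories becomes functorial. I would handle this through the (op)fibration of monads and monad functors over $X_w^{\mathsf{op}}$ — the same device used for Lemma~\ref{lem:restrictions} — rather than chasing coherences by hand; once that framework is in place, everything reduces to the pointwise inputs of Theorem~\ref{thm:PNFtensor}, Proposition~\ref{prop:PNFpreserve} and Proposition~\ref{prop:algebraiso}, together with naturality of base change.
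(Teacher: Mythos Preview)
Your outline for part~(i) is essentially the paper's argument: the restriction functor is obtained from the bimodule equivalence $\mathcal{D}^\infty_X1_X\widehat{\otimes}^\mathbf{L}_A A_U\simeq\mathcal{D}^\infty_U1_U$, which is precisely what the paper records as the monad functor $\sigma'$ in \eqref{eq:monadfunctorprime}.

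For part~(ii) you correctly identify the square that must commute and phrase it as compatibility of the morphism of monads \eqref{eq:naturaltransformlema} with $\sigma$ and $\sigma'$. Where your argument diverges from the paper is the \emph{mechanism} for producing this commutative square. You appeal to ``naturality of base change'' and to the parametrised monadicity framework, but neither quite pins it down: the left vertical arrow $(-)\widehat{\otimes}_X\mathcal{D}^\infty_X1_X\to\mathcal{D}^\infty_X(-)$ is not itself a base-change morphism, and Corollary~\ref{cor:BarrBeckFamiliesCor} produces module categories from a single monadic adjunction in families, not a comparison between two module categories. The paper instead exploits that \eqref{eq:naturaltransformlema} is, by construction, the counit $\iota\kappa\to\operatorname{id}$ of the adjunction of Lemma~\ref{lem:Vlinearcattheorem} (with target $\operatorname{Mod}_{A_U}$). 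One then observes that the non-invertible transformation $\tau\colon t^*\mathcal{D}^\infty_X\to\mathcal{D}^\infty_Ut^*$ on all of $\operatorname{QCoh}(X)$ lives in $\operatorname{Fun}^{\mathrm{lax}}_\mathscr{V}(\operatorname{Mod}_A\mathscr{V},\operatorname{Mod}_{A_U}\mathscr{V})$, that $\kappa(\tau)$ is exactly $\mathcal{D}^\infty_X1_X\widehat{\otimes}^\mathbf{L}_AA_U\to\mathcal{D}^\infty_U1_U$, and hence $\iota\kappa(\tau)$ is $(\sigma')^{-1}$. The desired square is then the naturality square of the counit $\iota\kappa\to\operatorname{id}$ applied to $\tau$, restricted to $\operatorname{sFr}(X)$ where $\tau$ becomes the equivalence $\sigma^{-1}$. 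This reduces the whole coherence question to a single naturality square for a counit, with no further bookkeeping required; your approach would work in principle but leaves exactly the step you flagged as the ``main obstacle'' unresolved.
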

\begin{proof}
    (i): We note that there is a canonical equivalence of $A$-$A_U$ bimodule objects 
    \begin{equation}
      \mathcal{D}^\infty_U 1_U  \simeq   \mathcal{D}^\infty_X 1_X \widehat{\otimes}_A^\mathbf{L} A_U.
    \end{equation}
    Thus for $M \in \operatorname{RMod}_{\mathcal{D}^\infty_X1_X}\operatorname{QCoh}(X)$, the object $t^*M = M \widehat{\otimes}_A^\mathbf{L} A_U$ obtains the canonical stucture of a $\mathcal{D}^\infty_U1_U$-module object via
    \begin{equation}
        ( M \widehat{\otimes}^\mathbf{L}_A A_U )  \widehat{\otimes}_{A_U}^\mathbf{L} \mathcal{D}^\infty_U1_U  \simeq  (M \widehat{\otimes}^\mathbf{L}_A\mathcal{D}^\infty_X 1_X ) \widehat{\otimes}^\mathbf{L}_A A_U  \to  M \widehat{\otimes}^\mathbf{L}_A  A_U,
    \end{equation}
    which gives the required restriction functors. A different way to say this is that the functor $t^* = (-) \widehat{\otimes}_A^\mathbf{L} A_U$, together with the equivalence 
    \begin{equation}\label{eq:monadfunctorprime}
        \sigma^\prime :   (-)\widehat{\otimes}^\mathbf{L}_A  A_U \widehat{\otimes}_{A_U}^\mathbf{L} \mathcal{D}^\infty_U1_U   \simeq(-)\widehat{\otimes}^\mathbf{L}_A  \mathcal{D}^\infty_X 1_X \widehat{\otimes}^\mathbf{L}_A A_U 
    \end{equation}
    gives a monad functor (in the sense of Remark \ref{rmk:monadfunctor}) from $(-)\widehat{\otimes}_{A_U}^\mathbf{L}\mathcal{D}^\infty_U1_U $ to $(-)\widehat{\otimes}^\mathbf{L}_A \mathcal{D}^\infty_X 1_X $. 
    
    (ii): We need to show that the natural transformation \eqref{eq:naturaltransformlema} is compatible with the monad functors $\sigma$ and $\sigma^\prime$ of \eqref{eq:sigmatranformation} and \eqref{eq:monadfunctorprime} respectively. Applying  Lemma \ref{lem:Vlinearcattheorem} (with $\mathscr{V} = D(\mathsf{CBorn}_K)$ and  $\mathcal{M} = D(\mathsf{CBorn}_K)$ and, in the notations of that Lemma), there is an adjunction 
    \begin{equation}\label{eq:FunLax3}
       \iota:  _{A}\operatorname{BMod}_B\mathscr{V} \leftrightarrows \operatorname{Fun}^\mathrm{lax}_\mathscr{V}(\operatorname{Mod}_A \mathscr{V}, \operatorname{Mod}_B\mathscr{V}) : \kappa. 
    \end{equation}
    in which the left adjoint $\iota$ is fully-faithful. The equivalence $\sigma: \mathcal{D}^\infty_Ut^* \simeq t^*\mathcal{D}^\infty_X$ of functors on $\operatorname{sFr}(X)$ comes from restricting the natural transformation $\tau: t^*\mathcal{D}^\infty_X \to \mathcal{D}^\infty_U t^*$ of functors on $\operatorname{QCoh}(X) = \operatorname{Mod}_A \mathscr{V}$. We may view $\tau$ as a morphism in the category on the the right side of \eqref{eq:FunLax3}. Now we note that the inverse of $\sigma^\prime$ agrees with (the restriction to $\operatorname{sFr}(X)$ of) $\iota \kappa (\tau)$: indeed, $\kappa \iota \kappa (\tau)$ identifies with $\mathcal{D}^\infty_X 1_X \widehat{\otimes}^\mathbf{L}_A A_U \xrightarrow[]{\sim} \mathcal{D}^\infty_U 1_U$. Hence, using that $\iota \kappa \to \operatorname{id}$ is a natural transformation we obtain the desired commutative square 
    \begin{equation}
\begin{tikzcd}
	{\mathcal{D}^\infty_U(-\widehat{\otimes}^\mathbf{L}_A A_U)} & {(\mathcal{D}^\infty_X(-))\widehat{\otimes}^\mathbf{L}_A A_U} \\
	{(-)\widehat{\otimes}^\mathbf{L}_A A_U \widehat{\otimes}^\mathbf{L}_{A_U} \mathcal{D}^\infty_U 1_U} & {(-)\widehat{\otimes}^\mathbf{L}_A \mathcal{D}^\infty_X1_X \widehat{\otimes}^\mathbf{L}_A A_U}
	\arrow["{\simeq \sigma }", no head, from=1-1, to=1-2]
	\arrow[from=2-1, to=1-1]
	\arrow["{\simeq \sigma^\prime}", no head, from=2-1, to=2-2]
	\arrow[from=2-2, to=1-2]
\end{tikzcd}
    \end{equation}
    of endofunctors on $\operatorname{sFr}(X)$ in which the vertical arrows are induced by \eqref{eq:naturaltransformlema}. 
\end{proof}
It is not hard to see that the equivalence 
\begin{equation}\label{eq:nothardtosee}
\operatorname{RMod}_{\mathcal{D}^\infty_X1_X} \operatorname{QCoh}(X) \simeq \operatorname{RMod}_{\wideparen{\mathcal{D}}_X(X)} D(\mathsf{CBorn}_K)
\end{equation}
is compatible with restrictions to $U \in X_w$, so that there is an equivalence of prestacks 
\begin{equation}\label{eq:equivalence}
\operatorname{RMod}_{\mathcal{D}^\infty_{(-)}1_{(-)}} \operatorname{QCoh}(-) \simeq \operatorname{RMod}_{\wideparen{\mathcal{D}}_{(-)}(-)} D(\mathsf{CBorn}_K)
\end{equation}
on $X_w$. We recall from Definition \ref{defn:Ccomplexprime} that the $\infty$-category $D_\mathcal{C}(X)$ is defined as a certain full subcategory of $\operatorname{RMod}_{\wideparen{\mathcal{D}}_X(X)} D(\mathsf{CBorn}_K)$. Under the equivalence \eqref{eq:nothardtosee}, the image of $D_\mathcal{C}(X)$ factors through the full subcategory $\operatorname{RMod}_{\mathcal{D}^\infty_{X}1_{X}} \operatorname{sFr}(X)$, by Corollary \ref{cor:ccomplexsFr}. In particular we obtain a fully-faithful functor 
\begin{equation}
    D_\mathcal{C}(X) \hookrightarrow \operatorname{RMod}_{\mathcal{D}^\infty_{X}1_{X}} \operatorname{sFr}(X)
\end{equation}
which is compatible with restrictions. Now combining all the assertions of Lemma \ref{lem:restrictions} we obtain the following. 
\begin{thm}\label{thm:CcomplexFFaffine}
The functor $D_\mathcal{C}(X) \to \operatorname{Strat}(X)$ is compatible with the restrictions induced by the upper-star pullback functors on $\operatorname{Strat}$. That is, there is a morphism
\begin{equation}
    D_\mathcal{C}(-) \to \operatorname{Strat}(-)
\end{equation}
of prestacks on $X_w$, which is pointwise fully-faithful. 
\end{thm}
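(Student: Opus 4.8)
The plan is to construct the morphism $D_\mathcal{C}(-)\to\operatorname{Strat}(-)$ of prestacks on $X_w$ by composing a chain of prestack-level maps, each of which has already been produced in the preceding lemmas, and then to note that pointwise fully-faithfulness is read off from the diagram \eqref{eq:bigdiagram}. Explicitly, the chain of morphisms of functors $X_w^\mathsf{op}\to\mathsf{Cat}_\infty$ (with respect to the upper-star pullback functors) is
\begin{equation*}
D_\mathcal{C}(-)\hookrightarrow \operatorname{RMod}_{\mathcal{D}^\infty_{(-)}1_{(-)}}\operatorname{sFr}(-)\xrightarrow{\ \sim\ }\operatorname{Mod}_{\mathcal{D}^\infty_{(-)}}\operatorname{sFr}(-)\xrightarrow{\ \sim\ }\operatorname{Strat}_{\operatorname{sFr}}(-)\hookrightarrow \operatorname{Strat}(-).
\end{equation*}
The composite is the desired morphism, and since every arrow is pointwise fully-faithful, so is the composite; that the resulting pointwise functor agrees with the one built at the end of the previous section is immediate from \eqref{eq:bigdiagram}.

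For the individual links I would argue as follows. The first inclusion is the observation (recorded just above the theorem) that under the prestack equivalence \eqref{eq:equivalence} the sub-prestack $D_\mathcal{C}(-)\subseteq\operatorname{RMod}_{\wideparen{\mathcal{D}}_{(-)}(-)}D(\mathsf{CBorn}_K)$ of Lemma \ref{lem:Coadmissiblepullback} has image inside $\operatorname{RMod}_{\mathcal{D}^\infty_{(-)}1_{(-)}}\operatorname{sFr}(-)$, using Corollary \ref{cor:ccomplexsFr} together with the fact that $\operatorname{sFr}(-)$ is stable under the restriction functors $(-)\widehat{\otimes}^\mathbf{L}_AA_U$ by its very definition. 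The first equivalence is part (ii) of the (unlabelled) lemma preceding the theorem, comparing right modules over the convolution algebra $\mathcal{D}^\infty_X1_X$ with modules over the monad $\mathcal{D}^\infty_X$, now promoted to an equivalence of prestacks on $X_w$. The second equivalence is Lemma \ref{lem:restrictions}(ii), which upgrades the monadicity of Theorem \ref{thm:StratMonadicity} to an equivalence $\operatorname{Mod}_{\mathcal{D}^\infty_{(-)}}\operatorname{sFr}(-)\simeq\operatorname{Strat}_{\operatorname{sFr}}(-)$ of prestacks. The last inclusion is Corollary \ref{Cor:CRYSPNFpullbackcompatible}(i), which says exactly that $\operatorname{Strat}_{\operatorname{sFr}}(-)$ is a sub-prestack of $\operatorname{Strat}(-)$ for the upper-star functors. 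Composing and restricting to $U\in X_w$ recovers the pointwise fully-faithful functors appearing in \eqref{eq:bigdiagram} (with $\operatorname{sFr}$ in place of $\operatorname{Fr}$, which is harmless since $D_\mathcal{C}(X)\subseteq\operatorname{RMod}_{\wideparen{\mathcal{D}}_X(X)}\operatorname{sFr}(X)$).

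I do not expect a genuine obstacle in this final assembly: the real difficulty has already been absorbed into the inputs. The subtlety — that $\operatorname{Strat}$ is forgotten to $\mathcal{D}^\infty$-modules through an \emph{upper-shriek} functor whereas the $\wideparen{\mathcal{D}}$-module theory restricts via \emph{upper-star} functors — is precisely what Lemma \ref{lem:UpperstarPullbackCompatibilityOfDeRhamDmoduleForgetfulFunctor!!!} and the parametrized (``families'') version of Barr--Beck invoked in the proof of Lemma \ref{lem:restrictions} are designed to reconcile. Granting those, the only thing to be careful about here is bookkeeping: checking that the equivalences of prestacks compose, i.e.\ that the coherence data (the monad-functor structures $\sigma,\sigma'$ of Remark \ref{rmk:monadfunctor} and the surrounding lemma) match up along the chain, which is the content of the commuting squares already established. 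Hence the theorem follows formally.
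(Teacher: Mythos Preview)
Your proposal is correct and matches the paper's approach exactly: the paper presents Theorem \ref{thm:CcomplexFFaffine} as the formal consequence of assembling the chain $D_\mathcal{C}(-)\hookrightarrow \operatorname{RMod}_{\mathcal{D}^\infty_{(-)}1_{(-)}}\operatorname{sFr}(-)\simeq\operatorname{Mod}_{\mathcal{D}^\infty_{(-)}}\operatorname{sFr}(-)\simeq\operatorname{Strat}_{\operatorname{sFr}}(-)\hookrightarrow\operatorname{Strat}(-)$ from the preceding results (Corollary \ref{cor:ccomplexsFr}, the equivalence \eqref{eq:equivalence}, the unlabelled lemma, Lemma \ref{lem:restrictions}, and Corollary \ref{Cor:CRYSPNFpullbackcompatible}). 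Your write-up is in fact more explicit than the paper's, which simply says ``combining all the assertions of Lemma \ref{lem:restrictions} we obtain the following'' and states the theorem without a separate proof block.
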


\section{Globalising the embedding of $\mathcal{C}$-complexes}\label{sec:global}
In this subsection $X$ now denotes an arbitrary smooth rigid analytic variety. Let $X_w(\mathcal{T})$ denote the poset of affinoid subdomains of $X$ which are \'etale over a polydisk (this is a basis for the weak topology). There is obviously a functor
\begin{equation}
    X_w(\mathcal{T}) \to X_{\mathrm{strong}}
\end{equation}
where the latter is the poset of all admissible open subsets of $X$ equipped with the strong G-topology. 
\begin{defn}\label{defn:CComplexstackification}
The stack of $\mathcal{C}$-complexes on $X$ is defined to be the right Kan extension of the functor $D_\mathcal{C}(-) : X_w^\mathsf{op}(\mathcal{T}) \to \mathsf{Cat}_\infty$ along $X_w^\mathsf{op}(\mathcal{T}) \to X_{\mathrm{strong}}$. 
\end{defn}
With this definition, one has (for an arbitrary rigid smooth variety $X$):
\begin{equation}
    D_\mathcal{C}(X) = \varprojlim_{U} D_\mathcal{C}(U), 
\end{equation}
where the limit runs over all affinoid subdomains $U \subseteq X$ which are \'etale over a polydisk. 
\begin{thm}
With notations as above:
\begin{enumerate}[(i)]
    \item $D_\mathcal{C}(-)$ is a sheaf of $\infty$-categories on $X_{\mathrm{strong}}$. 
    \item There is a fully-faithful functor $D_\mathcal{C}(X) \hookrightarrow \operatorname{Strat}(X)$, which is compatible with restrictions to admissible open subsets $U \subseteq X$, so that we obtain a morphism
    \begin{equation}\label{eq:DCstrong}
        D_\mathcal{C}(-) \to \operatorname{Strat}(-)
    \end{equation}
    of $\mathsf{Cat}_\infty$-valued sheaves on $X_{\mathrm{strong}}$ which is pointwise fully-faithful.
\end{enumerate}
\end{thm}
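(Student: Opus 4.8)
The strategy is to deduce both assertions from the affinoid statements already proved, exploiting that $X_w(\mathcal{T})$ is a basis for $X_{\mathrm{strong}}$ together with the standard equivalence between sheaves on a site and sheaves on a basis. First observe that an affinoid subdomain of a space étale over a polydisk is again étale over a polydisk (an affinoid subdomain inclusion is étale, and étale morphisms compose); hence for every $V \in X_w(\mathcal{T})$, all affinoid subdomains of $V$, and all the intersections occurring in the Čech nerve of an admissible affinoid covering of $V$, again lie in $X_w(\mathcal{T})$. So $X_w(\mathcal{T})$, with the weak $G$-topology, really is a basis for $X_{\mathrm{strong}}$.

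\emph{Proof of (i).} Applying Theorem \ref{thm:CCOmplexanalytictopology} with base $V$ for each $V \in X_w(\mathcal{T})$ shows that $D_\mathcal{C}(-)\colon X_w(\mathcal{T})^{\mathsf{op}} \to \mathsf{Cat}_\infty$ satisfies Čech descent for admissible affinoid coverings, which is exactly the sheaf condition for the topology induced on the basis. The $\infty$-categorical comparison lemma (sheaves on a site $\simeq$ sheaves on a basis, the equivalence being implemented by right Kan extension along the inclusion $X_w(\mathcal{T})^{\mathsf{op}} \hookrightarrow X_{\mathrm{strong}}^{\mathsf{op}}$) then shows that this right Kan extension is a sheaf on $X_{\mathrm{strong}}$ and, since the inclusion is fully faithful, restricts back to $D_\mathcal{C}(-)$ on the basis. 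By Definition \ref{defn:CComplexstackification} this right Kan extension \emph{is} $D_\mathcal{C}(-)$ on $X_{\mathrm{strong}}$, proving (i).

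\emph{Proof of (ii).} We assemble the affinoid comparisons into a single morphism of prestacks on $X_w(\mathcal{T})$. For each $V \in X_w(\mathcal{T})$, Theorem \ref{thm:CcomplexFFaffine} supplies a pointwise-fully-faithful morphism $D_\mathcal{C}(-) \to \operatorname{Strat}(-)$ of prestacks on $V_w$; for $W \subseteq V$ in $X_w(\mathcal{T})$ the required $1$-morphism compatibilities and their higher coherences are those produced by Theorem \ref{thm:CcomplexFFaffine} with base $V$. As that morphism is built entirely from the base-change transformations of the six-functor formalism of \cite{soor_six-functor_2024} (and the monadic comparisons of the earlier sections), all of which are coherently functorial over $\mathsf{dRig}$, these data organise into one morphism of functors $X_w(\mathcal{T})^{\mathsf{op}} \to \operatorname{Fun}(\Delta^1, \mathsf{Cat}_\infty)$ that is pointwise fully-faithful. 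Right Kan extending along $X_w(\mathcal{T})^{\mathsf{op}} \hookrightarrow X_{\mathrm{strong}}^{\mathsf{op}}$: the source becomes $D_\mathcal{C}(-)$ by Definition \ref{defn:CComplexstackification}; the target becomes $\operatorname{Strat}(-)$, since $\operatorname{Strat}(-) = \operatorname{QCoh}((-)_{\mathrm{str}})$ is a sheaf in the strong topology ($\operatorname{QCoh}$ carries colimits of prestacks to limits and $(-)_{\mathrm{str}}$ preserves the covering colimits), hence coincides with the right Kan extension of its own restriction to the basis. This produces a morphism $D_\mathcal{C}(-) \to \operatorname{Strat}(-)$ of sheaves on $X_{\mathrm{strong}}$, in particular compatible with restrictions to admissible opens. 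Full-faithfulness at a general $U \subseteq X$ follows by choosing a covering of $U$ by members of $X_w(\mathcal{T})$: the sheaf property writes $D_\mathcal{C}(U)$ and $\operatorname{Strat}(U)$ as limits of $D_\mathcal{C}(-)$, resp. $\operatorname{Strat}(-)$, over the diagram of basis elements refining $U$, and $D_\mathcal{C}(U) \to \operatorname{Strat}(U)$ is the induced map of limits; since mapping spaces in a limit of $\infty$-categories are limits of mapping spaces, a limit of fully-faithful functors is fully-faithful.

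The step I expect to be most delicate is the assembly in (ii): promoting the family of pointwise-fully-faithful functors of Theorem \ref{thm:CcomplexFFaffine}, each a priori constructed relative to a fixed affinoid base, into an honest morphism of $\operatorname{Fun}(\Delta^1, \mathsf{Cat}_\infty)$-valued prestacks on $X_w(\mathcal{T})$ with all the higher coherences. This is precisely where one must invoke the functoriality of base change in the six-functor formalism and, as in the proof of Theorem \ref{thm:CcomplexFFaffine}, the parametrized monadicity theorem of the appendix; everything else is a formal consequence of the sheaf-on-a-basis formalism.
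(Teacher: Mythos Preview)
Your proposal is correct and follows essentially the same approach as the paper: both parts are deduced from the affinoid case via the sheaf-on-a-basis formalism, using Theorem~\ref{thm:CCOmplexanalytictopology} for (i) and right Kan extension of the morphism of Theorem~\ref{thm:CcomplexFFaffine} for (ii), with full-faithfulness at general opens coming from mapping spaces in limits. Your treatment is somewhat more detailed than the paper's---in particular you explicitly flag the coherence issue in assembling the affinoid comparisons into a global morphism of prestacks on $X_w(\mathcal{T})$, which the paper absorbs into the phrase ``Kan extension''---but the underlying argument is the same.
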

\begin{proof}
(i): Because $X_w(\mathcal{T})$ is a basis for $X_\mathrm{strong}$, this follows from \cite[Proposition A.3.11(ii)]{mann_p-adic_2022} and Theorem \ref{thm:CComplexdescent}. 

(ii): By \cite[Corollary 4.85]{soor_six-functor_2024}, we know that for a smooth (classical) rigid variety $X$, that
\begin{equation}
    \operatorname{Strat}(X) \simeq \varprojlim_{U} \operatorname{Strat}(U),
\end{equation}
where the limit runs over all affinoid subdomains which are \'etale over a polydisk. In particular the morphism \eqref{eq:DCstrong} may be constructed using Theorem \ref{thm:CcomplexFFaffine} and Kan extension. The fully-faithfulness also follows from Theorem \ref{thm:CcomplexFFaffine}, together with the fact that the mapping space in a limit of $\infty$-categories, is the limit of the mapping spaces.  
\end{proof}
By Lemma \ref{lem:intervaldescent}, we obtain a subsheaf $D_\mathcal{C}^{\heartsuit}(-)$ of $D_\mathcal{C}(-)$ on $X_w(\mathcal{T})$. By Kan extension, we obtain a subsheaf $D_\mathcal{C}^{\heartsuit}(-)$ of $D_\mathcal{C}(-)$ on $X_{\mathrm{strong}}$.
\begin{thm}
Let $X$ be a smooth classical rigid-analytic space. The category $D^\heartsuit_\mathcal{C}(X)$ is equivalent to the category of coadmissible $\wideparen{\mathcal{D}}_X$-modules of \cite[\S 9.4]{Dcap1}. 
\end{thm}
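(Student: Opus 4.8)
The plan is to reduce the global statement to the affinoid case and then match definitions. First I would recall that, by Definition \ref{defn:CComplexstackification} and the construction of $D_\mathcal{C}^\heartsuit(-)$ by Kan extension, one has $D_\mathcal{C}^\heartsuit(X) \simeq \varprojlim_U D_\mathcal{C}^\heartsuit(U)$, where $U$ ranges over affinoid subdomains of $X$ which are \'etale over a polydisk. On the other side, the category $\operatorname{Coad}(\wideparen{\mathcal{D}}_X)$ of coadmissible $\wideparen{\mathcal{D}}_X$-modules in the sense of \cite[\S9.4]{Dcap1} is by definition also a limit: a coadmissible $\wideparen{\mathcal{D}}_X$-module is a sheaf $\mathcal{M}$ on $X_w$ (or $X_w(\mathcal{T})$) such that $\mathcal{M}(U)$ is a coadmissible $\wideparen{\mathcal{D}}_X(U)$-module for each affinoid $U$ and the restriction maps induce $\mathcal{M}(U) \widehat{\otimes}_{\wideparen{\mathcal{D}}_X(U)} \wideparen{\mathcal{D}}_X(V) \xrightarrow{\sim} \mathcal{M}(V)$; equivalently, by Ardakov--Wadsley's gluing results (their version of Kiehl's theorem for coadmissible modules), $\operatorname{Coad}(\wideparen{\mathcal{D}}_X) \simeq \varprojlim_U \operatorname{Coad}(\wideparen{\mathcal{D}}_X(U))$ over the same basis. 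So the theorem reduces to the affinoid statement: for $X = \operatorname{Sp}(A)$ \'etale over a polydisk, $D_\mathcal{C}^\heartsuit(X)$ is equivalent to the abelian category of coadmissible $\wideparen{\mathcal{D}}_X(X)$-modules, compatibly with restriction to affinoid subdomains.

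Next I would prove the affinoid statement. By Theorem \ref{thm:CComplexdescent} and the definition of the $t$-structure in Lemma \ref{lem:texact}, an object of $D_\mathcal{C}^\heartsuit(X)$ is an object $M^\bullet \in D_\mathcal{C}(X) \subseteq \operatorname{RMod}_{\wideparen{\mathcal{D}}_X(X)} D(\mathsf{CBorn}_K)$ with $H^j(M^\bullet) = 0$ for $j \neq 0$. By Lemma \ref{lem:texact}(i) the heart is cut out inside $D_\mathcal{C}(X)$ by the vanishing of cohomology away from degree $0$; since the $t$-structure restricts from the one on $\operatorname{RMod}_{\wideparen{\mathcal{D}}_X(X)} D(\mathsf{CBorn}_K)$ (this is what Lemma \ref{lem:D_ntransversellemma} gives), $M^\bullet$ is equivalent to a single module $M := H^0(M^\bullet)$ placed in degree $0$. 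The condition that $M^\bullet$ be a $\mathcal{C}$-complex then becomes: each $H^j(M \widehat{\otimes}^\mathbf{L}_{\wideparen{\mathcal{D}}_X(X)} \mathcal{D}^n_X(X))$ is finitely-generated over $\mathcal{D}^n_X(X)$ and vanishes for $|j| \gg 0$, and $M \xrightarrow{\sim} \operatorname{lim}_n H^0(M \widehat{\otimes}^\mathbf{L}_{\wideparen{\mathcal{D}}_X(X)} \mathcal{D}^n_X(X))$ (using Remark \ref{rmk:iiprime}). I would then show this is exactly the condition that $M$ be coadmissible: if $M$ is coadmissible, \cite[Corollary 5.38]{bode_six_2021} says it is acyclic for $(-) \widehat{\otimes}_{\wideparen{\mathcal{D}}_X(X)} \mathcal{D}^n_X(X)$, so $M \widehat{\otimes}^\mathbf{L}_{\wideparen{\mathcal{D}}_X(X)} \mathcal{D}^n_X(X)$ is concentrated in degree $0$ and equals $M_n := M \widehat{\otimes}_{\wideparen{\mathcal{D}}_X(X)} \mathcal{D}^n_X(X)$, which is a finitely-generated $\mathcal{D}^n_X(X)$-module by coadmissibility, and $M \cong \operatorname{lim}_n M_n$ by definition of coadmissibility; hence $M$ (in degree $0$) is a $\mathcal{C}$-complex. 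Conversely, if $M^\bullet = M[0]$ is a $\mathcal{C}$-complex, then $M_n := H^0(M \widehat{\otimes}^\mathbf{L}_{\wideparen{\mathcal{D}}_X(X)} \mathcal{D}^n_X(X))$ is finitely-generated over $\mathcal{D}^n_X(X)$, the transition maps $M_{n+1} \widehat{\otimes}_{\mathcal{D}^{n+1}_X(X)} \mathcal{D}^n_X(X) \to M_n$ are isomorphisms (this uses that the higher cohomology of $M \widehat{\otimes}^\mathbf{L}$ vanishes, which for a module in degree $0$ forces $M$ to be flat-acyclic in the relevant sense — one extracts this from the $\mathcal{C}$-complex condition applied to the two-term situation, exactly as in the proof of Theorem \ref{thm:CComplexdescent} where pre-nuclearity with dense images is deduced), and $M \xrightarrow{\sim} \operatorname{lim}_n M_n$; so $(M_n)_n$ is a coadmissible system and $M$ is coadmissible. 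This gives a fully-faithful functor $H^0 : D_\mathcal{C}^\heartsuit(X) \to \operatorname{Coad}(\wideparen{\mathcal{D}}_X(X))$ which is essentially surjective, hence an equivalence, and it is manifestly compatible with the pullback functors $(-) \widehat{\otimes}^\mathbf{L}_{\wideparen{\mathcal{D}}_X(X)} \wideparen{\mathcal{D}}_X(U)$ versus $(-) \widehat{\otimes}_{\wideparen{\mathcal{D}}_X(X)} \wideparen{\mathcal{D}}_X(U)$ by Lemma \ref{lem:Coadmissiblepullback}(ii).

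Finally I would assemble: the affinoid equivalences $D_\mathcal{C}^\heartsuit(U) \simeq \operatorname{Coad}(\wideparen{\mathcal{D}}_X(U))$ are natural in $U \in X_w(\mathcal{T})$, so passing to the limit over the basis gives $D_\mathcal{C}^\heartsuit(X) \simeq \varprojlim_U \operatorname{Coad}(\wideparen{\mathcal{D}}_X(U)) \simeq \operatorname{Coad}(\wideparen{\mathcal{D}}_X)$, where the last equivalence is Ardakov--Wadsley's gluing theorem for coadmissible $\wideparen{\mathcal{D}}_X$-modules \cite[\S9.4]{Dcap1}. I expect the main obstacle to be the converse direction in the affinoid step: verifying that the $\mathcal{C}$-complex conditions on a module concentrated in degree $0$ genuinely force the transition maps of the associated system $\{H^0(M_n^\bullet)\}_n$ to be the isomorphisms required by the definition of a coadmissible system, i.e.\ that one is not merely getting a ``derived'' coadmissibility. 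This is handled by the same Mittag-Leffler / pre-nuclearity argument used in Theorem \ref{thm:CComplexdescent} (via \cite[Theorem 5.26, Definition 5.24]{bode_six_2021}), together with the observation that for a module in degree $0$ the vanishing of $H^{\neq 0}$ of the derived tensor product pins down the system exactly; making the naturality in $U$ fully coherent at the $\infty$-categorical level (rather than just objectwise) requires, as elsewhere in the paper, an appeal to the unstraightening / Kan extension formalism, but no new idea beyond what is already in \S\ref{sec:DescentforDCap}.
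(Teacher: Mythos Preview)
Your proposal is correct and follows essentially the same route as the paper: reduce to the affinoid case via Kan extension, identify $D_\mathcal{C}^\heartsuit(U)$ with coadmissible $\wideparen{\mathcal{D}}_X(U)$-modules, and then glue using \cite[\S 9.4]{Dcap1}. The only difference is that the paper dispatches the affinoid identification in one sentence (``It is clear that each $D^\heartsuit_\mathcal{C}(U)$ identifies with the category of coadmissible $\wideparen{\mathcal{D}}_U(U)$-modules''), whereas you spell it out.

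One comment on your affinoid argument: the converse direction is simpler than you make it. You worry about forcing the higher derived tensor products $H^{j}(M \widehat{\otimes}^\mathbf{L}_{\wideparen{\mathcal{D}}_X(X)} \mathcal{D}^n_X(X))$ to vanish, but this is not needed to conclude coadmissibility. The transition isomorphisms $H^0(M_{n+1}^\bullet)\widehat{\otimes}_{\mathcal{D}^{n+1}_X(X)}\mathcal{D}^n_X(X)\xrightarrow{\sim} H^0(M_n^\bullet)$ follow directly from Lemma~\ref{lem:fgpullback}(i) together with associativity of the derived tensor product (exactly as in the first paragraph of the proof of Theorem~\ref{thm:CComplexdescent}); then $M\cong \operatorname{lim}_n H^0(M_n^\bullet)$ by Remark~\ref{rmk:iiprime}, so $M$ is coadmissible. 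In fact the proof of Theorem~\ref{thm:CComplexdescent} already shows that \emph{every} $\mathcal{C}$-complex has coadmissible cohomology, which makes the heart identification immediate and is presumably why the paper calls it ``clear''.
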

\begin{proof}
By construction, the abelian category $D_\mathcal{C}(X)^\heartsuit = D_\mathcal{C}^{\heartsuit}(X)$ satisfies
\begin{equation}
    D_\mathcal{C}^{\heartsuit}(X) \simeq \varprojlim_{U \subseteq X} D^\heartsuit_\mathcal{C}(U),
\end{equation}
where the limit runs over all affinoid subdomains $U \subseteq X$ which are \'etale over a polydisk. It is clear that each $D^\heartsuit_\mathcal{C}(U)$ identifies with the category of coadmissible $\wideparen{\mathcal{D}}_U(U)$-modules. The limit on the left is the (2,1)-limit in the sense of ordinary category theory. To be precise, the $(2,1)$-limit is defined to be the category of Cartesian sections of the Grothendieck fibration corresponding to the $\mathsf{Cat}$-valued presheaf $D^\heartsuit_\mathcal{C}(-)$. In concrete terms, its objects are collections $(M_U)_U$ of objects in each category equipped with the data of equivalences $\phi_{UV}: M_U \widehat{\otimes}_{\wideparen{\mathcal{D}}_U(U)}\wideparen{\mathcal{D}}_V(V) \xrightarrow[]{\sim} M_V$ for every affinoid subdomain $V \subseteq U$, which satisfy an obvious cocycle condition. Thus, using \cite[\S 8]{Dcap1}, we are reduced to proving that
\begin{equation}\label{eq:equivalenceCoad}
\left\{ \text{coadmissible }\mathcal{D}_X\text{-modules} \right\} \simeq \varprojlim_U\left\{ \text{coadmissible }\mathcal{D}_U\text{-modules} \right\}
\end{equation}
where the limit on the right is the $(2,1)$-limit. Given \cite[Theorem 9.4]{Dcap1}, this is tautological: the functor from left to right is given by restriction and the functor from right to left glues a sheaf of $\mathcal{D}_X$-modules from local data, and the equivalence \eqref{eq:equivalenceCoad} expresses the fact that being coadmissible is local on $X_w(\mathcal{T})$. 
\end{proof}
\begin{cor}
Let $X$ be a smooth rigid-analytic variety. There is a fully-faithful functor
\begin{equation}
\left\{ \text{coadmissible }\mathcal{D}_X\text{-modules} \right\} \hookrightarrow \operatorname{Strat}(X).
\end{equation}
\end{cor}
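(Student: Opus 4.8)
The plan is to realize the functor as a composite of three fully-faithful functors already established above, applied to the smooth (classical) rigid-analytic variety $X$.

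\emph{Step 1.} By the theorem immediately preceding this corollary, there is an equivalence of categories $\{\text{coadmissible }\mathcal{D}_X\text{-modules}\} \simeq D^\heartsuit_\mathcal{C}(X)$, where $D^\heartsuit_\mathcal{C}(-)$ is the subsheaf of $D_\mathcal{C}(-)$ on $X_{\mathrm{strong}}$ obtained, via Lemma \ref{lem:intervaldescent} and Kan extension, from the local hearts $D^\heartsuit_\mathcal{C}(U)$, $U \in X_w(\mathcal{T})$. \emph{Step 2.} The inclusion $D^\heartsuit_\mathcal{C}(X) \hookrightarrow D_\mathcal{C}(X)$ is fully-faithful. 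Indeed, by Definition \ref{defn:CComplexstackification} both sides are computed as limits over $U \in X_w(\mathcal{T})$ of $D^\heartsuit_\mathcal{C}(U)$ and $D_\mathcal{C}(U)$ respectively, and for each such $U$ the inclusion $D^\heartsuit_\mathcal{C}(U) \hookrightarrow D_\mathcal{C}(U)$ is the inclusion of the heart of the $t$-structure of Lemma \ref{lem:texact}, hence fully-faithful; since the mapping space in a limit of $\infty$-categories is the limit of the mapping spaces, full-faithfulness passes to the limit. \emph{Step 3.} By part (ii) of the earlier theorem of \S\ref{sec:global}, there is a pointwise fully-faithful morphism $D_\mathcal{C}(-) \to \operatorname{Strat}(-)$ of sheaves on $X_{\mathrm{strong}}$; in particular $D_\mathcal{C}(X) \hookrightarrow \operatorname{Strat}(X)$ is fully-faithful. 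Composing Steps 1--3 yields the asserted fully-faithful functor $\{\text{coadmissible }\mathcal{D}_X\text{-modules}\} \simeq D^\heartsuit_\mathcal{C}(X) \hookrightarrow D_\mathcal{C}(X) \hookrightarrow \operatorname{Strat}(X)$.

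I do not expect a genuine obstacle here: all the substantive content — the $(2,1)$-limit description of coadmissible modules resting on Ardakov--Wadsley's gluing theorem \cite[Theorem 9.4]{Dcap1}, and the globalized embedding of $\mathcal{C}$-complexes resting on Theorem \ref{thm:CcomplexFFaffine} — is already in place, so the corollary is essentially formal. The only points requiring care are bookkeeping ones: checking that ``smooth rigid-analytic variety'' is precisely the class ``smooth classical rigid-analytic space'' for which the preceding theorems were proved (it is; ``classical'' only distinguishes from ``derived''), and checking that the various Kan extensions and heart subsheaves are genuinely compatible under the equivalences in Steps 1 and 3, which holds because all of them are defined by restriction to the common basis $X_w(\mathcal{T})$. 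If one preferred a more hands-on route, one could instead observe directly that a coadmissible $\mathcal{D}_X$-module restricts, over each $U \in X_w(\mathcal{T})$, to a coadmissible $\wideparen{\mathcal{D}}_U(U)$-module concentrated in degree $0$ — hence a $\mathcal{C}$-complex by Remark \ref{rmk:iiprime} and Lemma \ref{lem:Ccomplex} — glue these via Lemma \ref{lem:Coadmissiblepullback} and Theorem \ref{thm:CCOmplexanalytictopology} to an object of $D_\mathcal{C}(X)$, and then apply Step 3; full-faithfulness on morphisms then follows because $\operatorname{Hom}$-spaces in $D_\mathcal{C}(X)$ between such degree-$0$ objects coincide with $\operatorname{Hom}$-spaces of coadmissible $\mathcal{D}_X$-modules.
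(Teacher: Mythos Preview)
Your proposal is correct and matches the paper's intended argument. The paper states this corollary without proof, treating it as an immediate consequence of the preceding identification $\{\text{coadmissible }\mathcal{D}_X\text{-modules}\}\simeq D^\heartsuit_\mathcal{C}(X)$ together with the already-established fully-faithful embedding $D_\mathcal{C}(X)\hookrightarrow\operatorname{Strat}(X)$; your three-step decomposition spells out precisely that composition.
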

\section{On the essential image of $\mathcal{C}$-complexes}\label{sec:essentialimage}
In this section we investigate the essential image of the functor $D_\mathcal{C}(X) \hookrightarrow \operatorname{Strat}(X)$, for $X$ a smooth rigid-analytic space. 

To explain this further, we recall that there is an ``ambient" six-functor formalism $\operatorname{QCoh}$ on $\mathsf{PStk}$, c.f. \cite[Theorem 4.67]{soor_six-functor_2024}. For the purposes of this section we assume that the structure morphism $f: X_{\mathrm{str}} \to *$ belongs to the class $\widetilde{E}$ of $!$-able morphisms in this six-functor formalism. This holds if, for instance, $X = \operatorname{dSp}(A)$ is a classical affinoid which is \'etale over a polydisk, by \cite[Theorem 4.101(i)]{soor_six-functor_2024} together with the fact that the class $\widetilde{E}$ is $!$-local on the source.

By considering the object $X_{\mathrm{str}} \in \mathsf{PStk}$ and the structure morphism $f: X_{\mathrm{str}} \to *$ we arrive at three natural finiteness conditions for objects in $\operatorname{Strat}(X) = \operatorname{QCoh}(X_{\mathrm{str}})$, as in \cite[\S4.4]{heyer_6-functor_2024}:
\begin{itemize}
    \item[$\star$] One can consider the \emph{$f$-suave objects} in $\operatorname{Strat}(X) = \operatorname{QCoh}(X_{\mathrm{str}})$, c.f. \cite[Definition 4.4.1(a)]{heyer_6-functor_2024}.
    \item[$\star$] One can consider the \emph{$f$-prim objects} in $\operatorname{Strat}(X) = \operatorname{QCoh}(X_{\mathrm{str}})$, c.f. \cite[Definition 4.4.1(b)]{heyer_6-functor_2024}.
    \item[$\star$] One can consider \emph{dualizable}\footnote{We recall that an object $A$ of a symmetric monoidal $\infty$-category $(\mathscr{V}, \otimes)$ is called \emph{dualizable} if there exists another object $A^\lor \in \mathscr{V}$ such that $A \otimes (-)$  is adjoint to $A^\lor \otimes(-)$.} objects in $\operatorname{Strat}(X)$. (By \cite[Example 4.4.3]{heyer_6-functor_2024} this coincides with the \emph{$\operatorname{id}$-suave} and \emph{$\operatorname{id}$-prim} objects). 
\end{itemize}
We will show that the essential image of $D_\mathcal{C}(X) \hookrightarrow \operatorname{Strat}(X)$ is \emph{not} contained in the $f$-prim or dualizable objects in $\operatorname{Strat}(X)$. From Proposition \ref{prop:suavesketch} below, it seems most likely that, if $D_\mathcal{C}(X)$ admits some intrinsic characterization, it would be as the $f$-suave objects in $\operatorname{Strat}(X)$, but we are unfortunately unable to prove this.

Let us first show that the essential image is not contained in the dualizable objects.
\begin{prop}\label{prop:notdualizable}
Let $X = \operatorname{dSp}(K \langle x \rangle)$ be the closed unit disk. Then the image of $\wideparen{\mathcal{D}}_X(X)$ under the functor $D_\mathcal{C}(X) \to \operatorname{Strat}(X)$ is not a dualizable object. 
\end{prop}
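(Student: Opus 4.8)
The plan is to identify $\operatorname{QCoh}(X_{\mathrm{str}})$ with modules over the monad $\mathcal{D}^\infty_X$ (Theorem \ref{thm:StratMonadicity}), so that the image of $\wideparen{\mathcal{D}}_X(X)$ under $D_\mathcal{C}(X) \hookrightarrow \operatorname{Strat}(X)$ is the free $\mathcal{D}^\infty_X$-module $p_{X,!}1_X = p_{X,!}\mathcal{O}_X$; equivalently, under \eqref{eq:nothardtosee}, it is the rank-one free right $\wideparen{\mathcal{D}}_X(X)$-module. First I would recall that $\operatorname{Strat}(X) = \operatorname{QCoh}(X_{\mathrm{str}})$ is a \emph{closed} symmetric monoidal stable $\infty$-category (it is presentably symmetric monoidal, being $\operatorname{QCoh}$ of a prestack), with unit $1_{X_{\mathrm{str}}} = p_{X,!}1_X$ pulling back along $p_X$ to $\mathcal{O}_X$. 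So the object in question is precisely the monoidal unit of $\operatorname{Strat}(X)$, and the claim becomes: \emph{the unit object of $\operatorname{Strat}(X)$ is not dualizable}, i.e. $\operatorname{Strat}(X)$ is not a rigid (compactly-unit) monoidal category.

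The key step is then a concrete computation obstructing dualizability. If $1_{X_{\mathrm{str}}}$ were dualizable, then since the unit is always its own dual, the evaluation/coevaluation would exhibit $1_{X_{\mathrm{str}}}$ as dualizable with $1_{X_{\mathrm{str}}}^\vee = 1_{X_{\mathrm{str}}}$, which is automatic and gives no contradiction directly; the real content is that for the unit, dualizability is equivalent to $1_{X_{\mathrm{str}}}$ being a \emph{compact} object, or more precisely that the functor $R\underline{\operatorname{Hom}}(1_{X_{\mathrm{str}}}, -) = \operatorname{id}$ commutes with all colimits — which it does — so I must instead use the $f$-theoretic phrasing: by \cite[Example 4.4.3]{heyer_6-functor_2024}, dualizable $=$ $\operatorname{id}$-suave $=$ $\operatorname{id}$-prim, and $\operatorname{id}$-suavity of $1_{X_{\mathrm{str}}}$ translates (via the six-functor formalism of \cite[Theorem 4.67]{soor_six-functor_2024}) into a base-change/projection-formula condition that I can test against $f: X_{\mathrm{str}} \to *$. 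Concretely: transport the question through the monadic equivalence to $\operatorname{RMod}_{\wideparen{\mathcal{D}}_X(X)}D(\mathsf{CBorn}_K)$, where the monoidal structure is the relative tensor $\widehat{\otimes}_{\wideparen{\mathcal{D}}_X(X)}$ and the unit is $\wideparen{\mathcal{D}}_X(X)$ itself as a right module over itself; a right module is dualizable for this monoidal structure iff it is a compact object (a perfect complex over $\wideparen{\mathcal{D}}_X(X)$ in the appropriate bornological sense), because the monoidal structure is generated by the unit. Thus the statement reduces to: \emph{$\wideparen{\mathcal{D}}_X(X) = K\langle x\rangle \widehat{\otimes}_K K\langle p^\infty\partial\rangle$ is not a compact (perfect) object of $\operatorname{RMod}_{\wideparen{\mathcal{D}}_X(X)}D(\mathsf{CBorn}_K)$.}

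Here is where the functional analysis does the work. The unit $\wideparen{\mathcal{D}}_X(X)$ is always ``internally'' perfect over itself, so the failure of dualizability must come from the ambient $D(\mathsf{CBorn}_K)$-linear structure: the point is that $\wideparen{\mathcal{D}}_X(X)$ is a Fréchet space which is \emph{not} a compact object of $D(\mathsf{CBorn}_K)$ (it is an infinite countable limit $\operatorname{lim}_n \mathcal{D}^n_X(X)$ of Banach algebras and is conuclear/Fréchet, hence manifestly not a retract of a finite colimit of the generators $c_0(S)$, by Proposition \ref{prop:Univprops1}(vii)). Using the explicit description $\mathcal{D}^\infty_X(-) \simeq (-)\widehat{\otimes}_K^\mathbf{L} K\langle p^\infty\partial\rangle$ from Proposition \ref{prop:PNFpreserve} together with \eqref{eq:underlyingendofunctor}, I would show that if $p_{X,!}1_X$ were $\operatorname{id}$-suave in $\operatorname{Strat}(X)$, then applying $p_X^!$ (which is conservative, by monadicity) and using base-change $\mathcal{D}^\infty_X \simeq \widetilde{\pi}_{2,*}\widetilde{\pi}_1^!$ would force $K\langle p^\infty\partial\rangle \cong \operatorname{lim}_n K\langle p^n\partial\rangle$ — a genuine Fréchet space, not Banach, with $\mathfrak{S}$ having no terminal object — to be a dualizable, hence compact, object of $D(\mathsf{CBorn}_K)$; this contradicts the Example after Proposition \ref{prop:Univprops1} (a Banach space is compact iff $\mathfrak{S}(V)$ has a terminal object, and $K\langle p^\infty\partial\rangle$ is not even Banach).

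\textbf{Main obstacle.} The delicate point is justifying the implication ``$p_{X,!}1_X$ dualizable/$\operatorname{id}$-suave in $\operatorname{Strat}(X)$ $\Rightarrow$ $K\langle p^\infty\partial\rangle$ compact in $D(\mathsf{CBorn}_K)$'' cleanly, rather than by brute force. The cleanest route is: dualizability of the unit is equivalent to the internal-Hom functor $R\underline{\operatorname{Hom}}_{X_{\mathrm{str}}}(p_{X,!}1_X, -)$ commuting with all colimits \emph{and} the resulting "dual" being well-behaved under $\widehat{\otimes}$; but one can instead argue that dualizability of $p_{X,!}1_X$ would by \cite[Example 4.4.3, \S4.4]{heyer_6-functor_2024} make $f_! = f_*$ on $p_{X,!}1_X$ and force a projection formula which, pulled back via $p_X$ and rewritten through \eqref{eq:underlyingendofunctor} and Proposition \ref{prop:algebraiso}, says exactly that tensoring with $K\langle p^\infty\partial\rangle$ over $K$ commutes with products (equivalently, $K\langle p^\infty\partial\rangle$ is flat and compact). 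The flatness holds (\cite[Corollary 5.36]{bode_six_2021}) but compactness fails. I expect the bookkeeping of which base-change/projection isomorphism precisely encodes $\operatorname{id}$-suavity — and verifying it survives the passage $\operatorname{Strat}(X) \to \operatorname{RMod}_{\wideparen{\mathcal{D}}_X(X)}D(\mathsf{CBorn}_K)$ — to be the technical heart; the final contradiction (a Fréchet space is not a compact object) is then immediate from the reminders in \S\ref{sec:quasiabelian}.
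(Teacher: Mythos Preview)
Your proposal contains a fundamental misidentification that derails the argument. The image of $\wideparen{\mathcal{D}}_X(X)$ under $D_\mathcal{C}(X)\hookrightarrow\operatorname{Strat}(X)$ is indeed $p_{X,!}1_X\simeq p_{X,*}1_X$, but this object is \emph{not} the monoidal unit of $\operatorname{Strat}(X)=\operatorname{QCoh}(X_{\mathrm{str}})$. The unit $1_{X_{\mathrm{str}}}$ satisfies $p_X^*1_{X_{\mathrm{str}}}\simeq 1_X=A$, whereas $p_X^*p_{X,*}1_X\simeq (A\widehat{\otimes}_KA)^\dagger_\Delta\simeq A\widehat{\otimes}_K K\langle t/p^\infty\rangle$, which is not $A$. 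You yourself notice that the unit is always dualizable and that this ``gives no contradiction directly'', but rather than recognising this as a signal that the identification is wrong, you pivot to an argument about $\operatorname{id}$-suavity and compactness that cannot succeed: the unit of any symmetric monoidal $\infty$-category is dualizable, full stop. The subsequent attempt to use a monoidal structure $\widehat{\otimes}_{\wideparen{\mathcal{D}}_X(X)}$ on $\operatorname{RMod}_{\wideparen{\mathcal{D}}_X(X)}D(\mathsf{CBorn}_K)$ is also ill-posed, since $\wideparen{\mathcal{D}}_X(X)$ is noncommutative and there is no such symmetric monoidal structure on its right modules.

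The paper's approach avoids all of this by a direct and elementary manoeuvre. One assumes $p_*1_X$ is dualizable in $\operatorname{Strat}(X)$; since $p^*$ is symmetric monoidal it preserves dualizable objects, so $p^*p_*1_X\simeq A\widehat{\otimes}_K K\langle t/p^\infty\rangle$ would be dualizable in $\operatorname{QCoh}(X)$. Dualizability of this object would force the canonical map
\[
R\underline{\operatorname{Hom}}_K(K\langle u/p^\infty\rangle,A)\widehat{\otimes}^\mathbf{L}_K K\langle t/p^\infty\rangle\;\longrightarrow\;R\underline{\operatorname{Hom}}_K(K\langle u/p^\infty\rangle,A\widehat{\otimes}_K K\langle t/p^\infty\rangle)
\]
to be an equivalence. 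Taking $H^0$ reduces this to asking whether a certain $\operatorname{colim}_m\operatorname{lim}_n$ agrees with the corresponding $\operatorname{lim}_n\operatorname{colim}_m$, and one exhibits an explicit power series (e.g.\ $\sum_{k,l}s^kt^l$) lying in the latter but not the former. The crux is therefore not compactness of $K\langle p^\infty\partial\rangle$ in $D(\mathsf{CBorn}_K)$, but rather the failure of $K\langle t/p^\infty\rangle$ (a genuine LB-space) to be dualizable in $\operatorname{QCoh}(X)$, witnessed by a concrete limit--colimit exchange obstruction.
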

\begin{proof}
Set $A := K \langle x \rangle$. Recall the canonical morphism $p: X \to X_{\mathrm{str}}$. The image of $\wideparen{\mathcal{D}}_X(X)$ under the functor $D_\mathcal{C}(X) \to \operatorname{Strat}(X)$ is the object $p_*1_X \in \operatorname{Strat}(X) = \operatorname{QCoh}(X_{\mathrm{str}})$. Suppose for a contradiction that this object is dualizable. The functor $p^*$ is symmetric monoidal, hence it preserves dualizable objects. So 
\begin{equation}
    p^*p_*1_X  \simeq (A \widehat{\otimes}_K A)^\dagger_\Delta \simeq A \widehat{\otimes}_K K \langle t/p^\infty \rangle 
\end{equation}
would be a dualizable object of\footnote{Here $A \widehat{\otimes}_K K \langle t/p^\infty \rangle$ is viewed as an $A$-module by the action on $A$ only.} $\operatorname{QCoh}(X)$. We claim it isn't: after simplification using associativity of $\widehat{\otimes}^\mathbf{L}$, this reduces to showing that the canonical morphism 
\begin{equation}
R\underline{\operatorname{Hom}}_K(K\langle u/p^\infty \rangle, A) \widehat{\otimes}_K^\mathbf{L} K \langle t/p^\infty \rangle  \to R\underline{\operatorname{Hom}}_K(K \langle u/p^\infty \rangle , A \widehat{\otimes}_K K \langle t /p^\infty \rangle),
\end{equation}
is not an equivalence. Taking zeroth cohomology\footnote{In fact one can show that both sides are concentrated in degree $0$, but this is not necessary for the argument.}, and then arguing using cofinality, it is sufficient to show that
\begin{equation}\label{eq:sufficienttoshow}
    \underset{m}{\operatorname{colim}} \, \underset{n}{\operatorname{lim}}  A \widehat{\otimes}_KK \langle p^n s, t/p^m\rangle \to \underset{n}{\operatorname{lim}} \, \underset{m}{\operatorname{colim}} A \widehat{\otimes}_KK \langle p^n s, t/p^m\rangle,
\end{equation}
is not an equivalence; here $s$ is dual to $u$. We can explicitly describe both sides of \eqref{eq:sufficienttoshow}. The left side is:
\begin{equation}\label{eq:seriesleft}
 \left\{\sum_{k,l} c_{kl} s^k t^l: c_{kl} \in A, \exists m \forall n \ \|c_{kl}\| p^{nk-ml} \to 0 \text{ as }k,l \to \infty\right\}, 
\end{equation}
whereas the right side has the order of quantifiers reversed:  
\begin{equation}\label{eq:seriesright}
     \left\{\sum_{k,l} c_{kl} s^k t^l: c_{kl} \in A, \forall n \exists m  \ \|c_{kl}\| p^{nk-ml} \to 0 \text{ as }k,l \to \infty\right\}, 
\end{equation}
and so we can exhibit an element which belongs to \eqref{eq:seriesright} but not \eqref{eq:seriesleft}, for instance $\sum_{k,l} s^k t^l$. 
\end{proof}
Now we prove that the essential image of $D_\mathcal{C}(X) \hookrightarrow \operatorname{Strat}(X)$ is not contained in the $f$-prim objects. The proof is an instance of the ansatz ``compact + nuclear = dualizable", together with the relation between $f$-prim and compact objects \cite[Lemma 4.4.18]{heyer_6-functor_2024}.
\begin{prop}\label{prop:notdualizable}
Let $X = \operatorname{dSp}(K \langle x \rangle)$ be the closed unit disk. Then the image of $\wideparen{\mathcal{D}}_X(X)$ under the functor $D_\mathcal{C}(X) \to \operatorname{Strat}(X)$ is not an $f$-prim object.
\end{prop}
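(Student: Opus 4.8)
The plan is to argue by contradiction: assume that $M := p_!1_X \simeq p_*1_X$ --- the image of $\wideparen{\mathcal{D}}_X(X)$ in $\operatorname{Strat}(X) = \operatorname{QCoh}(X_{\mathrm{str}})$, which under the monadic equivalence of Theorem \ref{thm:StratMonadicity} is $\mathcal{D}^\infty_X1_X \simeq \wideparen{\mathcal{D}}_X(X)$ (Proposition \ref{prop:algebraiso}) equipped with its tautological free right module structure --- is $f$-prim, and deduce that it is dualizable, contradicting the preceding Proposition. The first step is to turn priminess into compactness. Since $\operatorname{QCoh}(*) = D(\mathsf{CBorn}_K)$ has a compact unit ($1_* = K$ is the image of the compact projective generator $c_0(\{\ast\})$, cf. Proposition \ref{prop:CbornKproperties}), an application of \cite[Lemma 4.4.18]{heyer_6-functor_2024} to the $!$-able morphism $f : X_{\mathrm{str}} \to *$ shows that every $f$-prim object of $\operatorname{Strat}(X)$ is compact; in particular $M$ would be a compact object of $\operatorname{Strat}(X)$.

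The second step is to show that $M$ is a \emph{nuclear} object of $\operatorname{Strat}(X)$. Here the input is functional-analytic: by Proposition \ref{prop:algebraiso} and Lemma \ref{lem:frechetiso} the underlying object of $M$ in $D(\mathsf{CBorn}_K)$ is
$$ M \;\simeq\; A \widehat{\otimes}_K K\langle p^\infty\partial\rangle \;\simeq\; \operatorname{lim}_{n}\bigl(A \widehat{\otimes}_K K\langle p^n\partial\rangle\bigr), $$
a countable limit of Banach spaces whose transition maps are nuclear; indeed the elements of $K\langle p^\infty\partial\rangle$ are rapidly-decreasing series (cf. the proof of Lemma \ref{lem:frechetiso}), so $\wideparen{\mathcal{D}}_X(X)$, and hence $M$ (being free of rank one over it), is a nuclear Fréchet space. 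I would then upgrade this to nuclearity of $M$ \emph{as an object of} $(\operatorname{Strat}(X), \widehat{\otimes}_{X_{\mathrm{str}}})$, for instance by exhibiting $M$ as a filtered colimit in $\operatorname{Strat}(X)$ of the compact pieces built from the $A\widehat{\otimes}_K K\langle p^n\partial\rangle$ along the images of the above trace-class maps, thereby realising it as a colimit of trace-class morphisms between compact objects.

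Combining the two steps, one invokes the ``compact $+$ nuclear $=$ dualizable'' principle: an object of a stable closed symmetric monoidal $\infty$-category that is simultaneously compact and nuclear is dualizable (a nuclear object is a colimit of trace-class maps, and when it is moreover compact the identity factors through one of these, forcing the evaluation map to split). Applied in $\operatorname{Strat}(X)$ to $M$, this yields that $M$ is dualizable, contradicting the preceding Proposition. The step I expect to be the main obstacle is the second one: establishing nuclearity of $M$ \emph{inside} $\operatorname{Strat}(X)$ and not merely in $D(\mathsf{CBorn}_K)$, since the relevant monoidal structure on $\operatorname{Strat}(X)$ is the convolution structure $\widehat{\otimes}_{X_{\mathrm{str}}}$ and one must check that the colimit-of-trace-class presentation is compatible with the passage through $p^!p_!$; a secondary point is to verify the hypotheses of \cite[Lemma 4.4.18]{heyer_6-functor_2024}, which amount to the compactness of $1_*$ and the standing assumption $f \in \widetilde{E}$.
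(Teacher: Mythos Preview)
Your first step---using compactness of $1_* \in D(\mathsf{CBorn}_K)$ together with \cite[Lemma 4.4.18]{heyer_6-functor_2024} to conclude that $f$-prim implies compact in $\operatorname{Strat}(X)$---is exactly what the paper does. The overall strategy ``compact $+$ nuclear $=$ dualizable, hence contradiction with the previous Proposition'' is also the same idea.

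However, there is a real gap in your second step. You want to show that $M = p_*1_X$ is nuclear \emph{in $\operatorname{Strat}(X)$}, but the argument you sketch does not achieve this. The presentation $\wideparen{\mathcal{D}}_X(X) \simeq \operatorname{lim}_n \mathcal{D}^n_X(X)$ is a \emph{limit}, not a colimit, so it does not directly exhibit $M$ as a filtered colimit of trace-class maps. The individual pieces $A \widehat{\otimes}_K K\langle p^n\partial\rangle = \mathcal{D}^n_X(X)$ are not naturally objects of $\operatorname{Strat}(X)$ (they are not $\wideparen{\mathcal{D}}_X(X)$-modules in any obvious sense that would land them there), and even if you forced them in, there is no reason to expect them to be compact in $\operatorname{Strat}(X)$. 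You correctly flag this as the main obstacle, but you do not overcome it.

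The paper sidesteps this difficulty entirely. Rather than establishing nuclearity inside $\operatorname{Strat}(X)$, it \emph{transports the compactness down to $\operatorname{QCoh}(X)$}: since $p_! \simeq p_*$ preserves colimits, the left adjoint $p^*$ preserves compact objects, so $p^*p_*1_X \simeq (A \widehat{\otimes}_K A)^\dagger_\Delta \simeq A \widehat{\otimes}_K K\langle t/p^\infty\rangle$ would be compact in $\operatorname{QCoh}(X)$. Now the conuclearity of $K\langle t/p^\infty\rangle$ (via Lemma~\ref{lem:frechetiso}) gives the tensor--Hom comparison on Banach test objects, and compactness extends it to all of $\operatorname{QCoh}(X)$ by passing to filtered colimits; this forces $A \widehat{\otimes}_K K\langle t/p^\infty\rangle$ to be dualizable in $\operatorname{QCoh}(X)$, contradicting the previous Proposition. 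The moral: the ``nuclear'' input is readily available in $\operatorname{QCoh}(X)$ but not obviously in $\operatorname{Strat}(X)$, so the clean move is to push compactness down via $p^*$ rather than lift nuclearity up.
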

\begin{proof}
The unit object in $\operatorname{QCoh}(*) = D(\mathsf{CBorn}_K)$ is compact. Hence by \cite[Lemma 4.4.18]{heyer_6-functor_2024} every $f$-prim object in $\operatorname{Strat}(X)$ is compact. 

We recall again that the image of $\wideparen{\mathcal{D}}_X(X)$ under the functor $D_\mathcal{C}(X) \to \operatorname{Strat}(X)$ is the object $p_*1_X \in \operatorname{Strat}(X)$. Suppose that $p_*1_X$ is compact. Because $p_! \simeq p_*$, the functor $p_*$ preserves all colimits, and this formally implies that the left adjoint $p^*$ preserves compact objects. So $p^*p_*1_X \simeq (A \widehat{\otimes}_KA)^\dagger_\Delta \simeq A \widehat{\otimes}_K K \langle t /p^\infty \rangle$ would be a compact object of $\operatorname{QCoh}(X)$. We claim it isn't.

We know from, for instance, Lemma \ref{lem:frechetiso}, that if $V$ is a $K$-Banach space then the canonical morphism 
\begin{equation}
R\underline{\operatorname{Hom}}_A(A \widehat{\otimes}_KK\langle t/p^\infty \rangle, A) \widehat{\otimes}_A^\mathbf{L} (A \widehat{\otimes}_KV)  \to R\underline{\operatorname{Hom}}_A(A \widehat{\otimes}_KK \langle t/p^\infty \rangle , A \widehat{\otimes}_KV),
\end{equation}
is an equivalence. Now $R \underline{\operatorname{Hom}}_A(A \widehat{\otimes}_K K \langle t /p^\infty \rangle, -)$ commutes with finite colimits (since we are working with stable $\infty$-categories), and if $A \widehat{\otimes}_K K \langle t /p^\infty\rangle$ were compact, the functor $R \underline{\operatorname{Hom}}_A(A \widehat{\otimes}_K K \langle t /p^\infty \rangle, -)$ would also commute with filtered colimits by Proposition \ref{prop:RHomfilteredColimits}(iii). Hence using the compact generation and the explicit description of compact objects as finite colimits of Banach spaces (c.f. Proposition \ref{prop:compactgenprops}(vii) and Proposition \ref{prop:CbornKproperties}(ii)), we see that, if $A \widehat{\otimes}_K K \langle t /p^\infty\rangle$ were compact then the canonical morphism
\begin{equation}
R\underline{\operatorname{Hom}}_A(A \widehat{\otimes}_KK\langle t/p^\infty \rangle, A) \widehat{\otimes}_A^\mathbf{L} M^\bullet  \to R\underline{\operatorname{Hom}}_A(A \widehat{\otimes}_KK \langle t/p^\infty \rangle , M^\bullet),
\end{equation}
would be an equivalence for every $M^\bullet \in \operatorname{QCoh}(X)$, which is to say that $A \widehat{\otimes}_K K \langle t /p^\infty \rangle$ is dualizable. We already proved that this is false in Proposition \ref{prop:notdualizable}. 
\end{proof}
Let $X$ be a smooth rigid space. One can consider the full subcategory \begin{equation}
\operatorname{Perf}(\wideparen{\mathcal{D}}_X(X)) \subseteq D_\mathcal{C}(X)
\end{equation}
generated under finite (co)limits, shifts and retracts by the object $\wideparen{\mathcal{D}}_X(X)$. We give a sketch argument for the below Proposition. It is possibly an indication that $f$-suave objects are the most likely characterization of $\mathcal{C}$-complexes. 
\begin{prop}\label{prop:suavesketch}
Let $X = \mathring{\mathbf{D}}_K^n$ be the rigid open unit polydisk. The essential image of $\operatorname{Perf}(\wideparen{\mathcal{D}}_X(X)) \subseteq D_\mathcal{C}(X)$ under the functor $D_\mathcal{C}(X) \hookrightarrow \operatorname{Strat}(X)$ is contained in the $f$-suave objects. 
\end{prop}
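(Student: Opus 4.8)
The plan is to reduce, first, to a statement about a single object, and then to verify it by unwinding the definition of suaveness through the explicit description of $X_{\mathrm{str}}$ available on the open polydisk. Recall that the fully-faithful functor $D_\mathcal{C}(X) \hookrightarrow \operatorname{Strat}(X)$ is exact and, as in the proof of the preceding Proposition, sends $\wideparen{\mathcal{D}}_X(X)$ to $p_!1_X \simeq p_*1_X$. By \cite[\S 4.4]{heyer_6-functor_2024} the $f$-suave objects of $\operatorname{Strat}(X) = \operatorname{QCoh}(X_{\mathrm{str}})$ span a stable subcategory closed under retracts; since $\operatorname{Perf}(\wideparen{\mathcal{D}}_X(X))$ is by definition the smallest stable idempotent-complete subcategory of $D_\mathcal{C}(X)$ containing $\wideparen{\mathcal{D}}_X(X)$, it suffices to prove that $p_!1_X$ is $f$-suave.

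Write $h := f \circ p \colon X \to *$ for the structure morphism of the open polydisk. Since $X = \mathring{\mathbf{D}}^n_K$ is smooth, $h$ is cohomologically smooth, so $1_X$ is $h$-suave; and (recall from the discussion preceding the definition of $\mathcal{D}^\infty_X$) the morphism $p$ is cohomologically proper, in the sense that the canonical map $p_! \to p_*$ is an equivalence. One would like to conclude directly that $p_!1_X = p_*1_X$ is $f$-suave via a ``proper pushforward preserves suave objects'' statement as in \cite[\S 4.4]{heyer_6-functor_2024}. The subtle point — and the reason this is only a sketch — is that $p$ is not proper in a naive geometric sense: its ``fibre'' is the germ $(X \subseteq X\times X)^\dagger$, a $\dagger$-thickening of the diagonal rather than an honest proper space, so the standard statement does not literally apply. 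Instead, one verifies the base-change criterion for $f$-suaveness of $p_!1_X$ by hand.

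For this, using that $(-)_{\mathrm{str}}$ commutes with finite products one has $X_{\mathrm{str}}\times X_{\mathrm{str}} \simeq (X\times X)_{\mathrm{str}}$; combining the Cartesian square defining $X_{\mathrm{str}}$ with base change in the six-functor formalism of \cite[Theorem 4.67]{soor_six-functor_2024}, the criterion for $p_!1_X$ to be $f$-suave unwinds into the assertion that a certain comparison map built from $\widetilde{\pi}_{2,*}\widetilde{\pi}_1^!$ — that is, from the monad $\mathcal{D}^\infty$ on $X \times X$ — is an equivalence on the (Fr\'echet) object $p_!1_X|_{X}$. By Theorem \ref{thm:ExpTX} the germ is $\operatorname{dSp}(A\widehat{\otimes}_K K\langle dx/p^\infty\rangle)$ with $A = \mathcal{O}(X)$ and trivial tangent bundle, and by Lemma \ref{lem:frechetiso} together with the strong flatness of $K\langle p^\infty\partial\rangle$ this comparison map is identified with the natural transformation
\begin{equation}
    (-)\,\widehat{\otimes}_X\,\mathcal{D}^\infty_X1_X \longrightarrow \mathcal{D}^\infty_X(-)
\end{equation}
of Lemma \ref{lem:naturaltransformlema}, evaluated on a stably Fr\'echet complex (Corollary \ref{cor:ccomplexsFr}). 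That this is an equivalence is precisely Theorem \ref{thm:PNFtensor}.

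The main obstacle is thus two-fold. First, one must keep track of the base-change coherences needed to express the $f$-suaveness criterion for $p_!1_X$ in terms of $\mathcal{D}^\infty$, much as in the proof of Theorem \ref{thm:CcomplexFFaffine}. Second, there is the functional-analytic input that on $X = \mathring{\mathbf{D}}^n_K$ the relevant internal Hom-complexes commute with the filtered colimit defining $K\langle dx/p^\infty\rangle$ and the countable limit defining $K\langle p^\infty\partial\rangle$, which is where the hypotheses that $X$ be Stein with trivial tangent bundle enter (via Theorem \ref{thm:PNFtensor}). This last point is also why the argument applies only to $\operatorname{Perf}(\wideparen{\mathcal{D}}_X(X))$ and not to all of $D_\mathcal{C}(X)$: the class of $f$-suave objects is not closed under the countable limits one would need in order to build a general $\mathcal{C}$-complex out of perfect complexes.
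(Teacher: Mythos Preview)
Your reduction to showing that $p_!1_X$ is $f$-suave is correct and matches the paper exactly, as does the identification of $h$-suaveness of $1_X$ (for $h = f\circ p$) as the key analytic input. But you then take an unnecessary and unjustified detour.

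The misstep is the sentence ``the standard statement does not literally apply.'' In the Heyer--Mann framework the relevant hypothesis is not geometric properness but \emph{primness}, and the equivalence $p_! \xrightarrow{\sim} p_*$ is precisely what makes $p$ a prim morphism (indeed, with prim dual $1_X$). The paper's argument is simply: $g\colon X \to *$ is suave (expected, by analogy with the dagger case of \cite{soor_quasicoherent_2023}); $p\colon X \to X_{\mathrm{str}}$ is prim; hence by \cite[Lemma 4.5.16(ii)]{heyer_6-functor_2024} the functor $p_!$ carries $g$-suave objects to $f$-suave objects, so $p_!1_X$ is $f$-suave. No direct unwinding of the suaveness criterion is needed.

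Your alternative route --- verifying the suaveness criterion for $p_!1_X$ by hand and identifying it with the natural transformation of Lemma~\ref{lem:naturaltransformlema} --- is not substantiated as written. The $f$-suaveness condition is a statement about a specific comparison map in $\operatorname{QCoh}(X_{\mathrm{str}}\times X_{\mathrm{str}})$, whereas Theorem~\ref{thm:PNFtensor} concerns the monad $\mathcal{D}^\infty_X$ acting on $\operatorname{QCoh}(X)$; you have not explained how pulling the former back along $X\times X \to (X\times X)_{\mathrm{str}}$ and then restricting yields the latter, nor why that pullback is conservative for the question at hand. The phrase ``a certain comparison map built from $\widetilde{\pi}_{2,*}\widetilde{\pi}_1^!$'' hides exactly the step that would need to be checked. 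It may be that a direct argument along these lines can be made to work, but it is genuinely more work than the prim-pushforward argument, not a way around a difficulty.
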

\begin{proof}[Proof sketch] We expect that the morphism $g: X = \mathring{\mathbf{D}}_K^n \to *$ is \emph{suave} in the sense of \cite[\S4.5]{heyer_6-functor_2024}. All this means is that the unit object $1_X \in \operatorname{QCoh}(X)$ is $g$-suave. The proof of this should be essentially the same as in the dagger analytic setting, proved in \cite{soor_quasicoherent_2023} and details will be given in a future work. Now the morphism $p: X \to X_{\mathrm{str}}$ is \emph{prim} (this is essentially the content of the equivalence $p_! \simeq p_*$) and hence by \cite[Lemma 4.5.16(ii)]{heyer_6-functor_2024} suave objects are stable under the functor $p_!$. So $p_!1_X$ is $f$-suave, and this is the image of $\wideparen{\mathcal{D}}_X(X)$ under the functor $D_\mathcal{C}(X) \hookrightarrow \operatorname{Strat}(X)$. Since suave objects are stable under finite (co)limits and retracts \cite[Corollary 4.4.13]{heyer_6-functor_2024}, and the functor $D_\mathcal{C}(X) \hookrightarrow \operatorname{Strat}(X)$ is compatible with finite (co)limits and retracts, we conclude. 
\end{proof}
\appendix
\section{A way to produce monoids from monads}\label{sec:monoidsfromMonads}
For an operad $\mathscr{O}$ we let $\mathsf{CatMon}^\mathrm{lax}_{\mathscr{O}}$ denote the $(\infty,2)$-category of $\mathscr{O}$-monoidal categories with lax $\mathscr{O}$-linear functors \cite[Definition 3.4.1]{HausgengLax} and $\mathsf{CatMon}^\mathrm{oplax}_{\mathscr{O}}$ denote the $(\infty,2)$-category of $\mathscr{O}$-monoidal categories with oplax $\mathscr{O}$-linear functors \cite[Definition 3.4.3]{HausgengLax}.

Let $\mathsf{Assoc}$ denote the associative operad \cite[Definition 4.1.1.3]{HigherAlgebra} and let $\mathsf{LM}$ denote the operad of \cite[Notation 4.2.1.6]{HigherAlgebra}. Let $\mathscr{V}$ be a monoidal $(\infty,1)$-category which we identify with the object $\mathscr{V} \to \mathsf{Assoc}$ of $\mathsf{CatMon}^\mathrm{lax}_{\mathsf{Assoc}}$.  By abuse of notation we also identify $\mathscr{V}$ with the object $\mathscr{V}^\mathsf{op
} \to \mathsf{Assoc}^\mathsf{op}$ of $\mathsf{CatMon}^\mathrm{oplax}_{\mathsf{Assoc}}$.
\begin{defn}
\begin{enumerate}[(i)]
    \item The $(\infty,2)$-category of $\mathscr{V}$-linear categories with \emph{lax} $\mathscr{V}$-linear functors is defined to be 
\begin{equation}
\operatorname{LMod}_\mathscr{V}^\mathrm{lax} := \{\mathscr{V}\} \times_{\mathsf{CatMon}^\mathrm{lax}_{\mathsf{Assoc}}} \mathsf{CatMon}^\mathrm{lax}_{\mathsf{LM}}.
\end{equation}
    \item The $(\infty,2)$-category of $\mathscr{V}$-linear categories with \emph{oplax} $\mathscr{V}$-linear functors is defined to be 
\begin{equation}
\operatorname{LMod}_\mathscr{V}^\mathrm{oplax} := \{\mathscr{V}\} \times_{\mathsf{CatMon}^\mathrm{oplax}_{\mathsf{Assoc}}} \mathsf{CatMon}^\mathrm{oplax}_{\mathsf{LM}}.
\end{equation}
\end{enumerate}
\end{defn}
Here is an intuitive description. A \emph{$\mathscr{V}$-linear $\infty$-category} is an $\infty$-category $\mathscr{C}$ equipped with the data of a functor $ \otimes : \mathscr{V} \times \mathscr{C} \to \mathscr{C}$ which is unital and associative up to coherent homotopy. A \emph{lax $\mathscr{V}$-linear functor} $\mathscr{C} \to \mathscr{D}$ of $\mathscr{V}$-linear $\infty$-categories $\mathscr{C}, \mathscr{D}$ is a functor $F: \mathscr{C} \to \mathscr{D}$ equipped with the data of morphisms $V\otimes F(M) \to F(V \otimes M)$, for all $V \in \mathscr{V}, M \in \mathscr{C}$, which is unital and associative up to coherent homotopy. Oplax linear functors are defined similarly but with arrows reversed.
\begin{thm}\cite[Theorem C]{HausgengLax}\label{HaugsengLaxTheoremC}
Let $\operatorname{LMod}_\mathscr{V}^{R,\mathrm{lax}}$  denote the $1$-full $2$-subcategory of $\operatorname{LMod}_\mathscr{V}^{\mathrm{lax}}$ spanned by those functors which are objectwise right adjoints. Let $\operatorname{LMod}_\mathscr{V}^{L,\mathrm{oplax}}$  denote the $1$-full subcategory of $\operatorname{LMod}_\mathscr{V}^{\mathrm{oplax}}$ spanned by those functors which are objectwise left adjoints. Then there is an equivalence of $(\infty,2)$-categories 
\begin{equation}
\operatorname{LMod}_\mathscr{V}^{R,\mathrm{lax}} \simeq \big(\operatorname{LMod}_\mathscr{V}^{L,\mathrm{oplax}}\big)^{(1,2)-\mathsf{op}},
\end{equation}
obtained by passing to adjoints objectwise (in both directions). Here  $(1,2)-\mathsf{op}$ indicates that the direction of $1$- and $2$-morphisms are reversed. 
\end{thm}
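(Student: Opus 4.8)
\emph{Proof proposal.} The plan is to deduce the statement from the ``absolute'' $(\infty,2)$-categorical calculus of mates, and then to carry the $\mathscr{V}$-linear structure along it. Recall that for any $(\infty,2)$-category $\mathcal{K}$ there is a canonical equivalence $\mathcal{K}^{R}\simeq(\mathcal{K}^{L})^{(1,2)-\mathsf{op}}$, where $\mathcal{K}^{R}$ (resp.\ $\mathcal{K}^{L}$) is the $1$-full sub-$(\infty,2)$-category spanned by the $1$-morphisms which are right (resp.\ left) adjoints; it is obtained by passing to adjoints, the $(1,2)-\mathsf{op}$ being forced because sending a $1$-morphism to its adjoint reverses the direction of $1$-morphisms, while the mate of a natural transformation between left adjoints is a natural transformation between the corresponding right adjoints pointing the other way. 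The content of the theorem is that the lax and oplax worlds for $\mathscr{V}$-modules are interchanged by exactly this operation: a lax $\mathscr{V}$-linear functor $F$ which is objectwise a right adjoint, say $G\dashv F$, has a left adjoint $G$ that acquires the \emph{mate} oplax $\mathscr{V}$-linear structure — the morphisms $G(V\otimes N)\to V\otimes G(N)$ obtained from the lax structure morphisms $V\otimes F(M)\to F(V\otimes M)$ by the standard zig-zag with the unit and counit — and conversely. This is the $\infty$-categorical incarnation of Kelly's doctrinal adjunction, and it is precisely what makes the left adjoint land in $\operatorname{LMod}_{\mathscr{V}}^{L,\mathrm{oplax}}$ rather than in $\operatorname{LMod}_{\mathscr{V}}^{\mathrm{lax}}$.

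The first concrete step is to construct the comparison functor. I would work with the fibrational model: present the $\mathscr{V}$-module structure on a category $\mathscr{C}$ — the $\mathsf{LM}$-algebra lying over the fixed $\mathsf{Assoc}$-algebra $\mathscr{V}$ — as a cocartesian fibration over a base $B\mathscr{V}$ built from $\mathscr{V}$, so that a lax $\mathscr{V}$-linear functor is a map of such fibrations preserving cocartesian lifts over the $\mathsf{Assoc}$-directions only, and an oplax one is the same statement for the associated cartesian fibrations over $(B\mathscr{V})^{\mathsf{op}}$. Under straightening, ``objectwise right adjoint'' becomes ``right adjoint relative to $B\mathscr{V}$'', and the parametrized adjoint transform — the naturality of mates for two-variable fibrations — sends this to an oplax map over $(B\mathscr{V})^{\mathsf{op}}$ which is a relative left adjoint; tracking how straightening acts on $2$-cells shows the mate construction is functorial at the $(\infty,2)$-level and produces a functor
\begin{equation*}
\Phi\colon\operatorname{LMod}_{\mathscr{V}}^{R,\mathrm{lax}}\longrightarrow\big(\operatorname{LMod}_{\mathscr{V}}^{L,\mathrm{oplax}}\big)^{(1,2)-\mathsf{op}},
\end{equation*}
together with a symmetrically defined functor $\Psi$ in the other direction.

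It then remains to check that $\Phi$ and $\Psi$ are mutually inverse equivalences. Essential surjectivity on objects is immediate, since both sides have the same objects — the $\mathscr{V}$-linear $\infty$-categories — on which $\Phi$ and $\Psi$ are the identity. On mapping $\infty$-categories, the required equivalence
\begin{equation*}
\operatorname{Map}_{\operatorname{LMod}_{\mathscr{V}}^{R,\mathrm{lax}}}(\mathscr{C},\mathscr{D})\;\xrightarrow{\ \sim\ }\;\operatorname{Map}_{\operatorname{LMod}_{\mathscr{V}}^{L,\mathrm{oplax}}}(\mathscr{D},\mathscr{C})^{\mathsf{op}}
\end{equation*}
reduces to the absolute mate equivalence applied to the fibrations over $B\mathscr{V}$ encoding $\mathscr{C}$ and $\mathscr{D}$, once one knows that a relative adjunction of (co)cartesian fibrations is detected fiberwise (Lurie's relative adjoint functor theorem, plus the Beck--Chevalley compatibility which is exactly what the lax, resp.\ oplax, structure records). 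The unit and counit witnessing $\Phi$ and $\Psi$ as mutually inverse are then the double-mate identities, which hold because mating twice returns the original datum.

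The main obstacle is this last coherence: upgrading the pointwise slogan ``mates of lax structures are oplax structures'' to an honest equivalence of $(\infty,2)$-categories, rather than a levelwise correspondence, which amounts to producing a coherent homotopy identifying the whole tower of higher-coherence data of the lax structure on $F$ with that of the oplax structure on $G$. This is where the two-variable-fibration calculus — equivalently, the universal property of the walking adjunction internal to the $(\infty,2)$-category of $\mathscr{V}$-modules — must do the real work. A secondary but genuine subtlety is the bookkeeping of the two independent ``op''s, one on $1$-morphisms coming from $F\mapsto G$ and one on $2$-morphisms coming from mates, and the verification that they assemble into the single $(1,2)-\mathsf{op}$ of the statement and not, say, an op on $1$-morphisms alone.
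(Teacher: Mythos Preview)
Your proposal attempts to reprove the theorem from first principles via the calculus of mates and fibrational models. The paper does something much more modest: it simply observes that the statement is a special case of the general result \cite[Theorem C]{HausgengLax}, applied with the operad $\mathscr{O}$ equal to $\mathsf{LM}$ and $\mathsf{Assoc}$, and then restricted to the fiber over the fixed monoidal category $\mathscr{V}$. In other words, the paper treats the theorem as a black-box citation plus a one-line specialization, whereas you are sketching the proof of the cited black box itself.

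Your outline is a reasonable description of how one would go about proving the general statement in \cite{HausgengLax}, and the ingredients you name --- the parametrized mate equivalence for (co)cartesian fibrations, detection of relative adjunctions fiberwise, and the double-mate identities --- are the right ones. But you correctly identify that the hard step is upgrading the pointwise mate correspondence to an equivalence of $(\infty,2)$-categories, and you do not carry this out; you only gesture at ``the two-variable-fibration calculus'' doing the work. That is precisely the content of the cited theorem, and it is nontrivial. So as a self-contained proof your proposal has a genuine gap at exactly the point where the substance lies. As a proof of the theorem \emph{as stated in this paper}, the correct move is the one the paper makes: invoke the general operadic result and pass to the fiber over $\mathscr{V}$.
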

\begin{proof}
This follows immediately from \cite[Theorem C]{HausgengLax}, taking the operad $\mathscr{O}$ in \emph{loc. cit.} to be $\mathsf{LM}$ and $\mathsf{Assoc}$ and then taking the fiber over $\mathscr{V}$\footnote{I am grateful to Shay Ben-Moshe for explaining this to me.}. 
\end{proof}
Intuitively, this may be explained as follows. Let $\mathscr{C}$ and $\mathscr{D}$ be $\mathscr{V}$-linear categories. Let $F : \mathscr{D} \leftrightarrows \mathscr{C} :G$ be an adjunction in which the left adjoint $F$ is oplax linear. Let $V \in \mathscr{V}, M \in \mathscr{C}$. The desired morphism $V \otimes G(M) \to G(V \otimes M)$ giving the lax linear structure on $G$ is adjoint to the composite
\begin{equation}
    F(V \otimes G(M)) \to V \otimes FG(M) \to V \otimes M,
\end{equation}
where the first is from the oplax linear structure on $F$ and the second is induced by the counit of $F \dashv G$. Similarly, a lax linear structure on $G$ determines an oplax linear structure on $F$. The Theorem says that these operations give a mutually inverse equivalence of $(\infty,2)$-categories.

\begin{lem}\label{lem:LaxLinV}
Let us view $\mathscr{V}$ as a $\mathscr{V}$-linear category. Then there is an adjunction of $\infty$-categories 
\begin{equation}
    \iota: \mathscr{V} \leftrightarrows \operatorname{Fun}^{\mathrm{lax}}_\mathscr{V}(\mathscr{V},\mathscr{V}) : \kappa
\end{equation}
in which the left adjoint $\iota$ sends $V \mapsto -\otimes V$ and the right adjoint $\kappa$ sends $F \mapsto F(1_\mathscr{V})$. (Note here that the superscript \emph{lax} stands for \emph{lax $\mathscr{V}$-linear} functors and \emph{not} lax monoidal functors).
\end{lem}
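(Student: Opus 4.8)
The plan is to construct the adjunction by hand from the canonical bimodule structure of $\mathscr{V}$ over itself, and then to reduce its verification to a single mapping-space computation. First I would build $\iota$. The monoidal $\infty$-category $\mathscr{V}$ is canonically a $(\mathscr{V},\mathscr{V})$-bimodule, and a $(\mathscr{V},\mathscr{V})$-bimodule structure on a left $\mathscr{V}$-module $\mathscr{M}$ is the same datum as an algebra morphism $\mathscr{V}^{\mathrm{rev}} \to \operatorname{End}(\mathscr{M})$ into the endomorphism algebra of $\mathscr{M}$ inside the $(\infty,2)$-category $\operatorname{LMod}^{\mathrm{lax}}_\mathscr{V}$ (a standard feature of bimodules, cf.\ \cite[\S4.3]{HigherAlgebra}). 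Applied to $\mathscr{M} = \mathscr{V}$, the right regular action yields a monoidal functor $\mathscr{V}^{\mathrm{rev}} \to \operatorname{End}(\mathscr{V}) = \operatorname{Fun}^{\mathrm{lax}}_\mathscr{V}(\mathscr{V},\mathscr{V})$ — in fact landing in the strong $\mathscr{V}$-linear endofunctors — and I take $\iota$ to be its underlying functor, so that $\iota(V) = (-\otimes V)$ with left-$\mathscr{V}$-linear structure given by the associativity constraint, and $\iota$ carries $f\colon V\to V'$ to the $\mathscr{V}$-linear transformation $(-\otimes f)$. Dually, $\kappa$ is evaluation at the unit, i.e.\ restriction along $\{1_\mathscr{V}\}\hookrightarrow\mathscr{V}$ followed by forgetting lax structures, $\kappa(F) = F(1_\mathscr{V})$; this is manifestly functorial.

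Next I would produce the unit and counit and reduce to a statement about mapping spaces. Define $\eta\colon\operatorname{id}_\mathscr{V}\to\kappa\iota$ pointwise by the unitor equivalence $\eta_V\colon V\xrightarrow{\sim}1_\mathscr{V}\otimes V=\kappa\iota(V)$; since it is an equivalence, $\iota$ will be fully faithful once the adjunction is established. Define $\varepsilon\colon\iota\kappa\to\operatorname{id}$ by giving $\varepsilon_F$ the component
\begin{equation*}
\varepsilon_{F,W}\colon W\otimes F(1_\mathscr{V})\longrightarrow F(W\otimes 1_\mathscr{V})\xrightarrow{\sim}F(W)
\end{equation*}
at $W\in\mathscr{V}$, coming from the lax $\mathscr{V}$-linear structure constraint of $F$ with its second argument specialised at $1_\mathscr{V}$; the associativity and unitality coherences of that structure upgrade $\varepsilon_F$ to a $\mathscr{V}$-linear natural transformation, and the lax-linearity of morphisms in $\operatorname{Fun}^{\mathrm{lax}}_\mathscr{V}(\mathscr{V},\mathscr{V})$ makes $\varepsilon$ natural in $F$. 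By \cite[Proposition 5.2.2.8]{HigherAlgebra} it then suffices to check that for all $V$ and $F$ the composite
\begin{equation*}
\operatorname{Map}(\iota V,F)\longrightarrow\operatorname{Map}\big(\kappa\iota V,\kappa F\big)\xrightarrow{\ \eta_V^{*}\ }\operatorname{Map}\big(V,F(1_\mathscr{V})\big)
\end{equation*}
is an equivalence.

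The content of this last step — and the only real obstacle — is that a lax $\mathscr{V}$-linear natural transformation $\alpha\colon(-\otimes V)\Rightarrow F$ is determined, up to a contractible space of choices, by its value $\alpha_{1_\mathscr{V}}\colon V\to F(1_\mathscr{V})$ at the unit: evaluating the lax-linearity constraint of $\alpha$ at $(W,1_\mathscr{V})$ identifies $\alpha_W$, up to the unitor, with $W\otimes V\xrightarrow{W\otimes\alpha_{1_\mathscr{V}}}W\otimes F(1_\mathscr{V})\to F(W\otimes 1_\mathscr{V})\xrightarrow{\sim}F(W)$, and conversely any choice of $\alpha_{1_\mathscr{V}}$ extends coherently. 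To make this precise at the $\infty$-level I would use the explicit model of \cite{HausgengLax} in which lax $\mathscr{V}$-linear functors and transformations are encoded by maps of $\infty$-operads over $\mathsf{LM}$ lying over the identity on the $\mathsf{Assoc}$-part, and show that the space of such maps restricting to prescribed data over the generating colour is computed by a mapping space in $\mathscr{V}$, via a cofinality (left Kan extension) argument against the diagram that reconstructs $\alpha$ from $\alpha_{1_\mathscr{V}}$ through the structure constraints, whose indexing category has weakly contractible nerve. Equivalently one may phrase the same computation as a direct verification of the triangle identities for $(\iota,\kappa,\eta,\varepsilon)$. In the intended applications $\mathscr{V}=D(\mathsf{CBorn}_K)$ is closed symmetric monoidal, which can be used to streamline parts of this bookkeeping, but the coherence analysis itself seems unavoidable; every other part of the argument is formal.
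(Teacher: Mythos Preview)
Your proposal is correct and shares the paper's core idea: define $\eta$ as the unitor $V \simeq 1_\mathscr{V} \otimes V$ and $\varepsilon_F$ as the lax structure map $(-)\otimes F(1_\mathscr{V}) \to F(-\otimes 1_\mathscr{V}) \simeq F(-)$. Where you diverge is in the verification. You frame the ``only real obstacle'' as a mapping-space computation requiring an operadic model and a cofinality argument, and only mention the triangle identities as an equivalent afterthought. The paper takes the opposite view: it simply checks the two zig-zag composites directly. The composite $\kappa \to \kappa\iota\kappa \to \kappa$ is, objectwise, $F(1_\mathscr{V}) \simeq 1_\mathscr{V} \otimes F(1_\mathscr{V}) \to F(1_\mathscr{V} \otimes 1_\mathscr{V}) \simeq F(1_\mathscr{V})$, which is an equivalence precisely by the \emph{unitality} axiom of a lax $\mathscr{V}$-linear functor; the other triangle is immediate. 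No operadic unwinding or Kan extension argument is needed, and the coherence analysis you anticipate as ``unavoidable'' is in fact avoided. So your route is sound but heavier than necessary; the shortcut is to recognise that unitality of the lax structure does all the work.
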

\begin{proof}
The unit morphism $\eta : \operatorname{id} \xrightarrow[]{\sim} \kappa \iota$ is the canonical equivalence $\operatorname{id} \simeq   1_\mathscr{V} \otimes\operatorname{id}$. The counit morphism $\varepsilon: \iota \kappa \to \operatorname{id}$ is deduced from the lax-linear structure $(-)\otimes F(1_\mathscr{V})  \to F(- \otimes 1_\mathscr{V}) \simeq F(-)$. We check the zig-zag identities. The composite $\kappa \to \kappa\iota\kappa \to \kappa$
identifies (objectwise) with $F(1_\mathscr{V}) \simeq 1_\mathscr{V} \otimes F(1_\mathscr{V}) \to F(1_\mathscr{V} \otimes 1_\mathscr{V}) \simeq F(1_\mathscr{V})$ which is an equivalence because lax $\mathscr{V}$-linear functors satisfy a unitality axiom. It is easy to see that the composite $\iota \to \iota\kappa \iota \to \iota$ is homotopic to the identity. 
\end{proof}
\begin{cor}
The functor $\kappa$ in Lemma \ref{lem:LaxLinV} acquires a canonical lax monoidal structure, for the composition monoidal structure on $\operatorname{Fun}^{\mathrm{lax}}_\mathscr{V}(\mathscr{V},\mathscr{V})$. Further, there is an induced adjunction on algebra objects. 
\end{cor}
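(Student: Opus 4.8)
The plan is to promote the adjunction $\iota \dashv \kappa$ of Lemma \ref{lem:LaxLinV} to a monoidal adjunction — with $\iota$ strong monoidal and $\kappa$ lax monoidal for the composition monoidal structure — and then invoke two pieces of standard formalism: the right adjoint of a strong monoidal functor is canonically lax monoidal (calculus of mates), and a monoidal adjunction is carried by $\operatorname{Alg}(-)$ to an adjunction of $\infty$-categories.

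First I would exhibit $\iota$ as a strong monoidal functor. Recall that $\operatorname{Fun}^{\mathrm{lax}}_\mathscr{V}(\mathscr{V},\mathscr{V})$ carries its composition monoidal structure as the endomorphism $\infty$-category $\operatorname{End}_{\operatorname{LMod}^{\mathrm{lax}}_\mathscr{V}}(\mathscr{V})$ of the object $\mathscr{V}$ (the regular left module) in the $(\infty,2)$-category $\operatorname{LMod}^{\mathrm{lax}}_\mathscr{V}$. I would factor $\iota$ as the composite
\[
\mathscr{V}^{\mathrm{rev}} \xrightarrow{\ R\ } \operatorname{Fun}^{\mathrm{str}}_\mathscr{V}(\mathscr{V},\mathscr{V}) \hookrightarrow \operatorname{Fun}^{\mathrm{lax}}_\mathscr{V}(\mathscr{V},\mathscr{V}),
\]
where $\operatorname{Fun}^{\mathrm{str}}_\mathscr{V}(\mathscr{V},\mathscr{V})$ denotes the strongly $\mathscr{V}$-linear endofunctors, $R$ is right multiplication $V \mapsto (-)\otimes V$, and the second arrow is the inclusion. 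Associativity of $\otimes$ makes $(-)\otimes V$ strongly left-$\mathscr{V}$-linear and supplies the coherences $\iota(V)\circ\iota(W)\simeq\iota(W\otimes V)$ and $\iota(1_\mathscr{V})\simeq\operatorname{id}_\mathscr{V}$, assembling $R$ into a strong monoidal functor — this is the self-enrichment/regular-representation datum of a monoidal $\infty$-category, i.e.\ the canonical morphism of monoidal $\infty$-categories $\mathscr{V}^{\mathrm{rev}} \to \operatorname{End}_{\operatorname{LMod}_\mathscr{V}}(\mathscr{V})$, which I would extract from \cite[\S 4.7]{HigherAlgebra}. The inclusion is monoidal because composites and identities of strong $\mathscr{V}$-linear functors are again strong. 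Unwinding this composite on objects recovers the functor $\iota$ of Lemma \ref{lem:LaxLinV}. (Here and below I am cavalier about whether the relevant structure on $\mathscr{V}$ is $\otimes$ or $\otimes^{\mathrm{rev}}$; this depends on the composition-order convention and is immaterial, since $\operatorname{Alg}(\mathscr{V}) \simeq \operatorname{Alg}(\mathscr{V}^{\mathrm{rev}})$.)

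Granting this, the calculus of mates — as developed in \cite{HausgengLax}, or deduced formally from Theorem \ref{HaugsengLaxTheoremC} — equips the right adjoint $\kappa$ with a canonical lax monoidal structure, whose structure maps are the mates of the invertible structure maps of $\iota$; concretely, under $\kappa(F) = F(1_\mathscr{V})$ they are the composites
\[
F(1_\mathscr{V}) \otimes G(1_\mathscr{V}) \longrightarrow F\bigl(G(1_\mathscr{V}) \otimes 1_\mathscr{V}\bigr) \xrightarrow{\ \sim\ } F\bigl(G(1_\mathscr{V})\bigr) = (F\circ G)(1_\mathscr{V})
\]
induced by the lax $\mathscr{V}$-linear structure of $F$, together with the unit $1_\mathscr{V} \xrightarrow{\sim} \operatorname{id}_\mathscr{V}(1_\mathscr{V})$. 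This proves the first assertion. For the second, $(\iota,\kappa)$ is now an adjunction internal to the $(\infty,2)$-category $\mathsf{CatMon}^{\mathrm{lax}}_{\mathsf{Assoc}}$ of monoidal $\infty$-categories and lax monoidal functors (the unit and counit being monoidal by the mate construction), so applying the $2$-functor $\operatorname{Alg}(-)$ yields the desired adjunction $\operatorname{Alg}(\iota): \operatorname{Alg}(\mathscr{V}) \leftrightarrows \operatorname{Alg}\bigl(\operatorname{Fun}^{\mathrm{lax}}_\mathscr{V}(\mathscr{V},\mathscr{V})\bigr) : \operatorname{Alg}(\kappa)$, the right-hand side being the $\infty$-category of lax $\mathscr{V}$-linear monads on $\mathscr{V}$; I would cite this last step from \cite{HigherAlgebra} or \cite{HausgengLax} rather than reprove it.

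The main obstacle is the first step: producing the strong monoidal structure on $\iota$ as genuinely coherent homotopical data — a map of $\mathsf{Assoc}$-monoidal $\infty$-categories — rather than merely the equivalences $\iota(V)\circ\iota(W)\simeq\iota(W\otimes V)$ on underlying objects. The cleanest route is the one indicated above, bootstrapping off the self-enrichment $\mathscr{V}^{\mathrm{rev}} \to \operatorname{End}_{\operatorname{LMod}_\mathscr{V}}(\mathscr{V})$ and the observation that this endomorphism object maps monoidally into $\operatorname{Fun}^{\mathrm{lax}}_\mathscr{V}(\mathscr{V},\mathscr{V})$; an alternative is to transport the monoidal structure along the mate-equivalence of Theorem \ref{HaugsengLaxTheoremC}, where on the oplax side $\iota$ corepresents $V \otimes (-)$ against $\mathscr{V}$ and the composition monoidal structure is more transparent. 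Once $\iota$ is monoidal, everything that follows is formal.
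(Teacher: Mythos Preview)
Your proposal is correct and follows the same approach as the paper: show that $\iota$ is strongly monoidal, then deduce the lax monoidal structure on $\kappa$ (and the induced adjunction on algebras) from the general fact that right adjoints of strong monoidal functors are canonically lax monoidal. The paper's proof simply asserts ``it is clear that $\iota$ is strongly monoidal'' and cites \cite[Corollary 7.3.2.7]{HigherAlgebra} or \cite[Theorem A]{HausgengLax}; you supply considerably more detail on how to make the monoidal structure on $\iota$ coherent, which is precisely the point the paper elides.
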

\begin{proof}
It is clear that $\iota$ is strongly monoidal. Therefore $\kappa$ acquires a canonical lax monoidal structure by \cite[Corollary 7.3.2.7]{HigherAlgebra} or \cite[Theorem A]{HausgengLax}. 
\end{proof}
This Lemma has an obvious oplax version.
\begin{lem}
There is an adjunction of $\infty$-categories 
\begin{equation}
\kappa^\prime:\operatorname{Fun}^{\mathrm{oplax}}_\mathscr{V}(\mathscr{V},\mathscr{V})\leftrightarrows  \mathscr{V}: \iota^\prime
\end{equation}
in which the right adjoint $\iota^\prime$ sends $V \mapsto - \otimes V$ and the left adjoint $\kappa^\prime$ sends $F \mapsto F(1_\mathscr{V})$. (Note again here that the superscript \emph{oplax} stands for \emph{oplax $\mathscr{V}$-linear} functors and \emph{not} oplax monoidal functors). The functor $\kappa^\prime$ acquires a canonical oplax monoidal structure, for the composition monoidal structure on $\operatorname{Fun}^{\mathrm{oplax}}_\mathscr{V}(\mathscr{V},\mathscr{V})$. Further, there is an induced adjunction on coalgebra objects.
\end{lem}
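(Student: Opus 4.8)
The plan is to dualize, essentially line by line, the proof of Lemma~\ref{lem:LaxLinV} and its Corollary, reversing the direction of all $2$-morphisms. First I would exhibit the unit and counit of the asserted adjunction $\kappa^\prime \dashv \iota^\prime$. If $F$ is an oplax $\mathscr{V}$-linear endofunctor of $\mathscr{V}$, its oplax structure supplies morphisms $F(V \otimes M) \to V \otimes F(M)$; specialising to $M = 1_\mathscr{V}$ and using the unitality constraint gives a natural transformation $\eta^\prime_F \colon F(-) \to (-) \otimes F(1_\mathscr{V}) = \iota^\prime\kappa^\prime(F)$, and these assemble to a unit $\eta^\prime \colon \operatorname{id} \to \iota^\prime\kappa^\prime$. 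The counit $\varepsilon^\prime \colon \kappa^\prime\iota^\prime \to \operatorname{id}$ is the canonical equivalence $\kappa^\prime\iota^\prime(V) = 1_\mathscr{V} \otimes V \xrightarrow{\sim} V$. Then I would verify the two triangle identities exactly as in the proof of Lemma~\ref{lem:LaxLinV}: the composite $\kappa^\prime \to \kappa^\prime\iota^\prime\kappa^\prime \to \kappa^\prime$ evaluates on $F$ to $F(1_\mathscr{V}) \to 1_\mathscr{V} \otimes F(1_\mathscr{V}) \xrightarrow{\sim} F(1_\mathscr{V})$, which is an equivalence since oplax $\mathscr{V}$-linear functors satisfy the unitality axiom; and the composite $\iota^\prime \to \iota^\prime\kappa^\prime\iota^\prime \to \iota^\prime$ is homotopic to the identity by coherence of the unit and associativity constraints of $\mathscr{V}$. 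This establishes the adjunction together with the stated formulas for $\iota^\prime$ and $\kappa^\prime$.

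For the monoidal part, I would observe that $\iota^\prime \colon V \mapsto (-)\otimes V$ is strongly monoidal for the composition monoidal structure on $\operatorname{Fun}^{\mathrm{oplax}}_\mathscr{V}(\mathscr{V},\mathscr{V})$, by the same computation $(-\otimes V)\circ(-\otimes W) \simeq -\otimes(W\otimes V)$, compatibly with units, that shows $\iota$ is strongly monoidal in the Corollary above. Since $\iota^\prime$ is the strongly monoidal right adjoint in the adjunction $\kappa^\prime \dashv \iota^\prime$, the dual of \cite[Corollary 7.3.2.7]{HigherAlgebra} (or of \cite[Theorem A]{HausgengLax}) equips its left adjoint $\kappa^\prime$ with a canonical oplax monoidal structure. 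Finally, an oplax monoidal functor carries coalgebra objects to coalgebra objects and a strongly monoidal functor does the same, so $\kappa^\prime$ and $\iota^\prime$ both restrict to the $\infty$-categories of coalgebra objects, and the same dual citation promotes $\kappa^\prime \dashv \iota^\prime$ to an adjunction on coalgebra objects, compatible with the forgetful functors.

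The only real content is bookkeeping of variance conventions: one must keep straight that ``oplax'' here refers to oplax $\mathscr{V}$-linearity rather than oplax monoidality, and that the composition monoidal structure on $\operatorname{Fun}^{\mathrm{oplax}}_\mathscr{V}(\mathscr{V},\mathscr{V})$ interacts with $\iota^\prime$ through the opposite multiplication on $\mathscr{V}$, exactly as in the lax case. No input beyond the formal manipulations already used in the lax case is required.
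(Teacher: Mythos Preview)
Your proposal is correct and is precisely what the paper intends: the paper does not give a separate proof for this lemma, merely stating ``This Lemma has an obvious oplax version'' after proving the lax case in Lemma~\ref{lem:LaxLinV} and its Corollary. Your argument carries out exactly that dualization---reversing the direction of the structure morphisms, exhibiting the unit and counit, checking the zig-zag identities via unitality of oplax linear functors, and invoking the dual of \cite[Corollary 7.3.2.7]{HigherAlgebra} to obtain the oplax monoidal structure on $\kappa^\prime$ from the strong monoidality of $\iota^\prime$---so your proof is the one the paper is tacitly pointing to.
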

Now let $A \in \mathsf{Alg}(\mathscr{V})$ be an algebra object. Then $\operatorname{RMod}_A\mathscr{V}$ is a $\mathscr{V}$-linear category. We can formulate the following generalization of Lemma \ref{lem:LaxLinV}. 
\begin{lem}\label{lem:Vlinearcattheorem}
Let $\mathscr{M}$ be a $\mathscr{V}$-linear category. There is an adjunction of $\infty$-categories
\begin{equation}
    \iota: \operatorname{LMod}_A\mathscr{M} \leftrightarrows \operatorname{Fun}^\mathrm{lax}_\mathscr{V}(\operatorname{RMod}_A \mathscr{V},\mathscr{M}): \kappa
\end{equation}
in which the underlying object of $\kappa(F)$ is $F(A)$ and the left adjoint $\iota$ sends $M \mapsto (V \mapsto V\otimes_A M)$.
\end{lem}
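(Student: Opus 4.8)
The plan is to generalise the proof of Lemma~\ref{lem:LaxLinV} essentially verbatim, replacing $\mathscr{V}$ (as source) by $\operatorname{RMod}_A\mathscr{V}$, $\mathscr{V}$ (as target) by $\mathscr{M}$, and the unit $1_\mathscr{V}$ by the object $A \in \operatorname{RMod}_A\mathscr{V}$: exhibit $\iota$ and $\kappa$, then the unit and counit, and then verify the two triangle identities by reducing everything to the unitality axiom for lax $\mathscr{V}$-linear functors together with the left- and right-module axioms for $A$.

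First I would check that $\iota$ and $\kappa$ are well-defined functors. For $\iota$: the two-sided bar construction furnishes the relative tensor product $(-)\otimes_A(-) : \operatorname{RMod}_A\mathscr{V} \times \operatorname{LMod}_A\mathscr{M} \to \mathscr{M}$ (here one uses that the $\mathscr{V}$-linear categories in play admit geometric realisations which are preserved by the $\mathscr{V}$-action --- this is implicit in the standing setup, and holds in the intended applications). For fixed $M \in \operatorname{LMod}_A\mathscr{M}$ the functor $V \mapsto V\otimes_A M$ is in fact \emph{strongly} $\mathscr{V}$-linear: its structure maps are the projection-formula equivalences $W \otimes (V\otimes_A M) \xrightarrow{\sim} (W\otimes V)\otimes_A M$, valid because $W\otimes(-)$ preserves the bar-construction colimit. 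Hence $\iota(M) := (V \mapsto V\otimes_A M)$ is an object of $\operatorname{Fun}^{\mathrm{lax}}_\mathscr{V}(\operatorname{RMod}_A\mathscr{V},\mathscr{M})$, functorially in $M$. For $\kappa$: the object $A$, regarded as an $A$-$A$-bimodule, is canonically a left $A$-module object of the $\mathscr{V}$-linear category $\operatorname{RMod}_A\mathscr{V}$; and any lax $\mathscr{V}$-linear functor $F$ carries left $A$-module objects to left $A$-module objects, the action on $F(X)$ being $A\otimes F(X) \to F(A\otimes X) \to F(X)$. Applying this to $X = A$ gives $\kappa(F) := F(A) \in \operatorname{LMod}_A\mathscr{M}$, with action $A\otimes F(A) \to F(A\otimes A) \xrightarrow{F(\mathrm{mult})} F(A)$, functorially in $F$.

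Next I would construct the unit and counit. The unit $\eta\colon \operatorname{id} \to \kappa\iota$ is objectwise the canonical map $M \xrightarrow{\sim} A\otimes_A M = \kappa\iota(M)$; one must check it is an equivalence of left $A$-modules, i.e.\ that the $\kappa$-induced action on $A\otimes_A M$ (coming from the left action on the bimodule $A$ and the strong $\mathscr{V}$-linearity of $(-)\otimes_A M$) agrees, under $A\otimes_A M \simeq M$, with the original action --- a direct bar-construction computation. The counit $\varepsilon\colon \iota\kappa \to \operatorname{id}$ sends $F$ to the transformation $\varepsilon_F\colon (V \mapsto V\otimes_A F(A)) \to F$ whose value at $V$ is built from the bar resolution of $V\otimes_A F(A)$: on $n$-simplices one takes $V \otimes A^{\otimes n} \otimes F(A) \to F(V\otimes A^{\otimes n}\otimes A) \to F(V)$, using the lax structure of $F$ and the right $A$-action on $V$; these maps are compatible with the simplicial operators precisely by the module axioms and naturality of the lax structure, so they descend to a map out of the geometric realisation. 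At $n=0$ this is $V\otimes F(A) \to F(V\otimes A) \xrightarrow{F(\mathrm{act})} F(V)$, the exact analogue of the counit of Lemma~\ref{lem:LaxLinV}. One then verifies $\varepsilon_F$ is a morphism in $\operatorname{Fun}^{\mathrm{lax}}_\mathscr{V}(\operatorname{RMod}_A\mathscr{V},\mathscr{M})$.

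Finally I would verify the triangle identities. The composite $\kappa \to \kappa\iota\kappa \to \kappa$ is objectwise $F(A) \xrightarrow{\sim} A\otimes_A F(A) \xrightarrow{\varepsilon_F(A)} F(A)$, which reduces, on the relevant part of the bar complex, to the unit constraint $1_\mathscr{V}\otimes F(A) \to F(1_\mathscr{V}\otimes A) \simeq F(A)$ of the lax $\mathscr{V}$-linear functor $F$ and is therefore an equivalence; so it is homotopic to the identity. The composite $\iota \to \iota\kappa\iota \to \iota$ is homotopic to the identity by exactly the elementary argument of Lemma~\ref{lem:LaxLinV}, now using the left-unit axiom for $M$ as an $A$-module. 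The main obstacle, as there, is purely one of coherence book-keeping: giving $\varepsilon_F$ as a genuinely coherent natural transformation of lax $\mathscr{V}$-linear functors out of the simplicial bar object, and checking that $(-)\otimes_A(-)$ carries its strong $\mathscr{V}$-linearity coherently in both variables. The remaining verifications are routine lifts of the case $\mathscr{M}=\mathscr{V}$, $A=1_\mathscr{V}$ already treated, which one recovers by specialisation.
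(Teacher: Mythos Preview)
Your proposal is correct and follows essentially the same route as the paper: both construct $\kappa$ by observing that a lax $\mathscr{V}$-linear functor carries left $A$-module objects to left $A$-module objects (the paper phrases this via \cite[Remark 4.6.2.9]{HigherAlgebra}), both build the counit from the bar resolution $(-)\otimes_A F(A) \simeq |(-)\otimes A^{\otimes\bullet}\otimes F(A)| \to |F(-\otimes A^{\otimes\bullet+1})| \to F(-)$, and both reduce the nontrivial triangle identity to the unitality axiom $1_\mathscr{V}\otimes F(A) \to F(1_\mathscr{V}\otimes A)$ for lax $\mathscr{V}$-linear functors. The coherence concerns you flag are real but are left implicit in the paper as well.
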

\begin{proof}
The functor $\kappa$ is explicitly defined as follows. We first note that the construction of \cite[Remark 4.6.2.9]{HigherAlgebra} works just as well for lax-linear functors, so that there is a canonical functor
\begin{equation}
\operatorname{Fun}^\mathrm{lax}_\mathscr{V}(\operatorname{RMod}_A \mathscr{V},\mathscr{M}) \to \operatorname{Fun}_\mathscr{V}(\operatorname{LMod}_A(\operatorname{RMod_A\mathscr{V}}), \operatorname{LMod}_A\mathscr{M})
\end{equation}
then evaluation on the object $A$ (considered as a left and right $A$-module) gives the required functor $\kappa$. The unit morphism $\operatorname{id} \to \kappa\iota$ is the canonical equivalence $\operatorname{id} \simeq A \otimes_A \operatorname{id}$. The counit morphism $\kappa \iota \to \operatorname{id}$ is the composite
\begin{equation}
    (-) \otimes_A F(A) \simeq \big|(-) \otimes A^{\otimes \bullet} \otimes F(A)\big| \to \big|F(- \otimes A^{\otimes \bullet+1})\big|
    \to F\big(|- \otimes A^{\otimes \bullet+1}|\big)  \simeq F(-),
\end{equation}
where we used the bar construction of \cite[\S 4.4.2]{HigherAlgebra} and lax linearity of $F$. It is easy to see that the composite $\iota \to \iota \kappa \iota \to \iota$ is homotopic to the identity. We check the composite $\kappa \to \kappa\iota \kappa \to \kappa$. This identifies with
\begin{equation}
F(A) \simeq A \otimes_A F(A) \simeq |A^{\otimes\bullet+1} \otimes F(A)| \to |F(A^{\otimes \bullet +2}) |\simeq F(A).
\end{equation}
The morphism $|A^{\otimes \bullet +1} \otimes F(A)| \to |F(A^{\otimes \bullet+2})|$ is equivalent to $1_\mathscr{V} \otimes F(A) \to F(1_\mathscr{V} \otimes A)$ which is an equivalence by the unitality property of lax $\mathscr{V}$-linear functors.   
\end{proof}
Specialising to the situation of $\mathscr{M} = \operatorname{RMod}_A\mathscr{V}$ gives the following.
\begin{cor}\label{cor:Bimodmonoidal}
There is an adjunction of $\infty$-categories
\begin{equation}
    \iota: {}_A\operatorname{BMod}_A \mathscr{V} \leftrightarrows \operatorname{Fun}^\mathrm{lax}_\mathscr{V}(\operatorname{RMod}_A \mathscr{V}, \operatorname{RMod}_A \mathscr{V}): \kappa 
\end{equation}
in which the left adjoint $\iota$ is strongly monoidal and the right adjoint $\kappa$ is lax monoidal (for the convolution monoidal structure on bimodules and the composition monoidal structure on endofunctors). Further, there is an induced adjunction on algebra objects. 
\end{cor}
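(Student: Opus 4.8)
The statement is obtained by specialising Lemma~\ref{lem:Vlinearcattheorem} to $\mathscr{M} = \operatorname{RMod}_A\mathscr{V}$, together with the standard identification of iterated module $\infty$-categories $\operatorname{LMod}_A(\operatorname{RMod}_A\mathscr{V}) \simeq {}_A\operatorname{BMod}_A\mathscr{V}$ \cite[\S 4.3]{HigherAlgebra}. This immediately yields the adjunction $\iota \dashv \kappa$ with the asserted descriptions: $\iota$ sends a bimodule $M$ to the functor $V \mapsto V \otimes_A M$, and $\kappa(F)$ has underlying object $F(A)$. Hence, exactly as in the Corollary following Lemma~\ref{lem:LaxLinV}, the only point requiring argument is that $\iota$ is \emph{strongly} monoidal; granting that, the lax monoidal structure on $\kappa$ is automatic by \cite[Corollary 7.3.2.7]{HigherAlgebra} (equivalently \cite[Theorem A]{HausgengLax}), and the induced adjunction on algebra objects follows in the same way.

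So the substance of the proof lies in establishing that $\iota$ is strongly monoidal for the convolution ($=$ relative tensor over $A$) monoidal structure on ${}_A\operatorname{BMod}_A\mathscr{V}$ and the composition monoidal structure on $\operatorname{Fun}^{\mathrm{lax}}_\mathscr{V}(\operatorname{RMod}_A\mathscr{V}, \operatorname{RMod}_A\mathscr{V})$. The plan is to identify $\iota$ with the canonical monoidal functor supplied by the theory of relative tensor products \cite[\S 4.4, \S 4.8]{HigherAlgebra}: the coherence data making $\iota$ monoidal are the associativity constraints of $\otimes_A$ (together with the unit constraint $V \otimes_A A \simeq V$), and these structure morphisms $\iota(M)\circ\iota(N) \to \iota(M \otimes_A N)$ are equivalences because one computes directly $\bigl(\iota(M)\circ\iota(N)\bigr)(V) \simeq V \otimes_A (N \otimes_A M)$. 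One also notes that each $\iota(M)$ is strongly $\mathscr{V}$-linear (the constraint $W\otimes(V\otimes_A M)\simeq(W\otimes V)\otimes_A M$ is invertible), so $\iota$ factors through the monoidal subcategory of colimit-preserving $\mathscr{V}$-linear endofunctors inside $\operatorname{Fun}^{\mathrm{lax}}_\mathscr{V}$, on which the comparison with Morita theory is cleanest.

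The one genuinely delicate point I anticipate is the bookkeeping in the previous paragraph: reconciling the order of composition of endofunctors with the left/right variance of the convolution product on ${}_A\operatorname{BMod}_A\mathscr{V}$, so that the associativity constraints of $\otimes_A$ assemble into a \emph{monoidal} (rather than oplax, or opposite) structure on $\iota$ matching the source and target monoidal structures as stated. This coherence is part of Lurie's construction of relative tensor products and the associated Morita $2$-category, so it can be cited rather than reproved; everything else in the argument is either formal or a direct invocation of \cite{HigherAlgebra} and Theorem~\ref{HaugsengLaxTheoremC}.
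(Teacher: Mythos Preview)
Your proposal is correct and follows essentially the same approach as the paper: specialise Lemma~\ref{lem:Vlinearcattheorem} to $\mathscr{M} = \operatorname{RMod}_A\mathscr{V}$, observe that $\iota$ is strongly monoidal, and then invoke \cite[Corollary 7.3.2.7]{HigherAlgebra} or \cite[Theorem A]{HausgengLax} to obtain the lax monoidal structure on $\kappa$ and the induced adjunction on algebras. The paper's proof simply declares the strong monoidality of $\iota$ to be ``well-known'' without further justification, so your expanded discussion of how to extract it from the Morita-theoretic machinery in \cite[\S4.4, \S4.8]{HigherAlgebra} (and your flagging of the left/right bookkeeping as the only subtle point) is more detailed than what the paper provides, but not a different argument.
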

\begin{proof}
It is well-known that the functor $\iota$ is strongly monoidal. Hence $\kappa$ acquires a canonical lax monoidal structure by \cite[Corollary 7.3.2.7]{HigherAlgebra} or \cite[Theorem A]{HausgengLax}. 
\end{proof}
It seems possible that there is an ``oplax" version of Corollary \ref{cor:Bimodmonoidal} in which $A$ is replaced with a coalgebra object, and one uses bicomodules instead of bimodules. 

\begin{example}
Let $\mathscr{C}$ be a $\mathscr{V}$-linear category, and let $F: \operatorname{RMod}_A \mathscr{V} \leftrightarrows \mathscr{C} : G$ be an adjunction in which the left adjoint $F$ is $\mathscr{V}$-linear. Then by Theorem \ref{HaugsengLaxTheoremC} the right adjoint $G$ acquires a canonical lax $\mathscr{V}$-linear structure, so that the endofunctor $GF$ of $\operatorname{RMod}_A\mathscr{V}$ acquires a canonical lax $\mathscr{V}$-linear structure. By Corollary \ref{cor:Bimodmonoidal} the object $GF(A)$ acquires structure of an $A$-$A$ bimodule object equipped with a natural transformation
\begin{equation}\label{eq:GFnaturaltransf}
    GF(A) \otimes_A (-) \to GF(-)
\end{equation}
of endofunctors of $\operatorname{RMod}_A\mathscr{V}$, coming from the counit $\iota \kappa \to \operatorname{id}$. Further, since $GF$ is a monad (i.e., an algebra object in endofunctors), Corollary \ref{cor:Bimodmonoidal} implies that $GF(A)$ acquires the structure of an algebra object under convolution and the natural transformation \eqref{eq:GFnaturaltransf} is a morphism of monads.  
\end{example}

\section{Noncommutative notion of descendability}\label{subsec:NCdescent}

This section is the main technical ingredient to prove the descent results for $\wideparen{\mathcal{D}}$-modules in \S\ref{sec:DescentforDCap}. It is a mild generalization of the results of \cite{MathewGalois}. Let $\mathscr{V}$ be a a presentably symmetric monoidal stable $\infty$-category. Let $\operatorname{Alg}(\mathscr{V})$ be the category of associative algebra objects in $\mathscr{V}$. The constructions in this section are phrased in terms of \emph{left} modules, but have obvious counterparts for right modules. As usual, for an algebra object $A \in \mathsf{Alg}(\mathscr{V})$, we denote by $\operatorname{LMod}_A := \operatorname{LMod}_A\mathscr{V}$ the $\infty$-category of \emph{left module objects} over $A$ \cite[\S 4.2]{HigherAlgebra}. Let $f^\#:A \to B$ be a morphism in $\operatorname{Alg}(\mathscr{V})$. We obtain a forgetful functor
\begin{equation}
    f_*: \operatorname{LMod}_B \to \operatorname{LMod}_A.
\end{equation}
By \cite[Corollary 4.2.3.2]{HigherAlgebra}, there is a functor $\operatorname{Alg}(\mathscr{V})^\mathsf{op} \to \mathsf{Cat}_\infty$ which sends $A \mapsto \operatorname{LMod}_A$ and each $f^\# : A \to B$ to the forgetful functor $f_*$.

The functor $f_*$ has a left adjoint by the following construction. We may regard $B$ as an object of ${}_B\operatorname{BMod}_B$, the category of $B$-$B$ bimodule objects in $\mathscr{V}$. (In this section, we will always regard bimodule objects as a monoidal category with respect to \emph{convolution} \cite[\S 4.4.3]{HigherAlgebra}). There is a functor 
\begin{equation}
    {}_B\operatorname{BMod}_B \to _{B}\operatorname{BMod}_A \simeq \operatorname{Fun}^L_\mathscr{V}(\operatorname{LMod}_A, \operatorname{LMod}_B), 
\end{equation}
the image of $B$ under this composite is the left adjoint $f^*$ to $f_*$. By \cite[Proposition 4.6.2.17]{HigherAlgebra} the formation of these pullback functors can be arranged in a functorial way. Accordingly we obtain a comonad $f^*f_* = B\otimes_A-$ on $\operatorname{LMod}_B$. 
\begin{defn}\label{defn:descendableNC}
With notations as above. We let $\langle B\rangle$ denote smallest full subcategory of $_A\operatorname{BMod}_A$ which contains $B$ and is stable under finite (co)limits, retracts, and convolution. We say that $f^{\#}:A \to B$ is \emph{descendable} if $A \in \langle B \rangle$. 
\end{defn}
\begin{lem}\label{lem:ncdescendable}
Suppose that $f^\#: A \to B$ is descendable. Then the adjunction $f^* \dashv f_*$ is comonadic. 
\end{lem}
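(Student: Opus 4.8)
The plan is to deduce this from the Barr--Beck--Lurie comonadicity theorem applied to the adjunction $f^* \dashv f_*$, so the real content is showing that $f_*$ is conservative and that it preserves the relevant totalizations (limits of $f^*f_*$-split cosimplicial objects). The descendability hypothesis $A \in \langle B\rangle$ is exactly what lets us bootstrap from the obvious fact that $B$ itself is a ``good'' object to the conclusion that $A$ is.

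First I would set up the key reduction: consider the full subcategory $\mathscr{D} \subseteq {}_A\operatorname{BMod}_A$ consisting of those bimodules $N$ for which the functor $N \otimes_A (-): \operatorname{LMod}_A \to \operatorname{LMod}_A$ has the property that for every $\operatorname{LMod}_A$-valued cosimplicial object $M^\bullet$ which becomes split after applying $f^*f_* = B\otimes_A(-)$, the natural map $N\otimes_A \operatorname{Tot}(M^\bullet) \to \operatorname{Tot}(N\otimes_A M^\bullet)$ is an equivalence, and moreover $N\otimes_A(-)$ is conservative on the full subcategory of such split objects. One checks that $B \in \mathscr{D}$: after applying $B\otimes_A(-)$ the cosimplicial object is split, hence its totalization is computed as a finite limit (indeed, split cosimplicial objects have totalization equal to the zeroth term and the map is an equivalence on the nose), so $B\otimes_A(-)$ commutes with it, and it is conservative there because a split cosimplicial object whose $0$-term vanishes is contractible. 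Then one verifies $\mathscr{D}$ is closed under finite limits, finite colimits, retracts, and convolution $\otimes_A$ — the first three because $\operatorname{Tot}$ commutes with finite limits and with colimits/retracts in a stable category (and conservativity is inherited), and closure under $\otimes_A$ because $(N_1\otimes_A N_2)\otimes_A(-) \simeq N_1\otimes_A(N_2\otimes_A(-))$ composes two such functors. Since $\mathscr{D}$ is a full subcategory of ${}_A\operatorname{BMod}_A$ containing $B$ and stable under all these operations, we get $\langle B\rangle \subseteq \mathscr{D}$, hence $A \in \mathscr{D}$.

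But $A\otimes_A(-) \simeq \operatorname{id}_{\operatorname{LMod}_A}$, and the composite $\operatorname{LMod}_B \xrightarrow{f_*}\operatorname{LMod}_A$ followed by the identity is just $f_*$; more to the point, the comonad acting is $f^*f_* \simeq B\otimes_A(-)$ on $\operatorname{LMod}_B$, and the statement ``$A\otimes_A(-)$ commutes with $B\otimes_A(-)$-split totalizations and is conservative on them'' transports, via $f_*$ being a module over these bimodule-endofunctors, into precisely the two hypotheses of the comonadic Barr--Beck theorem for $f^* \dashv f_*$: namely $f_*$ is conservative, and $f_*$ preserves totalizations of $f^*f_*$-split cosimplicial objects in $\operatorname{LMod}_B$. (This is the same bookkeeping as in \cite[\S3]{MathewGalois}; one uses that $f_*$ is comodule-linear and that $f^*f_*$-split cosimplicial objects in $\operatorname{LMod}_B$ correspond under $f_*$ to $B\otimes_A(-)$-split ones in $\operatorname{LMod}_A$.) Applying \cite[Theorem 4.7.3.5]{HigherAlgebra} (the dual of Barr--Beck--Lurie) then gives that $f^* \dashv f_*$ is comonadic.

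The main obstacle I anticipate is the last bookkeeping step — carefully translating the statement that the \emph{identity} endofunctor lies in $\mathscr{D}$ into the conservativity and limit-preservation properties of $f_*$ \emph{as a functor out of $\operatorname{LMod}_B$}, rather than as an endofunctor of $\operatorname{LMod}_A$. The point is that $f_*$ factors as $\operatorname{LMod}_B \to {}_B\operatorname{BMod}_A\text{-acted thing} \to \operatorname{LMod}_A$ and that $f^*f_*$-split objects pull back correctly; making the identities $f_* \circ (\text{totalization in }\operatorname{LMod}_B) \simeq A\otimes_A(-)\circ f_*\circ(\ldots)$ precise requires invoking the functoriality of \cite[Proposition 4.6.2.17]{HigherAlgebra} and the compatibility of $f_*$ with the bimodule action. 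Everything else — stability of $\mathscr{D}$ under the four operations, the base case $B\in\mathscr{D}$ — is routine once the definitions are unwound, exactly paralleling Mathew's argument in the commutative case.
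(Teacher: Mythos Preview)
Your overall strategy --- use Barr--Beck--Lurie and bootstrap from $B$ to $A$ via the thick $\otimes$-ideal $\langle B\rangle$ --- is exactly the paper's approach. But you have the comonadic Barr--Beck hypotheses backwards, and this is what creates the ``bookkeeping obstacle'' you anticipate in your last paragraph.

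For $f^* \dashv f_*$ with $f^*:\operatorname{LMod}_A \to \operatorname{LMod}_B$ the left adjoint, comonadicity means $\operatorname{LMod}_A \simeq \operatorname{Comod}_{f^*f_*}(\operatorname{LMod}_B)$, and the dual Barr--Beck criterion asks that the \emph{left adjoint} $f^*$ be conservative and that $\operatorname{LMod}_A$ admit, and $f^*$ preserve, totalizations of $f^*$-split cosimplicial objects. You repeatedly state the conditions for $f_*$ instead (and work with $f^*f_*$-split cosimplicial objects in $\operatorname{LMod}_B$). Note that $f_*$ is always conservative --- it is a forgetful functor --- so that condition carries no content; and the adjunction $f^*\dashv f_*$ is always monadic, again with no hypothesis needed. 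The nontrivial direction here is comonadicity, and that is governed by $f^*$.

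Once you swap $f_*$ for $f^*$ (and $\operatorname{LMod}_B$ for $\operatorname{LMod}_A$) the argument becomes exactly what the paper does, and your anticipated obstacle dissolves: one takes a cosimplicial $M^\bullet$ in $\operatorname{LMod}_A$ which is $f^*$-split; then $B\otimes_A M^\bullet = f_*f^*M^\bullet$ is split in $\operatorname{LMod}_A$, so its $\operatorname{Tot}$-tower is a constant Pro-object; this property propagates through finite (co)limits, retracts and convolution, hence holds for every $N\in\langle B\rangle$, in particular for $N=A$; so the $\operatorname{Tot}$-tower of $M^\bullet$ itself is constant, and since $f^*$ preserves finite limits (stability) it preserves this totalization. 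Conservativity of $f^*$ is similar: $f^*M\simeq 0$ gives $B\otimes_A M\simeq 0$, whence $N\otimes_A M\simeq 0$ for all $N\in\langle B\rangle$, so $M\simeq A\otimes_A M\simeq 0$. There is no need to translate anything across $f_*$ into $\operatorname{LMod}_B$.
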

Before the proof of this Lemma we recall some properties of $\operatorname{Pro}$- and $\operatorname{Ind}$-objects. 
\begin{defn}\cite[\S 3]{MathewGalois} Let $\mathscr{C}$ be an $\infty$-category with finite limits and colimits.
Let $\operatorname{Ind}(\mathscr{C})$ and $\operatorname{Pro}(\mathscr{C}) := \operatorname{Ind}(\mathscr{C}^\mathsf{op})^\mathsf{op}$ be the $\operatorname{Pro}$- and $\operatorname{Ind}$-categories of $\mathscr{C}$, respectively. 
\begin{enumerate}[(i)]
    \item An object $M \in \operatorname{Ind}(\mathscr{C})$ is called a \emph{constant $\operatorname{Ind}$-object} of $\mathscr{C}$ if it belongs to the essential image of the Yoneda embedding $\mathscr{C} \to \operatorname{Ind}(\mathscr{C})$. 
    \item An object $M \in \operatorname{Pro}(\mathscr{C})$ is called a \emph{constant $\operatorname{Pro}$-object} of $\mathscr{C}$ if it belongs to the essential image of the Yoneda embedding $\mathscr{C} \to \operatorname{Pro}(\mathscr{C})$. 
\end{enumerate}
\end{defn}
\begin{lem}\cite[\S 3]{MathewGalois}. Let $\mathscr{C}$ be an $\infty$-category with finite limits and colimits.
\begin{enumerate}[(i)]
    \item Let $p: I \to \mathscr{C}$ be a filtered diagram. The following are equivalent:
    \begin{enumerate}
        \item The object ${\indlim} p \in \operatorname{Ind}(\mathscr{C})$ is a constant $\operatorname{Ind}$-object;
        \item For every $\infty$-category $\mathscr{D}$ and every functor $F: \mathscr{C} \to \mathscr{D}$ which preserves finite colimits, the colimit of $p$ is preserved by $F$.
    \end{enumerate}
    \item Let $p: I \to \mathscr{C}$ be a cofiltered diagram. The following are equivalent:
    \begin{enumerate}
        \item The object ${\prolim} p \in \operatorname{Pro}(\mathscr{C})$ is a constant $\operatorname{Pro}$-object;
        \item For every $\infty$-category $\mathscr{D}$ and every functor $F: \mathscr{C} \to \mathscr{D}$ which preserves finite limits, the limit of $p$ is preserved by $F$.
    \end{enumerate}
\end{enumerate}
\end{lem}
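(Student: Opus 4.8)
The plan is first to dispose of (ii) by duality: applying (i) to the opposite category $\mathscr{C}^{\mathsf{op}}$ (which again has finite limits and colimits) together with the filtered diagram $p^{\mathsf{op}}\colon I^{\mathsf{op}} \to \mathscr{C}^{\mathsf{op}}$ yields (ii), since $\operatorname{Pro}(\mathscr{C}) = \operatorname{Ind}(\mathscr{C}^{\mathsf{op}})^{\mathsf{op}}$, a functor preserves finite limits iff its opposite preserves finite colimits, and $\prolim p$ is a constant $\operatorname{Pro}$-object iff the corresponding object of $\operatorname{Ind}(\mathscr{C}^{\mathsf{op}})$ is a constant $\operatorname{Ind}$-object. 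So I would only prove (i). Write $j = j_{\mathscr{C}}\colon \mathscr{C} \hookrightarrow \operatorname{Ind}(\mathscr{C})$ for the Yoneda embedding. I would invoke three standard structural facts: $j$ is fully faithful; $j$ preserves finite colimits (as $\operatorname{Ind}(\mathscr{C})$ is compactly generated with $j$ identifying $\mathscr{C}$ with a full subcategory of its compact objects, which are closed under finite colimits); and any finite-colimit-preserving $F\colon \mathscr{C} \to \mathscr{D}$ induces $\operatorname{Ind}(F)\colon \operatorname{Ind}(\mathscr{C}) \to \operatorname{Ind}(\mathscr{D})$ preserving filtered colimits and satisfying $\operatorname{Ind}(F)\circ j_{\mathscr{C}} \simeq j_{\mathscr{D}}\circ F$, so that $\operatorname{Ind}(F)$ carries constant $\operatorname{Ind}$-objects to constant $\operatorname{Ind}$-objects.

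The first real step is an elementary observation to isolate: if a filtered diagram $q\colon I \to \mathscr{D}$ has $\indlim q \simeq j_{\mathscr{D}}(e)$ constant, then $e \simeq \operatorname{colim}_{\mathscr{D}} q$ (so in particular that colimit exists). This is immediate from $\operatorname{Hom}_{\mathscr{D}}(e,a) \simeq \operatorname{Hom}_{\operatorname{Ind}(\mathscr{D})}(\indlim q, j_{\mathscr{D}}(a)) \simeq \lim_i \operatorname{Hom}_{\mathscr{D}}(q(i),a)$ for all $a \in \mathscr{D}$, which is the defining property of $\operatorname{colim}_{\mathscr{D}} q$. For $(a)\Rightarrow(b)$ I would then argue: if $\indlim p \simeq j(c)$, the observation gives $c \simeq \operatorname{colim}_{\mathscr{C}} p$; applying $\operatorname{Ind}(F)$ gives $\indlim(F\circ p) \simeq \operatorname{Ind}(F)(j(c)) \simeq j_{\mathscr{D}}(F(c))$, and the observation applied to $F\circ p$ identifies $F(c)$ with $\operatorname{colim}_{\mathscr{D}}(F\circ p)$; since $c = \operatorname{colim}_{\mathscr{C}} p$ this is precisely the statement that $F$ preserves the colimit of $p$.

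For $(b)\Rightarrow(a)$ I would apply the hypothesis twice: taking $F = \operatorname{id}_{\mathscr{C}}$ forces $\operatorname{colim}_{\mathscr{C}} p$ to exist, and taking $F = j\colon \mathscr{C} \to \operatorname{Ind}(\mathscr{C})$ (finite-colimit-preserving, as recalled) gives that $j$ preserves the colimit of $p$, i.e. $\indlim p = \operatorname{colim}_{\operatorname{Ind}(\mathscr{C})}(j\circ p) \simeq j(\operatorname{colim}_{\mathscr{C}} p)$, which is manifestly a constant $\operatorname{Ind}$-object. The whole argument is soft, so there is no genuinely hard step; the only points needing care are the structural facts about $\operatorname{Ind}(-)$ invoked in the first paragraph and making precise what ``the colimit of $p$ is preserved by $F$'' means (I read it as: $\operatorname{colim}_{\mathscr{C}} p$ exists and the canonical comparison $\operatorname{colim}_{\mathscr{D}}(F\circ p) \to F(\operatorname{colim}_{\mathscr{C}} p)$ is an equivalence). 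In the applications of \S\ref{subsec:NCdescent} one works inside presentable stable $\infty$-categories, where all relevant colimits exist and this last point is automatic.
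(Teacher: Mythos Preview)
The paper does not give a proof of this lemma; it is simply quoted from \cite[\S3]{MathewGalois}. Your argument is correct and is essentially the standard one found there: the key point in both directions is that the Yoneda embedding $j\colon \mathscr{C} \to \operatorname{Ind}(\mathscr{C})$ preserves finite colimits, which lets one pass freely between colimits in $\mathscr{C}$ and the formal filtered colimit in $\operatorname{Ind}(\mathscr{C})$.
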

\begin{example}\cite[Example 3.11]{MathewGalois}. Let $\mathscr{C}$ be an $\infty$-category with finite limits and colimits.
\begin{enumerate}[(i)]
    \item If $M_\bullet$ is a split simplicial object of $\mathscr{C}$, then the system of its $n$-skeleta $\operatorname{sk}_n (M_\bullet) := \operatorname{colim}_{[m] \in \Delta_{\leqslant n}^\mathsf{op}} M_m$ forms a constant $\operatorname{Ind}$-object with $\operatorname{colim}_n \operatorname{sk}_n (M_\bullet) = \operatorname{colim}_{[m] \in \Delta^\mathsf{op}} M_\bullet$.  
    \item If $M^\bullet$ is a split cosimplicial object of $\mathscr{C}$, then the system of its partial totalizations  $\operatorname{Tot}_n (M^\bullet) := \operatorname{lim}_{[m] \in \Delta_{\leqslant n}} M^m$ forms a constant $\operatorname{Pro}$-object with $\operatorname{lim}_n \operatorname{Tot}_n (M^\bullet) = \operatorname{lim}_{[m] \in \Delta} M^\bullet$.  
\end{enumerate}
\end{example}
\begin{proof}[Proof of Lemma \ref{lem:ncdescendable}] This is essentially the same as \cite[Proposition 2.6.3]{mann_p-adic_2022}. We check the hypotheses of the Barr--Beck--Lurie theorem. Let $M^\bullet \in \operatorname{LMod}_A$ be a $f^*$-split cosimplicial object. In particular the $\operatorname{Tot}$-tower of $f_*f^*M^\bullet = B \otimes_A M^\bullet$ is a constant Pro-object. It follows (using that $\mathscr{V}$ is stable) that the $\operatorname{Tot}$-tower of $N \otimes_A M^\bullet$ is constant for every $N \in \langle B \rangle$. By taking $N = A \in \langle B \rangle$ one deduces that the $\operatorname{Tot}$-tower of $M^\bullet$ is a constant Pro-object. In this case it is clear that $f^*$ commutes with the totalization of $M^\bullet$: as the relevant categories are stable, $f^*$ preserves finite limits. 

It remains to check that $f^*$ is conservative. Suppose that $f^*M \simeq 0$, then $f_*f^*M = B \otimes_A M \simeq 0$ and hence $N \otimes_A M \simeq 0$ for every $N \in \langle B \rangle$. Taking $N = A \in \langle B \rangle $ then implies that $M \simeq 0$. 
\end{proof}
Now let $A^\bullet$ be an augmented cosimplicial object in $\operatorname{Alg}(\mathscr{V})$. We obtain a functor
\begin{equation}
    \begin{aligned}
    N(\Delta_+) \to \mathsf{Cat}_\infty : [n] \mapsto \operatorname{LMod}_{A^n},
    \end{aligned}
\end{equation}
which sends every morphism $[m] \to [n]$ in $\Delta_+$ to the corresponding pullback functor $\operatorname{LMod}_{A^m} \to \operatorname{LMod}_{A^n}$ defined as in the preceding section. In the next definition we will use the following notation. For each $[n] \in  \Delta_+$ we denote by $d^0$ the injective morphism $d^0: [n] \hookrightarrow [0]\star [n]\simeq  [n+1]$ which omits $0$ from its image. For any morphism $\alpha: [m] \to [n]$ in $\Delta_+$ there is an obvious cosimplicial morphism $\alpha^\prime : [m+1] \to [n+1]$ such that $\alpha^\prime d^0 = d^0 \alpha$.  
\begin{defn}\label{defn:Beckchevalley}
We say that $A^\bullet$ \emph{satisfies the Beck-Chevalley condition} if for every map $\alpha: [m] \to [n]$ in $\Delta_+$, the natural morphism
\begin{equation}
    A^n \otimes_{A^m} A^{m+1} \to A^{n+1}
\end{equation}
is an equivalence in $_{A^n}\operatorname{BMod}_{A^{m+1}}$. Here, the algebra morphisms $A^m \to A^{m+1}$ and $A^n \to A^{n+1}$ are induced by $d^0$. The morphisms $A^m \to A^n$ and $A^{m+1} \to A^{n+1}$ are induced by $\alpha$ and $\alpha^\prime$.  
\end{defn}
\begin{lem}\label{lem:NCdescent2}
Suppose that $A^\bullet$ satisfies the Beck-Chevalley condition and $A^{-1} \to A^0$ is descendable. Then the canonical morphism
\begin{equation}
    \operatorname{LMod}_{A^{-1}} \to \underset{[n] \in \Delta}{\operatorname{lim}}\operatorname{LMod}_{A^n}
\end{equation}
is an equivalence of $\infty$-categories. 
\end{lem}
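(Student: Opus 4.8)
The plan is to deduce this from the $\infty$-categorical Barr--Beck--Lurie theorem applied to the augmented cosimplicial object, following the now-standard argument (as in \cite[Proposition 2.6.3 and Proposition A.3.1]{mann_p-adic_2022} or \cite{MathewGalois}) for passing from descendability of a single map to descent along the whole \v{C}ech-type diagram. Concretely, write $A = A^{-1}$, $B = A^0$, and $f^\# \colon A \to B$ for the augmentation. The totalization functor
\begin{equation}
\operatorname{Tot} \colon \underset{[n] \in \Delta}{\operatorname{lim}} \operatorname{LMod}_{A^n} \to \operatorname{LMod}_{A},
\end{equation}
right adjoint to the canonical functor $c \colon \operatorname{LMod}_{A} \to \lim_{[n]\in\Delta} \operatorname{LMod}_{A^n}$, is built from the forgetful functors; I want to show the unit $\operatorname{id} \to \operatorname{Tot} \circ c$ and counit $c \circ \operatorname{Tot} \to \operatorname{id}$ are equivalences.

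First I would record the consequences of the Beck--Chevalley condition. For each $[n]$, Definition \ref{defn:Beckchevalley} (applied to $\alpha \colon [-1] \to [n]$, or iterating it for general $\alpha$) says that the base-change natural transformations for the squares appearing in the cosimplicial diagram are equivalences; by \cite[Proposition 4.6.2.17]{HigherAlgebra} this means the diagram $[n] \mapsto \operatorname{LMod}_{A^n}$ on $N(\Delta_+)$ is a \emph{Beck--Chevalley fibration} in the sense needed, so that the augmented diagram $[n] \mapsto \operatorname{LMod}_{A^n}$ on $N(\Delta_+)$ is a limit diagram \emph{if and only if} the single adjunction $f^* \dashv f_*$ together with its \v{C}ech resolution exhibits $\operatorname{LMod}_A$ as the totalization. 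This is exactly the reduction in \cite[Proposition A.3.1]{mann_p-adic_2022}: Beck--Chevalley lets one identify the cosimplicial category $[n] \mapsto \operatorname{LMod}_{A^n}$ with the cobar construction $[n] \mapsto (\text{comodules over the }n\text{-fold iterate of the comonad } B\otimes_A -)$, equivalently with the cosimplicial object $[n] \mapsto \operatorname{LMod}_{A}^{\text{(arising from } (f_*f^*)^{\bullet+1})}$, so that
\begin{equation}
\underset{[n] \in \Delta}{\operatorname{lim}} \operatorname{LMod}_{A^n} \simeq \operatorname{LMod}_{A}^{h(f^*f_*)},
\end{equation}
the $\infty$-category of comodules over the comonad $f^* f_*$ on $\operatorname{LMod}_B$, equivalently the category of codescent data. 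Under this identification the functor $c$ becomes the comparison functor of the comonadic adjunction $f^* \dashv f_*$.

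Then I would invoke descendability: by hypothesis $f^\# \colon A \to B$ is descendable, so Lemma \ref{lem:ncdescendable} says the adjunction $f^* \dashv f_*$ is comonadic, i.e.\ the comparison functor $\operatorname{LMod}_B \to \operatorname{LMod}_A^{h(f^*f_*)}$ is an equivalence --- wait, I need it in the other direction: comonadicity of $f^* \dashv f_*$ gives $\operatorname{LMod}_B \simeq$ comodules, but what I want is $\operatorname{LMod}_A \simeq \lim_{[n]} \operatorname{LMod}_{A^n}$. The correct statement to extract from descendability is instead the dual one, proved by the same argument as Lemma \ref{lem:ncdescendable}: the $\operatorname{Tot}$-tower of $N \otimes_A M^\bullet$ is a constant $\operatorname{Pro}$-object for every $N \in \langle B\rangle$ and every cosimplicial $M^\bullet$ in the limit category, and taking $N = A \in \langle B\rangle$ shows $\operatorname{Tot}$ commutes with the relevant limits and is conservative on the limit category; combined with the Beck--Chevalley identification this verifies the hypotheses of Lurie's $\infty$-categorical descent criterion (\cite[Theorem 4.7.5.2]{HigherAlgebra} / the dual of Barr--Beck--Lurie). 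So the main step is: (a) use Beck--Chevalley to replace the cosimplicial diagram of module categories by the cobar construction on the comonad $f^*f_*$; (b) use descendability exactly as in the proof of Lemma \ref{lem:ncdescendable} (constancy of $\operatorname{Tot}$-towers after tensoring with objects of $\langle B\rangle$, plus conservativity) to check the descent criterion.

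The main obstacle I anticipate is purely bookkeeping: making the identification in step (a) precise at the level of $\infty$-categories, i.e.\ producing the equivalence between the limit over $\Delta$ of the straightened diagram $[n]\mapsto \operatorname{LMod}_{A^n}$ and the $\infty$-category of $f^*f_*$-comodules, compatibly with the forgetful/augmentation functors. This is where one genuinely needs the Beck--Chevalley hypothesis (without it the two cosimplicial objects differ), and where one must be careful that the simplicial identities and degeneracies match up; the cleanest route is to cite the corresponding statement in \cite[Appendix A]{mann_p-adic_2022} and note that the proof there is insensitive to commutativity of the algebras, since it only uses the bimodule formalism of \cite[\S 4.6.2, \S 4.4.3]{HigherAlgebra} which was already set up noncommutatively above. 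Everything after that identification is a formal consequence of Lemma \ref{lem:ncdescendable} and its proof.
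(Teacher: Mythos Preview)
Your approach is essentially the same as the paper's: the paper simply cites Lemma~\ref{lem:ncdescendable} together with \cite[Corollary~4.7.5.3]{HigherAlgebra}, which packages exactly the two steps you outline (Beck--Chevalley identifies $\lim_{[n]}\operatorname{LMod}_{A^n}$ with comodules over $f^*f_*$ on $\operatorname{LMod}_{A^0}$, and comonadicity of $f^*\dashv f_*$ identifies this with $\operatorname{LMod}_{A^{-1}}$).

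One point to clean up: your ``wait, I need it in the other direction'' is a false alarm. Comonadicity of the adjunction $f^* \dashv f_*$ (with $f^*\colon \operatorname{LMod}_A \to \operatorname{LMod}_B$ the left adjoint) says precisely that $\operatorname{LMod}_A \simeq \operatorname{Comod}_{f^*f_*}(\operatorname{LMod}_B)$, not that $\operatorname{LMod}_B$ is a comodule category. So Lemma~\ref{lem:ncdescendable} already gives exactly the statement you want once combined with the Beck--Chevalley identification; the detour in your middle paragraph (reproving conservativity and constancy of $\operatorname{Tot}$-towers on the limit category) is unnecessary.
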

\begin{proof}
Follows from Lemma \ref{lem:ncdescendable} above and \cite[Corollary 4.7.5.3]{HigherAlgebra}. 
\end{proof}
\begin{example}
Suppose that $A \to B$ is a morphism of \emph{commutative} algebra objects in $\mathscr{V}$. Then $A^{\otimes_B (\bullet +1)}$ is an augmented cosimplicial algebra object which satisfies the Beck--Chevalley condition. It is obvious that $A \to B$ is descendable in the sense of Definition \ref{defn:descendableNC} if and only if $A \to B$ is descendable in the sense of Mathew \cite[\S 3.3]{MathewGalois}. Hence Lemma \ref{lem:NCdescent2} recovers Mathew's theorem \cite[Proposition 3.22]{MathewGalois}.
\end{example}
\section{Barr--Beck--Lurie in families}\label{subsec:BarrBeckFamilies}
In this section we present a generalization of the result of \cite[Proposition 4.4.5]{HausgengInftyOperads} which is adapted to our setting.
\begin{prop}\label{prop:BarrBeckFamilies}
Given a diagram 
\begin{equation}
\begin{tikzcd}[cramped]
	{\mathscr{C}} && {\mathscr{D}} \\
	& {\mathscr{B}}
	\arrow["U", from=1-1, to=1-3]
	\arrow["p"', from=1-1, to=2-2]
	\arrow["r", from=1-3, to=2-2]
\end{tikzcd}
\end{equation}
in $\mathsf{Cat}_\infty$ such that:
\begin{enumerate}[(i)]
    \item $p$ and $r$ are coCartesian fibrations and $U$ preserves coCartesian edges;
    \item $U$ has a left adjoint $F: \mathscr{D} \to \mathscr{C}$ such that $pF \simeq r$;
    \item The adjunction $F \dashv U$ restricts in each fiber to an adjunction $F_b \dashv U_b$. For all $b \in \mathscr{B}$, the functor $U_b$ is conservative, and $\mathscr{C}_b$ admits colimits of $U_b$-split simplicial objects,  which $U_b$ preserves. 

    \item For any edge $e: b \to b^\prime$ in $\mathscr{B}$, the coCartesian covariant transport $e_!: \mathscr{C}_{b} \to \mathscr{C}_{b^\prime}$ preserves geometric realizations of $U_{b}$-split simplicial objects.
\end{enumerate}
Then, the adjunction $F \dashv U$ is monadic. 
\end{prop}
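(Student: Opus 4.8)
Proof proposal for Proposition~\ref{prop:BarrBeckFamilies}.

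\textbf{Overall strategy.} The plan is to verify directly the hypotheses of the Barr--Beck--Lurie theorem \cite[Theorem 4.7.3.5]{HigherAlgebra} for the adjunction $F \dashv U$, following and mildly generalizing the argument of \cite[Proposition 4.4.5]{HausgengInftyOperads}. Concretely, one must show (a) $U$ is conservative, and (b) $\mathscr{C}$ admits, and $U$ preserves, geometric realizations of $U$-split simplicial objects. For (a) I would argue as follows: since the triangle commutes we have $p = r \circ U$, so for any morphism $\varphi$ of $\mathscr{C}$ one has $p(\varphi) = r(U(\varphi))$; if $U(\varphi)$ is an equivalence then $p(\varphi)$ is an equivalence in $\mathscr{B}$. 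Factoring $\varphi$ as a $p$-coCartesian edge followed by a morphism lying in a single fiber $\mathscr{C}_b$, the coCartesian part is then invertible, and we are reduced to a morphism of $\mathscr{C}_b$ on which $U$ restricts to $U_b$; conservativity of $U_b$ (hypothesis (iii)) finishes this step.

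\textbf{Reducing geometric realizations to the fibers.} For (b), let $M_\bullet \colon \Delta^{\mathsf{op}} \to \mathscr{C}$ be a $U$-split simplicial object, so that $U M_\bullet$ extends to a split simplicial object of $\mathscr{D}$. Then $p M_\bullet = r\, U M_\bullet$ is split in $\mathscr{B}$, hence an \emph{absolute} colimit diagram: it has a colimit $b \in \mathscr{B}$, and this colimit, together with its colimiting cocone, is preserved by every functor out of $\mathscr{B}$. The idea is to transport $M_\bullet$ along the $p$-coCartesian pushforwards associated to the edges of this cocone, obtaining a simplicial object $M'_\bullet \colon \Delta^{\mathsf{op}} \to \mathscr{C}_b$ in the fiber over $b$. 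Hypothesis (iv) is exactly what is needed to make this transport compatible with the simplicial structure and with geometric realizations of $U_b$-split objects, and hypothesis (i) (preservation of coCartesian edges) identifies $U_b M'_\bullet$ with the corresponding pushforward of $U M_\bullet$ into $\mathscr{D}_b$, which is still split; thus $M'_\bullet$ is $U_b$-split. By hypothesis (iii), $M'_\bullet$ then admits a colimit in $\mathscr{C}_b$ which $U_b$ preserves. Finally, a relative-colimit argument --- the colimit of a diagram valued in the total space of a coCartesian fibration, computed fiberwise over a colimit of the projected diagram in the base, in the style of \cite[\S 4.3.1]{HigherToposTheory} --- upgrades this to a colimit of $M_\bullet$ in $\mathscr{C}$, and shows (using (i), (ii), and the corresponding statement for $r$ and $\mathscr{D}$) that $U$ preserves it. Applying Barr--Beck--Lurie then yields the claim.

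\textbf{Main obstacle.} The conservativity step and the ``project to the base, where splitness makes the colimit absolute'' step are routine. The genuine work is in the second half of the geometric-realization step: constructing $M'_\bullet$ as an honest diagram $\Delta^{\mathsf{op}} \to \mathscr{C}_b$ (rather than merely objectwise), and proving rigorously that its colimit computes the colimit of $M_\bullet$ in $\mathscr{C}$ and is preserved by $U$. This is precisely the coherence-heavy part, where one must invoke straightening for the coCartesian fibrations $p$ and $r$ and the interaction of $U$ with fiberwise colimits, and it is where hypothesis~(iv) enters essentially; I expect to organize it by importing the corresponding bookkeeping from \cite[Proof of Proposition 4.4.5]{HausgengInftyOperads} and checking that its hypotheses are covered by (i)--(iv) above.
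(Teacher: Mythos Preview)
Your proposal is correct and follows essentially the same approach as the paper: verify the Barr--Beck--Lurie hypotheses by the conservativity argument you describe, then transport a $U$-split simplicial object along coCartesian lifts to a single fiber $\mathscr{C}_b$, observe it is $U_b$-split via hypothesis (i), take the fiberwise colimit using (iii), and upgrade to a genuine colimit in $\mathscr{C}$ preserved by $U$ via the relative-colimit machinery of \cite[\S 4.3.1]{HigherToposTheory}. The paper carries out the coherence you flag as the main obstacle by an explicit construction invoking \cite[Propositions 4.3.1.5(2), 4.3.1.9, 4.3.1.10]{HigherToposTheory}, which is exactly the bookkeeping you anticipate.
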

\begin{rmk}
In view of the Barr--Beck--Lurie theorem, condition (iii) in Proposition \ref{prop:BarrBeckFamilies} is equivalent to:
\begin{enumerate}[(i)]
    \item[(iii)${}^\prime$] The adjunction $F \dashv U$ restricts in each fiber to a monadic adjunction $F_b \dashv U_b$.
\end{enumerate}
\end{rmk}
\begin{proof}[Proof of Proposition \ref{prop:BarrBeckFamilies}.]
We verify the conditions of the Barr--Beck--Lurie theorem \cite[Theorem 4.7.3.5]{HigherAlgebra}.

First we show that $U$ is conservative. We can argue in exactly the same way as \cite[Proposition 4.4.5]{HausgengInftyOperads}. Suppose that $f: c \to c^\prime$ is a morphism in $\mathscr{C}$ such that $Uf$ is an equivalence in $\mathscr{D}$. Then $e:=qUf \simeq pf$ is an equivalence in $\mathscr{B}$. One can factor $f$ as $c \xrightarrow[]{\varphi} e_!c \xrightarrow[]{f^\prime} c^\prime$ where $\varphi$ is a coCartesian lift of $e$ and $f^\prime$ is a morphism in the fiber $\mathscr{C}_{b^\prime}$ above $b^\prime := p(c^\prime)$. Since $\varphi$ is coCartesian lift of an equivalence, it is an equivalence. Because of the fiberwise monadicity assumption (iii), $f^\prime$ is an equivalence. Therefore $f$ is an equivalence and $U$ is conservative. 

Now let us show that $\mathscr{C}$ admits and $U$ preserves colimits of $U$-split simplicial objects. Let $q: \Delta^\mathsf{op} \to \mathscr{C}$ be a $U$-split simplicial object, so that $Uq$ extends to a diagram $\widetilde{Uq}: \Delta^\mathsf{op}_{-\infty} \to \mathscr{D}$. Let $f: \Delta^\mathsf{op}_{-\infty} \to \mathscr{B}$ be the underlying diagram in $\mathscr{B}$. There is a morphism 
\begin{equation}
    \Delta^1  \times \Delta_{-\infty}^\mathsf{op}  \to \Delta_{-\infty}^\mathsf{op}
\end{equation}
which is the identity on $ \{0\} \times \Delta_{-\infty}^\mathsf{op}$ and carries $\{1\} \times \Delta_{-\infty}^\mathsf{op} $ to $[-1] \in \Delta_{-\infty}^\mathsf{op}$. It sends each horizontal morphism $\{0\} \times 
 [n]\to \{1\}  \times [n] $ to the unique morphism $[n] \to [-1]$.
Consider the composite 
\begin{equation}
   P: \Delta^1  \times \Delta_{-\infty}^\mathsf{op}  \to \Delta_{-\infty}^\mathsf{op} \xrightarrow[]{f} \mathscr{B}.
    \end{equation}
Now we will take a coCartesian lifts, using the exponentiation for coCartesian fibrations \cite[\href{https://kerodon.net/tag/01VG}{Tag 01VG}]{kerodon}. 
\begin{itemize}
    \item[$\star$] Let $Q$ be a coCartesian lift of $\left.P\right|_{\Delta^1 \times  \Delta^\mathsf{op}}$ to $\mathscr{C}$. Then $Q$ is a natural transformation between $q$ and a morphism $q^\prime: \Delta^\mathsf{op} \to \mathscr{C}_b$, where $b$ is the image under $f$ of $[-1] \in \Delta_{-\infty}^\mathsf{op} $.
    \item[$\star$] Let $\widetilde{UQ}$ be a coCartesian lift of $P$ to $\mathscr{D}$. Then $\widetilde{UQ}$ is a natural transformation between $\widetilde{Uq}$ and a morphism $\widetilde{Uq^\prime}: \Delta_{-\infty}^\mathsf{op} \to \mathscr{C}_b$.
\end{itemize}
These natural transformations $Q$ and $\widetilde{UQ}$ are uniquely characterised by the property that their components are coCartesian edges \cite[\href{https://kerodon.net/tag/01VG}{Tag 01VG}]{kerodon}. Because of the assumption (i) that $U$ preserves coCartesian edges, this unicity implies that $UQ \simeq \left.\widetilde{UQ}\right|_{\Delta^1 \times \Delta^\mathsf{op}}$. In particular $U q^\prime: \Delta^\mathsf{op} \to \mathscr{C}_b$ extends to the split simplicial object $\widetilde{Uq^\prime}: \Delta^\mathsf{op}_{-\infty} \to \mathscr{C}_b$. By the fiberwise monadicity assumption (iii), this implies that $q^\prime$ extends to a colimit diagram $\overline{q}^\prime: (\Delta^\mathsf{op})^{\triangleright} \to \mathscr{C}_b$ such that $U\overline{q}^\prime$ is also a colimit diagram. By assumption (iv) and \cite[Proposition 4.3.1.10]{HigherToposTheory} it then follows that $\overline{q}^\prime$ (resp. $U \overline{q}^\prime$), when regarded as a diagram in $\mathscr{C}$ (resp. $\mathscr{D}$), is a $p$-colimit diagram (resp. $r$-colimit diagram). Now we can argue as in \cite[Corollary 4.3.1.11]{HigherToposTheory}. We have a commutative diagram
\begin{equation}
\begin{tikzcd}[column sep=large]
	{(\Delta^1 \times \Delta^\mathsf{op})\coprod_{\{1\} \times \Delta^\mathsf{op}}(\{1\}\times (\Delta^\mathsf{op})^\triangleright)} & {\mathscr{C}} \\
	{(\Delta^1 \times \Delta^\mathsf{op} )^\triangleright} & {\mathscr{B}}
	\arrow["{(Q, \overline{q}^\prime)}", from=1-1, to=1-2]
	\arrow[hook, from=1-1, to=2-1]
	\arrow["p", from=1-2, to=2-2]
	\arrow["s"{description}, dashed, from=2-1, to=1-2]
	\arrow["{(\left.f\right|_{(\Delta^\mathsf{op})^\triangleright})\circ\pi}"', from=2-1, to=2-2]
\end{tikzcd}
\end{equation}
in which $\pi: (\Delta^1 \times \Delta^\mathsf{op
})^{\triangleright} \to (\Delta^\mathsf{op})^\triangleright = \Delta^\mathsf{op}_+ \subseteq \Delta^\mathsf{op}_{-\infty}$ denotes the morphism which is the identity on $\{0\} \times \Delta^\mathsf{op}$ and which carries $(\{1\} \times \Delta^\mathsf{op})^\triangleright$ to the cone point. Because the left map is an inner fibration there exists a lift $s$ as indicated by the dashed arrow. Consider now the map $\Delta^1 \times (\Delta^\mathsf{op})^\triangleright \to (\Delta^1 \times \Delta^\mathsf{op})^\triangleright$ which is the identity on $\Delta^1 \times \Delta^\mathsf{op}$ and carries the other vertices of $ \Delta^1 \times (\Delta^\mathsf{op})^\triangleright $ to the cone point. Let $\overline{Q}$ denote the composition
\begin{equation}
    \Delta^1 \times (\Delta^\mathsf{op})^\triangleright \to (\Delta^1 \times \Delta^\mathsf{op})^\triangleright \xrightarrow[]{s} \mathscr{C}
\end{equation}
and define $\overline{q}:= \left.\overline{Q}\right|_{\{0\}\times (\Delta^\mathsf{op})^\triangleright}$. Then $\overline{Q}$ is a natural transformation from $\overline{q}$ to $\overline{q}^\prime$ which is componentwise coCartesian. Then \cite[Proposition 4.3.1.9]{HigherToposTheory} implies that $\overline{q}$ is a $p$-colimit diagram which fits into the diagram
\begin{equation}
\begin{tikzcd}[cramped,column sep=large]
	{\Delta^\mathsf{op}} & {\mathscr{C}} \\
	{(\Delta^\mathsf{op})^{\triangleright}} & {\mathscr{B}}
	\arrow["q", from=1-1, to=1-2]
	\arrow[hook, from=1-1, to=2-1]
	\arrow["p", from=1-2, to=2-2]
	\arrow["{\overline{q}}"{description}, from=2-1, to=1-2]
	\arrow["{\left.f\right|_{(\Delta^\mathsf{op})^\triangleright}}"', from=2-1, to=2-2]
\end{tikzcd}
\end{equation} 
By assumption (i), $U\overline{Q}$ is a natural transformation from  $U\overline{q}$ to $U\overline{q}^\prime$ which is componentwise coCartesian. Hence \cite[Proposition 4.3.1.9]{HigherToposTheory} implies that $U\overline{q}$ is an $r$-colimit diagram. The underlying diagram $\left.f\right|_{(\Delta^\mathsf{op})^\triangleright}$ of $\overline{q}$ in $\mathscr{B}$ extends to the split simplicial diagram $f$ and hence admits a colimit in $\mathscr{B}$. Hence \cite[Proposition 4.3.1.5(2)]{HigherToposTheory} implies that $\overline{q}$ and $U\overline{q}$ are colimit diagrams in $\mathscr{C}$ and $\mathscr{D}$ respectively. Hence $\mathscr{C}$ admits and $U$ preserves geometric realizations of $U$-split simplicial objects.
\end{proof}
\begin{cor}\label{cor:BarrBeckFamiliesCor}
Let $\mathscr{B}$ be an $\infty$-category and let $\eta : F \to G$ be a natural transformation of functors $F, G : \mathscr{B} \to \mathsf{Cat}_\infty$. Assume that:
\begin{enumerate}[(i)]
    \item For each $b \in \mathscr{B}$ the functor $\eta_b : F(b) \to G(b)$ admits a left adjoint $\nu_b$;
    \item For each $b \in \mathscr{B}$, the functor $\eta_b$ is conservative, and $F(b)$ admits colimits of $\eta_b$-split simplicial objects, which $\eta_b$ preserves;
    \item For any edge $e: b \to b^\prime$ in $\mathscr{B}$, the functor $F(e): F(b) \to F(b^\prime)$ preserves geometric realizations of $\eta_b$-split simplicial objects. 
\end{enumerate}
Then there exists a functor $H: \mathscr{B} \to \mathsf{Cat}_\infty$ equipped with a natural equivalence $\lambda: F \xrightarrow[]{\sim} H$ and a natural transformation $\mu: H \to G$ such that:
\begin{enumerate}[(i)]
    \item There is an equivalence $\mu \circ \lambda \simeq \eta$;
    \item Set $T_b := \eta_b \nu_b$. Then for each $b \in \mathscr{B}$ one has $H(b) = \operatorname{Mod}_{T_b}F(b)$ and 
    \begin{equation}
        F(b) \xrightarrow[]{\simeq \lambda(b)} H(b) \xrightarrow[]{\mu(b)} G(b)
    \end{equation}
    identifies with the factorization of $\eta(b)$ as the \emph{comparison functor} followed by the \emph{forgetful functor}. 
\end{enumerate}
\end{cor}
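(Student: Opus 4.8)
The plan is to deduce this from the fibered monadicity result of Proposition~\ref{prop:BarrBeckFamilies} by passing through the $\infty$-categorical Grothendieck construction. First I would straighten everything in sight: let $p \colon \mathscr{C} \to \mathscr{B}$ and $r \colon \mathscr{D} \to \mathscr{B}$ be the coCartesian fibrations classified by $F$ and $G$, and let $U \colon \mathscr{C} \to \mathscr{D}$ be the functor over $\mathscr{B}$ which preserves coCartesian edges and classifies $\eta$. Then $U_b = \eta_b$ on fibers, and coCartesian transport along an edge $e \colon b \to b'$ in $\mathscr{B}$ recovers $F(e)$ on $\mathscr{C}$ and $G(e)$ on $\mathscr{D}$, by \cite[\S3.2]{HigherToposTheory}. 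By assumption (i) each $U_b$ admits a left adjoint $\nu_b$, and $U$ preserves coCartesian edges, so by the theory of relative adjunctions \cite[\S7.3.2]{HigherAlgebra} there is a functor $\Phi \colon \mathscr{D} \to \mathscr{C}$ which is left adjoint to $U$ relative to $\mathscr{B}$, with $p\Phi \simeq r$ and $\Phi_b \simeq \nu_b$ on fibers.

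Next I would check that Proposition~\ref{prop:BarrBeckFamilies} applies to the diagram $\mathscr{C} \xrightarrow{U} \mathscr{D} \xrightarrow{r} \mathscr{B}$ with left adjoint $\Phi$. Hypothesis (i) is the previous paragraph; (ii) is the relative adjunction $\Phi \dashv U$ together with $p\Phi \simeq r$; (iii) holds because a relative adjunction restricts on each fiber to the adjunction $\nu_b \dashv \eta_b$, combined with assumption (ii) of the corollary ($\eta_b$ conservative, and $\mathscr{C}_b = F(b)$ admits colimits of $U_b$-split simplicial objects which $U_b$ preserves); and (iv) is precisely assumption (iii), since coCartesian transport along $e$ on $\mathscr{C}$ is $F(e)$. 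Hence $\Phi \dashv U$ is monadic. Write $\mathbb{T} := U\Phi$ for the resulting monad on $\mathscr{D}$; it lies over $\mathscr{B}$, since $r\mathbb{T} \simeq p\Phi \simeq r$, and $\mathbb{T}_b \simeq \eta_b \nu_b = T_b$ on fibers. Monadicity then provides an equivalence $\gamma \colon \mathscr{C} \xrightarrow{\sim} \operatorname{LMod}_{\mathbb{T}}(\mathscr{D})$ over $\mathscr{B}$ through which $U$ factors as $\gamma$ followed by the forgetful functor $V \colon \operatorname{LMod}_{\mathbb{T}}(\mathscr{D}) \to \mathscr{D}$.

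Finally I would straighten back. Since $\gamma$ is an equivalence over $\mathscr{B}$, the composite $\operatorname{LMod}_{\mathbb{T}}(\mathscr{D}) \xrightarrow{V} \mathscr{D} \xrightarrow{r} \mathscr{B}$ is again a coCartesian fibration; let $H \colon \mathscr{B} \to \mathsf{Cat}_\infty$ be the functor it classifies, so $H(b) = \operatorname{LMod}_{\mathbb{T}_b}(\mathscr{D}_b) = \operatorname{Mod}_{T_b} G(b)$. Both $\gamma$ and $V$ preserve coCartesian edges ($\gamma$ because it is an equivalence over $\mathscr{B}$, and $V \simeq U \circ \gamma^{-1}$ because $U$ does), so they classify a natural equivalence $\lambda \colon F \xrightarrow{\sim} H$ and a natural transformation $\mu \colon H \to G$ respectively; functoriality of straightening applied to $U \simeq V\gamma$ yields $\mu \circ \lambda \simeq \eta$. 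On the fiber over $b$, $\lambda(b) = \gamma_b$ is the Eilenberg--Moore comparison functor of $\nu_b \dashv \eta_b$ and $\mu(b) = V_b$ is the forgetful functor, which is exactly the asserted factorization of $\eta(b)$. The one substantive point, and the likely main obstacle, is the relative bookkeeping: one must ensure that $\Phi$, the monad $\mathbb{T}$, and the comparison $\gamma$ are all produced as data over $\mathscr{B}$ compatibly with coCartesian edges, so that every object and $1$-morphism genuinely descends through the straightening equivalence; once this is arranged, the fiberwise identifications are forced by unwinding the construction of $\gamma$.
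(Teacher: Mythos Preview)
Your proposal is correct and follows exactly the route the paper takes: unstraighten, invoke the relative-adjunction machinery of \cite[\S7.3.2]{HigherAlgebra} to produce the left adjoint over $\mathscr{B}$, apply Proposition~\ref{prop:BarrBeckFamilies}, and straighten back. The paper's own proof is a two-line sketch (``follows from Proposition~\ref{prop:BarrBeckFamilies} and straightening; strictly speaking, an application of (the dual of) \cite[Proposition 7.3.2.6]{HigherAlgebra} is required''), and the ``substantive point'' you flag at the end is precisely what that citation is meant to handle.
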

\begin{proof}
This follows from Proposition \ref{prop:BarrBeckFamilies} and straightening. Strictly speaking, an application of (the dual of) \cite[Proposition 7.3.2.6]{HigherAlgebra} is required. 
\end{proof}
\bibliography{references}
\bibliographystyle{alpha}
\end{document}